\documentclass[12pt, a4paper]{article}
\usepackage[top=1in, bottom=1.25in, left=1.25in, right=1.25in]{geometry}
\usepackage{amsfonts,amssymb,amsmath,amsthm} 
\usepackage{srcltx,tabularx}
\usepackage[utf8]{inputenc} 
\usepackage[cyr]{aeguill}
\usepackage{enumerate}
\usepackage[english]{babel}
\usepackage{authblk}
\usepackage[pagebackref=false,colorlinks=true,linktoc=page]{hyperref}
\usepackage{graphicx}
\usepackage{pstricks,pst-plot,pst-node}
\usepackage{pstricks-add}
\usepackage[bottom]{footmisc}
\usepackage[toc,page]{appendix}
\usepackage{breqn}
\usepackage{arydshln}
\usepackage{stfloats}
\usepackage{caption}
\usepackage{subcaption}
\usepackage{multirow}
\usepackage{pdflscape}
\usepackage{subfiles}

\usepackage{mathrsfs} 

\newcommand\Tstrut{\rule{0pt}{2.6ex}}         
\newcommand\Bstrut{\rule[-0.9ex]{0pt}{0pt}}   

\newtheorem{theorem}{Theorem}[section]
\newtheorem{lemma}[theorem]{Lemma}
\newtheorem{proposition}[theorem]{Proposition}
\newtheorem{corollary}[theorem]{Corollary}

\newtheorem*{claim*}{Claim}

\newenvironment{claimproof*}[1]{\par\noindent\textit{Proof of the claim:}\space#1}{}

\newtheorem{maintheorem}{Theorem}
\newtheorem{maincorollary}[maintheorem]{Corollary}

\newtheorem*{theorem*}{Theorem}

\newtheorem*{problem*}{Problem}
\newtheorem*{question*}{Question}

\newtheorem{thm}{Theorem}[section]

\newtheorem{prop}[thm]{Proposition}
\newtheorem{lem}[thm]{Lemma}
\newtheorem{cor}[thm]{Corollary}

\theoremstyle{definition}
\newtheorem{definition}[theorem]{Definition}
\newtheorem*{definition*}{Definition}
\newtheorem*{remark*}{Remark}


\newtheorem{rmk}[thm]{Remark}

\newcommand{\suchthat}{\;\ifnum\currentgrouptype=16 \middle\fi|\;}

\newcommand{\bigslant}[2]{{\raisebox{.2em}{$#1$}\left/\raisebox{-.2em}{$#2$}\right.}}
\DeclareMathOperator{\Aut}{\mathrm{Aut}}
\DeclareMathOperator{\PSL}{\mathrm{PSL}}
\DeclareMathOperator{\SL}{\mathrm{SL}}
\DeclareMathOperator{\Fix}{\mathrm{Fix}}

\DeclareMathOperator{\Sym}{\mathrm{Sym}}
\DeclareMathOperator{\Alt}{\mathrm{Alt}}
\DeclareMathOperator{\proj}{\mathrm{proj}}
\DeclareMathOperator{\Inn}{\mathrm{Inn}}

\DeclareMathOperator{\Rect}{\mathrm{Rect}}
\DeclareMathOperator{\C}{\mathbf{C}}

\DeclareMathOperator{\D}{\mathbf{D}}
\DeclareMathOperator{\N}{\mathbf{Z}_{\geq 0}}

\DeclareMathOperator{\sgn}{\mathrm{sgn}}
\DeclareMathOperator{\Sgn}{\mathrm{Sgn}}







\newcommand{\Mon}{\mathrm{Mon}}

\newcommand{\FC}{\mathrm{FC}}
\newcommand{\Comm}{\mathrm{Comm}}

\newcommand{\PGL}{\mathrm{PGL}}
\newcommand{\RR}{\mathbf{R}}
\newcommand{\QQ}{\mathbf{Q}}
\newcommand{\ZZ}{\mathbf{Z}}


\title{New simple lattices in products of\\ trees and their projections}
\author{Nicolas Radu\thanks{N.\ Radu is a F.R.S.-FNRS research fellow.}\\[0.3cm]with an appendix by Pierre-Emmanuel Caprace\thanks{P.-E.\ Caprace is a F.R.S.-FNRS senior research associate.}}

\affil{UCLouvain, 1348 Louvain-la-Neuve, Belgium}
\date{December 4, 2017}


\begin{document}


\maketitle


\begin{abstract}
Let $\Gamma \leq \Aut(T_{d_1}) \times \Aut(T_{d_2}) $ be a group acting freely and transitively on the product of two regular trees of degree $d_1$ and $d_2$. We develop an algorithm which computes the closure of the projection of $\Gamma$ on $\Aut(T_{d_t})$ under the hypothesis that $d_t \geq 6$ is even and that the local action of $\Gamma$ on $T_{d_t}$ contains $\Alt(d_t)$. We show that if $\Gamma$ is torsion-free and $d_1 = d_2 = 6$, exactly seven closed subgroups of $\Aut(T_6)$ arise in this way. We also construct two new infinite families of virtually simple lattices in $\Aut(T_{6}) \times \Aut(T_{4n})$ and in $\Aut(T_{2n}) \times \Aut(T_{2n+1})$ respectively, for all $n \geq 2$. In particular we provide an explicit presentation of a torsion-free infinite simple group on $5$ generators and $10$ relations, that splits as an amalgamated free product of two copies of $F_3$ over $F_{11}$. We include information arising from computer-assisted exhaustive searches of lattices in products of trees of small degrees. In an appendix by Pierre-Emmanuel Caprace, some of our results are used to show that abstract and relative commensurator groups of free groups are almost simple, providing partial answers to questions of Lubotzky and Lubotzky--Mozes--Zimmer.
\end{abstract}


\tableofcontents

\section{Introduction}

One of the starting points of this text is the following question, asked by Burger, Mozes and Zimmer in~\cite{BMZ}.

\begin{question*}[Burger--Mozes--Zimmer, 2009]
Which groups arise as closures of projections of cocompact lattices in $\Aut(T_1) \times \Aut(T_2)$, where $T_1$ and $T_2$ are locally finite regular trees?
\end{question*}

The cocompact lattices $\Gamma \leq \Aut(T_1) \times \Aut(T_2)$ of interest are those which are not commensurable to a product of lattices $\Gamma_t \leq \Aut(T_t)$, they are called \textbf{irreducible}. This irreducibility condition is equivalent to asking both projections on $\Aut(T_1)$ and $\Aut(T_2)$ to be non-discrete, see \cite[Proposition~1.2]{Burger2}.

Under this irreducibility assumption, the above question thus asks for which pairs of non-discrete closed subgroups $H_1 \leq \Aut(T_1)$ and $H_2 \leq \Aut(T_2)$ there exists a cocompact lattice $\Gamma \leq H_1 \times H_2$ whose projections on $H_1$ and $H_2$ are dense. A first remark is that $H_t$ must be locally topologically finitely generated for each $t \in \{1,2\}$ (see~\cite[Proposition~1.1.2]{BMZ}), which in particular excludes the full group $\Aut(T_t)$.

Most known irreducible cocompact lattices in products of trees come from the algebraic world: we call them arithmetic lattices. One can for instance construct a cocompact lattice $\Gamma \leq \mathrm{PGL}(2,\mathbf{Q}_p) \times \mathrm{PGL}(2,\mathbf{Q}_{p'})$ with dense projections for each distinct odd primes $p$ and $p'$, see~\cite[\S4, Theorem~1.1]{Vigneras} and \cite[Chapter~3]{Rattaggi}. Note that $\mathrm{PGL}(2,\mathbf{Q}_p)$ acts on its Bruhat--Tits tree $T$ (which is $(p+1)$-regular), so it can be seen as a closed subgroup of $\Aut(T)$.

The first non-algebraic groups that were shown to appear as closures of projections of cocompact lattices in a product of two trees are the universal groups $U(\Alt(d))$ defined and studied by Burger and Mozes in \cite{Burger} (for sufficiently large even values of $d$). Recall that, given a $d$-regular tree $T$ and a transitive finite permutation group $F \leq \Sym(d)$, the group $U(F)$ is the largest vertex-transitive subgroup of $\Aut(T)$ whose stabilizer of a vertex acts as $F$ on its $d$ neighbors (we write $\underline{G}(v) \cong F$ when the local action of a group $G \leq \Aut(T)$ at $v$ is given by $F$). In \cite{Burger2}, the same authors indeed constructed for each $m \geq 15$ and $n \geq 19$ a cocompact lattice $\Gamma \leq U(\Alt(2m)) \times U(\Alt(2n))$ with dense projections. Later, Rattaggi constructed in his thesis \cite{Rattaggi} such lattices for some smaller values of $m$ and $n$, for instance $m = n = 3$. He was also able to produce a cocompact lattice $\Gamma \leq U(\Alt(6)) \times U(M_{12})$ with dense projections, where $M_{12} \leq \Sym(12)$ is the Mathieu group of degree~$12$. All cocompact lattices $\Gamma \leq \Aut(T_1) \times \Aut(T_2)$ mentioned above are simply transitive on the vertices of $T_1 \times T_2$. We call such a lattice a \textbf{$(d_1,d_2)$-group}, where $d_1$ and $d_2$ are the degrees of the trees $T_1$ and $T_2$. 

The vertex-transitive non-discrete closed subgroups $H$ of $\Aut(T)$ such that $\underline{H}(v) \geq \Alt(d)$ for each $v \in V(T)$ were classified when $d \geq 6$, in \cite{Radu2} (see \S\ref{section:AutT} below). There are infinitely many isomorphism classes of such groups, and it is therefore natural to ask which of them can appear as the closure of a projection of a $(d_1,d_2)$-group. Our first aim is to develop tools enabling us to answer that question for small values of $d_1$ and $d_2$. Under some suitable hypotheses on the local action, we indeed develop algorithms that can be used to compute the closure of a projection.

\begin{maintheorem}\label{maintheorem:algorithms}
Let $\Gamma \leq \Aut(T_1) \times \Aut(T_2)$ be a $(d_1,d_2)$-group and let $H_1$ be the closure of the projection of $\Gamma$ on $\Aut(T_1)$. Suppose that $d_1 \geq 6$ and that $\underline{H_1}(v) \geq \Alt(d_1)$ for each $v \in V(T_1)$.
\begin{enumerate}[(i)]
\item There is an (efficient) algorithm that determines whether $\Gamma$ is irreducible.
\item If $\Gamma$ is irreducible and $d_1$ is even, then there is an (efficient) algorithm that computes the exact isomorphism class of $H_1$.
\end{enumerate}
\end{maintheorem}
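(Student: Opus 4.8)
The plan is to exploit the rigidity of lattices in products of trees together with the explicit classification of vertex-transitive subgroups $H \le \Aut(T)$ with $\underline{H}(v) \ge \Alt(d)$ from \cite{Radu2}. Since $\Gamma$ is a $(d_1,d_2)$-group, it is determined by a finite combinatorial datum: a $1$-vertex square complex whose universal cover is $T_1 \times T_2$, equivalently a finite set of relators encoding how horizontal and vertical edges fit together. From this datum one can read off, for any radius $r$, the action of $\Gamma$ on the ball $B_{T_1}(v_0, r)$ induced by horizontal paths of $\Gamma$ carrying a fixed vertical prefix; concretely, one walks along vertical edges and records the resulting tree automorphisms of $T_1$. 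This produces, for each $r$, a finite group $P_r \le \Aut(B_{T_1}(v_0,r))$, and $H_1 = \overline{\proj_1(\Gamma)}$ is the inverse limit of the $P_r$ in the profinite-like sense, i.e. $H_1$ is the set of automorphisms of $T_1$ whose restriction to every ball lies in the appropriate $P_r$.

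For part (i): $\Gamma$ is irreducible if and only if $\proj_1(\Gamma)$ is non-discrete, i.e. if and only if for every $r$ there is a non-trivial element of $P_{r+1}$ fixing $B_{T_1}(v_0,r)$ pointwise. Equivalently — and this is the efficient reformulation — $\Gamma$ is reducible precisely when the stabilizer in $\Gamma$ of a horizontal line acts on it through a finite quotient, which can be detected by checking whether a certain explicitly-bounded-size "obstruction" group stabilizes. The point is that by Burger--Mozes-type local-to-global arguments, once $\underline{H_1}(v) \ge \Alt(d_1)$ with $d_1 \ge 6$, the local action stabilizes the behaviour at small radius: there is an a priori bound $r_0 = r_0(d_1)$ such that non-discreteness of $\proj_1(\Gamma)$ is equivalent to $P_{r_0+1} \to P_{r_0}$ being non-injective, and $P_{r_0}$ has size bounded in terms of $d_1$ alone. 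So the algorithm computes $P_{r_0}$ and $P_{r_0+1}$ from the relators and compares their orders.

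For part (ii): assuming $\Gamma$ irreducible and $d_1$ even, we need to pin down $H_1$ among the countably many groups in the classification of \cite{Radu2}. These groups are distinguished by finitely many local/combinatorial invariants (for instance a "type" recording, in the even-degree $\Alt$ case, data such as whether $H_1 = U(F)$ for some $F$, or one of the Burger--Mozes legal-coloring variants, governed by the structure of a vertex stabilizer modulo the rigid stabilizer and by the value of certain $\{\pm 1\}$-valued cocycles). The key claim is that each such invariant is already determined by $P_r$ for $r$ up to an explicit bound depending only on $d_1$: once the local action contains $\Alt(d_1)$, the subgroups in \cite{Radu2} that agree on $B(v_0, r_1)$ for a suitable $r_1 = r_1(d_1)$ must coincide. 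So the algorithm computes $P_{r_1}$ from the presentation and matches it against the (finite, explicitly enumerable) list of candidate local patterns from \cite{Radu2}, outputting the unique $H_1$ whose $r_1$-ball action is $P_{r_1}$.

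The main obstacle is justifying the a priori radius bounds $r_0, r_1$ and the uniqueness-from-finite-data statement: a priori the closure $H_1$ is an inverse limit and two distinct groups in the classification could conceivably have arbitrarily deep first discrepancy. This is where the hypothesis $\underline{H_1}(v) \ge \Alt(d_1)$ and $d_1 \ge 6$ does the real work — it forces $H_1$ to be one of the finitely-many-per-degree structured groups of \cite{Radu2} rather than an arbitrary closed subgroup, and for those the distinguishing invariants genuinely live at bounded depth. I expect the bulk of the argument to consist of (a) extracting from \cite{Radu2} the precise finite invariant and its bounded "support radius", and (b) verifying that the $P_r$ computed from the square-complex presentation correctly capture that invariant, the efficiency claim then being essentially the observation that all the groups involved have size bounded in terms of $d_1$.
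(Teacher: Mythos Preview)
Your approach to (i) is essentially correct and matches the paper: under the hypothesis $d_1 \geq 6$ and $\underline{H_1}(v) \geq \Alt(d_1)$, Burger--Mozes \cite[Propositions~3.3.1 and~3.3.2]{Burger} give the explicit bound $r_0 = 2$, so one only needs to compute the image of $H_1(v_1)$ in $\Aut(B(v_1,2))$ and compare its order to $\tfrac{d_1!}{2}\bigl(\tfrac{(d_1-1)!}{2}\bigr)^{d_1}$.

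Your approach to (ii), however, rests on a misconception about the classification $\mathcal G'_{(i)}$ from \cite{Radu2}. There are \emph{infinitely many} groups in $\mathcal G'_{(i)}$ for each fixed degree $d_1$: they are indexed (roughly) by an arbitrary finite subset $X \subset_f \mathbf Z_{\geq 0}$ together with one of four ``types''. The group $G_{(i)}(X,X)$ has $K = \max X$, and its vertex stabilizer surjects onto $\Aut(B(v,K))$; in particular $G_{(i)}(\{n\},\{n\})$ and $G_{(i)}(\{n+1\},\{n+1\})$ are indistinguishable on $B(v,n)$. So there is no bound $r_1 = r_1(d_1)$ as you claim, and there is no finite list of ``candidate local patterns'' to match against. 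Your statement that $H_1$ is ``one of the finitely-many-per-degree structured groups'' is simply false.

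What the paper does instead is compute the unbounded invariant $K^{(1)}$ (and then the subset $X$) \emph{without} computing the full action of $H_1(v_1)$ on $B(v_1,K^{(1)}+1)$. The key object is a labelled graph $G^{(1)}_\Gamma$ on the vertex set $B$ (so $d_2$ vertices, not $d_1$), built directly from the geometric squares: two vertices $b, b'$ are joined when an odd number of squares have the relevant corner pattern, and each vertex carries a $\pm 1$ label recording a signature. Proposition~\ref{proposition:algo} shows that the parity invariants $s_k^{(1)}(b_j) = \Sgn_{(i)}(b_j, S(v_1,k))$ are computed by products of labels over non-repeating paths of length $k$ in this graph. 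Since the graph is fixed and finite, these invariants are cheap to evaluate for growing $k$ until $K^{(1)}$ is found; this determines $X$ up to at most four ambiguities, which are then resolved by solving two small $\{\pm 1\}$-linear systems (Proposition~\ref{proposition:choose4}) and inspecting one further rectangle (Proposition~\ref{proposition:primeornot}). The efficiency comes not from a radius bound but from replacing ball computations by parity computations in a graph of size $d_2$.
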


Point (i) follows from results of Burger--Mozes \cite[Propositions~3.3.1 and~3.3.2]{Burger}, while (ii) requires a much more involved analysis.

In the following theorem we gather everything we can say about torsion-free $(6,6)$-groups, notably thanks to our algorithms above. This is a preview of what is done in~\S\ref{section:projections}. Two $(d_1,d_2)$-groups are called \textbf{equivalent} if they are conjugate in $\Aut(T_1 \times T_2)$. The systematic search of $(d_1,d_2)$-groups of small degree was undertaken in the torsion-free case by Kimberly-Robertson \cite{Kimberly} (for $d_1 = d_2 = 4$) and Rattaggi \cite{Rattaggi} (whose tables do not provide information on the number of equivalence classes). 

\begin{maintheorem}\label{maintheorem:66}
There are $32062$ torsion-free $(6,6)$-groups $\Gamma \leq \Aut(T_1) \times \Aut(T_2)$ up to equivalence. At least $18426$ of them are reducible, and at least $8227$ of them are irreducible. Moreover, there are exactly $7$ groups $H \leq \Aut(T_1)$ (up to conjugation) which are transitive on $V(T_1)$, satisfy $\underline{H}(v) \geq \Alt(6)$ for each $v \in V(T)$, and appear as the closure of the projection on $\Aut(T_1)$ of a torsion-free irreducible $(6,6)$-group $\Gamma \leq \Aut(T_1) \times \Aut(T_2)$.
\end{maintheorem}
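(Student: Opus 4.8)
The plan is to establish the three assertions of Theorem~\ref{maintheorem:66} by combining an exhaustive computer search with the algorithmic criteria provided by Theorem~\ref{maintheorem:algorithms}. A torsion-free $(6,6)$-group $\Gamma \leq \Aut(T_1) \times \Aut(T_2)$ is encoded by its associated finite combinatorial datum: a complete square complex with one vertex, $18$ squares, and two bouquets of $3$ loops, equivalently a set of $18$ relations of length four in the free group $F_3 \times F_3$ satisfying the local-completeness ("geometric square") condition of Wise and Burger--Mozes. First I would describe this encoding precisely and recall that two such data define equivalent $(6,6)$-groups if and only if they are related by the obvious symmetry group (relabelling each of the two generating sets by $\Aut(F_3)$-moves coming from the trees, together with the swap of the two factors when $d_1=d_2$). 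Enumerating all geometric squares up to this symmetry is a finite problem; carrying it out by computer yields the count $32062$. This first step is purely a verification that the search space has been traversed exhaustively and that the isomorphism/equivalence reduction has been implemented correctly; I would document the search well enough that it can be independently reproduced.

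Next, for each of the $32062$ equivalence classes I would run the decision procedure of Theorem~\ref{maintheorem:algorithms}(i) to separate reducible from irreducible groups. Here one must be slightly careful: the hypothesis of Theorem~\ref{maintheorem:algorithms} requires that the closure $H_1$ of one projection satisfies $\underline{H_1}(v) \geq \Alt(6)$, so before applying the algorithm I would first compute, for each group, the local actions $\underline{\Gamma}(v)$ on $T_1$ and on $T_2$ (these are transitive subgroups of $\Sym(6)$ read directly off the datum) and retain those groups for which at least one projection has local action containing $\Alt(6)$; irreducibility of the remaining groups can be detected by the Burger--Mozes criteria (Propositions~3.3.1 and~3.3.2 of~\cite{Burger}) directly. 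This yields the lower bounds ``at least $18426$ reducible'' and ``at least $8227$ irreducible''; the bounds are stated as inequalities rather than equalities precisely because for groups whose local action on \emph{both} sides is a proper subgroup of $\Sym(6)$ not containing $\Alt(6)$ — small transitive groups like $D_6$, $A_4$, etc. — our methods do not decide reducibility, and these are set aside.

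Finally, for the irreducible groups that do fall under the hypothesis of Theorem~\ref{maintheorem:algorithms}(ii) — irreducible, and with an even degree ($d_1 = 6$) and local action on $T_1$ containing $\Alt(6)$ — I would run the second algorithm to compute the exact isomorphism class of $H_1$ as one of the groups in the classification of~\cite{Radu2}. Collecting the outputs over all such $\Gamma$ (and over both projections, by symmetry) produces a finite list of closed subgroups of $\Aut(T_6)$; I would then check that this list has exactly $7$ members up to conjugacy, identifying each explicitly within the \cite{Radu2} classification (e.g.\ $U(\Alt(6))$, $U(\Sym(6))^+$, and the other five that occur). The main obstacle is neither step individually but rather controlling the correctness and completeness of the computation: one must be certain that the enumeration in the first step is genuinely exhaustive and that the equivalence reduction neither over- nor under-counts, and one must trust the implementation of the algorithm of Theorem~\ref{maintheorem:algorithms}(ii), whose analysis is the technical heart of the paper. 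I would therefore include enough of the raw data (generating relations for representatives of each of the $7$ groups, and for sample members of the reducible and irreducible families) that the key conclusions can be spot-checked by hand, and I would cross-validate the search against the partial tables of Rattaggi~\cite{Rattaggi} and Kimberly-Robertson~\cite{Kimberly} where they overlap.
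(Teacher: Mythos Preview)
Your proposal follows essentially the same route as the paper: exhaustive enumeration of torsion-free $(6,6)$-data up to equivalence, followed by the algorithms of Theorem~\ref{maintheorem:algorithms} to separate reducible from irreducible cases and to compute the projection closures. The paper's proof is literally a reference to the resulting tables.

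Two small points are worth noting. First, a minor slip: a torsion-free $(6,6)$-datum has $d_1 d_2/4 = 9$ geometric squares (equivalently, $9$ defining relations), not $18$. Second, the paper supplements the computation with a short a priori argument you do not mention: because $\tau_1 = 0$, the simplified labelled graph $\tilde G^{(2)}_\Gamma$ has only $3$ vertices, no loops, and all vertices of even degree, so there are only two possible underlying graphs (empty or the triangle) and at most eight labellings; analysing these via Propositions~\ref{proposition:algo2} and Corollary~\ref{corollary:choose4} shows that at most seven members of $\mathcal G'_{(i)}$ can ever arise as such a projection (see Figure~\ref{figure:projections66}). This gives an independent theoretical upper bound of $7$, and the computation then confirms that all seven actually occur. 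Your purely computational approach is sufficient for the statement as written, but the graph argument is what makes the ``exactly~$7$'' conceptually transparent rather than a bare output of a search.
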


We also show that, if $T_1$ is the $6$-regular tree, then infinitely many of the groups of \cite{Radu2} appear as the closure of the projection on $\Aut(T_1)$ of some $(6,d_2)$-group (see Theorem~\ref{maintheorem:64n} below). 

After our study of projections of $(d_1,d_2)$-groups, we go back to one of the original motivations of Burger--Mozes in~\cite{Burger}. The following problem was asked by Peter Neumann in~\cite{Neumann}.

\begin{problem*}[Neumann, 1973]
Let $G = F_m \ast_{F_k} F_n$ be a free amalgamated product of non-abelian free groups of finite rank over a subgroup of finite index. Can it happen that $G$ is simple?
\end{problem*}

Burger and Mozes answered this question in the positive by proving that for each $m \geq 109$ and $n \geq 150$, there exists a virtually simple $(2m,2n)$-group $\Gamma \leq U(\Alt(2m)) \times U(\Alt(2n))$ with dense projections (see Theorem~\cite[Theorems~5.5 and~6.4]{Burger}). The simple subgroup of finite index in $\Gamma$ is then isomorphic to its projection on $U(\Alt(2m))$, and this projection is edge-transitive. It can therefore be written as the free amalgamated product of two adjacent vertex stabilizers over an edge stabilizer (see~\cite{Serre}). Each vertex stabilizer in the first tree is a cocompact lattice in the second tree that is free, so we get a simple group of the form $F_m \ast_{F_k} F_n$ as wanted.

The values for $m$, $n$ and $k$ are however really huge in that case, so a presentation for such a group would be too large to manipulate. Rattaggi later found in \cite{Rattaggi} a $(8,12)$-group whose index~$4$ subgroup (preserving all types of vertices) is simple. With the same reasoning, he observed that this simple subgroup is a free amalgamated product $F_7 \ast_{F_{73}} F_7$. Even more recently, Bondarenko and Kivva found in~\cite{Bondarenko} a $(8,8)$-group whose index~$4$ subgroup is simple. This leads them to a simple group that decomposes as $F_7 \ast_{F_{49}} F_7$. In all these references, the explicit presentation for the simple groups was not computed.

In \S\ref{section:simple} we use the same techniques as the above authors to produce $(6,6)$-groups and $(4,5)$-groups whose index~$4$ subgroups are simple.

\begin{maintheorem}\label{maintheorem:simple66-45}
There exist (at least):
\begin{enumerate}[(i)]
\item $160$ pairwise non-commensurable virtually simple $(6,6)$-groups: two of them have a simple subgroup of index~$12$, and the other $158$ have a simple subgroup of index~$4$;
\item $60$ pairwise non-isomorphic virtually simple $(4,5)$-groups: $12$ of them have a simple subgroup of index~$8$, and the other $48$ have a simple subgroup of index~$4$.
\end{enumerate}
\end{maintheorem}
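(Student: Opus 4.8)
The plan is to follow the Burger--Mozes strategy for producing virtually simple lattices, made effective by computer search. Recall that a $(d_1,d_2)$-group $\Gamma$ acts on $T_1\times T_2$ simply transitively on vertices, so it is determined by finite combinatorial data (a set of generators indexed by the edges at a base vertex, subject to the ``squares'' relations). The first step is to enumerate, up to equivalence, all torsion-free $(6,6)$-groups and all $(4,5)$-groups; for the $(6,6)$ case this enumeration is exactly the one underlying Theorem~\ref{maintheorem:66}, and for $(4,5)$ one performs the analogous exhaustive search. For each candidate $\Gamma$, one first checks irreducibility: by Main Theorem~\ref{maintheorem:algorithms}(i) (i.e.\ \cite[Propositions~3.3.1 and~3.3.2]{Burger}) this is an efficient test on the local actions, and one discards the reducible ones.

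The second step is to apply the Burger--Mozes virtual simplicity criterion \cite[Theorems~5.5 and~6.4]{Burger}: an irreducible cocompact lattice $\Gamma\le \Aut(T_1)\times\Aut(T_2)$ whose closed projections $H_t$ both satisfy Tits' independence property and are locally quasiprimitive (indeed, here one can arrange $\underline{H_t}(v)\ge\Alt(d_t)$, which is $2$-transitive hence quasiprimitive) has the property that any non-trivial normal subgroup is of finite index; passing to the finite-index subgroup $\Gamma_0\normal\Gamma$ preserving all vertex-types and then quotienting by its (finite) amenable radical yields a simple group, and one verifies the radical is trivial in the cases at hand so that $\Gamma_0$ itself is simple. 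Concretely, one checks the hypotheses via the algorithms of \S\ref{section:projections}: compute the closures $H_1$ and $H_2$ of the two projections (Main Theorem~\ref{maintheorem:algorithms}(ii) applies when the relevant degree is even; for the $(4,5)$-groups one uses the degree-$4$ projection together with the local-action analysis for the odd side), confirm they lie among the groups of \cite{Radu2} that enjoy Tits' independence property, and then invoke the criterion. The index of the simple subgroup (4, 8, or 12) is read off from the index in $\Gamma$ of the type-preserving subgroup, which depends on the edge-transitivity of the projections; the sporadic values $12$ (in the $(6,6)$ case) and $8$ (in the $(4,5)$ case) arise when one projection fails to be edge-transitive and one must pass to a further index-$2$ or index-$3$ subgroup.

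The third step is to certify the claimed counts of \emph{pairwise non-commensurable} (resp.\ \emph{non-isomorphic}) examples. Commensurability of $(d_1,d_2)$-groups is detected through commensurability of the pairs of projection closures together with an analysis of the lattice inside $H_1\times H_2$; since the closures $H_t$ are the explicit groups of \cite{Radu2}, which are pairwise non-commensurable for distinct ``invariants'', one separates the examples first by the isomorphism type of $(H_1,H_2)$ and then, within a fixed pair, by finer invariants (e.g.\ Euler characteristic / covolume, or abelianizations of finite-index subgroups). For the $(4,5)$-groups, where no non-trivial equivalences between distinct $(4,5)$-groups with the same local data are expected, one separates by isomorphism of the groups directly, e.g.\ via the finitely many low-index subgroups and their abelianizations, computed with a tool such as GAP. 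This bookkeeping is what produces the numbers $160=2+158$ and $60=12+48$.

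The main obstacle is the bookkeeping and certification in the last step: running the simplicity criterion on each surviving candidate is routine once the projection-closure algorithms are in place, but proving that $160$ (resp.\ $60$) specific examples are genuinely pairwise non-commensurable (resp.\ non-isomorphic) requires a careful, computer-assisted choice of invariants fine enough to distinguish them all, and one must be sure these invariants are actual commensurability (resp.\ isomorphism) invariants. A secondary subtlety is handling the exceptional indices $8$ and $12$: there one must correctly identify the type-preserving or otherwise distinguished finite-index subgroup on which the Burger--Mozes argument yields simplicity, rather than the naive index-$4$ subgroup.
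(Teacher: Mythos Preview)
Your proposal has a genuine gap at the core: the Normal Subgroup Theorem (Burger--Mozes or Bader--Shalom) only yields that an irreducible lattice with boundary-$2$-transitive projections is \emph{just-infinite}, i.e.\ either residually finite or virtually simple. It does \emph{not} by itself produce virtual simplicity, and your sentence ``quotienting by its (finite) amenable radical yields a simple group'' is not a valid step---a just-infinite group can perfectly well be residually finite (e.g.\ arithmetic lattices of this type are). The paper's key idea, which you are missing, is to \emph{embed} a specific small group already known to be non-residually-finite: a concrete torsion-free $(4,4)$-group $\Gamma_{4,4}$ (resp.\ a $(3,3)$-group $\Gamma_{3,3}$) for which an explicit non-trivial element of $\Gamma^{(\infty)}$ is exhibited. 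One then searches not among all $(6,6)$- or $(4,5)$-groups, but only among those whose geometric squares \emph{contain} those of $\Gamma_{4,4}$ (resp.\ $\Gamma_{3,3}$ or its mirror) and whose local actions are large enough. Any such $\Gamma$ inherits non-residual-finiteness from the subgroup, and then the NST forces virtual simplicity.

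Two further points. First, your account of the indices $8$ and $12$ is incorrect: the type-preserving subgroup $\Gamma^+$ always has index~$4$, and the larger indices occur because $\Gamma^{(\infty)}$ is a proper subgroup of $\Gamma^+$ in those cases. The paper computes $[\Gamma:\Gamma^{(\infty)}]$ by adding the known relator (the explicit element of $\Gamma_{4,4}^{(\infty)}$ or $\Gamma_{3,3}^{(\infty)}$) to the presentation and finding the order of the resulting finite quotient. Second, for the $(4,5)$-groups neither degree is $\geq 6$, so Theorem~\ref{maintheorem:algorithms} does not apply; boundary-$2$-transitivity is obtained instead via Trofimov's theorem using local actions containing $\mathrm{PSL}(2,3)$ and $\mathrm{PSL}(2,4)$. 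Finally, the pairwise non-commensurability/non-isomorphism is established not by ad hoc invariants but by showing that the $\Gamma^+_k$ are pairwise non-conjugate in $\Aut(T_1\times T_2)$ (via the automorphism group of the quotient complex) and then invoking \cite[Corollary~1.1.22 and Theorem~1.4.1]{BMZ}.
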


The $(6,6)$-groups obtained in Theorem~\ref{maintheorem:simple66-45}(i) have the advantage that they can have shorter presentations than the $(4,5)$-groups obtained in (ii). On the other hand, those $(4,5)$-groups lead to simple groups splitting as $F_3 \ast_{F_{11}} F_3$, which improves the results mentioned above. This is illustrated by the following down-to-earth statement.

\begin{maincorollary}\label{maincorollary:presentations}
\quad
\begin{enumerate}[(i)]
\item The following group, presented by $6$ generators and $10$ relators, is a $(6,6)$-group with a simple subgroup of index $4$:
$$\hspace{-1cm}\fontsize{10}{10}
\begin{array}{rl}
\langle a_1, a_2, a_3, b_1, b_2, b_3 \ \mid
& a_1b_1a_2^{-1}b_1,\
a_1b_2a_2b_2^{-1},\
a_1b_2^{-1}a_2^{-1}b_1^{-1},\
a_1b_1^{-1}a_2^{-1}b_2,\\
& a_1b_3a_1b_3^{-1},\
a_2b_3a_2b_3,\
a_2b_3^{-1}a_3b_3^{-1},\
a_3b_1a_3^{-1}b_1^{-1},\\
& a_3b_2a_3b_3,\
a_3b_2^{-1}a_3b_2^{-1}
\rangle
\end{array}$$
\item The following group is isomorphic to a free amalgamated product $F_3 \ast_{F_{11}} F_3$, is simple, and is an index~$4$ subgroup of a $(4,5)$-group:
$$\hspace{-1cm}\fontsize{10}{10}
\begin{array}{rl}
\langle x_1, x_2, x_3, y_1, y_2, y_3 \ \mid
& x_1 = y_1,\\
& x_2^2 = y_2y_1^{-1}y_2,\\
& x_3^2 = y_3^2,\\
& x_3^{-1}x_1x_3 = y_3^{-1}y_2y_3,\\
& x_3^{-1}x_2x_3 = y_3^{-1}y_1y_3,\\
& x_2^{-1}x_1x_2 = y_2^{-1}y_1^{-1}y_2,\\
& x_2^{-1}x_3^{-2}x_2 = y_2^{-1}y_1y_3^{-2}y_2,\\
& x_2^{-1}x_3^{-1}x_2^{-1}x_1x_3x_2 = y_2^{-1}y_1y_3^{-1}y_1^{-1}y_2y_3y_1^{-1}y_2,\\
&x_2^{-1}x_3^{-1}x_1x_2x_3x_2 = y_2^{-1}y_1y_3^{-1}y_2y_3^{-1}y_1y_3y_1^{-1}y_2,\\
&x_2^{-1}x_3^{-1}x_2^2x_3x_2 = y_2^{-1}y_1y_3^{-1}y_1y_3^{-1}y_1y_3y_1^{-1}y_2,\\
&x_2^{-1}x_3^{-1}x_2^{-1}x_3x_2x_3x_2 = y_2^{-1}y_1y_3^{-1}y_1^{-1}y_3^{-1}y_1y_3y_1^{-1}y_2 \rangle
\end{array}$$
\end{enumerate}
\end{maincorollary}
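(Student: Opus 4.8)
Both statements are concrete instances of Theorem~\ref{maintheorem:simple66-45}, so the plan is to exhibit two specific groups produced in its proof and then to identify the presentations displayed above. Recall that a $(6,6)$-group $\Gamma$ is the fundamental group of a finite square complex $X_\Gamma$ with a single vertex, three horizontal loops $a_1,a_2,a_3$, three vertical loops $b_1,b_2,b_3$, and nine squares, subject to the condition that the link of the vertex be the complete bipartite graph $K_{6,6}$ (equivalently, that its universal cover be $T_6 \times T_6$); the boundary words of the squares then give a presentation on the six loop-generators with relators of the shape $a_i b_j a_k^{\pm 1} b_\ell^{\pm 1}$. For part~(i) I would take one of the $158$ $(6,6)$-groups provided by Theorem~\ref{maintheorem:simple66-45}(i) whose index-$4$ type-preserving subgroup $\Gamma^+$ --- the kernel of the ``type'' homomorphism $\Gamma \to \mathbf{Z}/2\mathbf{Z} \times \mathbf{Z}/2\mathbf{Z}$ recording, in each tree, whether a group element preserves or swaps the two bipartition classes --- is simple, and check that the ten relators displayed in~(i) present exactly that group. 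This is a finite, mechanical verification (a check on the vertex link of $X_\Gamma$, together with Tietze-equivalence of the displayed presentation with the standard one), after which part~(i) follows at once from Theorem~\ref{maintheorem:simple66-45}.

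For part~(ii) I would fix one of the $48$ $(4,5)$-groups $\Gamma$ from the proof of Theorem~\ref{maintheorem:simple66-45}(ii) whose index-$4$ type-preserving subgroup $\Gamma^+$ is simple, compute a presentation of $\Gamma^+$ by the Reidemeister--Schreier procedure (with coset representatives attached to the four vertex types), and then simplify it by Tietze transformations. The amalgam description comes from Bass--Serre theory: $\Gamma^+$ is torsion-free and, being type-preserving and vertex-transitive on each of the two bipartition classes, acts on the degree-$5$ tree with a single edge as quotient, so $\Gamma^+ \cong \Gamma^+_{v_0} \ast_{\Gamma^+_e} \Gamma^+_{v_1}$, where each of the three groups acts freely and cocompactly on the degree-$4$ tree (the stabiliser of a vertex in one tree together with a vertex in the other is trivial) and is therefore free. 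An Euler-characteristic count, $\chi(\Gamma^+) = 4\,\chi(\Gamma) = 4\,(1-2)\,(1-\tfrac{5}{2}) = 6$, together with the fact that each vertex group is transitive on the five edges issuing from its vertex, forces $\Gamma^+_{v_0} \cong \Gamma^+_{v_1} \cong F_3$ and $\Gamma^+_e \cong F_{11}$. Taking free bases $x_1,x_2,x_3$ of $\Gamma^+_{v_0}$ and $y_1,y_2,y_3$ of $\Gamma^+_{v_1}$, the eleven defining relations of the amalgam are precisely the equalities between the two images of a free basis of the rank-$11$ edge group; matching these against the eleven printed relations completes part~(ii), simplicity again being supplied by Theorem~\ref{maintheorem:simple66-45}.

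The deep ingredient --- simplicity of $\Gamma^+$ --- is not reproved here but imported from Theorem~\ref{maintheorem:simple66-45}, whose proof rests on the normal subgroup theorem of Burger and Mozes applied to the irreducible, non-discrete, locally quasi-primitive projections of these lattices. What remains for Corollary~\ref{maincorollary:presentations} is then pure bookkeeping: the link/completeness check and Tietze-equivalence in~(i), and the Reidemeister--Schreier and Tietze computations in~(ii). Of the steps that are genuinely carried out here, I expect the most delicate to be the final one in~(ii): steering the Tietze simplification so that its output agrees verbatim with the displayed $6$-generator, $11$-relator presentation, and in particular verifying that those eleven relators really correspond, under both edge-inclusions, to a free basis of the rank-$11$ edge group rather than to some larger generating set. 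That matching step should absorb most of the effort, although it is routine in nature.
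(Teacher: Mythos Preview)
Your overall plan is sound and matches the paper's strategy: both presentations are specific instances from the lists produced for Theorem~\ref{maintheorem:simple66-45}, and simplicity is imported from there.

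For part~(i), however, your description contains a factual slip: the displayed group is \emph{not} torsion-free, hence not the fundamental group of a one-vertex square complex with nine squares. The relators $a_2b_3a_2b_3$ and $a_3b_2^{-1}a_3b_2^{-1}$ force $(a_2b_3)^2=(a_3b_2^{-1})^2=1$, and indeed the statement itself reports ten relators. In the paper's framework this is a $(6,6)$-datum with $\tau_1=\tau_2=0$ but with torsion (two geometric squares have nontrivial stabilizer under the $\C_2\times\C_2$ action on $R$); the associated complex has four vertices, not one. The paper simply identifies the presentation as $\Gamma_{6,6,2}$ from Table~\ref{table:simple1}. Your link-checking strategy still works once reformulated in that setting, so this is a conceptual misstatement rather than a gap.

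For part~(ii) your route via Reidemeister--Schreier on $\Gamma^+\leq\Gamma$ followed by Tietze moves would work, but the paper takes a shorter path that sidesteps the Tietze step you flagged as delicate. It fixes $\Gamma=\Gamma_{4,5,9}$ (all nine generators $A_1,\dots,A_4,B_1,\dots,B_5$ of order~$2$) and writes down the vertex-group generators \emph{explicitly}: $x_i=A_1A_{i+1}$ freely generate $G=\Gamma^+(v_2)$, and $y_i=B_1A_1A_{i+1}B_1$ freely generate $G'=\Gamma^+(B_1(v_2))$. Reidemeister--Schreier is then applied only to the index-$5$ inclusion $G\cap G'\hookrightarrow G$, producing eleven free generators of the edge group as words in the $x_i$; each is rewritten in the $y_i$ by pushing through the geometric squares (e.g.\ $x_2^2=(A_1A_3)^2=B_1(A_1A_3A_2A_3)B_1=y_2y_1^{-1}y_2$). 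The eleven displayed relations drop out directly, with no blind simplification. Your Euler-characteristic computation of the ranks is a correct alternative to the paper's direct Nielsen--Schreier count $1+5(3-1)=11$.
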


Observe that in view of the relation $x_1 = y_1$ in (ii), the simple group in question admits a presentation on $5$ generators and $10$ relators.

Remark that Burger--Mozes and their followers were only dealing with regular trees of even degrees. This is due to the fact that they were only considering torsion-free $(d_1,d_2)$-groups, which can only exist when $d_1$ and $d_2$ are even. In the following result, we show that any $d$-regular tree with $d \geq 4$ can appear as a factor of a product of two trees in which a simple cocompact lattice lives.

\begin{maintheorem}\label{maintheorem:2n2n+1}
For each $n \geq 2$, there exists a virtually simple $(2n,2n+1)$-group.
\end{maintheorem}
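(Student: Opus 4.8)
The plan is to construct, for each $n \ge 2$, an explicit $(2n,2n+1)$-group $\Gamma_n$ and to prove that the finite-index subgroup $\Gamma_n^+ \le \Gamma_n$ preserving the types of vertices is an infinite simple group. The case $n = 2$ requires nothing new: a $(4,5)$-group is precisely a $(2\cdot 2, 2\cdot 2 + 1)$-group, so Theorem~\ref{maintheorem:simple66-45}(ii) already supplies a virtually simple one. Fix such a group $\Lambda$ together with its defining one-vertex square orbi-complex $S_\Lambda$ (the second degree being odd, one of the defining vertical generators is an involution, producing one orbifold point in $S_\Lambda$). Being virtually simple and infinite, $\Lambda$ has no non-trivial finite quotient; in particular it is not residually finite.

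For $n \ge 3$ the idea is to obtain $\Gamma_n$ by enlarging $S_\Lambda$: adjoin $n-2$ new horizontal generators and $n-2$ new ordinary vertical generators (keeping the single vertical involution), and prescribe a family of new squares, each one involving at least one new generator, so that $S_\Lambda$ remains a full subcomplex while the link of the vertex grows from $K_{4,5}$ to $K_{2n,2n+1}$. The family of new squares must be chosen so as to guarantee, uniformly in $n$, three things. \emph{(a) Link condition}: the vertex link of the enlarged complex $S_n$ is exactly the complete bipartite graph $K_{2n,2n+1}$ (with the orbifold point coming from the involution accounted for appropriately); this makes $\Gamma_n := \pi_1(S_n)$ act freely and transitively on $V(T_{2n}) \times V(T_{2n+1})$, i.e.\ be a genuine $(2n,2n+1)$-group. \emph{(b) Large local action}: the new squares should generate enough additional permutations of the directions that the local action of $\Gamma_n$ on $T_{2n}$ contains $\Alt(2n)$ and the local action on $T_{2n+1}$ contains $\Alt(2n+1)$; by the classical theorem of Jordan it suffices that each local action be primitive and contain a transposition or a $3$-cycle, which one reads off directly from the new squares. \emph{(c) Irreducibility}: the local actions being then $2$-transitive, $\Gamma_n$ is irreducible by the Burger--Mozes criterion underlying Theorem~\ref{maintheorem:algorithms}(i) (cf.\ \cite[Propositions~3.3.1 and~3.3.2]{Burger}), verified for the whole family.

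Granting such a construction, the argument closes as follows. Because $S_\Lambda$ is a full subcomplex of $S_n$, because $K_{4,5}$ is an induced subgraph of $K_{2n,2n+1}$, and because the orbifold structure of $S_\Lambda$ is unchanged inside $S_n$, the inclusion $S_\Lambda \hookrightarrow S_n$ is a local isometry of non-positively curved (orbi-)complexes, hence $\pi_1$-injective; therefore $\Lambda$ embeds as a subgroup of $\Gamma_n$. Since subgroups of residually finite groups are residually finite, it follows that $\Gamma_n$, and with it the finite-index subgroup $\Gamma_n^+$, is not residually finite. On the other hand $\Gamma_n$ is an irreducible $(2n,2n+1)$-group whose projection closures $H_1 \le \Aut(T_{2n})$ and $H_2 \le \Aut(T_{2n+1})$ are non-discrete and satisfy $\underline{H_1}(v) \ge \Alt(2n)$ and $\underline{H_2}(w) \ge \Alt(2n+1)$ at all vertices, hence are locally quasi-primitive; so by the Burger--Mozes normal subgroup theorem and its consequences (as used throughout this paper), $\Gamma_n^+$ is just infinite and, not being residually finite, is virtually simple. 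Hence $\Gamma_n$ is a virtually simple $(2n,2n+1)$-group, which together with the case $n=2$ proves the theorem.

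I expect the main obstacle to be the combinatorial heart of the construction --- exhibiting, uniformly in $n$, a family of new squares that simultaneously completes the link to $K_{2n,2n+1}$, drives both local actions up to contain the alternating groups, and keeps the projections non-discrete. The source of the difficulty is the parity clash between the two degrees: the odd degree $2n+1$ forces a vertical involution, which on the one hand breaks the naive ``count the squares'' bookkeeping (certain squares now behave as orbifold cells of weight $\tfrac12$) and on the other makes the verification that the $T_{2n+1}$-local action contains $\Alt(2n+1)$ more delicate, since the involution must be woven into the permutations that generate it. By comparison, the $\pi_1$-injectivity of the subcomplex inclusion and the passage ``not residually finite $\Rightarrow$ virtually simple'' are routine once the construction is in place.
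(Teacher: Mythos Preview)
Your strategy is essentially the paper's: start from a virtually simple $(4,5)$-group $\Lambda$, enlarge its square complex to a $(2n,2n+1)$-complex containing it as a subcomplex, verify that both local actions contain the alternating group, and combine the embedding $\Lambda \hookrightarrow \Gamma_n$ (hence non-residual-finiteness) with the Normal Subgroup Theorem. The paper carries out precisely this plan and gives the explicit squares you leave open.

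Two corrections and one simplification are worth noting.

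\textbf{A factual slip.} Your sentence ``Being virtually simple and infinite, $\Lambda$ has no non-trivial finite quotient'' is false: the paper's $(4,5)$-groups $\Gamma_{4,5,k}$ all have $[\Gamma:\Gamma^{(\infty)}]\in\{4,8\}$, so they do admit non-trivial finite quotients. What \emph{is} true, and what you need, is that an infinite virtually simple group is not residually finite (every finite-index normal subgroup contains the simple monolith). Your conclusion survives; the justification should be rewritten.

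\textbf{Structure of the base group.} You assume the $(4,5)$-group has a single vertical involution and otherwise ordinary generators. In fact all the virtually simple $(4,5)$-groups produced by the paper have $\tau_1=4$ and $\tau_2=5$: \emph{every} generator is an involution. This is not a mere cosmetic difference --- it is what makes the enlargement easy. The paper takes $\Gamma_{4,5}=\Gamma_{4,5,9}$ with generators $A_1,\dots,A_4,B_1,\dots,B_5$ (all of order~$2$) and, for each $3\le k\le n$, adds two new horizontal involutions $A_{2k-1},A_{2k}$, two new vertical involutions $B_{2k},B_{2k+1}$, exactly three ``interesting'' squares
\[
A_{2k}B_{2k+1}A_1B_{2k},\qquad A_{2k-1}B_{2k}A_{2k-1}B_1,\qquad A_{2k-1}B_{2k+1}A_2B_{2k+1},
\]
and then fills every remaining corner with the trivial square $A_jB_kA_jB_k$. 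With all generators of order~$2$ there is no ``weight $\tfrac12$'' bookkeeping and no parity clash: the link condition reduces to the tautology that every unused corner is used exactly once by its own trivial square. The verification that $\underline{H_1}(v_1)=\Sym(2n)$ and $\underline{H_2}(v_2)=\Sym(2n+1)$ is then a short explicit computation with the listed permutations. So the ``main obstacle'' you anticipate largely evaporates once you work in the all-involution setting.

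\textbf{Irreducibility.} Your item~(c) misstates the Burger--Mozes criterion: $2$-transitive local action does not by itself give irreducibility. You do not need a separate check, however. Once $\Lambda\hookrightarrow\Gamma_n$ shows $\Gamma_n$ is not residually finite, irreducibility follows because a reducible $(d_1,d_2)$-group is virtually a product of free groups and hence residually finite. With irreducibility in hand and local actions $\ge\Alt$ in degrees $\ge 6$, Burger--Mozes' Propositions~3.3.1--3.3.2 give boundary-$2$-transitivity, and the NST finishes the argument as you describe.
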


Theorem~\ref{maintheorem:2n2n+1} is used in Appendix~\ref{appendix:A}, which was written by Pierre-Emmanuel Caprace, to show that abstract and relative commensurator groups of free groups are almost simple, see Theorems~\ref{thm:RelCom} and~\ref{thm:AbsCom}.

Our next result provides a family of virtually simple $(6,d_2)$-groups with arbitrarily large $d_2$, so that the projection on the $6$-regular tree becomes larger and larger when $d_2 \to \infty$.

\begin{maintheorem}\label{maintheorem:64n}
There exists a virtually simple $(6,4n)$-group $\Gamma_{6,4n}$ for each $n \geq 2$, such that $\overline{\proj_1(\Gamma_{6,4n})} \to \Aut(T_1)$ in the Chabauty topology of $\Aut(T)$ when $n \to \infty$ (where $T_1$ is the $6$-regular tree).
\end{maintheorem}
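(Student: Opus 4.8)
# Proof Proposal for Theorem \ref{maintheorem:64n}

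The plan is to build $\Gamma_{6,4n}$ explicitly as a $(6,4n)$-group with prescribed local actions, chosen so that (a) the closure of the projection on $\Aut(T_1)$ belongs to a controlled family in which the local action at every vertex contains $\Alt(6)$ (so that the algorithm of Theorem \ref{maintheorem:algorithms}(ii) applies), and (b) a known virtual-simplicity criterion of Burger--Mozes applies, giving a simple subgroup of finite index. Concretely, I would take $\Gamma_{6,4n}$ to have local action $\Alt(6)$ (or all of $\Sym(6)$) on $T_1$, and on $T_2$ a local action $F_n \leq \Sym(4n)$ that is $2$-transitive (hence nonabelian simple-ish in the relevant sense) with the property that the associated quasi-center / normal-subgroup-structure conditions of \cite[Theorems~5.4--6.4]{Burger} are satisfied. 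The combinatorial data defining $\Gamma_{6,4n}$ is a set of $6 \cdot 4n / 2 = 12n$ squares (geometric squares in $T_1\times T_2$) forming a ``VH-structure'', equivalently a solution to a system of commutation-type equations in $\Sym(6)\times\Sym(4n)$; I would write these down as an explicit family parametrized by $n$, generalizing the small cases found by computer search in \S\ref{section:simple}.

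The key steps, in order, are: (1) Define the square set / generators and relations of $\Gamma_{6,4n}$ explicitly as a function of $n$, and verify it is a legitimate $(6,4n)$-group, i.e.\ that it acts freely and transitively on $V(T_1\times T_2)$ — this reduces to checking that the $12n$ squares satisfy the closure-under-the-four-rotations condition and the ``each oriented $1$-cell of each colour occurs exactly once'' condition. (2) Compute the two local actions $\underline{\Gamma}(v)$ on $T_1$ and on $T_2$ from the square data and check that the $T_1$-local action contains $\Alt(6)$ and the $T_2$-local action is $2$-transitive. (3) Apply Theorem \ref{maintheorem:algorithms}(i) (or directly Burger--Mozes' non-discreteness criterion \cite[Propositions~3.3.1--3.3.2]{Burger}) to conclude $\Gamma_{6,4n}$ is irreducible, i.e.\ both projections are non-discrete. (4) Invoke the Burger--Mozes virtual simplicity machinery \cite[\S5--6]{Burger}: if a finite-index subgroup $\Gamma_0 \leq \Gamma_{6,4n}$ has the ``non-residually-finite / QZ-trivial'' properties coming from the chosen $2$-transitive local action on $T_2$, then the quotient by the (nontrivial, by Normal Subgroup Theorem type arguments) closure-theoretic obstruction is simple; this yields a simple subgroup of finite index, so $\Gamma_{6,4n}$ is virtually simple. (5) Finally, compute $H_{1,n} := \overline{\proj_1(\Gamma_{6,4n})}$ using the algorithm of Theorem \ref{maintheorem:algorithms}(ii), and show that as $n\to\infty$ the groups $H_{1,n}$ converge to $\Aut(T_1)$ in the Chabauty topology. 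For the last point the natural mechanism is: the $H_{1,n}$ form an increasing (or cofinal) family of open subgroups of $\Aut(T_1)$ whose ``depth of local structure'' tends to infinity — one exhibits, for each radius $r$, an $n$ large enough that $H_{1,n}$ contains all of $\Fix_{\Aut(T_1)}(B(v,r))$ acting with full local action out to radius $r$, so any basic Chabauty neighbourhood of $\Aut(T_1)$ eventually contains $H_{1,n}$.

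The main obstacle, I expect, is step (1) together with step (4): producing an \emph{explicit infinite family} of valid square sets (rather than individual computer-found examples) that \emph{simultaneously} (a) gives local action $\Alt(6)$ on one side, (b) gives a suitable $2$-transitive local action of degree $4n$ on the other side for \emph{every} $n\ge 2$ — here one needs a uniform construction, e.g.\ using $F_n = \PSL$ or $\Alt$ type groups of the right degree, or an affine/product construction of $2$-transitive groups of degree $4n$ — and (c) survives the Burger--Mozes simplicity hypotheses uniformly in $n$. The bookkeeping that the same relations work for all $n$, and that the local-action and non-discreteness computations are uniform, is the technical heart; the Chabauty convergence in step (5) is then comparatively soft once one knows the sequence $(H_{1,n})$ is cofinal among the relevant open subgroups and uses the classification of such subgroups from \cite{Radu2}.
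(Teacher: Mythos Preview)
Your overall outline matches the paper's approach in broad strokes (explicit family of square sets, compute local actions, apply NST, compute $H_{1,n}$ via the labelled-graph algorithm, deduce Chabauty convergence), but step~(4) has a genuine gap.

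The Normal Subgroup Theorem (Burger--Mozes / Bader--Shalom) only tells you that a lattice with boundary-$2$-transitive projections is \emph{either} residually finite \emph{or} virtually simple. The $2$-transitive local action on $T_2$ does not by itself produce non-residual-finiteness: plenty of $(d_1,d_2)$-groups with $\Alt$ or $\Sym$ local actions on both sides are residually finite (arithmetic lattices, for instance). So your phrase ``non-residually-finite / QZ-trivial properties coming from the chosen $2$-transitive local action'' conflates two different things, and the step as written does not go through. What the paper actually does is build the square set so that the four geometric squares of the fixed non-residually-finite $(4,4)$-group $\Gamma_{4,4}$ (Proposition~\ref{proposition:nonrf}) sit inside the square set of $\Gamma_{6,4n}$ for every $n$. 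Then $\Gamma_{4,4} \leq \Gamma_{6,4n}$, so $\Gamma_{6,4n}$ inherits both irreducibility and non-residual-finiteness from $\Gamma_{4,4}$, and the NST finishes the job. This embedding trick is the missing idea in your step~(4), and it also makes your step~(3) automatic.

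A secondary point on step~(5): the paper does not argue via an increasing or cofinal family. It computes directly, using the simplified labelled graph $\tilde G^{(1)}_{\Gamma_{6,4n}}$ (a $2n$-cycle with a single $-1$ label), that $H_{1,n} = G_{(i_1)}(\{n\},\{n\})$ for some legal coloring $i_1$; Chabauty convergence to $\Aut(T_1)$ is then immediate since the depth parameter $K^{(1)}=n$ tends to infinity. Your soft argument would work once you know the sequence is of this shape, but the explicit identification is what the construction is designed to make transparent.
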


This theorem can be used to prove the next statement. It was already established in much greater generality in~\cite[Corollary~4.25]{Bass-Kulkarni} and~\cite{Liu} with a completely different approach.

\begin{maincorollary}\label{maincorollary:commensurator}
Let $F_3$ be the free group on $3$ generators and let $T$ be the usual Cayley graph of $F_3$, i.e.\ the $6$-regular tree. Then the commensurator of $F_3$ in $\Aut(T)$ is dense in $\Aut(T)$.
\end{maincorollary}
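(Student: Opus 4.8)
The plan is to deduce this directly from Theorem~\ref{maintheorem:64n}. Recall that the commensurator $\Comm_{\Aut(T)}(F_3)$ consists of all $g \in \Aut(T)$ such that $gF_3g^{-1} \cap F_3$ has finite index in both $gF_3g^{-1}$ and $F_3$; it is a subgroup of $\Aut(T)$ that one wants to show is dense. The key observation is that for a $(6,4n)$-group $\Gamma_{6,4n} \leq \Aut(T_1) \times \Aut(T_{4n})$, every vertex stabilizer $\Gamma_v$ for the action on the second tree $T_{4n}$ is a free group acting freely and cocompactly on $T_1$, hence is isomorphic to $F_3$ (since $T_1$ is $6$-regular, the quotient graph has Euler characteristic giving rank $3$). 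Any two such vertex stabilizers $\Gamma_v, \Gamma_{v'}$ are commensurable inside $\proj_1(\Gamma_{6,4n}) \leq \Aut(T_1)$, because $\Gamma_v \cap \Gamma_{v'}$ has finite index in each (it contains the stabilizer of a large ball around the geodesic from $v$ to $v'$). Fixing one such copy $F_3 = \Gamma_{v_0}$, the element realizing the commensuration between $\Gamma_{v_0}$ and $\Gamma_{v'}$ is the image in $\Aut(T_1)$ of any $\gamma \in \Gamma_{6,4n}$ with $\gamma \cdot v_0 = v'$; thus $\proj_1(\Gamma_{6,4n}) \subseteq \Comm_{\Aut(T_1)}(F_3)$, and in fact all of $\overline{\proj_1(\Gamma_{6,4n})}$ normalizes this commensurability class up to closure considerations.

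More carefully, I would argue as follows. Let $H_n := \overline{\proj_1(\Gamma_{6,4n})}$. Since $\Gamma_{6,4n}$ has dense projection refined appropriately (the projection is non-discrete by irreducibility, which is built into being a $(6,4n)$-group with the stated local action), and since each vertex stabilizer embeds as a cocompact lattice isomorphic to $F_3$, we get that $H_n$ is contained in the closure of $\Comm_{\Aut(T_1)}(F_3)$; but the commensurator is already a subgroup, so a cleaner route is: pick a fixed identification of $F_3$ with a cocompact free lattice $\Lambda \leq \Aut(T_1)$ that is conjugate into $\proj_1(\Gamma_{6,4n})$ as a vertex stabilizer. Then $\proj_1(\Gamma_{6,4n})$ is contained in $\Comm_{\Aut(T_1)}(\Lambda)$, because for any $\gamma \in \Gamma_{6,4n}$ the groups $\Lambda$ and $\proj_1(\gamma)\Lambda\proj_1(\gamma)^{-1}$ are both cocompact free lattices sharing a common finite-index subgroup (the stabilizer of a suitable finite subtree). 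Consequently $H_n \subseteq \overline{\Comm_{\Aut(T_1)}(\Lambda)}$. Now apply Theorem~\ref{maintheorem:64n}: as $n \to \infty$, $H_n \to \Aut(T_1)$ in the Chabauty topology, which forces $\overline{\Comm_{\Aut(T_1)}(\Lambda)} = \Aut(T_1)$. Finally, since all cocompact free lattices of rank $3$ in $\Aut(T_1)$ are abstractly commensurable and the commensurator only depends on the commensurability class, $\overline{\Comm_{\Aut(T_1)}(F_3)} = \Aut(T_1)$, i.e.\ the commensurator is dense.

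The main obstacle I anticipate is the bookkeeping needed to pass from "$H_n$ is contained in the closure of a single commensurator" to a conclusion about $F_3$ as an abstract group: one must check that the relevant vertex stabilizers really are free of rank $3$ (a Nielsen--Schreier / Euler characteristic computation, using that $\Gamma_{6,4n}$ acts freely on vertices so $\Gamma_v$ acts freely on $T_1$ with quotient a finite graph of Euler characteristic $1 - 3 = -2$), and that the commensurator is genuinely insensitive to which cocompact free lattice one picks inside its commensurability class — this last point is standard but should be stated, perhaps citing that any two cocompact lattices in $\Aut(T_1)$ acting freely with isomorphic rank are conjugate in $\Aut(T_1)$ after passing to finite-index subgroups, or more simply that $\Comm_{\Aut(T_1)}(\Lambda) = \Comm_{\Aut(T_1)}(\Lambda')$ whenever $\Lambda \cap \Lambda'$ has finite index in both. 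A secondary subtlety is ensuring that the Chabauty convergence $H_n \to \Aut(T_1)$ is used correctly: Chabauty convergence of closed subgroups to $\Aut(T_1)$ means that for every finite subtree $S$ the pointwise stabilizer $\Aut(T_1)_{(S)}$ is eventually contained in $H_n$, which combined with $H_n \subseteq \overline{\Comm_{\Aut(T_1)}(\Lambda)}$ immediately yields density; I would spell this out in one or two lines rather than invoke a black box.
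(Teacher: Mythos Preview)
Your approach matches the paper's: use that the vertex stabilizer of $\Gamma_{6,4n}$ in $T_2^{(n)}$ is a copy of $F_3$ in $\Aut(T_1)$ commensurated by $\proj_1(\Gamma_{6,4n})$, and then let $n\to\infty$ via Theorem~\ref{maintheorem:64n}. Two points deserve tightening.

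First, your stated characterization of Chabauty convergence (``for every finite subtree $S$ the pointwise stabilizer $\Aut(T_1)_{(S)}$ is eventually contained in $H_n$'') is false here: $G_{(i)}(\{n\},\{n\})$ does not contain the fixator of any ball. What you actually need is the definition itself: $H_n\to\Aut(T_1)$ means every $g\in\Aut(T_1)$ is a limit of elements $h_n\in H_n$, and since $h_n\in\overline{\Comm(\Lambda)}$ and the latter is closed, $g\in\overline{\Comm(\Lambda)}$.

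Second, the phrase ``abstractly commensurable'' does not let you identify $\Comm_{\Aut(T_1)}(\Lambda)$ with $\Comm_{\Aut(T_1)}(F_3)$; abstract commensurability says nothing about relative commensurators. The clean fix is to observe that $\proj_1(\Gamma_{6,4n}(v_2))=\langle a_1,a_2,a_3\rangle$ is literally the \emph{same} subgroup of $\Aut(T_1)$ for every $n$ (it is $F_3$ acting by left translation on its own Cayley graph, independent of the $b$-generators), so a single $\Lambda$ works for all $n$; then $\Lambda$ and the given $F_3$ are conjugate in $\Aut(T_1)$, and density of the commensurator is a conjugacy invariant. The paper itself sidesteps this by using the explicit form $\overline{\proj_1(\Gamma_{6,4n})}=G_{(i^{(n)})}(\{n\},\{n\})$ and noting that such groups (for arbitrary colorings) have vertex stabilizers surjecting onto $\Aut(B(v,n))$, so their union over $n$ is dense.
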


We finally close this article with an experimental study of lattices in products of three trees. While our previous results show the existence of many irreducible $(d_1,d_2)$-groups, things are apparently different when a third tree pops up.

\begin{maintheorem}\label{maintheorem:3trees}
Let $T_1$, $T_2$, $T_3$ be $6$-regular trees and let $v_t \in V(T_t)$ for each $t \in \{1,2,3\}$. There is no subgroup $\Gamma \leq \Aut(T_1) \times \Aut(T_2) \times \Aut(T_3)$ acting simply transitively on the vertices of $T_1 \times T_2 \times T_3$ and such that the following conditions hold, where $H_t = \overline{\proj_t(\Gamma)} \leq \Aut(T_t)$:
\begin{itemize}
\item $H_1, H_2, H_3$ are non-discrete and $\underline{H_1}(v_1), \underline{H_2}(v_2), \underline{H_3}(v_3) \geq \Alt(6)$;
\item $\proj_{1,3}(\Gamma)$ is dense in $H_1 \times H_3$ and $\proj_{2,3}(\Gamma)$ is dense in $H_2 \times H_3$;
\item the stabilizers $\Gamma(v_1,v_3)$ and $\Gamma(v_2,v_3)$ are torsion-free.
\end{itemize}
\end{maintheorem}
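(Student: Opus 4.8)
The plan is to reduce the statement to a finite computation and then invoke the algorithmic machinery of Theorem~\ref{maintheorem:algorithms}. Suppose for contradiction that such a $\Gamma \leq \Aut(T_1) \times \Aut(T_2) \times \Aut(T_3)$ exists, and fix $v_t \in V(T_t)$. For a pair $\{s,t\} \subseteq \{1,2,3\}$, write $\Lambda_{s,t} = \proj_{s,t}(\Gamma) \leq \Aut(T_s) \times \Aut(T_t)$. The first step is to observe that $\Lambda_{1,3}$ and $\Lambda_{2,3}$ are themselves $(6,6)$-groups: since $\Gamma$ acts simply transitively on $V(T_1) \times V(T_2) \times V(T_3)$ and $\Gamma(v_1, v_3)$ is torsion-free (and projects onto $\Lambda_{1,3}$-point-stabilizers, which act freely on the fibers over $T_1 \times T_3$ by a standard argument), the induced action of $\Lambda_{1,3}$ on $V(T_1) \times V(T_3)$ is simply transitive and torsion-free; similarly for $\Lambda_{2,3}$. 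By hypothesis the closures $H_t$ are non-discrete with local action containing $\Alt(6)$, so $\Lambda_{1,3}$ and $\Lambda_{2,3}$ are irreducible torsion-free $(6,6)$-groups whose closure of projection on the third factor is $H_3$. Here is where Theorem~\ref{maintheorem:66} enters: $H_3$ must be one of the exactly $7$ groups in $\Aut(T_3)$ that arise as such a projection, and moreover the enumeration behind that theorem provides, for each of those $7$ groups, the complete (finite) list of irreducible torsion-free $(6,6)$-groups realizing it.

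The second step is combinatorial gluing. A $(6,6)$-group is encoded by a finite "VH-datum" — essentially a set of $36$ oriented squares (the $1$-cells of the quotient $2$-complex), equivalently a subgroup of $\Sym(6) \times \Sym(6)$ satisfying the Burger--Mozes conditions. A simply transitive torsion-free action of $\Gamma$ on $T_1 \times T_2 \times T_3$ is likewise encoded by a finite datum: a set of cubes whose three pairs of opposite faces project to the VH-data of $\Lambda_{1,2}$, $\Lambda_{1,3}$, $\Lambda_{2,3}$ respectively, with compatible identifications along edges. The point is that $\Gamma$ is determined up to equivalence by the triple $(\Lambda_{1,2}, \Lambda_{1,3}, \Lambda_{2,3})$ together with the gluing, and the $\Lambda_{1,3}$, $\Lambda_{2,3}$ components are constrained to the finite lists from the previous step. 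So the third step is a finite exhaustive search: for every pair $(\Lambda_{1,3}, \Lambda_{2,3})$ drawn from those lists with matching $H_3$, and every way of completing it to a cube complex with torsion-free simply transitive fundamental group, check whether the two density conditions can simultaneously hold; conclude that the search returns nothing.

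There are two places where I expect real work. The first, minor, obstacle is verifying cleanly that torsion-freeness of $\Gamma(v_1,v_3)$ and $\Gamma(v_2,v_3)$ really does force $\Lambda_{1,3}$ and $\Lambda_{2,3}$ to be (honest, faithful) torsion-free $(6,6)$-groups rather than quotients thereof — one must check that the projection $\Gamma \to \Lambda_{1,3}$ does not collapse the relevant stabilizers, which follows from simple transitivity on the third factor and the fact that a nontrivial element of $\Gamma$ fixing a vertex of $T_1 \times T_3$ but acting trivially there would act nontrivially only on $T_2$, contradicting that it lies in the torsion-free group $\Gamma(v_1,v_3)$ — so actually this is automatic. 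The genuine obstacle is the \textbf{size of the search space}: although each of the $7$ groups $H_3$ is realized by only finitely many $(6,6)$-groups, that finite number can be large, and for each pair one must enumerate all torsion-free simply transitive cube-complex completions, which is a priori a combinatorial explosion over $36$-square data on the third pair of faces. The resolution is the same as for Theorems~\ref{maintheorem:66} and~\ref{maintheorem:simple66-45}: organize the enumeration so that the $T_1 \times T_3$ and $T_2 \times T_3$ "slices" are fixed first (pruning via the classification), then the remaining $T_1 \times T_2$ structure and the cube-gluings are filled in with heavy isomorphism-rejection and the density tests of Theorem~\ref{maintheorem:algorithms}(i) applied early to kill branches — a computer-assisted exhaustive search, reported as such, which terminates with an empty output.
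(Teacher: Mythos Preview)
Your reduction breaks down at the very first step. You claim that $\Lambda_{1,3} = \proj_{1,3}(\Gamma)$ is a $(6,6)$-group, i.e.\ acts \emph{simply} transitively on $V(T_1\times T_3)$. But a $(6,6)$-group is automatically discrete in $\Aut(T_1)\times\Aut(T_3)$ (point stabilizers are trivial and open), whereas by hypothesis $\proj_{1,3}(\Gamma)$ is dense in the non-discrete group $H_1\times H_3$. Dense and discrete together would force $H_1\times H_3$ to be discrete, contradicting the assumption that $H_1$ is non-discrete. Concretely, the stabilizer $\Lambda_{1,3}(v_1,v_3)=\proj_{1,3}(\Gamma(v_1,v_3))$ is \emph{not} trivial: the generators $b_j\in\Gamma(v_1,v_3)$ act nontrivially on $T_1$ and $T_3$ while fixing $v_1,v_3$ (this is precisely what produces the local actions $\underline{H_1}(v_1),\underline{H_3}(v_3)\geq\Alt(6)$). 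Your ``acts freely on the fibers'' argument confuses the action of $\Gamma(v_1,v_3)$ on $T_2$ with the action of its image under $\proj_{1,3}$ on $T_1\times T_3$; these are unrelated. So $\Lambda_{1,3}$ is not a lattice at all, Theorem~\ref{maintheorem:66} does not apply to it, and the whole enumeration scheme collapses.

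The paper's proof fixes this by taking the correct $(6,6)$-slice: not $\proj_{1,3}(\Gamma)$ but $\Gamma'=\proj_{1,2}(\Gamma(v_3))$, the projection of a \emph{point stabilizer} onto the \emph{remaining} two factors. This $\Gamma'$ genuinely is a $(6,6)$-group, and the density hypotheses give $\overline{\proj_t(\Gamma')}=H_t$ for $t=1,2$; the torsion-freeness of $\Gamma(v_1,v_3)$ and $\Gamma(v_2,v_3)$ forces $\tau_1=\tau_2=0$ for $\Gamma'$ (not torsion-freeness of $\Gamma'$ itself, which can fail). This yields a finite list of $23225$ candidates for $\Gamma'$. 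The paper then avoids any cube-complex enumeration entirely: since $\Gamma'\cong\Gamma(v_3)$ acts on $E(v_3)$ with image $\underline{H_3}(v_3)\geq\Alt(6)$, and since each generator $g$ of $\Gamma'$ has some $o\in\{4,5,6\}$ with $g^o$ acting trivially on $E(v_3)$, the quotient $\Gamma'/\langle\!\langle g^o\rangle\!\rangle$ must surject onto $\Alt(6)$ or $\Sym(6)$. By the Normal Subgroup Theorem this quotient is finite (as $g^o\neq 1$), so the condition is decidable. A GAP check shows that every one of the $23225$ candidates has some generator $g$ for which none of $Q_{g^4},Q_{g^5},Q_{g^6}$ surjects onto $\Alt(6)$, and the theorem follows. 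This is both a different reduction and a qualitatively smaller computation than the three-dimensional gluing search you propose.
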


\subsection*{Acknowledgements}

I thank the Isaac Newton Institute for Mathematical Sciences, Cambridge for
its hospitality during the programme \textit{Non-positive curvature group actions and cohomology} where part of the work on this paper was accomplished. I am also grateful to Pierre-Emmanuel Caprace for all his wise advice and his permanent interest in my work.

\section{The class \texorpdfstring{$\mathcal{G}'_{(i)}$}{G'_(i)} of automorphism groups of trees}\label{section:AutT}

In this section we recall a classification result from \cite{Radu2}. The groups involved in that result will play a great role in this paper; so we recall their definitions.

Let $T$ be the $d$-regular tree with $d \geq 3$. Let $V(T) = V_0(T) \sqcup V_1(T)$ be the canonical bipartition of the vertex set $V(T)$, so that each edge of $T$ has a vertex in $V_0(T)$ and a vertex in $V_1(T)$. We say that a vertex $v$ has \textbf{type} $t \in \{0,1\}$ if $v \in V_t(T)$. We denote by $\Aut(T)^+$ the index~$2$ subgroup of $\Aut(T)$ consisting of the type-preserving automorphisms.

A \textbf{legal coloring} $i$ of $T$ is a map $i \colon V(T) \to \{1, \ldots, d\}$ whose restriction $i|_{S(v,1)} \colon S(v,1) \to \{1,\ldots,d\}$ to $S(v,1)$ is a bijection for each $v \in V(T)$, where $S(v,r)$ denotes the set of vertices at distance from $r$ from $v$. Remark that vertices with different types do not interact in this definition, so that a legal coloring can be defined on $V_0(T)$ and on $V_1(T)$ independently.

Given $g \in \Aut(T)$ and $v \in V(T)$, the \textbf{local action} of $g$ at the vertex $v$ is defined by
$$\sigma_{(i)}(g,v) := i|_{S(g(v),1)} \circ g \circ i|_{S(v,1)}^{-1} \in \Sym(d).$$
This allows us to define the groups that will appear in the classification result.

\begin{definition}\label{definition:Radu}
Let $T$ be the $d$-regular tree with $d \geq 3$ and let $i$ be a legal coloring. Given a finite subset $A$ of $V(T)$ and $g \in \Aut(T)$, we write $\Sgn_{(i)}(g,A) := \prod_{w \in A} \sgn(\sigma_{(i)}(g,w))$. Given a subset $X$ of $\N$ and $v \in V(T)$, we set $S_X(v) := \bigcup_{r \in X}S(v,r)$. First define $G_{(i)}(\varnothing,\varnothing) = \Aut(T)$. Then, for $X$ a non-empty finite subset of $\N$, we define
$$G_{(i)}(X,X) := \left\{g \in \Aut(T) \suchthat \Sgn_{(i)}(g, S_X(v)) = 1 \text{ for each $v \in V(T)$}\right\},$$
$$G_{(i)}(X,X)^* := \left\{g \in \Aut(T) \suchthat 
\text{All } \Sgn_{(i)}(g, S_X(v))  \text{ with $v \in V(T)$ are equal}
\right\},$$
$$G_{(i)}'(X,X)^* := \left\{g \in \Aut(T) \suchthat \begin{array}{c}
\text{All } \Sgn_{(i)}(g, S_X(v)) \text{ with $v \in V_0(T)$} \\
\text{are equal to $p_0$, all } \Sgn_{(i)}(g, S_X(v)) \\ \text{with $v \in V_1(T)$ are equal to $p_1$, and} \\
\text{$p_0 = p_1$ if and only if $g \in \Aut(T)^+$}
\end{array}\right\},$$
$$G_{(i)}(X^*,X^*) := \left\{g \in \Aut(T) \suchthat \begin{array}{c}
\text{All } \Sgn_{(i)}(g, S_X(v)) \text{ with $v \in V_0(T)$} \\
\text{are equal and all } \Sgn_{(i)}(g, S_X(v)) \\ \text{ with $v \in V_1(T)$ are equal}
\end{array}\right\}.$$
We write $\mathcal{G}'_{(i)}$ for the collection of all these groups.
\end{definition}

We also set $G_{(i)}^+(X,X) := G_{(i)}(X,X) \cap \Aut(T)^+$ when $X$ is a (possibly empty) finite subset of $\N$. In the next theorem we collect some facts about those groups.

\begin{theorem}\label{theorem:Radusimple}
Let $T$ be the $d$-regular tree with $d \geq 3$, and let $i$ be a legal coloring of $T$. Let $X$ be a finite subset of $\N$. The group $G_{(i)}^+(X,X)$ is a normal subgroup of the four groups $G_{(i)}(X,X)$, $G_{(i)}(X,X)^*$, $G'_{(i)}(X,X)^*$ and $G_{(i)}(X^*,X^*)$, respectively of index~$2$, $4$, $4$ and $8$. If moreover $d \geq 4$, then $G_{(i)}^+(X,X)$ is simple.
\end{theorem}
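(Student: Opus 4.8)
The statement has two parts: an index computation for the four groups over $G_{(i)}^+(X,X)$, and the simplicity of $G_{(i)}^+(X,X)$ when $d \geq 4$. I would treat them separately.

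\textit{The index computations.} The plan is to exhibit, for each of the four groups $G$ in the list, an explicit surjective homomorphism from $G$ onto a finite abelian $2$-group whose kernel is exactly $G_{(i)}^+(X,X)$, and then argue the map is onto. The natural ingredients are the \emph{type map} $\tau \colon \Aut(T) \to \Z/2\Z$ (with $\Aut(T)^+ = \ker\tau$) and the \emph{parity maps} $\pi_t \colon g \mapsto \Sgn_{(i)}(g, S_X(v))$ for $v \in V_t(T)$, $t \in \{0,1\}$, which are well-defined $\Z/2\Z$-valued homomorphisms on the relevant subgroups precisely because the defining conditions force the quantity $\Sgn_{(i)}(g,S_X(v))$ to be independent of $v$ within each type class (one checks the cocycle identity $\sigma_{(i)}(gh,v) = \sigma_{(i)}(g,h(v))\sigma_{(i)}(h,v)$ makes $\Sgn_{(i)}(\cdot, S_X(v))$ multiplicative, and a type-preserving element moving $v_1$ to $v_2$ within $V_t(T)$ transports one condition to the other). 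Concretely: on $G_{(i)}(X,X)$ only $\tau$ survives as a nontrivial quotient (index $2$); on $G_{(i)}(X,X)^*$ one uses $(\tau, \pi)$ where $\pi$ is the common parity (index $4$); on $G_{(i)}(X^*,X^*)$ one uses $(\tau,\pi_0,\pi_1)$ (index $8$); and on $G'_{(i)}(X,X)^*$ one uses $(\pi_0,\pi_1)$, noting that the defining constraint ``$p_0 = p_1 \iff g \in \Aut(T)^+$'' means $\tau$ is determined by $\pi_0 + \pi_1$, so the image is the index-$4$ subgroup $\{(p_0,p_1)\}$ of $(\Z/2\Z)^2$ and the kernel is again $G_{(i)}^+(X,X)$. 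Surjectivity in each case is the routine part: since $d \geq 3$ one can find automorphisms supported near a single vertex realizing any prescribed local permutation with odd sign at one vertex of a chosen type and trivial local action elsewhere on $S_X(v)$, and combine these with a type-reversing element; the only mild care is to arrange the support so that only the sphere around \emph{one} vertex of one type is affected, which is possible because $X$ is finite.

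\textit{Simplicity of $G_{(i)}^+(X,X)$ for $d \geq 4$.} This is the substantive part. The strategy I would follow is the standard Tits-style independence/simplicity machinery for groups acting on trees: verify that $G := G_{(i)}^+(X,X)$ (a) acts on $T$ without inversions and minimally (in fact cocompactly) with no proper invariant subtree, (b) does not fix an end, (c) satisfies Tits' independence property $(P)$ — or a suitable variant — so that the permutation action on the ends is ``locally $\infty$-transitive'' enough, and (d) is generated by its pointwise edge-stabilizers of half-trees, i.e.\ $G = G^{+}$ in Tits' notation. Then Tits' simplicity theorem yields that every nontrivial subgroup of $G$ normalized by $G$ contains $G^{+} = G$. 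The delicate points are (c) and (d). For independence: the condition defining $G_{(i)}^+(X,X)$ is a constraint on the product of local signs over a sphere-union $S_X(v)$ around each $v$; for an edge $e = \{u,w\}$ and the two half-trees $T_u, T_w$, one must check that given $g_1$ fixing $T_w$ pointwise and $g_2$ fixing $T_u$ pointwise, both in $G$, the composite $g_1 g_2$ is again in $G$ — and more subtly, that $G$ acts on the set of half-trees with the same ``rigid stabilizer'' behaviour as $\Aut(T)^+$. I expect the main obstacle to be precisely this: the sign conditions couple the two sides of an edge through the spheres $S_X(v)$ for vertices $v$ near $e$, so property $(P)$ in its naive form may fail and one has to pass to a ``labelled'' version (property $(P_k)$ with $k = \max X$, or work with the quotient graph of groups on a ball of radius $k$) and check that the label set — which records the sign data in a radius-$k$ neighbourhood — is finite and that the action remains generic enough. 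Alternatively, and perhaps more cleanly, I would try to reduce to a known case: realize $G_{(i)}^+(X,X)$ as a finite-index subgroup inside $\Aut(T)^+$ of a universal-type group $U(F)^+$ for a suitable local action $F$ on a ball, and invoke the simplicity of such groups; the point $d \geq 4$ enters because for $d = 3$ the group $\Alt(3)$ is cyclic and the relevant local action is too small to force transitivity on the boundary, whereas for $d \geq 4$ there is enough room. I would expect the cleanest write-up to cite the precise simplicity criterion from the literature (Tits, or Banks–Elder–Willis type results on groups acting on trees with property $(P_k)$) and spend the real work verifying the hypotheses for the concrete constraint defining $G_{(i)}^+(X,X)$.
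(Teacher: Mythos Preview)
The paper's own proof of this theorem is a two-line citation: it refers to \cite[Theorem~A(ii)]{Radu2} for the simplicity of $G_{(i)}^+(X,X)$ and declares the index claims to be ``a consequence of the definitions of the groups.'' So there is essentially nothing to compare against; your proposal is already far more detailed than what the paper supplies.

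On the index part, your approach via explicit quotient maps is exactly the unpacking that ``consequence of the definitions'' is shorthand for, and the plan is sound. Two small points. First, your sentence about $G'_{(i)}(X,X)^*$ garbles the conclusion: $(\Z/2\Z)^2$ has order $4$, so there is no index-$4$ subgroup; you mean the image \emph{is} all of $(\Z/2\Z)^2$ and hence the kernel has index $4$. Second, the maps $\pi_0,\pi_1$ are not literally homomorphisms on the full (not type-preserving) groups, because a type-reversing $h$ swaps the roles of $V_0$ and $V_1$ in the cocycle identity; the correct target for $G_{(i)}(X^*,X^*)$ is the wreath-type group $(\Z/2\Z)^2 \rtimes \Z/2\Z$ of order $8$, and $G'_{(i)}(X,X)^*$ lands in the index-$2$ subgroup where the swap-coordinate equals $p_0+p_1$. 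With that adjustment your argument goes through. On the simplicity part, your Tits-style outline---minimality, no fixed end, a property $(P_k)$ with $k=\max X$, generation by half-tree fixators---is the standard architecture, and you correctly flag the genuine obstacle: the sign constraint over $S_X(v)$ couples the two half-trees across a window of radius $\max X$, so naive independence fails and one needs the $k$-local version. That is indeed how such results are proved (and presumably how \cite{Radu2} proceeds). Your alternative of realising $G_{(i)}^+(X,X)$ as a Burger--Mozes universal group $U(F)^+$ is unlikely to work directly, since these groups are not universal in that sense; the $(P_k)$ route is the right one.
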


\begin{proof}
See \cite[Theorem~A(ii)]{Radu2} for the simplicity of $G_{(i)}^+(X,X)$. The other statement is a consequence of the definitions of the groups.
\end{proof}

The classification result then states as follows.

\begin{theorem}\label{theorem:Raduclassification}
Let $T$ be the $d$-regular tree with $d \geq 4$, and let $i$ be a legal coloring of $T$.
\begin{enumerate}[(i)]
\item The members of $\mathcal{G}'_{(i)}$ are pairwise non-conjugate in $\Aut(T)$.
\item Suppose that $d \geq 6$. Let $H$ be a vertex-transitive non-discrete closed subgroup of $\Aut(T)$ such that $\underline{H}(v) \geq \Alt(d)$ for each $v \in V(T)$. Then $H$ is conjugate in $\Aut(T)$ to a group belonging to $\mathcal{G}'_{(i)}$.
\end{enumerate}
\end{theorem}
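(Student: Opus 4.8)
The plan is to deduce this from the main classification theorem of \cite{Radu2}, and in particular from the analysis of closed vertex-transitive subgroups of $\Aut(T)$ whose local action contains the alternating group. For part (i), the point is that conjugacy in $\Aut(T)$ preserves the datum $\bigl(\Sgn_{(i)}(g,S_X(v))\bigr)_v$ up to a relabeling coming from the conjugating element, but not the \emph{pattern} of which signs are forced to coincide: the groups in $\mathcal{G}'_{(i)}$ are distinguished by (a) the subset $X \subseteq \N$ — recovered as the set of radii at which the sphere-sum of signs is constrained — and (b) the ``type'' of constraint (global constancy, constancy on each part of the bipartition, coupling of the two parts with the type-preserving subgroup, etc.). First I would argue that each of these invariants is preserved under conjugation in $\Aut(T)$; since conjugation permutes $V(T)$ and respects (or globally swaps) the bipartition classes, the collection of affine subspaces of $\prod_v \Z/2\Z$ cut out by the defining sign conditions is an $\Aut(T)$-conjugacy invariant, and a short case check over the four families in Definition~\ref{definition:Radu} shows these invariants separate all members. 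The cited reference \cite[Theorem~A]{Radu2} already records this, so here I would simply invoke it.

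For part (ii), the heart is the classification in \cite{Radu2}. Let $H$ be closed, vertex-transitive, non-discrete, with $\underline{H}(v) \ge \Alt(d)$ for all $v$. The first step is to pass to $H^+ = H \cap \Aut(T)^+$, a closed subgroup of index at most $2$ in $H$; it is type-preserving and still has local action containing $\Alt(d)$ at every vertex. The key structural input is that such an $H^+$ is determined by its ``sign data'': for $g \in H^+$ and a vertex $v$, the only obstruction to $g$ lying in the simple group $G_{(i)}^+(\varnothing,\varnothing) = \Aut(T)^+$ itself coming from finitely many local signs, and the hypothesis $\underline{H}(v) \ge \Alt(d)$ means that at each vertex $H$ realizes all of $\Alt(d)$ and hence the local signs are the only constraints. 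One then shows — this is the substantive theorem of \cite{Radu2} — that the collection of sign-vectors $\bigl(\sgn(\sigma_{(i)}(g,w))\bigr)_{w}$ realized by $g \in H^+$, as $g$ ranges over $H^+$, is cut out by finitely many $\Z/2\Z$-linear conditions indexed by a finite set $X \subseteq \N$ of radii, and that the closure forces these conditions to be exactly of one of the four shapes in Definition~\ref{definition:Radu}. Thus $H^+ = G_{(i')}^+(X,X)$ for some legal coloring $i'$, and then $H$, sitting between $G_{(i')}^+(X,X)$ and its normalizer, must be one of the over-groups listed (index $2$, $4$, $4$, or $8$ by Theorem~\ref{theorem:Radusimple}); finally, changing the coloring $i'$ back to $i$ is realized by conjugation in $\Aut(T)$, giving the conclusion.

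The main obstacle, and the step that genuinely requires $d \ge 6$, is showing that the relevant constraints are \emph{only} the sign conditions at finitely many spheres — i.e.\ that non-discreteness plus $\underline{H}(v) \ge \Alt(d)$ forces $H^+$ to contain $G_{(i')}^+(X,X)$ for an appropriate finite $X$, rather than some more exotic closed subgroup. This is where one uses simplicity of $G_{(i)}^+(X,X)$ (Theorem~\ref{theorem:Radusimple}) together with a commutator/independence argument: local actions can be prescribed independently on disjoint subtrees, so the closure of $H^+$ contains arbitrarily deep ``rigid'' pieces, and the group generated is all of $G_{(i')}^+(X,X)$ once $X$ is chosen to encode the sign relations. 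Since $d \ge 6$ guarantees $\Alt(d)$ is simple and $2$-transitive with the right subgroup lattice, the bookkeeping of which parity relations survive in the closure is forced; for $d \in \{4,5\}$ extra phenomena can occur, which is why part (ii) is stated only for $d \ge 6$. I would quote \cite[Theorem~A]{Radu2} for this rather than reprove it, and spend the text instead on reconciling the notation of that paper with Definition~\ref{definition:Radu} here.
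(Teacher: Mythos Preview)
Your proposal is correct and takes essentially the same approach as the paper: this theorem is a direct citation of results from \cite{Radu2}, and both you and the paper simply invoke that reference rather than reproving anything. The only discrepancy is bibliographic: the paper cites \cite[Corollary~C'(i) and Corollary~E']{Radu2} for (i) and (ii) respectively, whereas you point to Theorem~A of that paper; you should check that you are invoking the statements with the correct hypotheses (vertex-transitive rather than transitive on each type, the precise list $\mathcal{G}'_{(i)}$ rather than a larger one), and adjust the citation accordingly.
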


\begin{proof}
See~\cite[Corollary~C'(i) and Corollary~E']{Radu2}.
\end{proof}

\section{Structure theory of \texorpdfstring{$(d_1,d_2)$-groups}{(d_1,d_2)-groups}}

Given two integers $d_1, d_2 \geq 3$, we now consider the product $T_1 \times T_2$ of the $d_1$-regular tree $T_1$ and the $d_2$-regular tree $T_2$. It can be seen as a square-complex whose vertex set, edge set and square set are given by
\begin{align*}
V &= V(T_1) \times V(T_2),\\
E &= (E(T_1) \times V(T_2)) \cup (V(T_1) \times E(T_2)),\\
S &= E(T_1) \times E(T_2);
\end{align*}
with the natural incidence relations. We call $T_1$ the \textit{horizontal} tree and $T_2$ the \textit{vertical} tree, so that the edge set $E$ decomposes as $E = E_h \cup E_v$ with $E_h = E(T_1) \times V(T_2)$ being the \textit{horizontal edge set} and $E_v = V(T_1) \times E(T_2)$ the \textit{vertical edge set}.

Recall from the introduction that a \textbf{$(d_1,d_2)$-group} is a group $\Gamma \leq \Aut(T_1) \times \Aut(T_2)$ acting simply transitively on the set of vertices of $T_1 \times T_2$. In \cite[Chapter~6]{Burger2} and \cite{Rattaggi}, the authors studied those groups with the additional property that they are torsion-free. We do not make this assumption here; authorizing torsion will actually lead us to interesting examples. Recall also that a $(d_1,d_2)$-group $\Gamma$ is said to be \textbf{reducible} if it is commensurable to a product $\Gamma_1 \times \Gamma_2$ of lattices $\Gamma_t \leq \Aut(T_t)$. It is called \textbf{irreducible} if it is not reducible, which is equivalent to asking that $H_1 = \overline{\proj_1(\Gamma)} \leq \Aut(T_1)$ and $H_2 = \overline{\proj_2(\Gamma)} \leq \Aut(T_2)$ are both non-discrete \cite[Proposition~1.2]{Burger2}. (Note that $H_1$ and $H_2$ are either both discrete or both non-discrete.)

\subsection{\texorpdfstring{$(d_1,d_2)$-complexes}{(d_1,d_2)-complexes} and \texorpdfstring{$(d_1,d_2)$-data}{(d_1,d_2)-data}}
\label{subsection:complexesanddata}

Let $\Gamma$ be some $(d_1,d_2)$-group. Recall from \S\ref{section:AutT} that $V(T_1) = V_0(T_1) \sqcup V_1(T_1)$ and $V(T_2) = V_0(T_2) \sqcup V_1(T_2)$. Hence, the set $V$ can naturally be partitioned as $V = V_{00} \sqcup V_{01} \sqcup V_{10} \sqcup V_{11}$ where $V_{ij} = V_i(T_1) \times V_j(T_2)$ for each $i, j \in \{0,1\}$. We say that a vertex in $V_{ij}$ is of \textbf{type} $(i,j)$. Each element of $\Gamma$ can be type-preserving or not on each of the two trees $T_1$ and $T_2$, so that this induces a natural (surjective) homomorphism $\Gamma \to \C_2 \times \C_2$. The kernel of this homomorphism, which we denote by $\Gamma^+$, is an index~$4$ normal subgroup of $\Gamma$ and consists of elements of $\Gamma$ which preserve the types in $T_1 \times T_2$.

Let us focus for a moment on those groups $\Lambda \leq \Aut(T_1) \times \Aut(T_2)$ which preserve the types and act simply transitively on the vertices of each type, as $\Gamma^+$. In the next lemma, we show that those $\Lambda$ are always torsion-free.

\begin{lemma}\label{lemma:torsionfree}
Let $\Lambda \leq \Aut(T_1) \times \Aut(T_2)$ be a type-preserving group acting freely on the vertices of $T_1 \times T_2$. Then $\Lambda$ is torsion-free.
\end{lemma}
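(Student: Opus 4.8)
The statement is that a type-preserving subgroup $\Lambda \leq \Aut(T_1) \times \Aut(T_2)$ acting freely on the vertices of $T_1 \times T_2$ must be torsion-free. The plan is to argue by contradiction: suppose $g \in \Lambda$ has finite order $n > 1$, and derive a fixed vertex of $T_1 \times T_2$, contradicting freeness. Write $g = (g_1, g_2)$ with $g_t \in \Aut(T_t)$ type-preserving of finite order (dividing $n$). The key classical fact I would invoke is that a type-preserving finite-order automorphism of a tree fixes at least one \emph{vertex}: indeed any finite group acting on a tree (or more generally on a CAT(0) space, or simply by the standard argument that the midpoint of an orbit is fixed) has a fixed point, and since $g_t$ is type-preserving, its fixed-point set in the geometric realization contains an actual vertex rather than only an edge-midpoint. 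So pick $w_t \in V(T_t)$ with $g_t(w_t) = w_t$ for $t \in \{1,2\}$; then $(w_1, w_2) \in V$ is fixed by $g$, contradicting that $\Lambda$ acts freely on vertices. Hence no such $g$ exists and $\Lambda$ is torsion-free.

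The main point requiring care is the claim that a type-preserving finite-order tree automorphism fixes a vertex. The clean way: let $F = \langle g_t \rangle$ be finite acting on $T_t$. Pick any vertex $v$ and consider the finite orbit $F \cdot v$; the circumcenter of this finite set (the center of the smallest ball containing it, which is unique by the tree/CAT(0) geometry) is $F$-invariant, hence fixed by $g_t$. A priori this center could be the midpoint of an edge. But $g_t$ is type-preserving, so it cannot swap the two endpoints of an edge; therefore if the fixed point is an edge-midpoint, both endpoints of that edge are also fixed, and in particular $g_t$ fixes a vertex. Alternatively one can note that a type-preserving automorphism fixing an edge-midpoint fixes that edge pointwise. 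Either way we obtain the fixed vertex $w_t$.

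The hardest part is genuinely just this geometric lemma about fixed points of type-preserving finite automorphisms, and it is standard (see e.g. Serre's \emph{Trees}); there is no subtle combinatorial obstruction here because the type-preserving hypothesis is exactly what rules out the ``inversion'' phenomenon that would otherwise leave only an edge fixed. Once the fixed vertex in each factor is produced, combining them into a fixed vertex of the product and contradicting freeness is immediate. I would present the argument in essentially three lines: (1) reduce to a single finite-order element $g = (g_1,g_2)$; (2) each $g_t$ being type-preserving and of finite order fixes a vertex $w_t \in V(T_t)$; (3) $(w_1,w_2)$ is then a fixed vertex, contradicting freeness.
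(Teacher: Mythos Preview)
Your proof is correct and follows essentially the same approach as the paper: both argue that a finite-order type-preserving automorphism of each tree must fix a vertex (the paper cites \cite[Proposition~3.2]{Titsarbres} for the fixed-point/inversion dichotomy, while you give the circumcenter argument directly), and then combine the two fixed vertices to contradict freeness of the action on $V(T_1 \times T_2)$.
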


\begin{proof}
Let $g$ be a torsion element in $\Lambda$, i.e.\ such that $g^n = 1$ for some $n \geq 1$. For each $t \in \{1,2\}$, we deduce from \cite[Proposition~3.2]{Titsarbres} that $\proj_{\Aut(T_t)}(g)$ fixes a vertex of $T_t$ or inverses an edge of $T_t$. It cannot be an inversion since it is type-preserving, so it fixes a vertex of $T_t$. Hence, $g$ fixes a vertex of $T_1 \times T_2$. As $\Lambda$ acts freely on the vertices of $T_1 \times T_2$, this means that $g = 1$.
\end{proof}

When $\Lambda$ acts simply transitively on vertices of each type of $T_1 \times T_2$, the quotient square-complex $X_\Lambda = \Lambda \backslash (T_1 \times T_2)$ has four vertices. We denote them by $v_{00}, v_{10}, v_{11}$ and $v_{01}$, so that the projection of a vertex in $V_{ij}$ is $v_{ij}$. For each $j \in \{0,1\}$, there are $d_1$ edges between $v_{0j}$ and $v_{1j}$ (call them \textit{horizontal}), and for each $i \in \{0,1\}$, there are $d_2$ edges between $v_{i0}$ and $v_{i1}$ (call them \textit{vertical}). Also, there are exactly $d_1d_2$ squares in $X_\Lambda$, attached to the four vertices and such that the link of each vertex is a complete bipartite graph. We call such a finite square-complex a \textit{$(d_1,d_2)$-complex}, see Definition~\ref{definition:complex} below (and Figure~\ref{picture:complex}). Be aware that, in \cite{Rattaggi}, the author uses the same term for a similar (but different) square-complex.

\begin{definition}\label{definition:complex}
A \textbf{$(d_1,d_2)$-complex} is a square-complex with:
\begin{itemize}
\item four vertices $v_{00}, v_{10}, v_{11}$ and $v_{01}$;
\item $d_1$ edges between $v_{00}$ and $v_{10}$ and $d_1$ edges between $v_{11}$ and $v_{01}$;
\item $d_2$ edges between $v_{10}$ and $v_{11}$ and $d_2$ edges between $v_{01}$ and $v_{00}$;
\item $d_1d_2$ squares attached to the four vertices, such that for each pair $(e_h, e_v)$ of horizontal and vertical edges, there is exactly one square adjacent to both $e_h$ and $e_v$.
\end{itemize}
\end{definition}

\begin{figure}[t!]
\centering
\begin{pspicture*}(-1.7,-0.7)(3.7,2.7)
\fontsize{12pt}{12pt}\selectfont
\psset{unit=2cm}

\psline(0,0)(1,0)
\psbezier(0,0)(0.3,0.1)(0.7,0.1)(1,0)
\psbezier(0,0)(0.3,-0.1)(0.7,-0.1)(1,0)
\rput(0.5,-0.23){\small $d_1$ edges}

\psline(0,0)(0,1)
\psbezier(0,0)(0.1,0.3)(0.1,0.7)(0,1)
\psbezier(0,0)(-0.1,0.3)(-0.1,0.7)(0,1)
\rput(0.5,1.23){\small $d_1$ edges}

\psline(0,1)(1,1)
\psbezier(0,1)(0.3,0.9)(0.7,0.9)(1,1)
\psbezier(0,1)(0.3,1.1)(0.7,1.1)(1,1)
\rput(-0.5,0.5){\small $d_2$ edges}

\psline(1,0)(1,1)
\psbezier(1,0)(0.9,0.3)(0.9,0.7)(1,1)
\psbezier(1,0)(1.1,0.3)(1.1,0.7)(1,1)
\rput(1.5,0.5){\small $d_2$ edges}

\pscircle[fillstyle=solid,fillcolor=white](0,0){0.05}
\rput(-0.1,-0.15){$v_{00}$}
\pscircle[fillstyle=solid,fillcolor=white](1,0){0.05}
\rput(1.17,-0.15){$v_{10}$}
\pscircle[fillstyle=solid,fillcolor=white](1,1){0.05}
\rput(1.15,1.12){$v_{11}$}
\pscircle[fillstyle=solid,fillcolor=white](0,1){0.05}
\rput(-0.1,1.15){$v_{01}$}

\end{pspicture*}
\caption{The $1$-skeleton of a $(d_1,d_2)$-complex.}\label{picture:complex}
\end{figure}
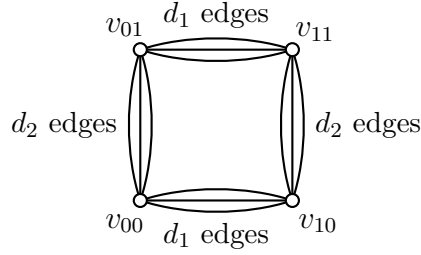

We saw above how to go from $\Lambda$ to a $(d_1,d_2)$-complex. Conversely, given a $(d_1,d_2)$-complex $X$, the universal cover $\tilde X$ of $X$ is the product of the $d_1$-regular tree and the $d_2$-regular tree, and the fundamental group $\pi_1(X)$ of $X$ acts simply transitively on the vertices of each type of $\tilde X$. One easily checks that this gives us a bijective correspondence between conjugacy classes of such groups $\Lambda$ (in $\Aut(T_1 \times T_2)$) and isomorphism classes of $(d_1,d_2)$-complexes.

Now if we come back to our $(d_1,d_2)$-group $\Gamma$, then we can consider the $(d_1,d_2)$-complex $X_{\Gamma^+} = \Gamma^+ \backslash (T_1 \times T_2)$. In addition, the action of $\Gamma$ on $T_1 \times T_2$ induces an action of $\C_2 \times \C_2$ on $X_{\Gamma^+}$. The three non-trivial elements of $\C_2 \times \C_2$ permute the vertices of $X_{\Gamma^+}$ with the three permutations $(v_{00}\ v_{10})(v_{01}\ v_{11})$, $(v_{00}\ v_{01})(v_{10}\ v_{11})$ and $(v_{00}\ v_{11})(v_{10}\ v_{01})$. We say that an action of $\C_2 \times \C_2$ on a $(d_1,d_2)$-complex is \textbf{good} if it induces those permutations on the vertices of that complex. Conversely, given a good action of $\C_2 \times \C_2$ on a $(d_1,d_2)$-complex $X$, we can consider the projection $\pi \colon T_1 \times T_2 \cong \tilde X \to X$ and lift $\C_2 \times \C_2$ to
$$\widetilde{\C_2 \times \C_2} = \{g \in \Aut(\tilde X) \mid \exists h \in \C_2 \times \C_2 : \pi \circ g = h \circ \pi\}.$$
This group $\widetilde{\C_2 \times \C_2}$ acts simply transitively on the vertices of $T_1 \times T_2$ and is thus a $(d_1,d_2)$-group. So we now have a bijective correspondence between $(d_1,d_2)$-groups (up to equivalence) and good $C_2 \times C_2$-actions on $(d_1,d_2)$-complexes (up to equivariant isomorphism).

A $(d_1,d_2)$-complex with a good $\C_2 \times \C_2$-action can be encoded via what we call a \textit{$(d_1,d_2)$-datum}. The following definition is inspired from the definition of a \textit{VH-datum} in \cite[Section~6]{Burger2}. The differences between the two notions come from the fact that we allow torsion. What the authors call a VH-datum will here be called a \textit{torsion-free $(d_1,d_2)$-datum}, see Definition~\ref{definition:tfdatum} below.

\begin{definition}\label{definition:datum}
A \textbf{$(d_1,d_2)$-datum} $(A,B,\varphi_A, \varphi_B, R)$ consists of two finite sets $A, B$ with $|A|\ = d_1$ and $|B|\ = d_2$, two involutions $\varphi_A \colon A \to A$, $\varphi_B \colon B \to B$ and a subset $R \subset A \times B \times A \times B$ satisfying conditions (1) and (2) below. Write $a^{-1} = \varphi_A(a)$ for $a \in A$ and $b^{-1} = \varphi_B(b)$ for $b \in B$. The two maps $\sigma, \rho \colon A \times B \times A \times B \to A \times B \times A \times B$ are defined by
\begin{align*}
\sigma(a,b,a',b') &= (a^{\prime-1}, b^{-1}, a^{-1}, b^{\prime-1}),\\
\rho(a,b,a',b') &= (a',b',a,b).
\end{align*}
\begin{enumerate}[(1)]
\item Each of the four projections of $R$ onto the subproducts of the form $A \times B$ or $B \times A$ are bijective;
\item $R$ is invariant under the action of the group $\langle \sigma, \rho \rangle \cong \C_2 \times \C_2$.
\end{enumerate}
\end{definition}

\begin{definition}
Given $(A,B,\varphi_A,\varphi_B,R)$ and $(A',B',\varphi_{A'},\varphi_{B'},R')$ two $(d_1,d_2)$-data, an object of one of the following forms is called an \textbf{equivalence} between the two data.
\begin{itemize}
\item A pair $(\alpha,\beta)$ of bijections $\alpha \colon A \to A'$ and $\beta \colon B \to B'$ such that $\varphi_{A'} = \alpha \varphi_A \alpha^{-1}$, $\varphi_{B'} = \beta \varphi_B \beta^{-1}$, and $R' = (\alpha \times \beta \times \alpha \times \beta)(R)$.
\item A pair $(\alpha, \beta)$ of bijections $\alpha \colon A \to B'$ and $\beta \colon B \to A'$ such that $\varphi_{A'} = \beta \varphi_B \beta^{-1}$, $\varphi_{B'} = \alpha \varphi_A \alpha^{-1}$, and
$$R' = \{(\beta(b), \alpha(a'), \beta(b'), \alpha(a')) \mid (a,b,a',b') \in R\}.$$
\end{itemize}
This defines an equivalence relation on the set of $(d_1,d_2)$-data.
\end{definition}

Given a $(d_1,d_2)$-datum $(A,B,\varphi_A, \varphi_B, R)$, one can build a $(d_1,d_2)$-complex with a good $\C_2 \times \C_2$-action as follows. Fix four vertices $v_{00}$, $v_{10}$, $v_{11}$ and $v_{01}$. For each $a \in A$ we draw an horizontal edge $e_a$ between $v_{00}$ and $v_{10}$ and an horizontal edge $e'_a$ between $v_{11}$ and $v_{01}$. Also, for each $b \in B$ we draw a vertical edge $f_b$ between $v_{10}$ and $v_{11}$ and a vertical edge $f'_b$ between $v_{01}$ and $v_{00}$. Then, for each $(a,b,a',b') \in R$ we glue a square to the edges $e_a, f_b, e'_{a'}$ and $f'_{b'}$. Condition (1) in Definition~\ref{definition:datum} ensures that this square complex $X$ is a $(d_1,d_2)$-datum, and (2) enables us to define a good $\C_2 \times \C_2$-action on it. Indeed, we can define $\tilde{\sigma} \in \Aut(X)$ by $\tilde{\sigma} \colon v_{00} \leftrightarrow v_{01}$, $v_{10} \leftrightarrow v_{11}$, $e_a \leftrightarrow e'_{a^{-1}}$, $f_b \leftrightarrow f_{b^{-1}}$, $f'_b \leftrightarrow f'_{b^{-1}}$ and $\tilde{\rho} \in \Aut(X)$ by $\tilde{\rho} \colon v_{00} \leftrightarrow v_{11}$, $v_{01} \leftrightarrow v_{10}$, $e_a \leftrightarrow e'_a$, $f_b \leftrightarrow f'_b$. (The actions on the squares are then clearly defined.) They are automorphisms of $X$ because $R$ is invariant under the action of $\langle \sigma, \rho \rangle$.

Conversely, from a good $\C_2 \times \C_2$-action on a $(d_1,d_2)$-complex we can come back to a $(d_1,d_2)$-datum. We can indeed consider two finite sets $A$ and $B$ with $|A|\ = d_1$ and $|B|\ = d_2$, denote the edges between $v_{00}$ and $v_{10}$ by $e_a$ with $a \in A$ (arbitrarily) and those between $v_{10}$ and $v_{11}$ by $f_b$ with $b \in B$ (arbitrarily). Then, if $\tilde{\rho}$ is the element of $\C_2 \times \C_2$ that exchanges $v_{00}$ and $v_{11}$, we write $e'_a = \tilde{\rho}(e_a)$ for each $a \in A$ and $f'_b = \tilde{\rho}(f_b)$ for each $b \in B$. If $\tilde{\sigma}$ is the element of $\C_2 \times \C_2$ that exchanges $v_{00}$ and $v_{01}$, then we define $\varphi_B \colon B \to B$ such that $\tilde{\sigma}(f_b) = f_{\varphi_B(b)}$ for each $b \in B$. Similarly, if $\tilde{\sigma}'$ exchanges $v_{00}$ and $v_{10}$ (i.e.\ $\tilde{\sigma}' = \tilde{\sigma}\tilde{\rho}$), then we define $\varphi_A \colon A \to A$ such that $\tilde{\sigma}'(e_a) = e_{\varphi_A(a)}$. Finally, we define $R \subset A \times B \times A \times B$ as the set of all $(a, b, a', b')$ such that $e_a, f_b, e'_{a'}$ and $f'_{b'}$ are the four edges of some square in the $(d_1,d_2)$-complex. One easily checks that $(A,B,\varphi_A,\varphi_B,R)$ is a $(d_1,d_2)$-datum.

Recalling that $(d_1,d_2)$-groups and good $\C_2 \times \C_2$-actions on $(d_1,d_2)$-complexes are in correspondence, we now have a bijective correspondence between equivalence classes of $(d_1,d_2)$-groups and equivalence classes of $(d_1,d_2)$-data. A presentation for the $(d_1,d_2)$-group $\Gamma$ corresponding to some $(d_1,d_2)$-datum $(A,B,\varphi_A,\varphi_B,R)$ is given by
$$\Gamma = \langle A \cup B \mid xx^{-1} = 1\ \forall x \in A \cup B,\ aba'b' = 1\ \forall (a,b,a',b') \in R\rangle.$$
Indeed, the group $\Gamma$ acts simply transitively on the vertices of $T_1 \times T_2$, so the $1$-skeleton of $T_1 \times T_2$ is a Cayley graph for $\Gamma$. Moreover, the edges of this Cayley graph are labelled by $A \cup B$ so that the four edges of each square in the graph correspond to elements of $R$.

\begin{definition}\label{definition:tfdatum}
A $(d_1,d_2)$-datum $(A,B,\varphi_A,\varphi_B,R)$ is \textbf{torsion-free} if $\varphi_A$ and $\varphi_B$ have no fixed points and the action of $\langle\sigma, \rho\rangle$ on $R$ is free.
\end{definition}

The above correspondence between equivalence classes of $(d_1,d_2)$-groups and equivalence classes of $(d_1,d_2)$-data then restricts to a correspondence between equivalence classes of torsion-free $(d_1,d_2)$-groups and equivalence classes of torsion-free $(d_1,d_2)$-data. Note that, since $\varphi_A$ and $\varphi_B$ cannot have any fixed point in Definition~\ref{definition:tfdatum}, there does not exist any torsion-free $(d_1,d_2)$-group when $d_1$ or $d_2$ is odd.

In the following, we will always consider $(d_1,d_2)$-data up to equivalence. We write $A = \{a_1,\ldots,a_{d_1}\}$ and $B = \{b_1,\ldots,b_{d_2}\}$. Also, we denote by $\tau_1$ (resp.\ $\tau_2$) the number of fixed points of $\varphi_A$ (resp.\ $\varphi_B$) in some $(d_1,d_2)$-data, and we assume (without losing any generality) that $\varphi_A(a_i) = a_{d_1+1-i}$ for each $i \in \{1, \ldots, \frac{d_1-\tau_1}{2}\}$ and $\varphi_A(a_i) = a_i$ for each $i \in \{\frac{d_1-\tau_1}{2}+1, \ldots, \frac{d_1+\tau_1}{2}\}$ (resp.\ $\varphi_B(b_i) = b_{d_2+1-i}$ for each $i \in \{1, \ldots, \frac{d_2-\tau_2}{2}\}$ and $\varphi_B(b_i) = b_i$ for each $i \in \{\frac{d_2-\tau_2}{2}+1, \ldots, \frac{d_2+\tau_2}{2}\}$). We will sometimes write $A_i$ instead of $a_i$ when $\varphi_A(a_i) = a_i$ (i.e.\ $a_i = a_i^{-1}$), and $B_i$ instead of $b_i$ when $\varphi_B(b_i) = b_i$ (i.e.\ $b_i = b_i^{-1}$). Note that, with these assumptions, $\tau_1$, $\tau_2$ and $R$ fully determine the $(d_1,d_2)$-datum.

\subsection{Geometric squares in a \texorpdfstring{$(d_1,d_2)$-datum}{(d_1,d_2)-datum}}
\label{subsection:geometricsquares}

Consider some $(d_1,d_2)$-datum with associated parameters $\tau_1$, $\tau_2$ and $R$. Given $(a,b,a',b') \in R$, we write $[a,b,a',b']$ for the set $$\{(a,b,a',b'),(a',b',a,b),(a^{\prime -1}, b^{-1}, a^{-1}, b^{\prime -1}), (a^{-1}, b^{\prime -1}, a^{\prime -1}, b^{-1})\}.$$
This is a subset of $R$, we call it a \textbf{geometric square}. If the $(d_1,d_2)$-datum is torsion-free, then the action of $\C_2 \times \C_2$ on $R$ is free so each geometric square contains exactly four elements. In this particular case, we have exactly $\frac{d_1d_2}{4}$ geometric squares (remember that $d_1$ and $d_2$ must be even). When allowing torsion, we can actually have up to $d_1d_2$ geometric squares (in the particular case where $\tau_1 = d_1$ and $\tau_2 = d_2$).

An easy way to define some particular $(d_1,d_2)$-datum is then to draw its geometric squares. We explain how the drawing works by giving an example. Consider the $(3,4)$-datum defined by the geodesic squares $[a_1, b_1, a_1, b_2^{-1}]$, $[a_1, b_2, a_1, b_2]$, $[a_1, b_1^{-1}, A_2, b_1^{-1}]$ and $[A_2, b_2, A_2, b_2^{-1}]$. Note that the values $\tau_1 = 1$ and $\tau_2 = 0$ can be understood from the squares. Then we draw this $(3,4)$-datum as in Figure~\ref{picture:example}. Each square can be read counterclockwise, starting from the bottom edge. The white symbols thus represent elements of $A$, while the black ones represent elements of $B$. A single arrow with the forward orientation means $a_1$ (or $b_1$), a double arrow with the forward orientation means $a_2$ (or $b_2$), etc. A single arrow with the backward orientation means $a_1^{-1}$ (or $b_1^{-1}$), a double arrow with the backward orientation means $a_2^{-1}$ (or $b_2^{-1}$), etc. Finally, a lozenge (that does not have any orientation) means $A_1$ (or $B_1$), two lozenges mean $A_2$ (or $B_2$), etc. Note that we can actually read each square from the bottom or from the top edge, and clockwise or counterclockwise. These four ways of reading give the four (possibly equal) elements of $R$ defined by the geometric square. In our example, the first and third geometric squares give four distinct elements of $R$, while the second and fourth geometric squares give two distinct elements of $R$. Note that $|R|\ = 4+2+4+2 = 12 = 3 \cdot 4$ as is needed for a $(3,4)$-datum.

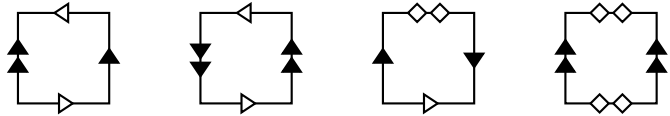
\begin{figure}[b]
\centering
\begin{pspicture*}(-0.2,-0.2)(8.6,1.4)
\fontsize{10pt}{10pt}\selectfont
\psset{unit=1.2cm}

\pspolygon(0,0)(1,0)(1,1)(0,1)

\pspolygon[fillstyle=solid,fillcolor=white](0.45,0.1)(0.45,-0.1)(0.6,0)
\pspolygon[fillstyle=solid,fillcolor=white](0.55,1.1)(0.55,0.9)(0.4,1)
\pspolygon[fillstyle=solid,fillcolor=black](0.9,0.45)(1.1,0.45)(1,0.6)
\pspolygon[fillstyle=solid,fillcolor=black](-0.1,0.55)(0.1,0.55)(0,0.7)
\pspolygon[fillstyle=solid,fillcolor=black](-0.1,0.35)(0.1,0.35)(0,0.5)

\pspolygon(2,0)(3,0)(3,1)(2,1)

\pspolygon[fillstyle=solid,fillcolor=white](2.45,0.1)(2.45,-0.1)(2.6,0)
\pspolygon[fillstyle=solid,fillcolor=white](2.55,1.1)(2.55,0.9)(2.4,1)
\pspolygon[fillstyle=solid,fillcolor=black](2.9,0.55)(3.1,0.55)(3,0.7)
\pspolygon[fillstyle=solid,fillcolor=black](2.9,0.35)(3.1,0.35)(3,0.5)
\pspolygon[fillstyle=solid,fillcolor=black](1.9,0.45)(2.1,0.45)(2,0.3)
\pspolygon[fillstyle=solid,fillcolor=black](1.9,0.65)(2.1,0.65)(2,0.5)

\pspolygon(4,0)(5,0)(5,1)(4,1)

\pspolygon[fillstyle=solid,fillcolor=white](4.45,0.1)(4.45,-0.1)(4.6,0)
\pspolygon[fillstyle=solid,fillcolor=white](4.525,1)(4.625,1.1)(4.725,1)(4.625,0.9)
\pspolygon[fillstyle=solid,fillcolor=white](4.475,1)(4.375,1.1)(4.275,1)(4.375,0.9)
\pspolygon[fillstyle=solid,fillcolor=black](4.9,0.55)(5.1,0.55)(5,0.4)
\pspolygon[fillstyle=solid,fillcolor=black](3.9,0.45)(4.1,0.45)(4,0.6)

\pspolygon(6,0)(7,0)(7,1)(6,1)

\pspolygon[fillstyle=solid,fillcolor=white](6.525,0)(6.625,0.1)(6.725,0)(6.625,-0.1)
\pspolygon[fillstyle=solid,fillcolor=white](6.475,0)(6.375,0.1)(6.275,0)(6.375,-0.1)
\pspolygon[fillstyle=solid,fillcolor=white](6.525,1)(6.625,1.1)(6.725,1)(6.625,0.9)
\pspolygon[fillstyle=solid,fillcolor=white](6.475,1)(6.375,1.1)(6.275,1)(6.375,0.9)
\pspolygon[fillstyle=solid,fillcolor=black](5.9,0.55)(6.1,0.55)(6,0.7)
\pspolygon[fillstyle=solid,fillcolor=black](5.9,0.35)(6.1,0.35)(6,0.5)
\pspolygon[fillstyle=solid,fillcolor=black](6.9,0.55)(7.1,0.55)(7,0.7)
\pspolygon[fillstyle=solid,fillcolor=black](6.9,0.35)(7.1,0.35)(7,0.5)
\end{pspicture*}
\caption{An example of a $(3,4)$-datum.}\label{picture:example}
\end{figure}

One can quickly check if some set of geometric squares satisfies the hypotheses for representing a $(d_1,d_2)$-datum. For a torsion-free $(d_1,d_2)$-datum, it suffices to verify that each possible \textit{corner} appears exactly once in the geometric squares. By a \textbf{corner}, we mean a vertex of a square together with the two labelled edges adjacent to it, where the orientation of the arrows matters. There are $d_1d_2$ possible corners, and they must all appear once (in one of the $\frac{d_1d_2}{4}$ geometric squares). Now the condition is almost the same for a general $(d_1,d_2)$-datum : we just need to take into account that some geometric squares can have non-trivial automorphisms. So the condition is now that each of the $d_1d_2$ possible corners must appear exactly once, up to these square automorphisms. Note that a geometric square as $[a_1, b_1, A_2, b_1^{-1}]$ cannot appear in a $(d_1,d_2)$-datum, since some corner is appearing twice and the square has no automorphism.

%
%

\section{Projections on each factor}
\label{section:projections}

Consider a $(d_1,d_2)$-group $\Gamma \leq \Aut(T_1) \times \Aut(T_2)$, associated to some set of geometric squares. Define $H_1 = \overline{\proj_1(\Gamma)} \leq \Aut(T_1)$ and $H_2 = \overline{\proj_2(\Gamma)} \leq \Aut(T_2)$ as the closures of the projections of $\Gamma$ on $\Aut(T_1)$ and $\Aut(T_2)$. The goal of this section is to analyze which pairs of groups $(H_1, H_2)$ can be obtained from a $(d_1,d_2)$-group $\Gamma$ which is irreducible.

\subsection{Action of \texorpdfstring{$\Gamma$}{Gamma} on \texorpdfstring{$T_1 \times T_2$}{T_1 x T_2}}
\label{subsection:action}

Let us see $a_1, \ldots, a_{d_1}, b_1, \ldots, b_{d_2}$ as the generators of $\Gamma$, as explained in \S\ref{subsection:complexesanddata}. In $T_1 \times T_2$, there is a vertex $(v_1,v_2) \in V(T_1) \times V(T_2)$ such that all generators $b_1, \ldots, b_{d_2}$ fix $v_1$ and all generators $a_1, \ldots, a_{d_1}$ fix $v_2$. Also, $b_1, \ldots, b_{d_2}$ send $v_2$ to its $d_2$ neighboring vertices and $a_1, \ldots, a_{d_1}$ send $v_1$ to its $d_1$ neighboring vertices. This means that $\langle b_1, \ldots, b_{d_2}\rangle = \Gamma(v_1)$ and that $\langle a_1, \ldots, a_{d_1}\rangle = \Gamma(v_2)$, where $\Gamma(v)$ denotes the fixator of $v$ in $\Gamma$. We now explain how the action of a particular element of $\Gamma$ on $T_1 \times T_2$ can be computed from the geodesic squares. These explanations are similar to those in~\cite[Section~1.4]{Rattaggi}, but we recall them here so as to remain self-contained.

The group $\Gamma$ acts simply transitively on the vertices of $T_1 \times T_2$, and the generators $a_1, \ldots, a_{d_1}, b_1, \ldots, b_{d_2}$ send $(v_1,v_2) \in V(T_1 \times T_2)$ to its $d_1+d_2$ neighbors in $T_1 \times T_2$. This means that the $1$-skeleton of the square-complex $T_1 \times T_2$ can be seen as the Cayley graph of $\Gamma$ with respect to the set of generators $\{a_1, \ldots, a_{d_1}, b_1, \ldots, b_{d_2}\}$. In particular, this gives a labelling of the edges of $T_1 \times T_2$ by the generators. As for the geometric squares, we can thus associate a (white or black) symbol to each edge of $T_1 \times T_2$. This labelling is actually such that each square in $T_1 \times T_2$ corresponds to one of the geometric squares associated to $\Gamma$. We also have natural embeddings $T_1 \hookrightarrow T_1 \times T_2 \colon w \in V(T_1) \mapsto (w,v_2)$ and $T_2 \hookrightarrow T_1 \times T_2 \colon w \in V(T_2) \mapsto (v_1,w)$ from which we get a labelling of $T_1$ with white symbols and a labelling of $T_2$ with black symbols. This is actually equivalent to seeing $T_1$ as the Cayley graph of $\langle a_1, \ldots, a_{d_1}\rangle$ and $T_2$ as the Cayley graph of $\langle b_1, \ldots, b_{d_2}\rangle$.

The image of $(v_1,v_2)$ by some element $g \in \Gamma$ is easy to get: it suffices to write $g$ as a product of the generators and then follow (from the vertex $(v_1,v_2)$) the sequence of symbols in $T_1 \times T_2$ corresponding to these generators. The vertex at the end of the path will be $g(v_1,v_2)$. Note however that this only works because $(v_1,v_2)$ is the vertex associated to the identity of $\Gamma$ in the Cayley graph $T_1 \times T_2$. Given some $g \in \Gamma$ and another vertex $(w_1,w_2) \in V(T_1 \times T_2)$, the way to obtain $g(w_1,w_2)$ is to first localize $g(v_1,v_2)$ with the above procedure, and then to recall that $\Gamma$ preserves the symbols in $T_1 \times T_2$. Hence, it suffices to look at the symbols on some path from $(v_1,v_2)$ to $(w_1,w_2)$ and to follow the same symbols from $g(v_1,v_2)$ so as to arrive at $g(w_1,w_2)$.

In particular, the action of an element $g \in \langle a_1, \ldots, a_{d_1}\rangle = \Gamma(v_2)$ on $T_2$ can be obtained by doing the following. In order to compute $g(w)$ for some $w \in V(T_2)$, we draw a rectangle whose bottom side is labelled by the sequence of white symbols corresponding to $g$ (from left to right) and whose right side is labelled by the sequence of black symbols on the path from $v_2$ to $w$ in $T_2$ (from bottom to top), see Figure~\ref{picture:computing}. Then we fill in the rectangle with the appropriate geometric squares (starting from the bottom-right corner). The rectangle that we obtain corresponds to a subcomplex of $T_1 \times T_2$: the bottom-left corner is $(v_1,v_2)$, the bottom-right corner is $g(v_1,v_2)$, and the top-right corner is $g(v_1,w)$. Now $T_2$ corresponds to $v_1 \times T_2$ in $T_1 \times T_2$, so we can read $g(w)$ by looking at the left side of our rectangle. Indeed, the symbols on the path from $v_2$ to $g(w)$ in $T_2$ are exactly those on the left side of the rectangle (from bottom to top). Another way to explain why this idea works is to write $h$ for the element of $\langle b_1, \ldots, b_{d_2}\rangle$ corresponding to the right side of the rectangle (i.e.\ $w = h(v_2)$ in $T_2$), $h'$ for the element of $\langle b_1, \ldots, b_{d_2}\rangle$ corresponding to the left side, and $g'$ for the element of $\langle a_1, \ldots, a_{d_1}\rangle$ corresponding to the top side. From $(v_1,v_2)$, following $g$ and then $h$ leads to the same vertex as following $h'$ and then $g'$, so $gh = h'g'$. In particular we have $gh(v_2) = h'g'(v_2)$, which reduces to $g(w) = h'(v_2)$ as wanted. Similarly, the action of an element $g \in \langle b_1, \ldots, b_{d_2}\rangle = \Gamma(v_1)$ on $T_1$ can be obtained with the same method.

This method is illustrated on Figure~\ref{picture:computing}, with the $(3,4)$-group $\Gamma$ defined by the squares of Figure~\ref{picture:example}. On this figure we computed the image of vertex $b_1b_2(v_2)$ by $a_1A_2a_1^{-1}A_2 \in \Gamma(v_2)$. The bottom side of the rectangle is indeed labelled by the symbols of $a_1A_2a_1^{-1}A_2$, and the right side by the symbols of $b_1b_2$. After filling in the rectangle with the squares of Figure~\ref{picture:example} (the numbers (1), (2), (3), (4) indicate which squares we used), it appears on the left side that the image of $b_1b_2(v_2)$ is $b_1^{-2}(v_2)$. Note that this rectangle also shows, for instance, that the image of $A_2 a_1^{-3}(v_1)$ by $b_1^{-2} \in \Gamma(v_1)$ is $a_1A_2a_1^{-1}A_2(v_1)$.

\begin{figure}
\centering
\begin{pspicture*}(-0.2,-0.2)(5,2.6)
\fontsize{10pt}{10pt}\selectfont
\psset{unit=1.2cm}


\psline(0,0)(4,0)
\psline(0,1)(4,1)
\psline(0,2)(4,2)
\psline(0,0)(0,2)
\psline(1,0)(1,2)
\psline(2,0)(2,2)
\psline(3,0)(3,2)
\psline(4,0)(4,2)

\rput(0.5,0.5){(1)}
\rput(1.5,0.5){(4)}
\rput(2.5,0.5){(1)}
\rput(3.5,0.5){(3)}

\rput(0.5,1.5){(3)}
\rput(1.5,1.5){(3)}
\rput(2.5,1.5){(1)}
\rput(3.5,1.5){(2)}


\pspolygon[fillstyle=solid,fillcolor=white](0.45,0.1)(0.45,-0.1)(0.6,0)
\pspolygon[fillstyle=solid,fillcolor=white](1.525,0)(1.625,0.1)(1.725,0)(1.625,-0.1)
\pspolygon[fillstyle=solid,fillcolor=white](1.475,0)(1.375,0.1)(1.275,0)(1.375,-0.1)
\pspolygon[fillstyle=solid,fillcolor=white](2.55,0.1)(2.55,-0.1)(2.4,0)
\pspolygon[fillstyle=solid,fillcolor=white](3.525,0)(3.625,0.1)(3.725,0)(3.625,-0.1)
\pspolygon[fillstyle=solid,fillcolor=white](3.475,0)(3.375,0.1)(3.275,0)(3.375,-0.1)


\pspolygon[fillstyle=solid,fillcolor=white](0.55,1.1)(0.55,0.9)(0.4,1)
\pspolygon[fillstyle=solid,fillcolor=white](1.525,1)(1.625,1.1)(1.725,1)(1.625,0.9)
\pspolygon[fillstyle=solid,fillcolor=white](1.475,1)(1.375,1.1)(1.275,1)(1.375,0.9)
\pspolygon[fillstyle=solid,fillcolor=white](2.45,1.1)(2.45,0.9)(2.6,1)
\pspolygon[fillstyle=solid,fillcolor=white](3.45,1.1)(3.45,0.9)(3.6,1)


\pspolygon[fillstyle=solid,fillcolor=white](0.525,2)(0.625,2.1)(0.725,2)(0.625,1.9)
\pspolygon[fillstyle=solid,fillcolor=white](0.475,2)(0.375,2.1)(0.275,2)(0.375,1.9)
\pspolygon[fillstyle=solid,fillcolor=white](1.55,2.1)(1.55,1.9)(1.4,2)
\pspolygon[fillstyle=solid,fillcolor=white](2.55,2.1)(2.55,1.9)(2.4,2)
\pspolygon[fillstyle=solid,fillcolor=white](3.55,2.1)(3.55,1.9)(3.4,2)


\pspolygon[fillstyle=solid,fillcolor=black](-0.1,0.55)(0.1,0.55)(0,0.4)
\pspolygon[fillstyle=solid,fillcolor=black](-0.1,1.55)(0.1,1.55)(0,1.4)


\pspolygon[fillstyle=solid,fillcolor=black](0.9,0.45)(1.1,0.45)(1,0.3)
\pspolygon[fillstyle=solid,fillcolor=black](0.9,0.65)(1.1,0.65)(1,0.5)
\pspolygon[fillstyle=solid,fillcolor=black](0.9,1.45)(1.1,1.45)(1,1.6)


\pspolygon[fillstyle=solid,fillcolor=black](1.9,0.45)(2.1,0.45)(2,0.3)
\pspolygon[fillstyle=solid,fillcolor=black](1.9,0.65)(2.1,0.65)(2,0.5)
\pspolygon[fillstyle=solid,fillcolor=black](1.9,1.55)(2.1,1.55)(2,1.4)


\pspolygon[fillstyle=solid,fillcolor=black](2.9,0.55)(3.1,0.55)(3,0.4)
\pspolygon[fillstyle=solid,fillcolor=black](2.9,1.45)(3.1,1.45)(3,1.3)
\pspolygon[fillstyle=solid,fillcolor=black](2.9,1.65)(3.1,1.65)(3,1.5)


\pspolygon[fillstyle=solid,fillcolor=black](3.9,0.45)(4.1,0.45)(4,0.6)
\pspolygon[fillstyle=solid,fillcolor=black](3.9,1.55)(4.1,1.55)(4,1.7)
\pspolygon[fillstyle=solid,fillcolor=black](3.9,1.35)(4.1,1.35)(4,1.5)
\end{pspicture*}
\caption{Example of a computation.}\label{picture:computing}
\end{figure}
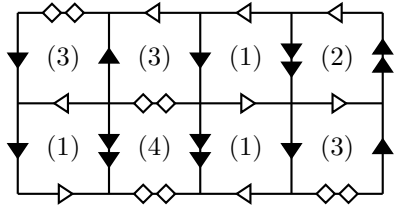

\subsection{Irreducibility}
\label{subsection:irred}

Recall that a $(d_1,d_2)$-group $\Gamma$ is said to be reducible if it is commensurable to a product $\Gamma_1 \times \Gamma_2$ of lattices $\Gamma_t \leq \Aut(T_t)$. By \cite[Proposition~1.2]{Burger2}, $\Gamma$ is reducible if and only if $\proj_1(\Gamma)$ or $\proj_2(\Gamma)$ is discrete. If $\Gamma$ is reducible, both projections are actually discrete. In some sense, the irreducible $(d_1,d_2)$-groups are the interesting ones. There is no known general algorithm deciding if a $(d_1,d_2)$-group is irreducible, but such an algorithm exists under suitable assumption on the local action.

We saw in the previous section that, for each $n \in \N$, the action of all $b_i$ (resp.\ $a_i$) on the ball $B(v_1,n)$ (resp.\ $B(v_2,n)$) of $T_1$ (resp.\ $T_2$) can be computed from the geometric squares. The group $\proj_t(\Gamma) \leq \Aut(T_t)$ is vertex-transitive, so it is discrete if and only if $\Fix_{\proj_t(\Gamma)}(B(v_t,n)) = \Fix_{\proj_t(\Gamma)}(B(v_t,n+1))$ for some $n \in \N$. A way to show that some $\Gamma$ is reducible thus consists in finding some $t \in \{1,2\}$ and some $n \in \N$ for which the latter equality is true.
 
Proving that a $(d_1,d_2)$-group $\Gamma$ is irreducible is not easy in general, but there is a case where a good criterion exists: when $d_t \geq 6$ and $\underline{H_t}(v_t) \geq \Alt(d_t)$ for some $t \in \{1,2\}$. Indeed, in this case it follows from \cite[Propositions~3.3.1 and~3.3.2]{Burger} that $H_t$ is non-discrete if and only if the image of $H_t(v_t)$ in $\Aut(B(v_t,2))$ has order 
$\geq \frac{d_j!}{2} \left(\frac{(d_j-1)!}{2}\right)^{d_j}$. (The result is actually more subtle but we only need this lower bound here.) Moreover, once we know that $H_t$ is non-discrete and that $\underline{H_t}(v_t) \geq \Alt(d_t)$ with $d_t \geq 6$, we actually get from Theorem~\ref{theorem:Raduclassification} that $H_t$ is a member of $\mathcal{G}'_{(i)}$ for some legal coloring $i$ of $T_t$ (as defined in Definition~\ref{definition:Radu}). In the next section, we investigate the question of determining from the geometric squares which group from $\mathcal{G}'_{(i)}$ it actually is.

\subsection{Recognizing a group in \texorpdfstring{$\mathcal{G}'_{(i)}$}{G'_(i)}}
\label{subsection:recognizing}

Let $T$ be the $d$-regular tree for some $d \geq 3$, let $i$ be a legal coloring of $T$ and let $H$ be an element of $\mathcal{G}'_{(i)}$ different from $\Aut(T)$. In particular $H$ is transitive on $V(T)$. We fix some vertex $v \in V(T)$ and denote by $K$ the smallest non-negative integer such that the homomorphism from $H(v)$ to $\Aut(B(v,K+1))$ is not surjective. Such an integer exists because $H \neq \Aut(T)$. We also define $\rho = \frac{|\Aut(B(v,K+1))|}{|\tilde{H}^K(v)|}$, where $\tilde{H}^K(v)$ is the image of $H(v)$ in $\Aut(B(v,K+1))$. Let us finally define the homomorphism $s \colon H(v) \to (\C_2)^{K+1}$ by $s(h) = (s_0(h), \ldots, s_K(h))$ where $s_k(h) = \Sgn_{(i)}(h, S(v,k)) = \prod_{w \in S(v,k)}\sgn(\sigma_{(i)}(h,w))$, as in Definition~\ref{definition:Radu}. As Lemma~\ref{lemma:nocoloring} below shows, the value of $s_k(h)$ does not depend on the coloring $i$.

\begin{lemma}\label{lemma:nocoloring}
Let $v$ be a vertex of $T$ and let $k \in \N$. Consider some $h \in \Aut(T)(v)$. For each $w \in S(v,k)$, fix a bijection $\iota_w \colon S(w,1) \to \{1, \ldots, d\}$. Then the value of
$$\prod_{w \in S(v,k)}\sgn(\iota_{h(w)} \circ h \circ \iota_w^{-1})$$
does not depend on the choices for the bijections $\iota_w$.
\end{lemma}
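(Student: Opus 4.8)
The plan is to show that changing a single bijection $\iota_w$ to some other bijection $\iota_w'$ does not change the product, and then to observe that any two choices of bijections differ by finitely many such single changes, so invariance in general follows.

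First I would fix all the bijections $\{\iota_u\}_{u \in S(v,k)}$ except for one particular vertex $w_0 \in S(v,k)$, and compare the product formed with $\iota_{w_0}$ against the product formed with a different bijection $\iota_{w_0}'$. Only two factors of the product can possibly differ: the factor indexed by $w_0$ itself (namely $\sgn(\iota_{h(w_0)} \circ h \circ \iota_{w_0}^{-1})$), and the factor indexed by the vertex $w_1 \in S(v,k)$ with $h(w_1) = w_0$ (namely $\sgn(\iota_{h(w_1)} \circ h \circ \iota_{w_1}^{-1}) = \sgn(\iota_{w_0} \circ h \circ \iota_{w_1}^{-1})$), if such a $w_1$ exists. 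Here I use that $h$ fixes $v$ and hence permutes $S(v,k)$, so $w_0$ has a unique preimage $w_1$ in $S(v,k)$ under $h$. Write $\pi = \iota_{w_0}' \circ \iota_{w_0}^{-1} \in \Sym(d)$ for the permutation recording the change. Then the $w_0$-factor gets multiplied by $\sgn(\pi)$ (on the left, since $\iota_{h(w_0)} \circ h \circ (\iota_{w_0}')^{-1} = \iota_{h(w_0)} \circ h \circ \iota_{w_0}^{-1} \circ \pi^{-1}$), and the $w_1$-factor gets multiplied by $\sgn(\pi)$ as well (on the other side, since $\iota_{w_0}' \circ h \circ \iota_{w_1}^{-1} = \pi \circ \iota_{w_0} \circ h \circ \iota_{w_1}^{-1}$). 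Since $\sgn$ is multiplicative, the two changes contribute $\sgn(\pi)^2 = 1$, so the product is unchanged.

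The one case to treat separately is when $w_0 = w_1$, i.e.\ $h(w_0) = w_0$. Then there is only a single affected factor, $\sgn(\iota_{w_0} \circ h \circ \iota_{w_0}^{-1})$, and replacing $\iota_{w_0}$ by $\iota_{w_0}'$ turns it into $\sgn(\pi \circ (\iota_{w_0} \circ h \circ \iota_{w_0}^{-1}) \circ \pi^{-1})$, which has the same sign because conjugation preserves the sign of a permutation. So again the product is unchanged. (If $w_0$ has no preimage in $S(v,k)$ under $h$ — which cannot happen since $h$ permutes $S(v,k)$ — one would simply note that then only the $w_0$-factor is affected and a similar argument applies, but in fact this case is vacuous.)

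Finally, given two arbitrary families of bijections $\{\iota_w\}$ and $\{\iota_w'\}$, the set $S(v,k)$ is finite, so one can pass from the first family to the second by changing the bijections one vertex at a time; each such step leaves the product invariant by the above, hence the two products agree. I do not expect any serious obstacle here; the only point requiring a little care is bookkeeping which factors are affected by a single change and verifying that the two sign contributions cancel, together with the degenerate fixed-vertex case handled by conjugation-invariance of $\sgn$.
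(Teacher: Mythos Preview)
Your proof is correct and rests on exactly the same observation as the paper's: each bijection $\iota_{w_0}$ enters the product in exactly two places (the factor indexed by $w_0$ and the factor indexed by $h^{-1}(w_0)$), so any change contributes $\sgn(\pi)^2 = 1$. The only difference is presentational: the paper compares two full families $\{\iota_w\}$ and $\{\iota'_w\}$ at once, inserting $\iota_{h(w)}^{-1}\iota_{h(w)}$ and $\iota_w\iota_w^{-1}$ and then reindexing via the bijection $w \mapsto h(w)$ of $S(v,k)$ to see that every correction term $\sgn(\iota_w \iota_w^{\prime-1})$ appears twice. This uniform reindexing absorbs your fixed-point case $h(w_0)=w_0$ without a separate argument, but the content is the same.
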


\begin{proof}
Fix some other bijections $\iota'_w \colon S(w,1) \to \{1, \ldots, d\}$ with $w \in S(v,k)$. Then we have
\begin{align*}
&\prod_{w \in S(v,k)}\sgn(\iota'_{h(w)} \circ h \circ \iota_w^{\prime-1})\\
=\ & \prod_{w \in S(v,k)}\sgn(\iota'_{h(w)} \circ \iota_{h(w)}^{-1} \circ \iota_{h(w)} \circ h \circ \iota_w^{-1} \circ \iota_w \circ \iota_w^{\prime-1})\\
=\ & \prod_{w \in S(v,k)} \sgn(\iota'_{h(w)} \circ \iota_{h(w)}^{-1}) \cdot \sgn(\iota_{h(w)} \circ h \circ \iota_w^{-1}) \cdot \sgn(\iota_w \circ \iota_w^{\prime-1})\\
=\ & \prod_{w \in S(v,k)} \sgn(\iota_{h(w)} \circ \iota_{h(w)}^{\prime-1}) \cdot \sgn(\iota_{h(w)} \circ h \circ \iota_w^{-1}) \cdot \sgn(\iota_w \circ \iota_w^{\prime-1})\\
=\ & \prod_{w \in S(v,k)} \sgn(\iota_{h(w)} \circ h \circ \iota_w^{-1}).
\end{align*}
The last equality holds because, for each $w \in S(v,k)$, the term $\sgn(\iota_w \circ \iota_w^{\prime-1})$ appears twice in the product.
\end{proof}

We now prove in the following results that, when $d$ is even, computing $K$, $\rho$ and $s(H(v)) \leq (\C_2)^K$ (almost) suffices to recognize which group of $\mathcal{G}'_{(i)}$ we actually have. These invariants do not depend on the coloring $i$ of $T$, which means that they can be computed without knowing for which coloring $i$ the group $H$ is contained in $\mathcal{G}'_{(i)}$.

\begin{lemma}\label{lemma:valueS}
Let $H = G_{(i)}(X,X)$ for some $X \subset_f \N$. Then $K = \max X$, $\rho = 2$ and $s(H(v)) = \{(s_0, \ldots, s_K) \in (\C_2)^{K+1} \mid \prod_{r \in X} s_r = 1\}$.
\end{lemma}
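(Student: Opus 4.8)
The plan is to understand the structure of $H = G_{(i)}(X,X)$ well enough to extract the three invariants $K$, $\rho$, and $s(H(v))$ directly from the defining condition $\Sgn_{(i)}(g, S_X(w)) = 1$ for all $w \in V(T)$. First I would recall that $\Aut(T)(v)$ acts on $B(v,n)$ with image the full $\Aut(B(v,n))$ for every $n$, and that the signs $s_k(h) = \Sgn_{(i)}(h, S(v,k))$ are independent of the coloring by Lemma~\ref{lemma:nocoloring}; moreover, for $h$ ranging over $\Aut(T)(v)$, the tuple $(s_0(h),\dots,s_n(h))$ takes every value in $(\C_2)^{n+1}$, because one can independently prescribe the local permutation (hence its sign) at the vertices of each sphere $S(v,k)$ — indeed even the restriction to a single vertex of each sphere can be chosen freely while fixing everything else on that sphere. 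This ``freeness of the sign homomorphism'' is the key auxiliary fact I would isolate first.

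Next I would compute $K$. The group $H(v)$ is cut out inside $\Aut(T)(v)$ by the conditions $\Sgn_{(i)}(g, S_X(w)) = 1$ for all vertices $w$. For a vertex $w$ at distance $> \max X + 1$ from $v$, the ball $B(v, \max X + 1)$ carries no information about $S_X(w)$ at all, so such conditions do not constrain the image of $H(v)$ in $\Aut(B(v, \max X + 1))$. For $w = v$ itself, the condition $\Sgn_{(i)}(g, S_X(v)) = 1$ involves only spheres $S(v,r)$ with $r \in X$, all contained in $B(v, \max X)$, hence visible already in $\Aut(B(v, \max X))$; so the map $H(v) \to \Aut(B(v, \max X))$ is already non-surjective, giving $K \leq \max X - 1$... — wait, I must be careful: $H(v) \to \Aut(B(v,n))$ is surjective iff no defining relation is ``active'' on $B(v,n)$. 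The relation at $w=v$ is active on $B(v,n)$ as soon as $n \geq \max X$; the relations at neighbors $w$ of $v$ become active once $n \geq \max X + 1$; and one checks that for $n = \max X$ only the single relation at $v$ is active, and it genuinely cuts the order in half. So $K = \max X$ and $\rho = 2$: the image $\tilde H^K(v)$ in $\Aut(B(v, K+1))$ is the index-$2$ subgroup where the single sign condition $\prod_{r \in X} s_r = 1$ at the central vertex $v$ holds, while all other vertex conditions involve spheres reaching outside $B(v,K+1)$ and so impose nothing. (I would double-check the edge case $X = \{0\}$, where $S_X(v) = \{v\}$ and $\Sgn_{(i)}(g,\{v\}) = \sgn(\sigma_{(i)}(g,v))$, confirming $K = 0$, $\rho = 2$.)

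Finally, for $s(H(v))$: by definition $s(h) = (s_0(h),\dots,s_K(h))$, and for $h \in H(v)$ the central relation forces $\prod_{r \in X} s_r(h) = 1$. Conversely, given any tuple in $(\C_2)^{K+1}$ satisfying $\prod_{r \in X} s_r = 1$, I would construct an element of $H(v)$ realizing it: using the freeness fact above, pick $h \in \Aut(T)(v)$ with prescribed sphere-signs near $v$ equal to the target tuple and acting trivially far from $v$ — but one must then also verify the remaining conditions $\Sgn_{(i)}(h, S_X(w)) = 1$ for $w \neq v$. This is where the construction must be done with care: rather than the naive ``trivial far away'' element, I would build $h$ supported on finitely many vertices and then correct it, sphere by sphere moving outward, by composing with automorphisms that fix a large ball and adjust signs on a single far sphere, so that every condition $\Sgn_{(i)}(h, S_X(w)) = 1$ eventually holds; since each such correction only touches one sphere and the conditions are a locally finite system of $\C_2$-linear equations in the sphere-signs, this terminates after finitely many steps on any given finite ball and passes to the limit. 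I expect \textbf{this surjectivity half} — producing an element of $H(v)$ with exactly the prescribed tuple $s(h)$ while respecting all the non-central sign constraints — to be the main obstacle; the computation of $K$ and $\rho$ is comparatively routine bookkeeping about which defining relation first becomes visible on $B(v,n)$.
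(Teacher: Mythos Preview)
Your proposal is correct. The paper's own proof is the single sentence ``This follows immediately from the definition of $G_{(i)}(X,X)$,'' so there is no detailed argument to compare against; what you have written is precisely the natural unpacking of that claim. Your identification of the surjectivity direction --- producing an element of $H(v)$ with prescribed sphere-signs $(s_0,\dots,s_K)$ satisfying $\prod_{r\in X}s_r=1$ while simultaneously meeting all the non-central constraints $\Sgn_{(i)}(h,S_X(w))=1$ for $w\neq v$ --- as the only part requiring real thought is accurate, and your greedy sphere-by-sphere correction scheme is the right mechanism: the key point (which you should make explicit) is that for each $w$ at distance $\ell\geq 1$ from $v$ one can pick a vertex $z_w\in S(w,\max X)$ at distance $\ell+\max X$ from $v$, that distinct $w$'s at the same distance $\ell$ give distinct $z_w$'s (tree geometry), and that flipping the local sign at $z_w$ by permuting its children affects no condition at any $w'$ with $d(v,w')\leq\ell$ other than $w$ itself. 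This makes the inductive extension non-interfering.

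Your momentary wobble about whether the condition at $v$ is already visible in $\Aut(B(v,\max X))$ is harmless and you land in the right place: the local permutation $\sigma_{(i)}(g,z)$ at $z\in S(v,\max X)$ depends on $g|_{S(v,\max X+1)}$, so the central condition first constrains the image in $\Aut(B(v,\max X+1))$, giving $K=\max X$ and $\rho=2$ exactly as you conclude.
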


\begin{proof}
This follows immediately from the definition of $G_{(i)}(X,X)$.
\end{proof}

For each $X \subset_f \N$, we define $\alpha(X)$ as the subset of $\N$ such that $S_{\alpha(X)}(v)$ is the set of vertices of $T$ that appear in an odd number of sets $S_X(w_1), \ldots, S_X(w_d)$, where $w_1, \ldots, w_d$ are the $d$ neighbors of $v$ in $T$. In other words, $S_{\alpha(X)}(v)$ is the support of $\mathbf{1}_{S_X(w_1)} + \cdots + \mathbf{1}_{S_X(w_d)} \bmod 2$ where $\mathbf{1}$ denotes the characteristic function. It is clear from its definition that this set of vertices indeed takes the form $S_{\alpha(X)}(v)$ for some $\alpha(X)$. In the next lemma we give an explicit expression for $\alpha(X)$, depending on the parity of $d$.

\begin{lemma}\label{lemma:explicitalpha}
Let $X \subset_f \N$. We have the following expressions for $\alpha(X)$, where $\triangle$ denotes the symmetric difference.
$$\alpha(X) = \left\{\begin{array}{ll}
\{x+1 \mid x \in X\} \triangle \{x-1 \mid x \in X, x \geq 2\} & \text{if $d$ is even,}\\
\{x+1 \mid x \in X\} \cup (\{x-1 \mid x \in X\} \cap \{0\}) & \text{if $d$ is odd.}
\end{array}\right.$$

\end{lemma}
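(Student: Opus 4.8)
The plan is to unwind the definition of $\alpha(X)$ by a direct combinatorial count. Fix $v$ together with its neighbours $w_1,\dots,w_d$, and recall that a radius $k \in \N$ lies in $\alpha(X)$ precisely when each vertex of the sphere $S(v,k)$ belongs to an odd number of the sets $S_X(w_1),\dots,S_X(w_d)$. The number of indices $i$ with a given $u \in S(v,k)$ lying in $S_X(w_i)$ depends only on $k$ (indeed only on the multiset of distances from $u$ to the $w_i$, which is the same for every $u$ at distance $k$ from $v$), which is also why the support in question is a union of spheres about $v$. So it suffices to fix one vertex $u$ with $\mathrm{dist}(v,u)=k$; since $u \in S_X(w_i) \iff \mathrm{dist}(u,w_i) \in X$, everything reduces to knowing that multiset of distances.

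The first step is to determine it. If $k=0$ then $u=v$ and all $d$ neighbours are at distance $1$. If $k \ge 1$, then by uniqueness of geodesics in a tree exactly one neighbour $w_{i_0}$ lies on the segment $[v,u]$, with $\mathrm{dist}(u,w_{i_0}) = k-1$, while the geodesic from $u$ to each of the other $d-1$ neighbours passes through $v$, giving distance $k+1$. Hence the number of indices $i$ with $u \in S_X(w_i)$ equals $d\cdot\mathbf{1}_X(1)$ if $k=0$, and $\mathbf{1}_X(k-1) + (d-1)\,\mathbf{1}_X(k+1)$ if $k \ge 1$, where $\mathbf{1}_X$ is the indicator function of $X$.

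The second step is to reduce these counts modulo $2$ and read off $\alpha(X)$, splitting on the parity of $d$. If $d$ is even then $d \equiv 0$ and $d-1 \equiv 1 \pmod 2$, so $0 \notin \alpha(X)$, and for $k \ge 1$ one has $k \in \alpha(X)$ iff exactly one of $k-1 \in X$ and $k+1 \in X$ holds; rewriting ``$k-1 \in X$'' as ``$k \in \{x+1 : x \in X\}$'' and ``$k+1 \in X$ together with $k \ge 1$'' as ``$k \in \{x-1 : x \in X,\ x \ge 2\}$'', this ``exactly one'' condition is membership in the symmetric difference of these two sets, which is the first line of the formula. If $d$ is odd then $d-1 \equiv 0 \pmod 2$, so the $\mathbf{1}_X(k+1)$ term vanishes: $0 \in \alpha(X)$ iff $1 \in X$, and for $k \ge 1$, $k \in \alpha(X)$ iff $k-1 \in X$; combining these gives $\alpha(X) = \{x+1 : x \in X\} \cup (\{x-1 : x\in X\} \cap \{0\})$, the second line.

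There is no deep obstacle here; the one point that requires care is the bookkeeping at the small radii $k=0$ and $k=1$, where the ball/sphere combinatorics degenerate — the value $k-1$ may equal $0$, and the vertex $v$ is reached by the geodesic from $u$ only when $k \ge 1$. Tracking this correctly is exactly what forces the restriction $x \ge 2$ in the even case and the correction term $\{x-1 : x\in X\}\cap\{0\}$ in the odd case, so the claimed formula must be checked against the count at $k=0$ and $k=1$ separately.
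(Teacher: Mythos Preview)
Your proof is correct and follows essentially the same approach as the paper's: both compute, for a vertex $u$ at distance $k$ from $v$, the number of neighbours $w_i$ with $u \in S_X(w_i)$ by using that in a tree exactly one neighbour lies on the geodesic $[v,u]$ (giving distance $k-1$) while the remaining $d-1$ are at distance $k+1$. The paper phrases this via a decomposition $S_X(w_j) = S_X^+(w_j) \sqcup S_X^-(w_j)$ into vertices further from and closer to $v$, whereas you fix $u$ and count directly; the arithmetic and the case analysis at $k=0$ are the same.
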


\begin{proof}
For each $j \in \{1, \ldots, d\}$, we write $S_X(w_j) = S_X^+(w_j) \sqcup S_X^-(w_j)$, where $S_X^+(w_j)$ is the set of vertices of $S_X(w_j)$ that are further from $v$ than from $w_j$, and $S_X^-(w_j) = S_X(w_j) \setminus S_X^+(w_j)$. Then, all the sets $S_X^+(w_j)$ with $j \in \{1, \ldots, d\}$ are disjoint and their union is $S_{\{x+1 \mid x \in X\}}(v)$. Now if we look at the sets $S_X^-(w_j)$, they only contain vertices that are at distance $x-1$ from $v$ for some $x \in X$ ($x \geq 1$). More precisely, if $x \in X$ and $x \geq 2$ then each vertex at distance $x-1$ from $v$ is contained in exactly $d-1$ of the sets $S_X^-(w_j)$, $j \in \{1, \ldots, d\}$. Also, if $x = 1 \in X$, then $v$ is contained in all $d$ sets $S_X^-(w_j)$, $j \in \{1, \ldots, d\}$. These affirmations directly lead to the expressions given in the statement.
\end{proof}

The next lemma then follows almost immediately.

\begin{lemma}\label{lemma:alpha}
We have $\alpha(X) \subset_f \N$ for each $X \subset_f \N$, and the map $\alpha \colon \{X \subset_f \N\} \to \{X \subset_f \N\}$ is injective. Moreover, we have
$$\alpha(\{X \subset_f \N\}) = \left\{\begin{array}{ll}
\{X \subset_f \N \mid 0 \not \in X\} & \text{if $d$ is even,}\\
\{X \subset_f \N \mid 0 \in X \Leftrightarrow 2 \in X\} & \text{if $d$ is odd.}
\end{array}\right.$$
\end{lemma}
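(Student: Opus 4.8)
The plan is to analyze the formula for $\alpha(X)$ established in Lemma~\ref{lemma:explicitalpha} and verify the three claims (finiteness of $\alpha(X)$, injectivity of $\alpha$, and the description of its image) separately in each parity case. Finiteness is immediate: in both formulas $\alpha(X)$ is built from $X$ by shifts and finite set operations, so if $X$ is finite then so is $\alpha(X)$. The real content is injectivity and the image computation.

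For the even case, where $\alpha(X) = \{x+1 \mid x \in X\} \triangle \{x-1 \mid x \in X, x \geq 2\}$, I would first observe that $0 \notin \alpha(X)$ since $\{x+1 \mid x \in X\} \subseteq \Nz$ and $\{x-1 \mid x \in X, x \geq 2\} \subseteq \Nz$; this gives the inclusion $\subseteq$ in the image formula. For injectivity and surjectivity onto $\{X \subset_f \N \mid 0 \notin X\}$, the key idea is to reconstruct $X$ from $Y = \alpha(X)$ by induction from the top down. Write $M = \max X$; then $M+1 \in Y$ and no element of $Y$ exceeds $M+1$, so $\max Y = M+1$ recovers $M$. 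More systematically, I would prove that for each $n \geq 1$, whether $n \in X$ is determined by the values $Y \cap \{n+1, n+2, \ldots\}$ together with the relation $\mathbf{1}_Y(m+1) = \mathbf{1}_X(m) + \mathbf{1}_X(m+2) \bmod 2$ for $m \geq 1$ (reading the symmetric difference coordinatewise); descending induction on $m$ from $\max Y$ then pins down all $\mathbf{1}_X(m)$, and whether $0 \in X$ is irrelevant to $\alpha(X)$ except that by convention the members of $\Trd$ we care about have $0\notin X$ anyway — here one simply notes $\alpha$ is defined on all finite subsets and the reconstruction determines $X \cap \Nz$; to get a genuine bijection one restricts the domain appropriately, but as stated the claim is just about the image and injectivity, and injectivity needs the convention that $X$ ranges over finite subsets of $\N$ — here a subtlety arises because $0 \in X$ and $0 \notin X$ can a priori give the same $\alpha(X)$. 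I would resolve this by checking directly from the formula that $1 \in X$ contributes $2$ (from $x+1$) and nothing from the second set (since $x=1 < 2$), whereas $x = 0$ would contribute $1$, but $x=0$ never enters $\{x-1 : x \geq 2\}$ and contributes $\mathbf{1}$ to $\{x+1\}$, i.e.\ $1 \in \alpha(X)$ iff $0 \in X$; so $0 \in X$ is detected by $1 \in \alpha(X)$, and the reconstruction is complete.

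For the odd case, where $\alpha(X) = \{x+1 \mid x \in X\} \cup (\{x - 1\mid x\in X\} \cap \{0\})$, the union is no longer a symmetric difference so the analysis is cleaner: $\{x+1 \mid x\in X\}$ determines $X \cap \Nz$ bijectively with $\{Y' \subset_f \N : 0 \notin Y'\}$ (shift by one), and the extra term puts $0$ into $\alpha(X)$ precisely when $2 \in X$ (the only $x$ with $x - 1 = 0$... wait, $x=1$ gives $x-1=0$ too). I would need to be careful: $\{x-1 : x \in X\} \cap \{0\}$ is $\{0\}$ iff $1 \in X$. But $1 \in X$ also forces $2 = 1+1 \in \alpha(X)$. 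So $0 \in \alpha(X) \Leftrightarrow 1 \in X$, and meanwhile $2 \in \alpha(X) \Leftrightarrow 1 \in X$ or $3 \in X$; hmm, this does not immediately give ``$0 \in X \Leftrightarrow 2 \in X$'' for the image. I expect the main obstacle is getting these index bookkeeping details exactly right and matching them to the stated image descriptions — in particular correctly identifying which small-index elements of $X$ produce $0 \in \alpha(X)$, since this is where the parity cases genuinely differ and where an off-by-one error is easy. Once the reconstruction map (a top-down descending induction recovering $\mathbf{1}_X$ from $\mathbf{1}_{\alpha(X)}$) is written down carefully, injectivity is automatic and both inclusions of the image formula follow by exhibiting, for any candidate $Y$ in the claimed image, the preimage produced by that same reconstruction and checking $\alpha$ of it equals $Y$.
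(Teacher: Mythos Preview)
Your reconstruction approach via descending induction is sound in principle, but two of your index computations are wrong---precisely the kind of error you anticipated. In the even case, your claim ``$1 \in \alpha(X)$ iff $0 \in X$'' is false: since $1 = 0+1 = 2-1$ and $2 \geq 2$, the formula gives $1 \in \alpha(X) \Leftrightarrow (0 \in X) \oplus (2 \in X)$, so $\mathbf 1_X(0)$ can only be recovered \emph{after} the descending induction has computed $\mathbf 1_X(2)$. In the odd case, your claim ``$2 \in \alpha(X) \Leftrightarrow 1 \in X$ or $3 \in X$'' is also wrong: the second set in the odd formula is intersected with $\{0\}$, so for every $n \geq 1$ one simply has $n \in \alpha(X) \Leftrightarrow n-1 \in X$. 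Hence $2 \in \alpha(X) \Leftrightarrow 1 \in X \Leftrightarrow 0 \in \alpha(X)$, and both injectivity and the image description for odd $d$ fall out immediately.

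The paper takes a much cleaner route that avoids all of this. From the original \emph{definition} of $\alpha$ (as the support of $\sum_j \mathbf 1_{S_X(w_j)} \bmod 2$ over the neighbours $w_j$ of $v$), one sees at once that $\alpha(X \triangle X') = \alpha(X) \triangle \alpha(X')$, i.e.\ $\alpha$ is $\mathbf F_2$-linear (with $\alpha(\varnothing)=\varnothing$). Injectivity then reduces to the observation that $\alpha(X)\neq\varnothing$ for $X\neq\varnothing$, which is immediate since $\max X + 1 \in \alpha(X)$; the image descriptions are read off from Lemma~\ref{lemma:explicitalpha}. Your top-down reconstruction ultimately yields the same thing, but the linearity argument eliminates the case analysis entirely and is worth internalizing.
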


\begin{proof}
From Lemma~\ref{lemma:explicitalpha} we know that $\alpha(X)$ is finite and non-empty (because $\max X + 1 \in \alpha(X)$), i.e.\ $\alpha(X) \subset_f \N$. Now remark from the definition of $\alpha$ that $\alpha(X \triangle X') = \alpha(X) \triangle \alpha(X')$ for each $X, X' \subset_f \N$, where we define $\alpha(\varnothing) = \varnothing$. Therefore, if $\alpha(X) = \alpha(X')$, then $\alpha(X \triangle X') = \varnothing$ and hence $X \triangle X' = \varnothing$, i.e.\ $X = X'$.

The expressions for $\alpha(\{X \subset_f \N\})$ can be found directly by examining Lemma~\ref{lemma:explicitalpha}.
\end{proof}

Lemma~\ref{lemma:stars} is the reason why $\alpha$ was defined above.

\begin{lemma}\label{lemma:stars}
Suppose that $d$ is even and let $H$ be one of $G_{(i)}(X^*, X^*)$, $G_{(i)}(X, X)^*$ and $G_{(i)}'(X, X)^*$ for some $X \subset_f \N$. Then $H$ is contained in $G_{(i)}(\alpha(X),\alpha(X))$.
\end{lemma}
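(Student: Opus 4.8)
The claim is a statement about the local signs $\Sgn_{(i)}(h, S_X(v))$ of elements $h \in H$. The plan is to take any $h \in H$ (where $H$ is one of $G_{(i)}(X^*,X^*)$, $G_{(i)}(X,X)^*$, or $G_{(i)}'(X,X)^*$) and show directly that $\Sgn_{(i)}(h, S_{\alpha(X)}(v)) = 1$ for every $v \in V(T)$, which is exactly the defining condition for membership in $G_{(i)}(\alpha(X),\alpha(X))$. The key identity to exploit is the relation between $S_{\alpha(X)}(v)$ and the sets $S_X(w_1), \dots, S_X(w_d)$ for $w_1, \dots, w_d$ the neighbors of $v$: by the very definition of $\alpha(X)$, the characteristic function $\mathbf{1}_{S_{\alpha(X)}(v)}$ equals $\sum_{j=1}^d \mathbf{1}_{S_X(w_j)} \bmod 2$. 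Since $\sgn$ takes values in $\C_2$ (written multiplicatively), this translates into
$$\Sgn_{(i)}(h, S_{\alpha(X)}(v)) = \prod_{j=1}^d \Sgn_{(i)}(h, S_X(w_j)),$$
where the cancellation of the evenly-counted vertices uses that each vertex appearing an even number of times contributes $\sigma_{(i)}(h, \cdot)^2$, whose sign is $1$.

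The next step is to evaluate the right-hand product using the defining property of $H$. Here I would split into the three cases, but they are all handled by the same bookkeeping. The neighbors $w_1, \dots, w_d$ of $v$ split into those of each type: if $v \in V_0(T)$ then all $w_j \in V_1(T)$, and if $v \in V_1(T)$ then all $w_j \in V_0(T)$ — so all $d$ neighbors have the \emph{same} type. For $H = G_{(i)}(X,X)^*$, all the $\Sgn_{(i)}(h, S_X(w_j))$ are equal to a common value $p$, so the product is $p^d = 1$ since $d$ is even. For $H = G_{(i)}(X^*,X^*)$, all $\Sgn_{(i)}(h, S_X(w_j))$ with $w_j$ of a fixed type are equal, so again the product over the $d$ same-type neighbors is $p^d = 1$. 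For $H = G_{(i)}'(X,X)^*$, likewise all $\Sgn_{(i)}(h, S_X(w_j))$ over same-type $w_j$ agree, giving $p^d = 1$. In every case $\Sgn_{(i)}(h, S_{\alpha(X)}(v)) = 1$, and since $v$ was arbitrary this shows $h \in G_{(i)}(\alpha(X), \alpha(X))$.

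The main obstacle, and the only point requiring genuine care, is justifying the displayed product identity — that is, making precise why the evenly-counted vertices drop out when passing from $\prod_j \Sgn_{(i)}(h, S_X(w_j))$ to $\Sgn_{(i)}(h, S_{\alpha(X)}(v))$. The subtlety is that this is a product of signs of local permutations $\sigma_{(i)}(h, w)$ indexed by vertices $w$, and a given vertex $w$ may lie in several of the $S_X(w_j)$; one must check that the total contribution of $w$ is $\sgn(\sigma_{(i)}(h,w))$ raised to the number of sets $S_X(w_j)$ containing it, hence trivial when that number is even. This follows from the fact that $\Sgn_{(i)}(h, A)$ is genuinely a product over the multiset (equivalently, that it is multiplicative in $A$ under symmetric difference modulo $2$), which is immediate from its definition as $\prod_{w \in A}\sgn(\sigma_{(i)}(h,w))$. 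Once this is granted, the rest is the elementary parity count above, and one should remark — as the surrounding text does — that $\alpha(X) \subset_f \N$ is needed for $G_{(i)}(\alpha(X),\alpha(X))$ to even be defined, which is exactly Lemma~\ref{lemma:alpha}.
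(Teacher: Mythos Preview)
Your proof is correct and follows essentially the same approach as the paper's proof: both establish the identity $\Sgn_{(i)}(h, S_{\alpha(X)}(v)) = \prod_{j=1}^d \Sgn_{(i)}(h, S_X(w_j))$ from the definition of $\alpha$, then use that all $d$ factors are equal (since the neighbors $w_j$ all share the same type) and that $d$ is even. Your write-up is considerably more explicit than the paper's three-line argument, carefully justifying the product identity and splitting into the three cases, but the underlying idea is identical.
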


\begin{proof}
For any $h \in H$ and $v \in V(T)$, we know that all $\Sgn_{(i)}(h, S(w_j,X))$ with $j \in \{1, \ldots, d\}$ are equal, where $w_1, \ldots, w_d$ are the $d$ neighbors of $v$ in $T$. Since $d$ is even, the product of these $d$ signatures is $1$. As this product is also equal to $\Sgn_{(i)}(h, S(v,\alpha(X)))$, we deduce that $h \in G_{(i)}(\alpha(X),\alpha(X))$.
\end{proof}

From the previous lemma we can now compute $s(H(v))$ for other groups $H$ in $\mathcal{G}'_{(i)}$ (when $d$ is even).

\begin{lemma}\label{lemma:valueS*}
Suppose that $d$ is even and let $H$ be one of $G_{(i)}(X^*, X^*)$, $G_{(i)}(X, X)^*$ and $G_{(i)}'(X, X)^*$ for some $X \subset_f \N$. Then $K = \max X + 1$ and $s(H(v)) = \{(s_0, \ldots, s_K) \in (\C_2)^{K+1} \mid \prod_{r \in \alpha(X)} s_r = 1\}$. Moreover, we have $\rho = 2^{d-1}$ if $H = G_{(i)}(X^*, X^*)$ and $\rho = 2^d$ if $H = G_{(i)}(X, X)^*$ or $G_{(i)}'(X, X)^*$.
\end{lemma}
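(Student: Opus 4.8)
**Proof plan for Lemma~\ref{lemma:valueS*}.**

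The plan is to treat the three cases $H = G_{(i)}(X^*,X^*)$, $G_{(i)}(X,X)^*$ and $G'_{(i)}(X,X)^*$ together as far as possible, extracting first the invariant $s(H(v))$, then $K$, and finally $\rho$. For the computation of $s(H(v))$, note that $H(v)$ is the stabilizer of $v$ in $H$, and for $h \in H(v)$ the components $s_k(h) = \Sgn_{(i)}(h, S(v,k))$ are well-defined independently of the coloring by Lemma~\ref{lemma:nocoloring}. By Lemma~\ref{lemma:stars}, $H \leq G_{(i)}(\alpha(X),\alpha(X))$, so every $h \in H(v)$ satisfies $\prod_{r \in \alpha(X)} s_r(h) = 1$; this gives the inclusion $s(H(v)) \subseteq \{(s_0,\ldots,s_K) \mid \prod_{r\in\alpha(X)}s_r = 1\}$ once we know $K = \max X + 1$ (note $\max \alpha(X) = \max X + 1$ by Lemma~\ref{lemma:explicitalpha}, so the right-hand side lives in $(\C_2)^{K+1}$). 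For the reverse inclusion, I would argue that $G_{(i)}^+(X,X) \leq H$ (this is immediate from the definitions, since an element with all local signatures trivial on every sphere $S_X(w)$ has all the ``starred'' signatures equal, namely to $1$), and then invoke Lemma~\ref{lemma:valueS} applied to $G_{(i)}(\alpha(X),\alpha(X))$ — whose vertex stabilizer has $s$-image exactly $\{\prod_{r\in\alpha(X)}s_r = 1\}$ — together with the chain $G_{(i)}^+(X,X) \leq H \leq G_{(i)}(\alpha(X),\alpha(X))$ and a dimension/index count to conclude the $s$-images agree. Actually the cleanest route is: $s$ restricted to $G_{(i)}(\alpha(X),\alpha(X))(v)$ is already surjective onto that hyperplane by Lemma~\ref{lemma:valueS}, and $s(G_{(i)}^+(X,X)(v))$ contains it too since $G^+_{(i)}(X,X) \supseteq G^+_{(i)}(\alpha(X),\alpha(X))$ whenever $\alpha(X) \supseteq X$ in the relevant sense — but this last containment need not hold, so instead I would directly exhibit, for each target vector $(s_0,\ldots,s_K)$ in the hyperplane, an element of $H(v)$ realizing it, by building a tree automorphism fixing $v$ with prescribed local signatures on $S(v,0),\ldots,S(v,K)$ subject only to $\prod_{r\in\alpha(X)}s_r = 1$; such an automorphism automatically lies in $G_{(i)}(\alpha(X),\alpha(X))$ and, choosing the signatures on spheres further out suitably, can be arranged to satisfy the ``all starred signatures equal'' condition defining $H$, hence lies in $H(v)$.

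For $K = \max X + 1$: the homomorphism $H(v) \to \Aut(B(v,K+1))$ must fail to be surjective, and $K$ is the least such radius. Since $H \leq G_{(i)}(\alpha(X),\alpha(X))$ and $\max\alpha(X) = \max X + 1$, the constraint $\prod_{r\in\alpha(X)}s_r(h)=1$ first becomes visible (i.e.\ becomes a nontrivial condition cutting down the image in $\Aut(B(v,k+1))$) exactly when $k \geq \max X + 1$; I would check that for $k \leq \max X$ the map $H(v) \to \Aut(B(v,k+1))$ is onto (no constraint yet "closes up" at that radius, using that the defining conditions of $H$ only compare signatures on spheres $S_X(w_j)$ for neighbors $w_j$ of $v$, which reach out to radius $\max X + 1$ from $v$), and that at $k = \max X + 1$ it is not, giving $K = \max X + 1$.

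Finally, for $\rho = |\Aut(B(v,K+1))| / |\tilde H^K(v)|$: here I would count the codimension of $\tilde H^K(v)$ inside $\Aut(B(v,K+1))$. The sign homomorphism on $\Aut(B(v,K+1))$ factors through local signatures, and the defining condition of $H$ imposes: for $G_{(i)}(X^*,X^*)$, that the $d$ signatures $\Sgn_{(i)}(h,S_X(w_j))$ be all equal in the type-$0$ part and all equal in the type-$1$ part — but $w_1,\dots,w_d$ are all neighbors of $v$ hence all of the same type relative to $v$, so this is $d-1$ independent $\C_2$-conditions, giving $\rho = 2^{d-1}$; whereas for $G_{(i)}(X,X)^*$ and $G'_{(i)}(X,X)^*$ there is one further $\C_2$-condition (the common value must be $1$, resp.\ the global parity/type constraint linking $p_0,p_1$), giving $d$ independent conditions and $\rho = 2^d$. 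I expect the main obstacle to be the surjectivity/realizability argument for the reverse inclusion in the $s(H(v))$ computation and the accompanying verification that at radius $\max X$ nothing has yet ``closed up'' — one must carefully track which spheres around which vertices the defining signature conditions of $H$ actually constrain, and confirm that a free choice of local signatures on $S(v,0),\dots,S(v,\max X)$ (subject to the single hyperplane relation) extends to a genuine element of $H$.
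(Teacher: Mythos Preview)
Your overall strategy is sound and would work, but you have made the reverse inclusion for $s(H(v))$ harder than it needs to be, and the paper's proof shows how to avoid the explicit construction you flag as the ``main obstacle''.

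The paper reverses your order: it first establishes $K = \max X + 1$ and $\rho$ directly from the definitions of the groups (as you also sketch), and \emph{then} exploits the very definition of $K$ to get the reverse inclusion for free. Namely, once $K$ is known, the map $H(v) \to \Aut(B(v,K))$ is surjective by definition of $K$; hence for every prefix $(s_0,\ldots,s_{K-1}) \in (\C_2)^K$ there is \emph{some} $s_K$ with $(s_0,\ldots,s_K) \in s(H(v))$. Since by Lemma~\ref{lemma:stars} we have $s(H(v)) \subseteq s'(H'(v))$ with $H' = G_{(i)}(\alpha(X),\alpha(X))$, and since $s'(H'(v))$ is an index-$2$ subgroup of $(\C_2)^{K+1}$ in which each prefix has a \emph{unique} extension (Lemma~\ref{lemma:valueS}, using $\max\alpha(X) = K$), a sandwich argument forces $s(H(v)) = s'(H'(v))$. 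No element of $H(v)$ needs to be constructed by hand.

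Two minor points on your $\rho$ discussion. First, the extra condition distinguishing $G_{(i)}(X,X)^*$ from $G_{(i)}(X^*,X^*)$ at radius $K+1$ is not that the common signature at the neighbours ``must be $1$'', but rather that it must equal $\Sgn_{(i)}(h,S_X(v))$ (the signature at the centre, which is also visible in $B(v,K+1)$ since $\max X = K-1$); this is still one further $\C_2$-condition, so your count $\rho = 2^d$ is correct. Second, for $G'_{(i)}(X,X)^*$ note that any $h \in H(v)$ is type-preserving, so the defining condition forces $p_0 = p_1$ and the constraint becomes identical to that of $G_{(i)}(X,X)^*$ on vertex stabilizers, again giving $\rho = 2^d$.
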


\begin{proof}
The values of $K$ and $\rho$ can be directly deduced from the definitions of the groups. By definition of $K$, the homomorphism from $H(v)$ to $\Aut(B(v,K))$ is surjective. Hence, for each $(s_0, \ldots, s_{K-1}) \in (\C_2)^K$, there exists $s_K \in \C_2$ such that $(s_0, \ldots, s_K) \in s(H(v))$. If $H' = G_{(i)}(\alpha(X),\alpha(X))$, then Lemma~\ref{lemma:stars} states that $H \subseteq H'$, so $s(H(v)) \subseteq s'(H'(v))$ (where $s' \colon H'(v) \to (\C_2)^{K+1}$ is the map associated to $H'$). Note that $H$ and $H'$ share the same $K$ because $\max(\alpha(X)) = \max X + 1$. Now $s'(H'(v)) = \{(s_0, \ldots, s_K) \in (\C_2)^{K+1} \mid \prod_{r \in \alpha(X)} s_r = 1\}$ by Lemma~\ref{lemma:valueS}, and in particular for each $(s_0, \ldots, s_{K-1}) \in (\C_2)^K$ there is a unique $s_K \in \C_2$ such that $(s_0, \ldots, s_K) \in s'(H'(v))$. From all this information it follows that $s(H(v)) = s'(H'(v))$.
\end{proof}

When $d$ is even, we see from Lemmas~\ref{lemma:valueS} and~\ref{lemma:valueS*} that the groups in $\mathcal{G}'_{(i)}$ can be differentiated by computing $K$, $\rho$ and the image of the map $s$, with one exception: $G_{(i)}(X, X)^*$ and $G_{(i)}'(X, X)^*$ have the same invariants (for a fixed $X \subset_f \N$). This is due to the fact that their type-preserving subgroups are both equal to $G_{(i)}^+(X, X)^*$ and all invariants are computed from vertex stabilizers. We will however be able to differentiate these two groups later, see Proposition~\ref{proposition:primeornot}.

For $d$ odd, the task would be harder. For instance, $G_{(i)}(\{0,1\}^*, \{0,1\}^*)$ and $G_{(i)}(\{1\}^*, \{1\}^*)$ have the same invariants when $d$ is odd ($K = 2$, $\rho = 2^{d-1}$ and $s$ is surjective). This occurs because Lemma~\ref{lemma:stars} is no longer true in that case. We will not deal with the odd case.

\subsection{Labelled graphs associated to a \texorpdfstring{$(d_1,d_2)$-group}{(d_1,d_2)-group}}
\label{subsection:graph}

Let us now come back to a $(d_1,d_2)$-group $\Gamma$ associated to some $(d_1,d_2)$-datum $(A,B,\varphi_A,\varphi_B,R)$. We assume that $d_2 \geq 6$ is even, that $H_2 = \overline{\proj_2(\Gamma)}$ is non-discrete and that $\underline{H_2}(v_2) \geq \Alt(d_2)$. As explained in \S\ref{subsection:irred}, the non-discreteness of $H_2$ can be checked by computing the action of $H_2(v_2)$ on $B(v_2,2)$. We now would like an efficient algorithm to determine which group from $\mathcal{G}'_{(i)}$ is isomorphic to $H_2$. In this section, we give a way to compute $K^{(2)}$ (the $K$ associated to $H_2$) and $s^{(2)}(H_2(v_2))$ by associating a labelled graph $G^{(2)}_\Gamma$ to our $(d_1,d_2)$-group $\Gamma$. In view of the results of \S\ref{subsection:recognizing}, this will reduce to $4$ (or less) the number of groups in $\mathcal{G}'_{(i)}$ that could be isomorphic to $H_2$. Of course, everything we do here for $H_2$ can be translated for $H_1$.

Given $h \in H_2(v_2)$ and $k \in \N$, we write $s^{(2)}_k(h) = \Sgn_{(i)}(h,S(v,k))$ where $i$ is any legal coloring of $T_2$ as in \S\ref{subsection:recognizing}. The invariant $K^{(2)}$ can be characterized as the smallest non-negative integer such that the map
$$s^{(2)} \colon H_2(v_2) \to (\C_2)^{K^{(2)}+1} \colon h \mapsto (s^{(2)}_0(h), \ldots s^{(2)}_{K^{(2)}}(h))$$
is not surjective. This indeed follows from the definition of $K^{(2)}$ and from Lemmas~\ref{lemma:valueS} and~\ref{lemma:valueS*}. An efficient algorithm to compute $s^{(2)}_k(a_j)$ for each $j \in \{1, \ldots, d_1\}$ and each $k \in \N$ would thus be sufficient to determine $K^{(2)}$ as well as $s^{(2)}(H_2(v_2))$. (Note that $a_j$ should actually be read as $\proj_2(a_j)$ here, but we will omit the projection.) This is where the graph $G^{(2)}_\Gamma$, defined hereafter, becomes useful.

Let us define the \textbf{labelled graph} $G^{(2)}_\Gamma$ associated to $\Gamma$. First, the vertex set $V(G^{(2)}_\Gamma)$ is simply defined to be $A$. Then, we put an edge between $a \in A$ and $a' \in A$ if and only if $|R \cap (\{a\} \times B \times \{a^{\prime-1}\} \times B)|$ is odd. Note that $|R \cap (\{a\} \times B \times \{a^{\prime-1}\} \times B)|\ = |R \cap (\{a'\} \times B \times \{a^{-1}\} \times B)|$ because $R$ is invariant under the action of $\C_2 \times \C_2$, so the edge set is well-defined. We obtain an undirected graph that can possibly contain loops (edges from a vertex to itself). Finally, to each vertex $a$ of $G^{(2)}_\Gamma$, we associate a label $\sigma(a) = \pm 1$ whose value depends on the signature of the permutation that the generator $a \in \Gamma$ induces on $E(v_2)$ (the set of $d_2$ edges adjacent to $v_2$ in $T_2$). This labelled graph has a non-trivial automorphism defined by $a \mapsto a^{-1}$ for each $a \in A$. Indeed, we clearly have $\sigma(a) = \sigma(a^{-1})$ for each $a \in A$, and there is an edge between $a$ and $a'$ if and only if there is an edge between $a^{-1}$ and $a^{\prime-1}$. Once again this follows from the fact that $R$ is invariant under the action of $\C_2 \times \C_2$.

The labelled graph $G^{(2)}_\Gamma$ can easily be drawn from the geometric squares that define $\Gamma$. Indeed, the vertex set corresponds to the set of white symbols (with orientation) that the horizontal edges can have. For each vertex $a$, the permutation induced by $a$ on $E(v_2)$ can also be directly computed from the geometric squares. We thus obtain the labels associated to the vertices by taking the signatures. Then, given two vertices $a$ and $a'$, we can determine if there is an edge between $a$ and $a'$ by counting the number of $b \in B$ such that there is a geometric square that can be read as $(a, b, a^{\prime-1}, *)$. We put an edge if and only if there is an odd number of such $b \in B$.

In the graph $G^{(2)}_\Gamma$, a \textbf{non-repeating path} $p$ is a finite sequence of vertices $x_0, x_1, \ldots, x_n$ where $x_{i-1}$ and $x_i$ are connected by an edge for each $i \in \{1, \ldots, n\}$ and $x_{i-1} \neq x_{i+1}$ for each $i \in \{1, \ldots, n-1\}$. In other words, the path cannot use a same edge twice consecutively. Such a path has \textbf{length} $n$, \textbf{origin} $x_0$ and \textbf{destination} $d(p) = x_n$. We write $\mathcal{P}_n(x_0)$ for the set of all non-repeating paths of length $n$ whose origin is $x_0$. 

The next result now shows that the non-repeating paths in $G^{(2)}_\Gamma$ can be helpful in order to compute the values $s^{(2)}_k(a_j)$ defined above. Note that this proposition is also true for $d_2$ odd.

\begin{proposition}\label{proposition:algo}
Let $\Gamma$ be a $(d_1,d_2)$-group with $d_2 \geq 6$ even and suppose that $H_2 = \overline{\proj_2(\Gamma)}$ is non-discrete and satisfies $\underline{H_2}(v_2) \geq \Alt(d_2)$. Let $j \in \{1, \ldots, d_1\}$ and $k \in \N$. Then we have
$$s^{(2)}_k(a_j) = \prod_{p \in \mathcal{P}_k(a_j)} \sigma(d(p)).$$
\end{proposition}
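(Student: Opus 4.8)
The plan is to compute $s^{(2)}_k(a_j) = \Sgn_{(i)}(a_j, S(v_2,k))$ by analysing the action of the generator $a_j \in \Gamma(v_2)$ on $T_2$ through the rectangle-filling procedure of \S\ref{subsection:action}, and to identify the contribution of the product of local signatures at the vertices of the sphere $S(v_2,k)$ with a sum over non-repeating paths of length $k$ in $G^{(2)}_\Gamma$. The key observation is the following bijective dictionary. A vertex $w \in S(v_2,k)$ is reached from $v_2$ by a geodesic whose edges carry black symbols $b_{i_1}, \ldots, b_{i_k}$; filling in the rectangle whose bottom edge is labelled by $a_j$ and whose right edge is labelled by this geodesic produces, on its top edge, a horizontal generator $g' = a_{j'}$ (by simple transitivity, the top-left vertex determines a unique generator starting the left side of the rectangle, but it is the top \emph{edge} that matters), and $\sigma_{(i)}(a_j, w)$ records the permutation that $a_j$ induces on the $d_2$ vertical edges at $w$, whose signature is by Lemma~\ref{lemma:nocoloring} exactly $\sigma(a_{j'})$. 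So the plan is to show that as $w$ ranges over $S(v_2,k)$, the corresponding top-edge labels $a_{j'}$, counted \emph{with multiplicity mod $2$}, are precisely the destinations of the non-repeating paths in $\mathcal{P}_k(a_j)$, counted with multiplicity mod $2$; then the product of signatures over $S(v_2,k)$ equals $\prod_{p \in \mathcal{P}_k(a_j)} \sigma(d(p))$.

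The heart of the argument is a one-step recursion. Fix the generator $a = a_j$ and, instead of summing over the whole sphere, set up a transfer relation: for each $a' \in A$, let $m_k(a \to a')$ be the number mod~$2$ of vertices $w \in S(v_2,k)$ whose rectangle (built over $a$ and the geodesic $v_2 \to w$) has top edge labelled $a'$. I will show $m_{k}(a \to a') \equiv \sum_{a''} m_{k-1}(a \to a'')\cdot e(a'',a')$, where $e(a'',a')$ is the number mod~$2$ of $b \in B$ such that a geometric square reads $(a'', b, a^{\prime -1}, *)$, i.e.\ exactly the adjacency in $G^{(2)}_\Gamma$. Indeed, moving from $S(v_2,k-1)$ to $S(v_2,k)$ means appending one more vertical edge $b$ on the right side of the rectangle; stacking one more row of squares on top changes the top generator from $a''$ to $a'$ exactly when some geometric square has corner $(a'', b, a^{\prime -1}, *)$, and the ``non-repeating'' condition on paths will correspond precisely to discarding the unique backtracking vertical edge (the one that returns towards $v_2$ along the geodesic), since $R$ is $\C_2 \times \C_2$-invariant and that backtracking square is the $\sigma$- or $\rho$-image of the previous one, hence cancels mod~$2$. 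Running this recursion from $m_0(a\to a) = 1$ gives $m_k(a\to a') \equiv \#\{p \in \mathcal{P}_k(a) : d(p) = a'\} \bmod 2$, and then $s^{(2)}_k(a) = \prod_{w \in S(v_2,k)} \sigma_{(i)}(a,w) = \prod_{a'}\sigma(a')^{m_k(a\to a')} = \prod_{p\in\mathcal{P}_k(a)}\sigma(d(p))$.

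The main obstacle I expect is the careful bookkeeping of \emph{multiplicities} and the precise identification of ``which vertical edge must be excluded'' at each step so that the recursion matches the non-repeating-path condition rather than the all-paths condition; getting this exactly right requires tracking, for each vertex $w$ of $S(v_2,k)$, which edge at $w$ points back toward $v_2$, and checking that the square sitting on that edge in the $k$-th row is forced (given the row below) and contributes the identity change to the top label, so it drops out mod~$2$. A secondary subtlety is the base case $k = 0$, where $S(v_2,0) = \{v_2\}$, the ``rectangle'' degenerates to the single bottom edge $a$, and one must check $s^{(2)}_0(a_j) = \sigma(a_j)$ directly from the definition of the label $\sigma$, matching the single path of length $0$. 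Once the recursion and these two endpoint checks are in place, the statement follows; note that nothing in this argument used that $d_2$ is even or that $\underline{H_2}(v_2) \geq \Alt(d_2)$, which is why the proposition holds in the odd case as well (those hypotheses are only needed elsewhere to guarantee $H_2 \in \mathcal{G}'_{(i)}$ so that $K^{(2)}$ and $s^{(2)}$ suffice to recognise it).
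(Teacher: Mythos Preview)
Your overall strategy is the paper's: use the rectangle-filling procedure to see that the contribution of each $w\in S(v_2,k)$ to $s^{(2)}_k(a_j)$ is $\sigma(a_{j'})$, where $a_{j'}$ is the top white label of the $1\times k$ rectangle built over $a_j$ and the geodesic $v_2\to w$, and then match the resulting mod-$2$ count with non-repeating paths in $G^{(2)}_\Gamma$. The setup, the identification $\sgn(\sigma_{(i)}(a_j,w))=\sigma(a_{j'})$, and the base case $k=0$ are all correct.

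The gap is the one-step recursion. For $k\ge 3$, the excluded backtracking edge at $w'\in S(v_2,k-1)$ produces the top label of the \emph{predecessor} of $w'$, not the top label of $w'$ itself; hence the exact recursion is
\[
m_k(a\to a')=\sum_{a''} m_{k-1}(a\to a'')\,|\Rect(a'',a')|-(d_2-1)\,m_{k-2}(a\to a'),
\]
which for $d_2$ even reduces mod $2$ to $m_k\equiv E\,m_{k-1}+m_{k-2}$, not $m_k\equiv E\,m_{k-1}$. And even if your recursion held, iterating it would yield $(E^k)_{a,a'}$, the mod-$2$ count of \emph{all} walks of length $k$ in $G^{(2)}_\Gamma$, not of non-repeating ones; these parities already disagree for $k=2$ on a path graph $a\!-\!b\!-\!c$. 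So the aggregated state $m_k(a\to a')$ is too coarse: the backtracking correction depends on the \emph{previous} top label, which your state has forgotten. Your sentence ``contributes the identity change to the top label, so it drops out mod~$2$'' is exactly where this goes wrong.

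The paper's fix is to refine the state to the full white-label sequence. Setting $\Rect(x_0,\ldots,x_k)$ to be the set of $1\times k$ rectangles (equivalently, geodesics $w$) with white labels $x_0,\ldots,x_k$, one has
\[
|\Rect(x_0,\ldots,x_k)|=|\Rect(x_0,\ldots,x_{k-1})|\cdot\bigl(|\Rect(x_{k-1},x_k)|-[x_{k-2}=x_k]\bigr),
\]
and an easy induction shows $|\Rect(x_0,\ldots,x_k)|$ is odd iff $(x_0,\ldots,x_k)$ is non-repeating in $G^{(2)}_\Gamma$. Summing over the intermediate $x_i$ then gives $m_k(a\to a')\equiv\#\{p\in\mathcal P_k(a):d(p)=a'\}\pmod 2$, and the proposition follows. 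Your plan becomes correct as soon as you carry the pair $(x_{k-1},x_k)$ through the recursion instead of $x_k$ alone.
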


\begin{proof}
Given a sequence of vertices $x_0, \ldots, x_n$ of $G^{(2)}_\Gamma$, we are first interested in rectangles $1 \times n$ made of $n$ geometric squares and whose $n+1$ white symbols from bottom to top exactly correspond to the $n+1$ vertices $x_0, \ldots, x_n$ (with the good orientation). Let us denote by $\Rect(x_0, \ldots, x_n)$ the set of all those rectangles.

\begin{claim*}
$|\Rect(x_0, \ldots, x_n)|$ is odd if and only if $(x_0, \ldots, x_n)$ is a non-repeating path in $G^{(2)}_\Gamma$.
\end{claim*}

\begin{claimproof*}
We prove the claim by induction. For $n = 1$ it follows from the definition of the edge set of $G^{(2)}_\Gamma$. Now let $n \geq 2$ and assume the claim is true for $n-1$. Observe that
$$|\Rect(x_0, \ldots, x_n)|\ = \left\{\begin{array}{ll}
|\Rect(x_0, \ldots, x_{n-1})| \cdot |\Rect(x_{n-1},x_n)| & \text{ if $x_{n-2} \neq x_n$,}\\
|\Rect(x_0, \ldots, x_{n-1})| \cdot (|\Rect(x_{n-1},x_n)|-1) & \text{ 
if $x_{n-2} = x_n$.}
\end{array}\right.$$
Indeed, a rectangle $1 \times n$ in $\Rect(x_0, \ldots, x_n)$ is made of a rectangle $1 \times (n-1)$ in $\Rect(x_0, \ldots, x_{n-1})$ and a square in $\Rect(x_{n-1}, x_n)$. The term $-1$ when $x_{n-2}=x_n$ appears because the square between $x_{n-1}$ and $x_n$ cannot be the same as the one between $x_{n-1}$ and $x_{n-2}$.

Assume now that $(x_0, \ldots, x_n)$ is a non-repeating. Then $(x_0, \ldots, x_{n-1})$ is a non-repeating path, $x_n \neq x_{n-2}$ and $(x_{n-1}, x_n)$ is a non-repeating path. By the induction hypothesis, $|\Rect(x_0, \ldots, x_{n-1})|$ and $|\Rect(x_{n-1},x_n)|$ are odd, hence $|\Rect(x_0, \ldots, x_n)|$ is odd by the above formula.

Conversely, assume that $|\Rect(x_0, \ldots, x_n)|$ is odd. By the formulas above, this already means that $|\Rect(x_0, \ldots, x_{n-1})|$ is odd and thus that $(x_0, \ldots, x_{n-1})$ is a non-repeating path. Then there are two possibilities:
\begin{itemize}
\item If $x_{n-2} \neq x_n$ then we also get that $|\Rect(x_{n-1},x_n)|$ is odd, so $(x_{n-1},x_n)$ is a non-repeating path. All together, these affirmations imply that $(x_0, \ldots, x_n)$ is a non-repeating path.
\item If $x_{n-2} = x_n$ then we get that $|\Rect(x_{n-1},x_n)|$ is even, so $(x_{n-1},x_n)$ is not a non-repeating path, i.e.\ there is no edge between $x_{n-1}$ and $x_n$. This situation is however impossible: we already know that there is an edge between $x_{n-2}$ and $x_{n-1}$, and $x_{n-2} = x_n$. \hfill $\blacksquare$
\end{itemize}
\end{claimproof*}

\medskip

We now prove the proposition. Recall from Lemma~\ref{lemma:nocoloring} that
$$s^{(2)}_k(a_j) = \prod_{w \in S(v_2,k)} \sgn(\iota_{a_j(w)} \circ a_j \circ \iota_w^{-1}),$$
where $\iota_w \colon S(w,1) \to \{1, \ldots, d\}$ is any bijection for each $w \in S(v,k)$. In our context, we have a canonical choice for the bijections $\iota_w$: the edges of $T_2$ are labelled by the black symbols, and the $d_2$ edges adjacent to any vertex carry the $d_2$ different black symbols (considered with their orientation). So the bijections $\iota_w$ can simply be defined by identifying $S(w,1)$ with $E(w)$ and the set $\{1, \ldots, d\}$ with the set of black symbols with orientation.

Take some $w \in S(v_2,k)$ and some $j \in \{1, \ldots, d\}$. Let $h \in \langle b_1, \ldots, b_{d_2}\rangle$ be the element such that $h(v_2) = w$. Let us draw, as in \S\ref{subsection:action}, the rectangle $1 \times k$ made of $k$ geometric squares such that the bottom symbol corresponds to $a_j$, and the $k$ symbols on the right-hand side correspond to $h$. After having filled in the rectangle with geometric squares, we obtain the equation $a_jh = h'a_{j'}$, where $a_{j'}$ is given by the top side and $h'$ by the left side of the rectangle. From the equality $a_jh(v_2) = h'a_{j'}(v_2)$ we obtain that $a_j(w) = h'(v_2)$. Moreover, since $h$ and $h'$ preserve the symbols in $T_2$, we have that $\iota_w^{-1} = h \iota_{v_2}^{-1}$ and $\iota_{a_j(w)}^{-1} = h' \iota_{v_2}^{-1}$. Using these equalities, we get
$$\iota_{a_j(w)} a_j \iota_w^{-1} = \iota_{v_2} h^{\prime-1} a_j h \iota_{v_2}^{-1} = \iota_{v_2} a_{j'} \iota_{v_2}^{-1}.$$
This implies that
\begin{align*}
s^{(2)}_k(a_j) &= \prod_{w \in S(v_2,k)} \sgn(\iota_{a_j(w)} \circ a_j \circ \iota_w^{-1})\\
&= \prod_{(a_j,x_1,\ldots,x_k) \in V(G_\Gamma)^{k+1}} \sigma(x_k)^{|\Rect(a_j,x_1,\ldots,x_k)|}.
\end{align*}
But $|\Rect(a_j,x_1,\ldots,x_k)|$ is odd if and only if $(a_j,x_1,\ldots,x_k)$ is a non-repeating path in $G^{(2)}_\Gamma$ (by the claim), so this leads to the formula
\[s^{(2)}_k(a_j) = \prod_{(a_j,x_1,\ldots,x_k) \in \mathcal{P}_k(a_j)} \sigma(x_k).\qedhere\]
\end{proof}

The graph $G^{(2)}_\Gamma$ has $d_1$ vertices and is somewhat redundant as it has a non-trivial automorphism defined by $a \mapsto a^{-1}$ for each $a \in A$. In the particular case where $a \neq a^{-1}$ for each $a \in A$, i.e.\ when $\tau_1 = 0$ (as defined in \S\ref{subsection:complexesanddata}), we can define the \textbf{simplified labelled graph} $\tilde{G}^{(2)}_\Gamma$ associated to $\Gamma$ as follows. The vertex set $V(\tilde{G}^{(2)}_\Gamma)$ corresponds to the set of all $\{a, a^{-1}\}$ with $a \in A$, so that there are $\frac{d_1}{2}$ vertices. Then, we put an edge between $\{a,a^{-1}\}$ and $\{a',a^{\prime-1}\}$ if and only if exactly one of $a'$ and $a^{\prime-1}$ is connected to $a$ by an edge in $G^{(2)}_\Gamma$. This amounts to saying that $|R\cap(\{a\}\times B \times \{a',a^{\prime-1}\} \times B)|$ is odd. The automorphism of $G^{(2)}_\Gamma$ ensures that this edge set is well-defined. A \textbf{non-repeating path} in $\tilde{G}^{(2)}_\Gamma$ is defined exactly as in $G^{(2)}_\Gamma$, and we write $\tilde{\mathcal{P}}_n(x)$ for the set of all non-repeating paths in $\tilde{G}^{(2)}_\Gamma$ with length $n$ and origin $x$. The next proposition then shows that the values $s_k^{(2)}(a_j)$ can be computed from the simplified labelled graph $\tilde{G}^{(2)}_\Gamma$ when $\tau_1 = 0$.

\begin{proposition}\label{proposition:algo2}
Let $\Gamma$ be a $(d_1,d_2)$-group with $d_2 \geq 6$ and $\tau_1 = 0$ and suppose that $H_2 = \overline{\proj_2(\Gamma)}$ is non-discrete and satisfies $\underline{H_2}(v_2) \geq \Alt(d_2)$. Let $j \in \{1, \ldots, d_1\}$ and $k \in \N$. Then we have
$$s^{(2)}_k(a_j) = \prod_{p \in \tilde{\mathcal{P}}_k(\{a_j,a_j^{-1}\})} \sigma(d(p)).$$
\end{proposition}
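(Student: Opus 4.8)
The plan is to deduce this from Proposition~\ref{proposition:algo} by a counting argument modulo $2$ that collapses non-repeating paths in $G^{(2)}_\Gamma$ onto non-repeating paths in $\tilde{G}^{(2)}_\Gamma$. The easy but crucial preliminary remark is that the label $\sigma$ is constant on each inversion class $\{a,a^{-1}\}$, since $\sigma(a)=\sigma(a^{-1})$; hence both the conclusion of Proposition~\ref{proposition:algo} and the identity we want are products of $\pm1$'s that only see destinations up to inversion, and it suffices to control the parity of the number of $G^{(2)}_\Gamma$-paths that project onto each candidate $\tilde{G}^{(2)}_\Gamma$-path.

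Concretely, for a sequence $(X_0,X_1,\ldots,X_k)$ of vertices of $\tilde{G}^{(2)}_\Gamma$ with $X_0=\{a_j,a_j^{-1}\}$, set
$$N_k(X_1,\ldots,X_k):=\bigl|\{(x_1,\ldots,x_k)\mid x_i\in X_i\ \forall i,\ (a_j,x_1,\ldots,x_k)\in\mathcal{P}_k(a_j)\}\bigr|.$$
Starting from the formula $s^{(2)}_k(a_j)=\prod_{p\in\mathcal{P}_k(a_j)}\sigma(d(p))$ of Proposition~\ref{proposition:algo} and grouping the lifts $(x_1,\ldots,x_k)$ according to their classes $X_i=\{x_i,x_i^{-1}\}$ (using $\sigma(x_k)=\sigma(X_k)$ for the destination), one gets
$$s^{(2)}_k(a_j)=\prod_{(X_1,\ldots,X_k)}\sigma(X_k)^{N_k(X_1,\ldots,X_k)}.$$
The proposition follows once we show that $N_k(X_1,\ldots,X_k)$ is odd if and only if $(\{a_j,a_j^{-1}\},X_1,\ldots,X_k)$ is a non-repeating path in $\tilde{G}^{(2)}_\Gamma$, which I would prove by induction on $k$.

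The base case $k=1$ is exactly the definition of the edge set of $\tilde{G}^{(2)}_\Gamma$: here $\tau_1=0$ guarantees $x_1\neq x_1^{-1}$, so a class has two distinct lifts, and $N_1(X_1)$ is the number of elements of $\{x_1,x_1^{-1}\}$ joined to $a_j$ in $G^{(2)}_\Gamma$, which is odd exactly when $\{a_j,a_j^{-1}\}$ and $X_1$ are joined in $\tilde{G}^{(2)}_\Gamma$. For the inductive step, one extends a non-repeating path $(a_j,x_1,\ldots,x_{k-1})$ of $G^{(2)}_\Gamma$ by one vertex $x_k\in X_k$ subject to $x_{k-1}\sim x_k$ and, by the no-immediate-backtracking condition at the penultimate vertex, $x_k\neq x_{k-2}$. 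As $x_{k-2}\sim x_{k-1}$ already holds, the number of admissible $x_k\in X_k$ is $m(x_{k-1},X_k)-[x_{k-2}\in X_k]$ (writing $[\cdot]$ for the indicator of a statement), where $m(x_{k-1},X_k):=\bigl|\{x_k\in X_k:x_{k-1}\sim x_k\}\bigr|$; a short computation using the $\C_2\times\C_2$-invariance of $R$ (i.e.\ the automorphism $a\mapsto a^{-1}$ of $G^{(2)}_\Gamma$) shows that $m(x_{k-1},X_k)$ reduced mod $2$ depends only on the classes and equals $[X_{k-1}\sim X_k\text{ in }\tilde{G}^{(2)}_\Gamma]$. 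Summing over all non-repeating $(a_j,x_1,\ldots,x_{k-1})$ and reducing modulo $2$ yields
$$N_k(X_1,\ldots,X_k)\equiv\big([X_{k-1}\sim X_k]+[X_{k-2}=X_k]\big)\,N_{k-1}(X_1,\ldots,X_{k-1})\pmod 2,$$
where $[X_{k-2}=X_k]$ uses that distinct inversion classes are disjoint. Plugging in the induction hypothesis for $N_{k-1}$, it remains to note that if $(\{a_j,a_j^{-1}\},X_1,\ldots,X_{k-1})$ is a non-repeating path then $X_{k-2}\sim X_{k-1}$, so $X_{k-2}=X_k$ already forces $X_{k-1}\sim X_k$; hence ``exactly one of $[X_{k-1}\sim X_k]$, $[X_{k-2}=X_k]$'' is equivalent to ``$X_{k-1}\sim X_k$ and $X_{k-2}\neq X_k$'', which is precisely the condition for $(\{a_j,a_j^{-1}\},X_1,\ldots,X_k)$ to be a non-repeating path. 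This closes the induction, and the proposition follows.

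The main obstacle is the combinatorial accounting in the inductive step: one has to keep careful track of which data survive only up to inversion and which (the last vertex $x_{k-1}$ and the penultimate one $x_{k-2}$, needed for the backtracking correction) must be remembered with orientation, check that after summing over the two lifts of each class all orientation-dependence cancels modulo $2$, and — the genuinely subtle point — observe that being a non-repeating path in $\tilde{G}^{(2)}_\Gamma$ already excludes the degenerate configuration in which the correction term and the edge term would disagree. The hypothesis $\tau_1=0$ is exactly what makes the scheme work: it is what allows $\tilde{G}^{(2)}_\Gamma$ to be defined at all and ensures that every vertex of $\tilde{G}^{(2)}_\Gamma$ has exactly two distinct lifts in $G^{(2)}_\Gamma$.
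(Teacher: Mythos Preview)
Your argument is correct. Both the base case and the inductive recursion
\[
N_k(X_1,\ldots,X_k)\equiv\bigl([X_{k-1}\sim X_k]+[X_{k-2}=X_k]\bigr)\,N_{k-1}(X_1,\ldots,X_{k-1})\pmod 2
\]
are sound, and the final case analysis (using that $X_{k-2}\sim X_{k-1}$ holds whenever $N_{k-1}$ is odd, so $X_{k-2}=X_k$ forces $X_{k-1}\sim X_k$) cleanly closes the induction.

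Your route, however, differs from the paper's. Rather than collapsing paths to their inversion-class projections and counting lifts modulo~$2$ by a recursion, the paper works entirely inside $G^{(2)}_\Gamma$: it calls an edge $(x,y)$ \emph{stylish} if $(x,y^{-1})$ is also an edge, calls a non-repeating path \emph{redundant} if it contains a stylish edge, and constructs an explicit fixed-point-free involution (the \emph{mirror}) on redundant non-repeating paths which preserves the destination up to inversion. Redundant paths therefore cancel in pairs in the product from Proposition~\ref{proposition:algo}, and the remaining non-redundant paths are shown to be in bijection with the non-repeating paths of $\tilde G^{(2)}_\Gamma$. Your approach is more arithmetic and systematic --- it never singles out special edges and handles everything uniformly via one recursion --- whereas the paper's is bijective and perhaps more transparent once one sees the stylish/mirror construction. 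Both exploit the same underlying fact: an edge $(x,y)$ of $G^{(2)}_\Gamma$ descends to an edge of $\tilde G^{(2)}_\Gamma$ precisely when it is not stylish, i.e.\ when $m(x,\{y,y^{-1}\})$ is odd.
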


\begin{proof}
Recall that $\tau_1 = 0$ means that $a \neq a^{-1}$ for all $a \in A$. Let us first focus on the (not simplified) labelled graph $G^{(2)}_\Gamma$. Given an edge $(x,y)$ in $G^{(2)}_\Gamma$, we say that $(x,y)$ is \textbf{stylish} if $(x,y^{-1})$ is also an edge in  $G^{(2)}_\Gamma$. Note that this also means that $(x^{-1},y)$ and $(x^{-1},y^{-1})$ are edges in  $G^{(2)}_\Gamma$. Let us say that a non-repeating path $(x_0,\ldots,x_n)$ in $G^{(2)}_\Gamma$ is \textbf{redundant} if there exists $i \in \{0,\ldots,n-1\}$ such that $(x_i,x_{i+1})$ is stylish. Given such a redundant non-repeating path, we let $i \geq 1$ be the smallest number such that $(x_i,x_{i+1})$ is stylish, and $j \leq n$ be the greatest number such that all edges $(x_i,x_{i+1})$, $(x_{i+1},x_{i+2})$, $\ldots$, $(x_{j-1},x_j)$ are stylish. Then we define the \textbf{mirror} $m(p)$ of the path $p = (x_0,\ldots,x_n)$ to be
$$(x_0,\ldots,x_i,x_{i+1}^{-1},x_{i+2},x_{i+3}^{-1},\ldots,x_{j-1}^{-1},x_j,x_{j+1},\ldots,x_n)$$
if $j \equiv i \bmod 2$ and
$$(x_0,\ldots,x_i,x_{i+1}^{-1},x_{i+2},x_{i+3}^{-1},\ldots,x_{j-1},x_j^{-1},x_{j+1}^{-1},\ldots,x_n^{-1})$$
if $j \not\equiv i \bmod 2$. The mirror of a redundant non-repeating path is still a redundant non-repeating path, and taking the mirror is an involution. This means that, if we look at the formula
$$s^{(2)}_k(a_j) = \prod_{p \in \mathcal{P}_k(a_j)} \sigma(d(p)).$$
given by Proposition~\ref{proposition:algo}, we can simply compute the product over the non-redundant non-repeating path in $\mathcal{P}_k(a_j)$. Indeed, the redundant ones come by pairs $(p,m(p))$, and $d(p) = d(m(p))$ or $d(m(p))^{-1}$ so that $p$ and $m(p)$ give the same signs. In order to conclude, there remains to observe that the map
$$(a_j,x_1,\ldots,x_n) \mapsto (\{a_j,a_j^{-1}\},\{x_1,x_1^{-1}\},\ldots,\{x_n,x_n^{-1}\})$$
defines a bijection between the set of non-redundant non-repeating paths in $G^{(2)}_\Gamma$ starting in $a_j$ and the set of non-repeating paths in $\tilde{G}^{(2)}_\Gamma$ starting in $\{a_j,a_j^{-1}\}$. This is a simple exercise.
\end{proof}

Thanks to Propositions~\ref{proposition:algo} or~\ref{proposition:algo2}, the invariants $K^{(2)}$ and $s^{(2)} \colon H_2(v_2) \to (\C_2)^{K^{(2)}+1}$ can be easily computed from the geometric squares defining $\Gamma$. For small $d_1$ and $d_2$ this can be done by hand, as illustrated in \S\ref{subsection:example}. We also know in advance, from Lemmas~\ref{lemma:valueS} and~\ref{lemma:valueS*}, that $s^{(2)}(H_2(v_2))$ must take the form $\{(s_0, \ldots, s_{K^{(2)}}) \in (\C_2)^{K^{(2)}+1} \mid \prod_{r \in X} s_r = 1\}$ for some $X \subset_f \N$ with $\max X = K^{(2)}$. If $0 \in X$, then $X \not \in \alpha(\{Y \subset_f \N\})$ by Lemma~\ref{lemma:alpha} and thus the only possibility for $H_2$ is to be equal to $G_{(i)}(X,X)$ for some legal coloring $i$ of $T_2$. On the other hand, if there exists $Y \subset_f \N$ such that $\alpha(Y) = X$, then $Y$ is unique (once again by Lemma~\ref{lemma:alpha}) and we conclude that $H_2$ is equal to one of the four groups $G_{(i)}(X,X)$, $G_{(i)}(Y^*,Y^*)$, $G_{(i)}(Y,Y)^*$ and $G_{(i)}'(Y,Y)^*$ for some legal coloring $i$ of $T_2$. Then it is still possible to compute the invariant $\rho^{(2)}$: if $\rho^{(2)} = 2$ then $H_2 = G_{(i)}(X,X)$, if $\rho^{(2)} = 2^{d_2}$ then $H_2 = G_{(i)}(Y^*,Y^*)$, and if $\rho^{(2)} = 2^{d_2-1}$ then $H_2 = G_{(i)}(Y,Y)^*$ or $G_{(i)}'(Y,Y)^*$. However, computing $\rho^{(2)}$ in general requires a computer, and even a computer is too slow if $K^{(2)}$ is big. In the next subsection, we see how to identify which of the four groups is the good one.

\subsection{Choosing among the four possible groups}
\label{subsection:choosing}

Let us suppose we are in presence of a $(d_1,d_2)$-group as in \S\ref{subsection:graph} and such that $s^{(2)}(H_2(v_2)) = \{(s_0, \ldots, s_{K^{(2)}}) \in (\C_2)^{K^{(2)}+1} \mid \prod_{r \in X} s_r = 1\}$ for some $X \subset_f \N$ with $\max X = K^{(2)}$ and $0 \not \in X$. Let $Y \subset_f \N$ be such that $\alpha(Y) = X$. Our goal is now to give a method enabling us to determine which of the four groups $G_{(i)}(X,X)$, $G_{(i)}(Y^*,Y^*)$, $G_{(i)}(Y,Y)^*$ and $G_{(i)}'(Y,Y)^*$ is isomorphic to $H_2$.

We start with the following proposition which, in some sense, enables to compute the invariant $\rho^{(2)} \in \{1, 2^{d_2-1}, 2^{d_2}\}$.

\begin{proposition}\label{proposition:choose4}
Let $\Gamma$ be a $(d_1,d_2)$-group with $d_2 \geq 6$ and suppose that $H_2 = \overline{\proj_2(\Gamma)}$ is non-discrete and satisfies $\underline{H_2}(v_2) \geq \Alt(d_2)$. Let $X \subset_f \N$ be such that $\max X = K^{(2)}$ and
$$s^{(2)}(H_2(v_2)) = \left\{(s_0, \ldots, s_{K^{(2)}}) \in (\C_2)^{K^{(2)}+1} \suchthat \prod_{r \in X} s_r = 1\right\},$$
and assume that $0 \not \in X$. Let $Y \subset_f \N$ be such that $\alpha(Y) = X$.

For each $j \in \{1, \ldots, d_1\}$, define $\Sigma_j = \prod_{r \in Y} s^{(2)}_r(a_j) \in \{-1, 1\}$. Also, for each $j \in \{1, \ldots, d_1\}$ and each $k \in \{1, \ldots, d_2\}$, define $\mu_{j,k} \in \{1, \ldots, d_1\}$ and $\nu_{j,k} \in \{1, \ldots, d_2\}$ so that $a_j b_k = b_{\nu_{j,k}} a_{\mu_{j,k}}$.

Then exactly one of the following assertions holds.

\begin{enumerate}[(1)]
\item There exist $x_1, \ldots, x_{d_2} \in \{-1, 1\}$ such that
$$\hspace{-0.5cm}(*)\ \left\{\begin{array}{l}
x_1 x_{\nu_{1,1}} \Sigma_{\mu_{1,1}} = x_2 x_{\nu_{1,2}} \Sigma_{\mu_{1,2}} = \cdots = x_{d_2} x_{\nu_{1,d_2}} \Sigma_{\mu_{1,d_2}} = \Sigma_1 \\
x_1 x_{\nu_{2,1}} \Sigma_{\mu_{2,1}} = x_2 x_{\nu_{2,2}} \Sigma_{\mu_{2,2}} = \cdots = x_{d_2} x_{\nu_{2,d_2}} \Sigma_{\mu_{2,d_2}} = \Sigma_2 \\
\quad \vdots \\
x_1 x_{\nu_{d_1,1}} \Sigma_{\mu_{d_1,1}} = x_2 x_{\nu_{d_1,2}} \Sigma_{\mu_{d_1,2}} = \cdots = x_{d_2} x_{\nu_{d_1,d_2}} \Sigma_{\mu_{d_1,d_2}} = \Sigma_{d_2} \\
\end{array}\right.$$
and $H_2 = G_{(i)}(Y,Y)^*$ or $G_{(i)}'(Y,Y)^*$ for some legal coloring $i$ of $T_2$.
\item There exist no $x_1, \ldots, x_{d_2} \in \{-1, 1\}$ satisfying the system $(*)$ but there exist $x_1, \ldots, x_{d_2} \in \{-1, 1\}$ such that
$$\hspace{-0.5cm}(**)\ \left\{\begin{array}{l}
x_1 x_{\nu_{1,1}} \Sigma_{\mu_{1,1}} = x_2 x_{\nu_{1,2}} \Sigma_{\mu_{1,2}} = \cdots = x_{d_2} x_{\nu_{1,d_2}} \Sigma_{\mu_{1,d_2}} \\
x_1 x_{\nu_{2,1}} \Sigma_{\mu_{2,1}} = x_2 x_{\nu_{2,2}} \Sigma_{\mu_{2,2}} = \cdots = x_{d_2} x_{\nu_{2,d_2}} \Sigma_{\mu_{2,d_2}} \\
\quad \vdots \\
x_1 x_{\nu_{d_1,1}} \Sigma_{\mu_{d_1,1}} = x_2 x_{\nu_{d_1,2}} \Sigma_{\mu_{d_1,2}} = \cdots = x_{d_2} x_{\nu_{d_1,d_2}} \Sigma_{\mu_{d_1,d_2}} \\
\end{array}\right.$$
and $H_2 = G_{(i)}(Y^*,Y^*)$ for some legal coloring $i$ of $T_2$.
\item There exist no $x_1, \ldots, x_{d_2} \in \{-1, 1\}$ satisfying the system $(*)$ or $(**)$, and $H_2 = G_{(i)}(X,X)$ for some legal coloring $i$ of $T_2$.
\end{enumerate}
\end{proposition}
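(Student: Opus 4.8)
\textit{Proof idea.} Everything reduces to a cocycle computation for $\Sgn_{(i)}$ together with a comparison of the invariant $\rho^{(2)}$. Since $\alpha(Y)=X$ with $0\notin X$, Lemma~\ref{lemma:stars} shows that each of the four candidate groups lies inside $G_{(i)}(X,X)$ for the appropriate legal coloring, so we may fix a legal coloring $i$ of $T_2$ with $H_2\subseteq G_{(i)}(X,X)$. Write $w_k=b_k(v_2)$ for the $d_2$ neighbours of $v_2$ and set $y_k=\Sgn_{(i)}(b_k,S_Y(v_2))\in\{-1,1\}$. Using the cocycle identity $\Sgn_{(i)}(gh,A)=\Sgn_{(i)}(g,h(A))\,\Sgn_{(i)}(h,A)$ applied to $a_j=b_{\nu_{j,k}}a_{\mu_{j,k}}b_k^{-1}$ (here $a_{\mu_{j,k}}$ fixes $v_2$ and $b_k^{-1}(S_Y(w_k))=S_Y(v_2)$), one gets the basic formula
\[\Sgn_{(i)}(a_j,S_Y(w_k))=y_k\,y_{\nu_{j,k}}\,\Sigma_{\mu_{j,k}},\]
where $\Sigma_m=\Sgn_{(i)}(a_m,S_Y(v_2))=\prod_{r\in Y}s^{(2)}_r(a_m)$ is independent of the coloring by Lemma~\ref{lemma:nocoloring}. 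The left-hand sides are exactly the entries appearing in the systems $(*)$ and $(**)$, with $x_k$ playing the role of $y_k$.

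\textit{Forward direction (group $\Rightarrow$ system).} Every $a_j$ fixes $v_2$, hence is type-preserving on $T_2$. If $H_2$ is conjugate to $G_{(i)}(Y,Y)^*$ or $G_{(i)}'(Y,Y)^*$, pick the coloring $i$ realizing this; by the definitions of these two groups (for the primed one using $a_j\in\Aut(T_2)^+$, which forces $p_0=p_1$) all $\Sgn_{(i)}(a_j,S_Y(v))$ are equal, so $\Sgn_{(i)}(a_j,S_Y(w_k))=\Sgn_{(i)}(a_j,S_Y(v_2))=\Sigma_j$ for every $k$, and the basic formula says $x_k:=y_k$ solves $(*)$. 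If instead $H_2$ is conjugate to $G_{(i)}(Y^*,Y^*)$, then $\Sgn_{(i)}(a_j,S_Y(\cdot))$ is constant on $V_0(T_2)$ and on $V_1(T_2)$; since the $w_k$ all have the type opposite to $v_2$, the values $\Sgn_{(i)}(a_j,S_Y(w_k))$ agree over $k$, so $x_k:=y_k$ solves $(**)$ (but possibly not $(*)$, as the common value need not be $\Sigma_j$). As any solution of $(*)$ is a solution of $(**)$, the Boolean conditions distinguishing (1), (2), (3) are mutually exclusive and exhaustive, so it remains only to prove the converse identifications.

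\textit{Converse (system $\Rightarrow$ group) and conclusion.} Suppose $(*)$ (resp.\ $(**)$) has a solution $(x_k)$. A change of legal coloring multiplies $\Sgn_{(i)}(g,S_Y(v))$ by $\epsilon(v)\epsilon(g(v))$, where $\epsilon(v)=\prod_{w\in S_Y(v)}\delta(w)$ for the sign function $\delta$ of the local relabellings. The plan is to build $\delta$, hence a coloring $i'$, recursively from $v_2$ outwards along the tree $T_2\cong$ Cayley graph of $\Gamma(v_1)=\langle b_1,\dots,b_{d_2}\rangle$, so that $\Sgn_{(i')}(b_k,S_Y(v_2))=x_k$ for all $k$ and, more importantly, the same normalization is respected everywhere. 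Granting this, the basic formula for $i'$ together with the system gives $\Sgn_{(i')}(a_j,S_Y(w_k))=x_kx_{\nu_{j,k}}\Sigma_{\mu_{j,k}}$, which is constant in $k$ (resp.\ constant in $k$ and equal to $\Sigma_j$); propagating through $T_2$ via the relations $a_jb_k=b_{\nu_{j,k}}a_{\mu_{j,k}}$ and density of $\langle a_1,\dots,a_{d_1}\rangle$ in $H_2(v_2)$, one concludes $H_2(v_2)\subseteq G_{(i')}(Y^*,Y^*)(v_2)$ (resp.\ $\subseteq G_{(i')}(Y,Y)^*(v_2)$). Since $K^{(2)}=\max Y+1$ coincides with the value of $K$ for all four groups (Lemma~\ref{lemma:valueS*}), this containment of vertex stabilizers forces $\rho^{(2)}\geq 2^{d_2-1}$ (resp.\ $\geq 2^{d_2}$); but $H_2$ is one of the four groups, so $\rho^{(2)}\in\{2,2^{d_2-1},2^{d_2}\}$, whence $\rho^{(2)}=2^{d_2-1}$ (resp.\ $=2^{d_2}$). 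By Lemma~\ref{lemma:valueS*} and the pairwise non-conjugacy of the members of $\mathcal{G}'_{(i)}$ (Theorem~\ref{theorem:Raduclassification}(i)), this leaves $H_2\cong G_{(i)}(Y^*,Y^*)$ (resp.\ $H_2\cong G_{(i)}(Y,Y)^*$ or $G_{(i)}'(Y,Y)^*$). Combining with the forward direction yields the three equivalences, and hence that exactly one of (1), (2), (3) holds.

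\textit{Main obstacle.} The crux is the construction of the coloring $i'$ in the converse: one must upgrade the finite, first-order data of a solution $(x_k)$ of $(*)$ or $(**)$ into a global recoloring of $T_2$ and then check that the resulting functions $v\mapsto\Sgn_{(i')}(g,S_Y(v))$ are genuinely (type-)constant for \emph{every} $g\in H_2(v_2)$, not merely for the generators near $v_2$. This is exactly where the tree structure is used: because $T_2$ is the Cayley graph of $\Gamma(v_1)$ and every element of $H_2(v_2)$ is assembled from the $a_j$ via the squares $a_jb_k=b_{\nu_{j,k}}a_{\mu_{j,k}}$, the local consistency encoded by the system propagates to global consistency of the recoloring, and one must verify both that the recursively defined $\delta$ is well defined and that the propagation goes through. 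The remaining steps (the cocycle computation, the mutual-exclusivity bookkeeping, the $\rho^{(2)}$-comparison and the appeal to the classification) are routine given the lemmas already established.
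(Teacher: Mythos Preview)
Your cocycle computation (the ``basic formula'') and the forward direction match the paper exactly. Two issues in the converse.

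\textbf{A swap.} Your ``(resp.)'' is inverted throughout the converse paragraph. System $(*)$ is the \emph{stronger} one (it carries the clause ``$=\Sigma_j$''), so a solution of $(*)$ yields $\Sgn_{(i')}(a_j,S_Y(w_k))=\Sigma_j$, the containment in $G_{(i')}(Y,Y)^*(v_2)$, and the bound $\rho^{(2)}\geq 2^{d_2}$, forcing $H_2\in\{G_{(i)}(Y,Y)^*,G'_{(i)}(Y,Y)^*\}$. It is $(**)$ that gives mere constancy and $\rho^{(2)}\geq 2^{d_2-1}$. As written, your conclusions for $(*)$ and $(**)$ are exchanged, and the step ``$\rho^{(2)}\geq 2^{d_2-1}$ whence $\rho^{(2)}=2^{d_2-1}$'' is unjustified without first using non-solvability of $(*)$ to exclude $\rho^{(2)}=2^{d_2}$.

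\textbf{The obstacle is self-imposed.} The paper does not attempt any global recoloring or propagation to all of $T_2$. It simply observes that every tuple $(x_k)\in\{-1,1\}^{d_2}$ arises as $(x_k^{(i)})$ for \emph{some} legal coloring $i$ (a finite adjustment: the sets $S_Y(w_k)$ reach into pairwise disjoint branches of the tree where the coloring can be tweaked independently). With such an $i$, the basic formula places each generator $a_j$ in the set
\[
S=\bigl\{h\in\Aut(T_2)(v_2):\Sgn_{(i)}(h,S_Y(v))\text{ is constant over }v\in B(v_2,1)\bigr\},
\]
and the cocycle identity shows $S$ is a closed subgroup (elements of the vertex stabilizer permute the $w_k$, so the constancy condition is preserved under products). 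Hence $H_2(v_2)\subseteq S$. Since the defining constraints of $S$ depend only on $B(v_2,K^{(2)}+1)$, this immediately gives $\rho^{(2)}\geq 2^{d_2}$, with no recursion along $T_2$. Your global construction, if it can be made to work, proves strictly more than is needed; the local argument is both simpler and complete.
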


\begin{proof}
For each $w \in V(T_2)$, we define $\iota_w \colon E(w) \to \{1, \ldots, d_2\}$ as before, i.e.\ so that the edge $e \in E(w)$ is labelled by the black symbol corresponding to $b_{\iota_w(e)}$.

Given a legal coloring $i$ of $T_2$ and some $k \in \{1, \ldots, d_2\}$, we define $x_k^{(i)} = \prod_{z \in S(b_k(v_2), Y)} \sgn (i \circ \iota_z^{-1}) \in \{-1, 1\}$. It is clear that any element of $\{-1,1\}^{d_2}$ can be written as $(x_1^{(i)}, \ldots, x_{d_2}^{(i)})$ for some legal coloring $i$. Now for such a coloring, we write $i_z \colon S(z,1) \to \{1, \ldots, d_2\}$ for the restriction of $i$ to $S(z,1)$, and compute
\begin{align*}
&\prod_{z \in S(b_k(v_2), Y))} \sgn(i_{a_j(z)} \circ a_j \circ i_z^{-1})\\
=& \prod_{z \in S(b_k(v_2), Y))} \sgn(i_{a_j(z)} \circ \iota^{-1}_{a_j(z)}) \sgn(\iota_{a_j(z)} \circ a_j \circ \iota_z^{-1}) \sgn(\iota_z \circ i_z^{-1})\\
=&\ x^{(i)}_k x^{(i)}_{\nu_{j,k}} \prod_{z \in S(b_k(v_2), Y)} \sgn(\iota_{a_j(z)} \circ a_j \circ \iota_z^{-1})\\
=&\ x^{(i)}_k x^{(i)}_{\nu_{j,k}} \prod_{z \in S(v_2, Y)} \sgn(\iota_{a_jb_k(z)} \circ a_j \circ \iota_{b_k(z)}^{-1})\\
=&\ x^{(i)}_k x^{(i)}_{\nu_{j,k}} \prod_{z \in S(v_2, Y)} \sgn(\iota_{b_{\nu_{j,k}}a_{\mu_{j,k}}(z)} \circ b_{\nu_{j,k}}\circ a_{\mu_{j,k}} \circ b_k^{-1} \circ \iota_{b_k(z)}^{-1})\\
=&\ x^{(i)}_k x^{(i)}_{\nu_{j,k}} \prod_{z \in S(v_2, Y)} \sgn(\iota_{a_{\mu_{j,k}}(z)} \circ a_{\mu_{j,k}} \circ \iota_z^{-1})\\
=&\ x^{(i)}_k x^{(i)}_{\nu_{j,k}} \Sigma_{\mu_{j,k}}.
\end{align*}
This implies that, if $H_2 = G_{(i)}(Y,Y)^*$ or $G_{(i)}'(Y,Y)^*$, then $(x_1^{(i)}, \ldots, x_{d_2}^{(i)})$ is a solution of $(*)$. Conversely, if $(x_1^{(i)}, \ldots, x_{d_2}^{(i)})$ is a solution of $(*)$ for some coloring $i$, then the equalities defining $G_{(i)}(Y,Y)^*$ are true in $B(v_2,\max Y+2)$ and we can deduce in particular that $\rho^{(2)} \geq 2^{d_2}$. In view of Lemmas~\ref{lemma:valueS} and~\ref{lemma:valueS*}, the only options for $H_2$ are then $G_{(i)}(Y,Y)^*$ and $G_{(i)}'(Y,Y)^*$ (for some coloring $i$ that may be different).

Now if we assume that $(*)$ has no solution, $H_2$ is different from $G_{(i)}(Y,Y)^*$ and $G_{(i)}'(Y,Y)^*$. Then by the same argument we obtain that $(**)$ has a solution if and only if $H_2 = G_{(i)}(Y^*,Y^*)$. In the case where neither $(*)$ nor $(**)$ has a solution, the only remaining possibility is to have $H_2 = G_{(i)}(X,X)$.
\end{proof}

The next proposition then explains how $G_{(i)}(Y,Y)^*$ and $G'_{(i)}(Y,Y)^*$ can be distinguished. As explained earlier, this requires observing an element exchanging the two types of vertices.

\begin{proposition}\label{proposition:primeornot}
Let $\Gamma$ be a $(d_1,d_2)$-group as in Proposition~\ref{proposition:choose4}, and assume that $H_2 = G_{(i)}(Y,Y)^*$ or $G_{(i)}'(Y,Y)^*$ for some legal coloring $i$ of $T_2$. Let $k \in \{1, \ldots, d_2\}$, $m \geq 0$ and $j_1, \ldots, j_m, j'_1, \ldots, j'_m \in \{1, \ldots, d_1\}$ be such that
$$a_{j_1}\cdots a_{j_m} b_k = b_k^{-1} a'_{j'_1} \cdots a'_{j'_m}.$$
Then $H_2 = G_{(i)}(Y,Y)^*$ if and only if $\Sigma_{j_1} \cdots \Sigma_{j_m} \Sigma_{j'_1} \cdots \Sigma_{j'_m} = 1$, where $\Sigma_j = \prod_{r \in Y} s^{(2)}_r(a_j) \in \{-1, 1\}$ for each $j \in \{1, \ldots, d_1\}$.
\end{proposition}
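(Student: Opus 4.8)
The two groups $G_{(i)}(Y,Y)^*$ and $G_{(i)}'(Y,Y)^*$ share the same type-preserving subgroup $G_{(i)}^+(Y,Y)^*$ and the same vertex-stabilizer invariants, so they can only be told apart by looking at an element that swaps the two types of vertices of $T_2$. The natural candidate is $b_k$ itself, viewed as $\proj_2(b_k) \in H_2$: since $b_k$ sends $v_2$ to a neighboring vertex, it is type-reversing on $T_2$. The plan is to compute, for this fixed type-reversing element $g = \proj_2(b_k)$, the pair $(p_0, p_1)$ of "signature-values at even/odd spheres" that appears in the definition of $G'_{(i)}(Y,Y)^*$, namely $p_0 = \Sgn_{(i)}(g, S_Y(v))$ for $v \in V_0(T_2)$ and $p_1 = \Sgn_{(i)}(g, S_Y(v))$ for $v \in V_1(T_2)$ (these are well-defined constants because $H_2$ lies in $G_{(i)}(Y^*,Y^*)$ by Lemma~\ref{lemma:valueS*} together with the hypothesis). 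By definition, $H_2 = G'_{(i)}(Y,Y)^*$ exactly when $p_0 \neq p_1$ for this type-reversing $g$, and $H_2 = G_{(i)}(Y,Y)^*$ exactly when $p_0 = p_1$. So the whole statement reduces to showing that $p_0 p_1 = \Sigma_{j_1}\cdots\Sigma_{j_m}\Sigma_{j'_1}\cdots\Sigma_{j'_m}$.

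**The core computation.** To evaluate $p_0 p_1$, I would pick a vertex where I can compute one of the two values directly, and relate the other via the relation $a_{j_1}\cdots a_{j_m} b_k = b_k^{-1} a'_{j'_1}\cdots a'_{j'_m}$. Concretely: take $v_2 \in V_0(T_2)$, say, and compute $\Sgn_{(i)}(\proj_2(b_k), S_Y(v_2))$; this is $p_0$ (or $p_1$, depending on the type of $v_2$, which is immaterial by symmetry). The relation says that following $a_{j_1}\cdots a_{j_m}$ then $b_k$ in $T_1 \times T_2$ lands at the same vertex as following $b_k^{-1}$ then $a'_{j'_1}\cdots a'_{j'_m}$ — this is exactly a "rectangle" as in \S\ref{subsection:action}, with bottom side labelled by $a_{j_1}\cdots a_{j_m}$, right side by $b_k$, top side by $a'_{j'_1}\cdots a'_{j'_m}$, left side by $b_k^{-1}$. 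Reading the left side: $w := b_k(v_2)$ and the word $b_k^{-1}$ read along the left edge tells us where things go. The key point is that $\proj_2(b_k)$ maps the vertex on the left side's bottom (which is $v_2$, image under the horizontal word of nothing... let me re-index): concretely, the element $h' = b_k^{-1}$ on the left side and $g' = a'_{j'_1}\cdots a'_{j'_m}$ on top give $(a_{j_1}\cdots a_{j_m})b_k = b_k^{-1}(a'_{j'_1}\cdots a'_{j'_m})$, and evaluating at $v_2 \in V(T_2)$ shows $b_k \cdot (a'_{j'_1}\cdots a'_{j'_m})(v_2) = (a_{j_1}\cdots a_{j_m})(v_2) = v_2$ since all $a_j$ fix $v_2$. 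So $\proj_2(b_k)$ sends the vertex $z_0 := (a'_{j'_1}\cdots a'_{j'_m})^{-1}(v_2)$... I would instead argue as in the proof of Proposition~\ref{proposition:choose4}: the quantity $\Sigma_j = \prod_{r\in Y} s^{(2)}_r(a_j)$ is precisely $\Sgn_{(i_0)}(\proj_2(a_j), S_Y(v_2))$ for the reference coloring, and the product of these along a word tracks how $\Sgn_{(i)}(\cdot, S_Y(\cdot))$ changes as one conjugates by the $a_j$'s — each $a_j$ both contributes its own $\Sigma_j$ and shifts the base vertex. Carrying the word relation through $\Sgn_{(i)}$ and using multiplicativity of $\Sgn$ under composition (exactly the telescoping computation in the proof of Proposition~\ref{proposition:choose4}, with the $x^{(i)}_k$ terms cancelling in pairs because each $b_k$ appears once on each side), the $x$-coloring contributions cancel and one is left with $p_0 p_1 = \Sigma_{j_1}\cdots\Sigma_{j_m}\Sigma_{j'_1}\cdots\Sigma_{j'_m}$.

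**Why this gives the dichotomy.** Since $b_k$ appears exactly once on the left side ($b_k^{-1}$) and its "mirror" on the right side, the two spheres $S_Y$ being compared are centered at vertices of opposite type in $T_2$; thus $\Sgn_{(i)}(\proj_2(b_k), S_Y(\text{one vertex})) \cdot \Sgn_{(i)}(\proj_2(b_k), S_Y(\text{other vertex})) = p_0 p_1$. If this product is $1$ then $p_0 = p_1$, meaning the type-reversing element $\proj_2(b_k)$ has equal signature-values on both types, which by Definition~\ref{definition:Radu} forces $H_2 = G_{(i)}(Y,Y)^*$ (in $G'_{(i)}(Y,Y)^*$ a type-reversing element must have $p_0 \neq p_1$); if the product is $-1$ then $p_0 \neq p_1$ for a type-reversing element, which is compatible only with $G'_{(i)}(Y,Y)^*$.

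**Main obstacle.** The delicate point is the bookkeeping in the telescoping computation: one must be careful that the coloring-dependent factors $x^{(i)}_k$ (arising from comparing the canonical bijections $\iota_w$ with the coloring $i$) genuinely cancel. They do so precisely because $b_k$ occurs once on each side of the relation $a_{j_1}\cdots a_{j_m}b_k = b_k^{-1} a'_{j'_1}\cdots a'_{j'_m}$, so the $x$-factors attached to the endpoints of the $b_k$-edge and the $b_k^{-1}$-edge match up; but verifying this requires tracking which vertex of $T_2$ each $\iota$ is attached to along the rectangle, and confirming that the two endpoints being compared have opposite type (which is what makes $p_0 p_1$, rather than $p_0^2 = 1$, appear). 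A secondary subtlety is checking that $\Sigma_j$ is coloring-independent in the relevant sense — but this is already guaranteed by Lemma~\ref{lemma:nocoloring}, since $\Sigma_j = \prod_{r \in Y} s^{(2)}_r(a_j)$ and each $s^{(2)}_r$ is coloring-independent.
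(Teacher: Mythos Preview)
Your high-level strategy is right --- exploit a type-reversing element and show that the parity of its $\Sgn$-values on the two vertex types is governed by $\prod \Sigma_j$ --- but your execution never quite lands. The attempt to work directly with $g=\proj_2(b_k)$ stalls (as you yourself note mid-argument), and the fallback to ``the telescoping from Proposition~\ref{proposition:choose4}'' is not the right analogy: there each rectangle is $1\times 1$, so the $x^{(i)}_k$ factors are attached to single $b$-edges and cancel because the same $b$-neighbour appears on both sides of a conjugation; here you have an $m\times 1$ rectangle and the mechanism is different. What you would actually need, if you insist on using $g=\proj_2(b_k)$, is the extra step that $\Sgn_{(i)}(\alpha, S_Y(w))=\Sgn_{(i)}(\alpha,S_Y(v_2))$ for the type-preserving element $\alpha=\proj_2(a_{j_1}\cdots a_{j_m})$ --- which holds because $H_2$ is already known to lie in $G_{(i)}(Y^*,Y^*)$ --- but you never isolate this step, and the ``$x^{(i)}_k$ cancellation'' slogan does not supply it.

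The paper's argument avoids all of this by a simple device. Set $\gamma=\proj_2(a_{j_1}\cdots a_{j_m} b_k)$; this is type-reversing and sends $v_2$ to a neighbour $w$. The given relation immediately yields
\[
\gamma^2=\proj_2(a_{j_1}\cdots a_{j_m}\,a'_{j'_1}\cdots a'_{j'_m}),
\]
a product of $a$-generators, hence an element fixing $v_2$. Thus on the one hand, by Lemma~\ref{lemma:nocoloring} and multiplicativity,
\[
\Sgn_{(i)}(\gamma^2,S_Y(v_2))=\Sigma_{j_1}\cdots\Sigma_{j_m}\Sigma_{j'_1}\cdots\Sigma_{j'_m};
\]
on the other hand, by the cocycle identity, $\Sgn_{(i)}(\gamma^2,S_Y(v_2))=\Sgn_{(i)}(\gamma,S_Y(w))\cdot\Sgn_{(i)}(\gamma,S_Y(v_2))$. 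Since $v_2$ and $w$ have opposite types, this product equals $1$ in $G_{(i)}(Y,Y)^*$ and $-1$ in $G'_{(i)}(Y,Y)^*$, by Definition~\ref{definition:Radu}. That is the whole proof: the key trick is to premultiply $b_k$ by the $a$-word so that the hypothesis forces the \emph{square} into the vertex stabilizer, where $\Sgn$ is directly and coloring-independently computable.
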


\begin{proof}
The element $\gamma = \proj_2(a_{j_1}\cdots a_{j_m}b_k) \in H_2$ sends $v_2 \in V(T_2)$ to one of its neighbors, say $w$. In particular, $\gamma$ exchanges the types of vertices in $T_2$. Moreover, the hypothesis implies that
$$\gamma^2 = \proj_2(a_{j_1}\cdots a_{j_m} a'_{j'_1}\cdots a'_{j'_m}).$$
So $\gamma^2$ fixes $v_2$ (i.e.\ $\gamma$ exchanges $v_2$ and $w$) and
$$\Sgn_{(i)}(\gamma^2,S_Y(v_2)) = \Sigma_{j_1} \cdots \Sigma_{j_m} \Sigma_{j'_1} \cdots \Sigma_{j'_m}.$$
The conclusion then follows from the definitions of the groups $G_{(i)}(Y,Y)^*$ or $G_{(i)}'(Y,Y)^*$. Indeed, if $\gamma \in G_{(i)}(Y,Y)^*$ then $\Sgn_{(i)}(\gamma,S_Y(v_2)) = \Sgn_{(i)}(\gamma,S_Y(w))$ and hence $\Sgn_{(i)}(\gamma^2,S_Y(v_2)) = 1$. On the contrary, if $\gamma \in G_{(i)}'(Y,Y)^*$ then $\Sgn_{(i)}(\gamma,S_Y(v_2)) \neq \Sgn_{(i)}(\gamma,S_Y(w))$ and in that case $\Sgn_{(i)}(\gamma^2,S_Y(v_2)) = -1$.
\end{proof}

Note that there always exist elements $k \in \{1, \ldots, d_2\}$, $m \geq 0$ and $j_1, \ldots, j_m$, $j'_1, \ldots, j'_m \in \{1, \ldots, d_1\}$ as in Proposition~\ref{proposition:primeornot}: they simply correspond to a rectangle $m \times 1$ in the square-complex $T_1 \times T_2$, with the property that the left and right edges of the rectangle correspond to $b_k$ and $b_k^{-1}$ for some $k \in \{1, \ldots, d_2\}$. The existence of such a rectangle is a consequence of the transitivity of $\underline{H_2}(v_2) \geq \Alt(d_2)$ on $E(v_2)$.

All our previous considerations lead to Theorem~\ref{maintheorem:algorithms}.

\begin{proof}[Proof of Theorem~\ref{maintheorem:algorithms}]
As explained in \S\ref{subsection:irred}, it suffices to look at the image of $H_t(v_t)$ in $\Aut(B(v_t,2))$ to see if $\Gamma$ is reducible or irreducible, and this can be done with the method explained in \S\ref{subsection:action}. So (i) is clear. For (ii), we saw in  \S\ref{subsection:graph} that computing the labelled graph $G^{(t)}_\Gamma$ gives at most $4$ possibilities for $H_t$, and in Propositions~\ref{proposition:choose4} and~\ref{proposition:primeornot} that choosing among the four groups could be done by solving two systems whose unknowns belong to $\{-1, 1\}$ and constructing a suitable $m \times 1$ or $1 \times m$ rectangle. It is not hard to implement those algorithms on a computer and they have a pretty good complexity. We do not go into a detailed analysis of the complexity, but the slowest part of the algorithm is probably to check the irreducibility of $\Gamma$ by observing the action of $H_t(v_t)$ on $B(v_t,2)$.
\end{proof}

We end this section with a particular case where Propositions~\ref{proposition:choose4} and~\ref{proposition:primeornot} always give the same conclusion.

\begin{corollary}\label{corollary:choose4}
Let $\Gamma$ be a $(d_1,d_2)$-group as in Proposition~\ref{proposition:choose4}. If $\prod_{r \in Y} s^{(2)}_r(a_j) = -1$ for each $j \in \{1, \ldots, d_1\}$, then $H_2 = G_{(i)}(Y,Y)^*$ for some legal coloring $i$ of $T_2$.
\end{corollary}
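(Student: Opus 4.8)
The plan is to read off the conclusion directly from Propositions~\ref{proposition:choose4} and~\ref{proposition:primeornot}, exploiting the hypothesis $\Sigma_j := \prod_{r\in Y} s^{(2)}_r(a_j) = -1$ for every $j \in \{1,\ldots,d_1\}$ in an entirely explicit manner.

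First I would verify that the system $(*)$ of Proposition~\ref{proposition:choose4} is satisfied by the trivial choice $x_1 = \cdots = x_{d_2} = 1$. With this choice, the $k$-th entry of the $j$-th line of $(*)$ equals $x_k\, x_{\nu_{j,k}}\, \Sigma_{\mu_{j,k}} = \Sigma_{\mu_{j,k}} = -1$, since $\mu_{j,k} \in \{1,\ldots,d_1\}$; and the right-hand side of that line is $\Sigma_j = -1$ as well. Hence every line of $(*)$ holds, and Proposition~\ref{proposition:choose4} places us in case~(1): $H_2 = G_{(i)}(Y,Y)^*$ or $H_2 = G'_{(i)}(Y,Y)^*$ for some legal coloring $i$ of $T_2$. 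In particular, cases~(2) and~(3) are excluded because $(*)$ — not merely $(**)$ — admits a solution.

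It remains to eliminate $G'_{(i)}(Y,Y)^*$, and here I would invoke Proposition~\ref{proposition:primeornot}. By the remark following it, there exist $k \in \{1,\ldots,d_2\}$, an integer $m \geq 0$ and indices $j_1,\ldots,j_m,j'_1,\ldots,j'_m \in \{1,\ldots,d_1\}$ with $a_{j_1}\cdots a_{j_m} b_k = b_k^{-1} a'_{j'_1}\cdots a'_{j'_m}$, obtained from a suitable $m \times 1$ rectangle in $T_1 \times T_2$. Proposition~\ref{proposition:primeornot} asserts that $H_2 = G_{(i)}(Y,Y)^*$ exactly when $\Sigma_{j_1}\cdots\Sigma_{j_m}\Sigma_{j'_1}\cdots\Sigma_{j'_m} = 1$; under the standing hypothesis this product equals $(-1)^m\cdot(-1)^m = (-1)^{2m} = 1$, so $H_2 = G_{(i)}(Y,Y)^*$, which is the claim.

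There is no genuine obstacle: the corollary is a direct specialization of the two preceding propositions. The only two points requiring a little care are (a) checking that $(1,\ldots,1)$ solves $(*)$ rather than only $(**)$ — this is precisely what forces case~(1) of Proposition~\ref{proposition:choose4} — and (b) observing that the two $a$-words appearing in Proposition~\ref{proposition:primeornot} have the same length $m$, so that the relevant sign product is automatically a perfect square and hence $+1$, irrespective of the chosen rectangle. This last observation is exactly why, in this situation, Propositions~\ref{proposition:choose4} and~\ref{proposition:primeornot} never disagree.
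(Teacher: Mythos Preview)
Your proof is correct and follows exactly the same approach as the paper: show that $x_1=\cdots=x_{d_2}=1$ solves $(*)$ (placing us in case~(1) of Proposition~\ref{proposition:choose4}), then use Proposition~\ref{proposition:primeornot} and the computation $(-1)^{2m}=1$ to rule out $G'_{(i)}(Y,Y)^*$. The paper's proof is just a more compressed version of what you wrote.
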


\begin{proof}
In Proposition~\ref{proposition:choose4}, we have $\Sigma_j = -1$ for each $j \in \{1, \ldots, d_1\}$ and thus $x_1 = \cdots = x_{d_2} = 1$ is a solution of $(*)$. Moreover, in Proposition~\ref{proposition:primeornot} we directly get $\Sigma_{j_1} \cdots \Sigma_{j_m} \Sigma_{j'_1} \cdots \Sigma_{j'_m} = (-1)^{2m} = 1$ which ends the proof.
\end{proof}

\subsection{Illustration on an example}
\label{subsection:example}

Let us illustrate the previous ideas on a concrete example. Let $\Gamma$ be the torsion-free $(6,6)$-group corresponding to the $9$ geometric squares drawn in Figure~\ref{picture:9squares}. Our goal is to explain how $H_1$ and $H_2$ can be computed in this particular case.

\begin{figure}[t!]
\centering
\begin{pspicture*}(-0.2,2.8)(11,6.2)
\fontsize{10pt}{10pt}\selectfont
\psset{unit=1.2cm}

\pspolygon(0,4)(1,4)(1,5)(0,5)

\pspolygon[fillstyle=solid,fillcolor=white](0.45,4.1)(0.45,3.9)(0.6,4)
\pspolygon[fillstyle=solid,fillcolor=white](0.55,5.1)(0.55,4.9)(0.4,5)
\pspolygon[fillstyle=solid,fillcolor=black](0.9,4.45)(1.1,4.45)(1,4.6)
\pspolygon[fillstyle=solid,fillcolor=black](-0.1,4.45)(0.1,4.45)(0,4.6)

\pspolygon(2,4)(3,4)(3,5)(2,5)

\pspolygon[fillstyle=solid,fillcolor=white](2.45,4.1)(2.45,3.9)(2.6,4)
\pspolygon[fillstyle=solid,fillcolor=white](2.55,5.1)(2.55,4.9)(2.4,5)
\pspolygon[fillstyle=solid,fillcolor=black](2.9,4.55)(3.1,4.55)(3,4.7)
\pspolygon[fillstyle=solid,fillcolor=black](2.9,4.35)(3.1,4.35)(3,4.5)
\pspolygon[fillstyle=solid,fillcolor=black](1.9,4.55)(2.1,4.55)(2,4.7)
\pspolygon[fillstyle=solid,fillcolor=black](1.9,4.35)(2.1,4.35)(2,4.5)

\pspolygon(4,4)(5,4)(5,5)(4,5)

\pspolygon[fillstyle=solid,fillcolor=white](4.45,4.1)(4.45,3.9)(4.6,4)
\pspolygon[fillstyle=solid,fillcolor=white](4.65,5.1)(4.65,4.9)(4.5,5)
\pspolygon[fillstyle=solid,fillcolor=white](4.45,5.1)(4.45,4.9)(4.3,5)
\pspolygon[fillstyle=solid,fillcolor=black](4.9,4.65)(5.1,4.65)(5,4.8)
\pspolygon[fillstyle=solid,fillcolor=black](4.9,4.45)(5.1,4.45)(5,4.6)
\pspolygon[fillstyle=solid,fillcolor=black](4.9,4.25)(5.1,4.25)(5,4.4)
\pspolygon[fillstyle=solid,fillcolor=black](3.9,4.35)(4.1,4.35)(4,4.2)
\pspolygon[fillstyle=solid,fillcolor=black](3.9,4.55)(4.1,4.55)(4,4.4)
\pspolygon[fillstyle=solid,fillcolor=black](3.9,4.75)(4.1,4.75)(4,4.6)

\pspolygon(6,4)(7,4)(7,5)(6,5)

\pspolygon[fillstyle=solid,fillcolor=white](6.45,4.1)(6.45,3.9)(6.6,4)
\pspolygon[fillstyle=solid,fillcolor=white](6.35,5.1)(6.35,4.9)(6.2,5)
\pspolygon[fillstyle=solid,fillcolor=white](6.55,5.1)(6.55,4.9)(6.4,5)
\pspolygon[fillstyle=solid,fillcolor=white](6.75,5.1)(6.75,4.9)(6.6,5)
\pspolygon[fillstyle=solid,fillcolor=black](6.9,4.35)(7.1,4.35)(7,4.2)
\pspolygon[fillstyle=solid,fillcolor=black](6.9,4.55)(7.1,4.55)(7,4.4)
\pspolygon[fillstyle=solid,fillcolor=black](6.9,4.75)(7.1,4.75)(7,4.6)
\pspolygon[fillstyle=solid,fillcolor=black](5.9,4.65)(6.1,4.65)(6,4.8)
\pspolygon[fillstyle=solid,fillcolor=black](5.9,4.45)(6.1,4.45)(6,4.6)
\pspolygon[fillstyle=solid,fillcolor=black](5.9,4.25)(6.1,4.25)(6,4.4)

\pspolygon(8,4)(9,4)(9,5)(8,5)

\pspolygon[fillstyle=solid,fillcolor=white](8.35,4.1)(8.35,3.9)(8.5,4)
\pspolygon[fillstyle=solid,fillcolor=white](8.55,4.1)(8.55,3.9)(8.7,4) 
\pspolygon[fillstyle=solid,fillcolor=white](8.35,5.1)(8.35,4.9)(8.2,5)
\pspolygon[fillstyle=solid,fillcolor=white](8.55,5.1)(8.55,4.9)(8.4,5)
\pspolygon[fillstyle=solid,fillcolor=white](8.75,5.1)(8.75,4.9)(8.6,5)
\pspolygon[fillstyle=solid,fillcolor=black](8.9,4.45)(9.1,4.45)(9,4.6)
\pspolygon[fillstyle=solid,fillcolor=black](7.9,4.45)(8.1,4.45)(8,4.6)


\pspolygon(1,2.5)(2,2.5)(2,3.5)(1,3.5)

\pspolygon[fillstyle=solid,fillcolor=white](1.35,2.6)(1.35,2.4)(1.5,2.5)
\pspolygon[fillstyle=solid,fillcolor=white](1.55,2.6)(1.55,2.4)(1.7,2.5)
\pspolygon[fillstyle=solid,fillcolor=white](1.25,3.6)(1.25,3.4)(1.4,3.5)
\pspolygon[fillstyle=solid,fillcolor=white](1.45,3.6)(1.45,3.4)(1.6,3.5)
\pspolygon[fillstyle=solid,fillcolor=white](1.65,3.6)(1.65,3.4)(1.8,3.5)
\pspolygon[fillstyle=solid,fillcolor=black](1.9,2.85)(2.1,2.85)(2,3)
\pspolygon[fillstyle=solid,fillcolor=black](1.9,3.05)(2.1,3.05)(2,3.2)
\pspolygon[fillstyle=solid,fillcolor=black](0.9,2.75)(1.1,2.75)(1,2.9)
\pspolygon[fillstyle=solid,fillcolor=black](0.9,2.95)(1.1,2.95)(1,3.1)
\pspolygon[fillstyle=solid,fillcolor=black](0.9,3.15)(1.1,3.15)(1,3.3)

\pspolygon(3,2.5)(4,2.5)(4,3.5)(3,3.5)

\pspolygon[fillstyle=solid,fillcolor=white](3.35,2.6)(3.35,2.4)(3.5,2.5)
\pspolygon[fillstyle=solid,fillcolor=white](3.55,2.6)(3.55,2.4)(3.7,2.5)
\pspolygon[fillstyle=solid,fillcolor=white](3.25,3.6)(3.25,3.4)(3.4,3.5)
\pspolygon[fillstyle=solid,fillcolor=white](3.45,3.6)(3.45,3.4)(3.6,3.5)
\pspolygon[fillstyle=solid,fillcolor=white](3.65,3.6)(3.65,3.4)(3.8,3.5)
\pspolygon[fillstyle=solid,fillcolor=black](3.9,3.25)(4.1,3.25)(4,3.1)
\pspolygon[fillstyle=solid,fillcolor=black](3.9,3.05)(4.1,3.05)(4,2.9)
\pspolygon[fillstyle=solid,fillcolor=black](3.9,2.85)(4.1,2.85)(4,2.7)
\pspolygon[fillstyle=solid,fillcolor=black](2.9,3.15)(3.1,3.15)(3,3)
\pspolygon[fillstyle=solid,fillcolor=black](2.9,2.95)(3.1,2.95)(3,2.8)

\pspolygon(5,2.5)(6,2.5)(6,3.5)(5,3.5)

\pspolygon[fillstyle=solid,fillcolor=white](5.35,2.6)(5.35,2.4)(5.5,2.5)
\pspolygon[fillstyle=solid,fillcolor=white](5.55,2.6)(5.55,2.4)(5.7,2.5)
\pspolygon[fillstyle=solid,fillcolor=white](5.25,3.6)(5.25,3.4)(5.4,3.5)
\pspolygon[fillstyle=solid,fillcolor=white](5.45,3.6)(5.45,3.4)(5.6,3.5)
\pspolygon[fillstyle=solid,fillcolor=white](5.65,3.6)(5.65,3.4)(5.8,3.5)
\pspolygon[fillstyle=solid,fillcolor=black](5.9,3.15)(6.1,3.15)(6,3)
\pspolygon[fillstyle=solid,fillcolor=black](5.9,2.95)(6.1,2.95)(6,2.8)
\pspolygon[fillstyle=solid,fillcolor=black](4.9,3.05)(5.1,3.05)(5,2.9)

\pspolygon(7,2.5)(8,2.5)(8,3.5)(7,3.5)

\pspolygon[fillstyle=solid,fillcolor=white](7.35,2.6)(7.35,2.4)(7.5,2.5)
\pspolygon[fillstyle=solid,fillcolor=white](7.55,2.6)(7.55,2.4)(7.7,2.5)
\pspolygon[fillstyle=solid,fillcolor=white](7.25,3.6)(7.25,3.4)(7.4,3.5)
\pspolygon[fillstyle=solid,fillcolor=white](7.45,3.6)(7.45,3.4)(7.6,3.5)
\pspolygon[fillstyle=solid,fillcolor=white](7.65,3.6)(7.65,3.4)(7.8,3.5)
\pspolygon[fillstyle=solid,fillcolor=black](7.9,3.05)(8.1,3.05)(8,2.9)
\pspolygon[fillstyle=solid,fillcolor=black](6.9,2.85)(7.1,2.85)(7,3)
\pspolygon[fillstyle=solid,fillcolor=black](6.9,3.05)(7.1,3.05)(7,3.2)
\end{pspicture*}

\caption{The geometric squares of a torsion-free $(6,6)$-group.}\label{picture:9squares}
\end{figure}
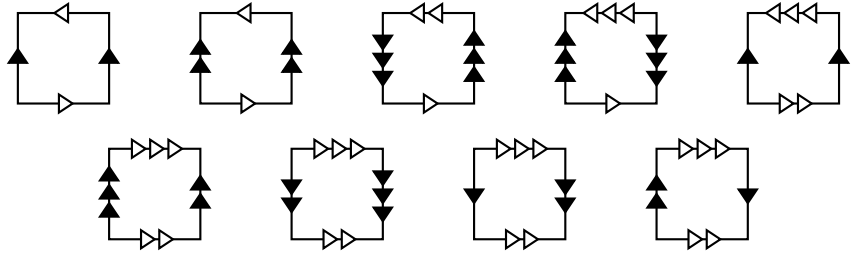

We start by computing the action of $H_2(v_2)$ on $B(v_2,1)$. The six vertices in $S(v_2,1)$ are $b_1(v_2)$, $b_2(v_2)$, $b_3(v_2)$, $b_3^{-1}(v_2)$, $b_2^{-1}(v_2)$ and $b_1^{-1}(v_2)$. The six edges from $v_2$ to these six vertices are labelled by the six black symbols (with orientation) corresponding to the six generators $b_1$, $b_2$, $b_3$, $b_4 = b_3^{-1}$, $b_5 = b_2^{-1}$ and $b_6 = b_1^{-1}$. From the geometric squares, we directly get the actions of $a_1$, $a_2$ and $a_3$ on these six edges. We denote them by the generator to which they correspond.
\begin{align*}
a_1\ &:\ (b_1) (b_1^{-1}) (b_2) (b_2^{-1}) (b_3 \ b_3^{-1})\\
a_2\ &:\ (b_1) (b_1^{-1}\ b_2\ b_3\ b_3^{-1}\ b_2^{-1})\\
a_3\ &:\ (b_1\ b_2^{-1}\ b_3^{-1}\ b_3\ b_2) (b_1^{-1})
\end{align*}
One can easily check that these three permutations generate $\Sym(6)$. As explained in~Section~\ref{subsection:irred}, it then suffices to compute the action of $H_1(v_1)$ on $B(v_1,2)$ and to use \cite[Proposition~3.3.2]{Burger} to conclude that $H_1$ is non-discrete and thus that $\Gamma$ is irreducible.

Let us now find out which group $H_2$ exactly is. As $\tau_1 = 0$, we can compute the simplified labelled graph $\tilde{G}^{(2)}_{\Gamma}$ and use Proposition~\ref{proposition:algo2}. We also compute $G^{(2)}_{\Gamma}$ so as to illustrate Proposition~\ref{proposition:algo} as well. The two graphs we obtain are given in Figure~\ref{picture:G^(2)}.

From $\tilde{G}^{(2)}_\Gamma$ (or $G^{(2)}_\Gamma$) we can compute the values of $s_k^{(2)}(a_j)$ for $j \in \{1,2,3\}$ and $k \in \N$. For $k \in \{0,1,2,3\}$ we obtain:
$$\begin{array}{c|cccc}
 & s^{(2)}_0 & s^{(2)}_1 & s^{(2)}_2 & s^{(2)}_3\\
 \hline
 a_1 & -1 & +1 & +1 & +1\\
 a_2 & +1 & -1 & -1 & +1\\
 a_3 & +1 & -1 & -1 & +1
\end{array}$$
The map $s^{(2)} \colon H_2(v_2) \to (\C_2)^3 \colon h \mapsto (s^{(2)}_0(h), s^{(2)}_1(h), s^{(2)}_2(h))$ is not surjective, so $K^{(2)} = 2$. Moreover, we see that
$$s^{(2)}(H_2(v_2)) = \left\{(s_0,s_1,s_2) \in (\C_2)^3 \suchthat \prod_{r \in \{1,2\}} s_r = 1\right\}.$$
Since $\alpha(\{0,1\}) = \{1,2\}$ (by Lemma~\ref{lemma:explicitalpha}), these values for $K^{(2)}$ and $s^{(2)}(H_2(v_2))$ imply that $H_2$ is one of $G_{(i)}(\{1,2\},\{1,2\})$, $G_{(i)}(\{0,1\}^*,\{0,1\}^*)$, $G_{(i)}(\{0,1\},\{0,1\})^*$ and $G_{(i)}'(\{0,1\},\{0,1\})^*$. We then remark that $\prod_{r \in \{0,1\}} s^{(2)}_r(a_j) = -1$ for each $j \in \{1,2,3\}$, which ensures via Corollary~\ref{corollary:choose4} that $H_2 = G_{(i_2)}(\{0,1\},\{0,1\})^*$ (for some legal coloring $i_2$ of $T_2$).

\begin{figure}[t!]
\begin{subfigure}{.49\textwidth}
\centering
\begin{pspicture*}(-0.2,-1)(4.2,3)
\fontsize{12pt}{12pt}\selectfont
\psset{unit=1cm}

\psline(0.58,0.616)(3.44,0.616)
\psline(1.08,-0.25)(2.94,-0.25)

\psline(0.58,0.616)(1.51,2.233)
\psline(1.08,-0.25)(2.51,2.233)

\psline(2.94,-0.25)(1.51,2.233)
\psline(3.44,0.616)(2.51,2.233)

\pscircle[fillstyle=solid,fillcolor=white](0.58,0.616){0.4}
\pspolygon[fillstyle=solid,fillcolor=white](0.52,0.716)(0.52,0.516)(0.67,0.616)
\rput(0.08,1.116){$-1$}
\pscircle[fillstyle=solid,fillcolor=white](1.08,-0.25){0.4}
\pspolygon[fillstyle=solid,fillcolor=white](1.14,-0.15)(1.14,-0.35)(0.99,-0.25)
\rput(0.58,-0.75){$-1$}

\pscircle[fillstyle=solid,fillcolor=white](2.94,-0.25){0.4}
\pspolygon[fillstyle=solid,fillcolor=white](2.78,-0.15)(2.78,-0.35)(2.93,-0.25)
\pspolygon[fillstyle=solid,fillcolor=white](2.98,-0.15)(2.98,-0.35)(3.13,-0.25)
\rput(3.44,-0.75){$+1$}
\pscircle[fillstyle=solid,fillcolor=white](3.44,0.616){0.4}
\pspolygon[fillstyle=solid,fillcolor=white](3.60,0.716)(3.60,0.516)(3.45,0.616)
\pspolygon[fillstyle=solid,fillcolor=white](3.40,0.716)(3.40,0.516)(3.25,0.616)
\rput(3.94,1.116){$+1$}

\pscircle[fillstyle=solid,fillcolor=white](2.51,2.233){0.4}
\pspolygon[fillstyle=solid,fillcolor=white](2.25,2.333)(2.25,2.133)(2.4,2.233)
\pspolygon[fillstyle=solid,fillcolor=white](2.45,2.333)(2.45,2.133)(2.6,2.233)
\pspolygon[fillstyle=solid,fillcolor=white](2.65,2.333)(2.65,2.133)(2.8,2.233)
\rput(3.01,2.733){$+1$}
\pscircle[fillstyle=solid,fillcolor=white](1.51,2.233){0.4}
\pspolygon[fillstyle=solid,fillcolor=white](1.77,2.333)(1.77,2.133)(1.62,2.233)
\pspolygon[fillstyle=solid,fillcolor=white](1.57,2.333)(1.57,2.133)(1.42,2.233)
\pspolygon[fillstyle=solid,fillcolor=white](1.37,2.333)(1.37,2.133)(1.22,2.233)
\rput(1.01,2.733){$+1$}
\end{pspicture*}
\end{subfigure}
\begin{subfigure}{.49\textwidth}
\centering
\begin{pspicture*}(-0.3,-0.45)(4.8,3.6)
\fontsize{12pt}{12pt}\selectfont
\psset{unit=1cm}

\psline(3.51,0)(2.01,2.6)

\psline(0.51,0)(3.51,0)

\psline(0.51,0)(2.01,2.6)

\pscircle[fillstyle=solid,fillcolor=white](0.51,0){0.4}
\pspolygon[fillstyle=solid,fillcolor=white](0.45,0.1)(0.45,-0.1)(0.6,0)
\rput(0.01,0.5){$-1$}

\pscircle[fillstyle=solid,fillcolor=white](3.51,0){0.4}
\pspolygon[fillstyle=solid,fillcolor=white](3.35,0.1)(3.35,-0.1)(3.5,0)
\pspolygon[fillstyle=solid,fillcolor=white](3.55,0.1)(3.55,-0.1)(3.7,0)
\rput(4.01,0.5){$+1$}

\pscircle[fillstyle=solid,fillcolor=white](2.01,2.6){0.4}
\pspolygon[fillstyle=solid,fillcolor=white](1.75,2.7)(1.75,2.5)(1.9,2.6)
\pspolygon[fillstyle=solid,fillcolor=white](1.95,2.7)(1.95,2.5)(2.1,2.6)
\pspolygon[fillstyle=solid,fillcolor=white](2.15,2.7)(2.15,2.5)(2.3,2.6)
\rput(2.51,3.1){$+1$}
\end{pspicture*}
\end{subfigure}
\caption{The labelled graphs $G^{(2)}_\Gamma$ and $\tilde{G}^{(2)}_\Gamma$.}\label{picture:G^(2)}
\end{figure}
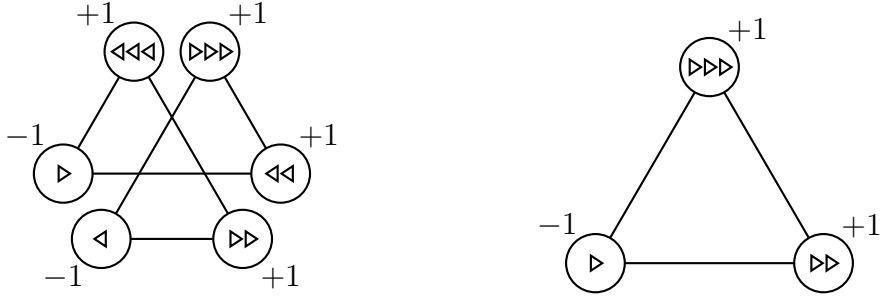

Let us do the same work for $H_1$. The action of $H_1(v_1)$ on $B(v_1,1)$ is given by the following permutations: they also generate $\Sym(6)$.
\begin{align*}
b_1\ &:\ (a_1\ a_1^{-1})(a_2\ a_3\ a_2^{-1}\ a_3^{-1})\\
b_2\ &:\ (a_1\ a_1^{-1}) (a_2\ a_3) (a_2^{-1}\ a_3^{-1})\\
b_3\ &:\ (a_1\ a_2^{-1}\ a_3^{-1})(a_1^{-1}\ a_3\ a_2)
\end{align*}

As $\tau_2 = 0$, we can compute the simplified labelled graph $\tilde{G}^{(1)}_{\Gamma}$. It has no edge, and exactly one of the three vertices is labelled by $-1$: the one corresponding to $\{b_2,b_2^{-1}\}$. The values of $s^{(1)}_k(b_j)$ for $j \in \{1,2,3\}$ and $k \in \{0,1,2,3\}$ are therefore:
$$\begin{array}{c|cccc}
 & s^{(1)}_0 & s^{(1)}_1 & s^{(1)}_2 & s^{(1)}_3\\
 \hline
 b_1 & +1 & +1 & +1 & +1\\
 b_2 & -1 & +1 & +1 & +1\\
 b_3 & +1 & +1 & +1 & +1
\end{array}$$
The map $s^{(1)} \colon H_1(v_1) \to (\C_2)^2 \colon h \mapsto (s^{(1)}_0(h), s^{(1)}_1(h))$ is not surjective, so $K^{(1)} = 1$. Also, $s^{(1)}(H_1(v_1)) = \left\{(s_0,s_1) \in (\C_2)^2 \suchthat \prod_{r \in \{1\}} s_r = 1\right\}$. Since $\alpha(\{0\}) = \{1\}$, we obtain that $H_1$ is one of the groups $G_{(i)}(\{1\},\{1\})$, $G_{(i)}(\{0\}^*,\{0\}^*)$, $G_{(i)}(\{0\},\{0\})^*$ and $G_{(i)}'(\{0\},\{0\})^*$. This time we do not have $\prod_{r \in \{0\}} s^{(1)}_r(b_j) = -1$ for each $j \in \{1,2,3\}$, so Corollary~\ref{corollary:choose4} cannot be used. We therefore need Proposition~\ref{proposition:choose4}. After looking carefully at the geometric squares, the system $(*)$ in Proposition~\ref{proposition:choose4} is
$$\left\{
\begin{array}{l}
x_1x_6 = -x_2x_3 = x_3x_5 = x_4x_2 = -x_5x_4 = x_6x_1 = 1\\
-x_1x_6 = x_2x_3 = x_3x_2 = x_4x_5 = x_5x_4 = -x_6x_1 = -1\\
x_1x_5 = x_2x_6 = -x_3x_2 = x_4x_1 = -x_5x_4 = x_6x_3 = 1
\end{array}
\right.$$
From $x_1x_5 = x_4x_1$ it follows that $x_4=x_5$, but this contradicts $x_4x_5 = -1$ so this system has no solution. Hence the groups $G_{(i)}(\{0\},\{0\})^*$ and $G_{(i)}'(\{0\},\{0\})^*$ can be excluded. The system $(**)$ in Proposition~\ref{proposition:choose4} is exactly the same, but without the last equality on each line:
$$\left\{
\begin{array}{l}
x_1x_6 = -x_2x_3 = x_3x_5 = x_4x_2 = -x_5x_4 = x_6x_1\\
-x_1x_6 = x_2x_3 = x_3x_2 = x_4x_5 = x_5x_4 = -x_6x_1\\
x_1x_5 = x_2x_6 = -x_3x_2 = x_4x_1 = -x_5x_4 = x_6x_3
\end{array}
\right.$$
A solution of this system is given by $(1,1,1,-1,-1,-1)$, so we conclude that $H_1 = G_{(i_1)}(\{0\}^*,\{0\}^*)$ for some legal coloring $i_1$ of $T_1$.

We summarize our computations in the following lemma.

\begin{lemma}
Let $\Gamma \leq \Aut(T_1) \times \Aut(T_2)$ be the torsion-free $(6,6)$-group defined by Figure~\ref{picture:9squares}. Then $H_1 = G_{(i_1)}(\{0\}^*,\{0\}^*)$ and $H_2 = G_{(i_2)}(\{0,1\},\{0,1\})^*$ for some legal colorings $i_1$ and $i_2$ of $T_1$ and $T_2$.
\end{lemma}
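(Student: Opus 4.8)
The plan is to identify $H_1$ and $H_2$ by running, factor by factor, the recognition procedure assembled in \S\ref{subsection:recognizing}--\S\ref{subsection:choosing}; the whole argument is just an instance of Theorem~\ref{maintheorem:algorithms}(ii) carried out by hand on the $9$ squares of Figure~\ref{picture:9squares}. First I would read off from those squares the local actions: the three permutations induced by $a_1,a_2,a_3$ on $E(v_2)$ and the three induced by $b_1,b_2,b_3$ on $E(v_1)$. One checks easily that each triple generates $\Sym(6)$, so in particular $\underline{H_1}(v_1),\underline{H_2}(v_2)\geq\Alt(6)$; then, following \S\ref{subsection:irred}, computing the image of some $H_t(v_t)$ in $\Aut(B(v_t,2))$ and invoking \cite[Propositions~3.3.1 and~3.3.2]{Burger} (and the fact that $H_1,H_2$ are simultaneously discrete or not) shows $H_1$ and $H_2$ are non-discrete, hence $\Gamma$ is irreducible. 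By Theorem~\ref{theorem:Raduclassification}, each $H_t$ is then conjugate to a member of $\mathcal{G}'_{(i)}$; since the invariants $K$, $s$ and $\rho$ do not depend on the coloring (Lemma~\ref{lemma:nocoloring} and \S\ref{subsection:recognizing}), it suffices to compute them.

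For $H_2$: since $\tau_1=0$ I would draw the simplified labelled graph $\tilde G^{(2)}_\Gamma$ (and, as a cross-check, $G^{(2)}_\Gamma$) of \S\ref{subsection:graph} -- vertices the pairs $\{a_j,a_j^{-1}\}$, vertex labels the signatures of the permutations above, edges governed by the parity of the appropriate slices of $R$ -- and then compute $s^{(2)}_k(a_j)$ for $k=0,1,2,3$ via Proposition~\ref{proposition:algo2}, i.e.\ as the product of $\sigma(d(p))$ over non-repeating paths of length $k$. This yields $K^{(2)}=2$ and $s^{(2)}(H_2(v_2))=\{(s_0,s_1,s_2)\in(\C_2)^3\mid s_1s_2=1\}$. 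Since $\alpha(\{0,1\})=\{1,2\}$ by Lemma~\ref{lemma:explicitalpha} and $\{1,2\}$ lies in the image of $\alpha$ by Lemma~\ref{lemma:alpha}, Lemmas~\ref{lemma:valueS} and~\ref{lemma:valueS*} narrow $H_2$ to one of $G_{(i)}(\{1,2\},\{1,2\})$, $G_{(i)}(\{0,1\}^*,\{0,1\}^*)$, $G_{(i)}(\{0,1\},\{0,1\})^*$, $G'_{(i)}(\{0,1\},\{0,1\})^*$. Finally, reading from the table of values that $\prod_{r\in\{0,1\}}s^{(2)}_r(a_j)=-1$ for every $j$, Corollary~\ref{corollary:choose4} applies at once and gives $H_2=G_{(i_2)}(\{0,1\},\{0,1\})^*$.

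For $H_1$: the same steps (now $\tau_2=0$, graph $\tilde G^{(1)}_\Gamma$, Proposition~\ref{proposition:algo2}) give $K^{(1)}=1$ and $s^{(1)}(H_1(v_1))=\{(s_0,s_1)\in(\C_2)^2\mid s_1=1\}$, and since $\alpha(\{0\})=\{1\}$ the candidates are $G_{(i)}(\{1\},\{1\})$, $G_{(i)}(\{0\}^*,\{0\}^*)$, $G_{(i)}(\{0\},\{0\})^*$, $G'_{(i)}(\{0\},\{0\})^*$. Here $\prod_{r\in\{0\}}s^{(1)}_r(b_j)$ is not constantly $-1$, so Corollary~\ref{corollary:choose4} does not apply and I would instead set up the sign systems $(*)$ and $(**)$ of Proposition~\ref{proposition:choose4}; for this I first extract, corner by corner from Figure~\ref{picture:9squares}, the multiplications $b_j a_k=a_{\nu}b_{\mu}$. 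One then checks that $(*)$ is inconsistent -- two of its equations force $x_4=x_5$ while another requires $x_4x_5=-1$ -- which eliminates the two $*$-decorated groups, and exhibits an explicit solution of $(**)$, such as $(1,1,1,-1,-1,-1)$, which by Proposition~\ref{proposition:choose4} pins down $H_1=G_{(i_1)}(\{0\}^*,\{0\}^*)$. (Since $H_1$ turns out not to be of type $G(Y,Y)^*$ or $G'(Y,Y)^*$, Proposition~\ref{proposition:primeornot} is not needed here; it is the tool that would separate those two in other examples.)

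The bulk of the work is bookkeeping. The one place that demands care is the faithful transcription of the combinatorial data from Figure~\ref{picture:9squares}: the vertex labels and edge relations of $\tilde G^{(1)}_\Gamma,\tilde G^{(2)}_\Gamma$ and the coefficients $\mu_{j,k},\nu_{j,k}$ feeding the systems $(*),(**)$, since a single misread corner would corrupt the candidate list. I expect that step, together with the $B(v_t,2)$ order computation underlying irreducibility, to be the only genuine obstacles.
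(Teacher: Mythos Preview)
Your proposal is correct and follows essentially the same route as the paper: the paper's proof is precisely the worked computation in \S\ref{subsection:example}, carrying out the recognition algorithm of \S\S\ref{subsection:recognizing}--\ref{subsection:choosing} on this example, with the same graph computations, the same tables of $s^{(t)}_k$, the same use of Corollary~\ref{corollary:choose4} for $H_2$, and the same contradiction in $(*)$ (from $x_1x_5=x_4x_1$ and $x_4x_5=-1$) together with the explicit solution $(1,1,1,-1,-1,-1)$ of $(**)$ for $H_1$.
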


\begin{proof}
See above.
\end{proof}

\subsection{An inventory of possible projections}
\label{subsection:inventory}

As already mentioned, we do not have any tool to check the irreducibility of a $(d_1,d_2)$-group in full generality. With a computer it can be quickly seen if, for some $j \in \{1,2\}$ the fixator of $B(v_j,2)$ in $H_j$ is trivial. Indeed, since $H_j$ is transitive on vertices of $T_j$, it suffices to see whether the fixator of $B(v_j,2)$ also fixes $B(v_j,3)$. We will therefore say in this section that some $(d_1,d_2)$-group $\Gamma$ is \textbf{possibly irreducible} if the fixator of $B(v_j,2)$ in $H_j$ is non-trivial for each $j \in \{1,2\}$.

\begin{table}[b!]
\setlength{\tabcolsep}{0.3em}
\centering
\begin{tabular}{|c|ccc|c|ccc|c|c|}
\hline
\multirow{2}{*}{} & \multicolumn{4}{c|}{Torsion-free} & \multicolumn{4}{c|}{With torsion}\\
\cline{2-9}
& Irred. & ? & Red. & Total & Irred. & ? & Red. & Total\\
\hline
$(3,3)$ & - & - & - & - & 0 & 4 & 56 & 60\\
$(3,4)$ & - & - & - & - & 0 & 59 & 664 & 723\\
$(3,5)$ & - & - & - & - & 0 & 457 & 1986 & 2443\\
$(3,6)$ & - & - & - & - & 204 & 3018 & 10529 & 13751\\
\hline
$(4,4)$ & 0 & 2 & 50 & 52 & 0 & 686 & 2992 & 3678\\
$(4,5)$ & - & - & - & - & 0 & 23839 & 34700 & 58539\\
$(4,6)$ & 16 & 95 & 890 & 1001 & (111) & (433) & (1840) & (2384)\\
\hline
$(6,6)$ & 8227 & 5409 & 18426 & 32062 & (83581) & (33565) & (76037) & (193083)\\
\hline 
\end{tabular}
\caption{$(d_1,d_2)$-groups up to equivalence.}\label{table:numbers}
\end{table}

We always consider $(d_1,d_2)$-groups up to equivalence (i.e.\ up to conjugation in $\Aut(T_1 \times T_2)$). For some values of $d_1$ and $d_2$, we could compute the total number of torsion-free $(d_1,d_2)$-groups and the number of $(d_1,d_2)$-groups with torsion (up to equivalence) by enumerating them all (thanks to the GAP system). Some of these groups can be seen to be reducible by simply showing that they are not \textit{possibly irreducible}. Also, when $d_j = 6$ for some $j \in \{1,2\}$, some other $(d_1,d_2)$-groups can be proved to be irreducible, as explained previously, when $\underline{H_j}(v_j) \geq \Alt(d_j)$. The results we obtained are given in Table~\ref{table:numbers}. The numbers in parentheses correspond to $(d_1,d_2)$-groups with torsion but with $\tau_1 = \tau_2 = 0$ (i.e.\ without generators of order~$2$). Indeed, for $(d_1,d_2) \in \{(4,6), (6,6)\}$ the number of $(d_1,d_2)$-groups is so big that we could not count them up to equivalence. Note that we actually know a bit more that what is written in Table~\ref{table:numbers}. For instance, we will see in \S\ref{subsection:45} below that at least $60$ of the $23839$ possibly irreducible $(4,5)$-groups are irreducible.

In the remainder of this section, we give tables with the possible pairs of projections $(H_1,H_2)$ that a possibly irreducible $(d_1,d_2)$-group can have, for some values of $d_1$ and $d_2$. The idea is, for each (equivalence class of) $(d_1,d_2)$-group, to check that it is possibly irreducible, to compute $H_1$ and $H_2$ (if possible) and to write in the table that there is a group with these two projections. However, we only saw in the previous sections how to determine $H_j$ in the particular case where $d_j \geq 6$, $\underline{H_j}(v_j) \geq \Alt(d_j)$ and $H_j$ is non-discrete. In all other cases, we therefore only take note of the local action $\underline{H_j}(v_j) \leq \Sym(d_j)$.

Remark that, when $d_1$ and $d_2$ are fixed with $d_2 \geq 6$ even, we can in advance give a co-finite subset of groups of $\mathcal{G}'_{(i)}$ that cannot appear as a projection of a $(d_1,d_2)$-group on $T_2$. Indeed, the labelled graph $G^{(2)}_\Lambda$ has $d_1$ vertices, it has a non-trivial automorphism as explained in \S\ref{subsection:graph}, and we also know that the degree of each vertex is even (where a loop in a vertex only counts once in the degree of that vertex). Hence, one can simply go through all labelled graphs satisfying these three properties and compute all groups $\mathcal{G}'_{(i)}$ that correspond to them. This gives a finite list of groups that covers all possible projections on $T_2$. If we are only interested in torsion-free $(d_1,d_2)$-groups, then the task is even shorter as we can use the simplified labelled graphs which have $\frac{d_1}{2}$ vertices. Moreover, the degree of each vertex is also even and loops cannot appear in that case.

Let us for instance consider the torsion-free $(6,6)$-groups. We must observe all labelled graphs with $3$ vertices, without any loop and such that the degree of each vertex is even. If we do not consider the labels then there are only two such graphs: the one without any edge and the one with all three possible edges. For each of these two graphs we can put between zero and three labels $-1$, so at the end we get eight labelled graphs. The groups associated to those graphs are given in Figure~\ref{figure:projections66}. Note that we could exclude some groups by making use of Corollary~\ref{corollary:choose4}. In total, we obtain only seven groups of $\mathcal{G}'_{(i)}$ that could possibly appear as a projection of a torsion-free $(6,6)$-group. Our tables below (found with a computer) show that all these seven groups indeed arise, see Tables~\ref{table:66} and~\ref{table:66b}.

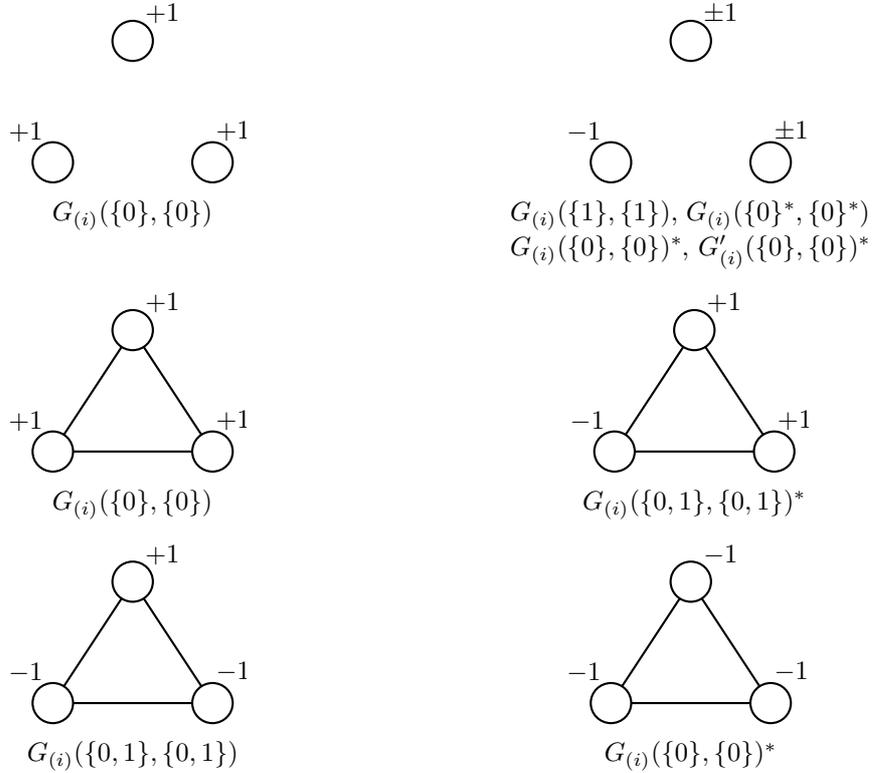
\begin{figure}[b!]
\begin{minipage}{.49\textwidth}
\centering
\begin{pspicture*}(-0.2,-1.6)(2.9,2.2)
\fontsize{10pt}{10pt}\selectfont
\psset{unit=0.7cm}

\pscircle[fillstyle=solid,fillcolor=white](0.51,0){0.4}
\rput(0,0.57){$+1$}

\pscircle[fillstyle=solid,fillcolor=white](3.51,0){0.4}
\rput(3.9,0.59){$+1$}

\pscircle[fillstyle=solid,fillcolor=white](2.01,2.3){0.4}
\rput(2.57,2.81){$+1$}

\rput(2,-1){$G_{(i)}(\{0\},\{0\})$}
\end{pspicture*}
\end{minipage}
\begin{minipage}{.5\textwidth}
\centering
\begin{pspicture*}(-1,-1.6)(3.8,2.2)
\fontsize{10pt}{10pt}\selectfont
\psset{unit=0.7cm}

\pscircle[fillstyle=solid,fillcolor=white](0.51,0){0.4}
\rput(0,0.57){$-1$}

\pscircle[fillstyle=solid,fillcolor=white](3.51,0){0.4}
\rput(3.9,0.59){$\pm1$}

\pscircle[fillstyle=solid,fillcolor=white](2.01,2.3){0.4}
\rput(2.57,2.81){$\pm1$}

\rput(2,-1){$G_{(i)}(\{1\},\{1\})$, $G_{(i)}(\{0\}^*,\{0\}^*)$}
\rput(2,-1.7){$G_{(i)}(\{0\},\{0\})^*$, $G'_{(i)}(\{0\},\{0\})^*$}
\end{pspicture*}
\end{minipage}

\begin{minipage}{.49\textwidth}
\centering
\begin{pspicture*}(-0.2,-1.1)(2.9,2.2)
\fontsize{10pt}{10pt}\selectfont
\psset{unit=0.7cm}

\psline(0.51,0)(3.51,0)

\psline(3.51,0)(2.01,2.3)

\psline(2.01,2.3)(0.51,0)

\pscircle[fillstyle=solid,fillcolor=white](0.51,0){0.4}
\rput(0,0.57){$+1$}

\pscircle[fillstyle=solid,fillcolor=white](3.51,0){0.4}
\rput(3.9,0.59){$+1$}

\pscircle[fillstyle=solid,fillcolor=white](2.01,2.3){0.4}
\rput(2.57,2.81){$+1$}

\rput(2,-1){$G_{(i)}(\{0\},\{0\})$}
\end{pspicture*}
\end{minipage}
\begin{minipage}{.5\textwidth}
\centering
\begin{pspicture*}(-0.2,-1.1)(2.9,2.2)
\fontsize{10pt}{10pt}\selectfont
\psset{unit=0.7cm}

\psline(0.51,0)(3.51,0)

\psline(3.51,0)(2.01,2.3)

\psline(2.01,2.3)(0.51,0)

\pscircle[fillstyle=solid,fillcolor=white](0.51,0){0.4}
\rput(0,0.57){$-1$}

\pscircle[fillstyle=solid,fillcolor=white](3.51,0){0.4}
\rput(3.9,0.59){$+1$}

\pscircle[fillstyle=solid,fillcolor=white](2.01,2.3){0.4}
\rput(2.57,2.81){$+1$}

\rput(2,-1){$G_{(i)}(\{0,1\},\{0,1\})^*$}
\end{pspicture*}
\end{minipage}

\begin{minipage}{.49\textwidth}
\centering
\begin{pspicture*}(-0.2,-1.0)(2.9,2.2)
\fontsize{10pt}{10pt}\selectfont
\psset{unit=0.7cm}

\psline(0.51,0)(3.51,0)

\psline(3.51,0)(2.01,2.3)

\psline(2.01,2.3)(0.51,0)

\pscircle[fillstyle=solid,fillcolor=white](0.51,0){0.4}
\rput(0,0.57){$-1$}

\pscircle[fillstyle=solid,fillcolor=white](3.51,0){0.4}
\rput(3.9,0.59){$-1$}

\pscircle[fillstyle=solid,fillcolor=white](2.01,2.3){0.4}
\rput(2.57,2.81){$+1$}

\rput(2,-1){$G_{(i)}(\{0,1\},\{0,1\})$}
\end{pspicture*}
\end{minipage}
\begin{minipage}{.5\textwidth}
\centering
\begin{pspicture*}(-1,-1.0)(3.8,2.2)
\fontsize{10pt}{10pt}\selectfont
\psset{unit=0.7cm}

\psline(0.51,0)(3.51,0)

\psline(3.51,0)(2.01,2.3)

\psline(2.01,2.3)(0.51,0)

\pscircle[fillstyle=solid,fillcolor=white](0.51,0){0.4}
\rput(0,0.57){$-1$}

\pscircle[fillstyle=solid,fillcolor=white](3.51,0){0.4}
\rput(3.9,0.59){$-1$}

\pscircle[fillstyle=solid,fillcolor=white](2.01,2.3){0.4}
\rput(2.57,2.81){$-1$}

\rput(2,-1){$G_{(i)}(\{0\},\{0\})^*$}
\end{pspicture*}
\end{minipage}

\caption{Possible projections for a torsion-free $(6,6)$-group.}\label{figure:projections66}
\end{figure}

So as to make the rendering of the tables better, let us introduce some notation. For groups in $\mathcal{G}'_{(i)}$, we use the following abbreviations:

\begin{table*}[h!]
\centering
\begin{tabular}{|c|c|}
\hline
Notation & Group\\
\hline
$X$ & $G_{(i)}(X,X)$\\
$X^*$ & $G_{(i)}(X,X)^*$\\
$X^{\prime*}$ & $G'_{(i)}(X,X)^*$\\
$X^{**}$ & $G_{(i)}(X^*,X^*)$\\
\hline
\end{tabular}
\end{table*}

Now for the local actions $\underline{H_j}(v_j) \leq \Sym(d_j)$, we need to give names to the conjugacy classes of subgroups of $\Sym(d_j)$. We will only give tables with $d_j \leq 6$, so we just need a notation for the conjugacy classes of subgroups of $\Sym(6)$. Indeed, each conjugacy class of subgroups of $\Sym(d)$ with $d < 6$ can also be seen as a conjugacy class in $\Sym(6)$ (by assuming that the $6-d$ other points are fixed). It can be computed that $\Sym(6)$ has exactly $56$ conjugacy classes of subgroups, and we give them names according to Table~\ref{table:classes}. The first part of the name of a conjugacy class is the order of a subgroup in that class, and we give for each one a set of generators of some representative subgroup. The classes of subgroups of $\Sym(3)$ (resp. $\Sym(4)$ and $\Sym(5)$) are marked with a $Y$ in the first (resp. second and third) column of the table.

The results of our computations are given in Tables~\ref{table:33T}--\ref{table:6600Tb}. Recall that, when $d_1 = d_2$, two $(d_1,d_2)$-groups can be conjugate by an element of $\Aut(T_1 \times T_2)$ exchanging $T_1$ and $T_2$. Hence, a group with projections $(H_1,H_2)$ is also conjugate to a group with projections $(H_2,H_1)$. For this reason, each equivalence class of $(d_1,d_2)$-groups appears twice or once in the table, depending on whether it is on the diagonal or not.

\begin{proof}[Proof of Theorem~\ref{maintheorem:66}]
See Tables~\ref{table:numbers}, \ref{table:66} and~\ref{table:66b}.
\end{proof}

\begin{table}
\setlength{\tabcolsep}{0.4em}
\renewcommand{\arraystretch}{0.75}
\footnotesize
\centering
\begin{tabular}{|c|c|c|c|c|c|}
\hline
3? & 4? & 5? & Name & Generators & Isomorphic to \Tstrut\Bstrut \\
\hline
 &  &  & 720.1 & (1,2,4,5)(3,6), (2,4), (1,2)(3,4) & $\Sym(6)$ \Tstrut \\
 &  &  & 360.1 & (1,2)(3,4), (1,2,4,5)(3,6) & $\Alt(6)$ \\
 &  &  & 120.2 & (1,3,2,4), (1,6,5,2,4) & $\mathrm{PGL}(2,5) \cong \Sym(5)$ \\
 &  & Y & 120.1 & (2,5), (1,2)(3,4,5) & $\Sym(5)$ \\
 &  &  & 72.1 & (2,3)(4,6,5), (1,4)(2,6,3,5) & $(\Sym(3) \times \Sym(3)) \rtimes \C_2$\\
 &  &  & 60.2 & (1,2)(3,4), (1,3,4)(2,5,6) & $\mathrm{PSL}(2,5) \cong \Sym(5)$ \\
 &  & Y & 60.1 & (1,2)(3,4), (2,3,5) & $\Alt(5)$ \\
 &  &  & 48.2 & (1,2)(3,6)(4,5), (1,3,4,2,5,6) & $\C_2 \times \Sym(4)$ \\
 &  &  & 48.1 & (1,2)(3,6,5), (1,2)(3,6,4,5) & $\C_2 \times \Sym(4)$ \\
 &  &  & 36.3 & (1,3)(5,6), (1,5,2,6)(3,4) & $(\C_3 \times \C_3) \rtimes \C_4$ \\
 &  &  & 36.2 & (2,3)(5,6), (1,3,2)(4,6) & $\Sym(3) \times \Sym(3)$ \\
 &  &  & 36.1 & (1,3)(4,6), (1,6,3,5,2,4) & $\Sym(3) \times \Sym(3)$ \\
 &  &  & 24.6 & (4,5,6), (1,2)(3,4,6,5) & $\Sym(4)$ \\
 &  &  & 24.5 & (1,3,6)(2,5,4), (1,2)(3,4,5,6) & $\Sym(4)$ \\
 & Y & Y & 24.4 & (1,4,2), (1,2), (3,4) & $\Sym(4)$ \\
 &  &  & 24.3 & (1,4,2,3), (1,6,2,5) & $\Sym(4)$ \\
 &  &  & 24.2 & (1,5,4,2,6,3), (1,5,4)(2,6,3) & $\C_2 \times \Alt(4)$ \\
 &  &  & 24.1 & (3,6,4), (1,2)(3,6,5) & $\C_2 \times \Alt(4)$ \\
 &  & Y & 20.1 & (2,4,5,3), (1,5)(2,4) & $\mathrm{GA}(1,5) \cong \C_5 \rtimes \C_4$ \\
 &  &  & 18.3 & (5,6), (1,2,3)(4,5) & $\C_3 \times \Sym(3)$ \\
 &  &  & 18.2 & (1,4,2,5,3,6), (1,3,2)(4,5,6) & $\C_3 \times \Sym(3)$ \\
 &  &  & 18.1 & (1,2,3), (1,3)(5,6), (4,5,6) & $(\C_3 \times \C_3) \rtimes \C_2$ \\
 &  &  & 16.1 & (5,6), (3,6)(4,5), (1,2)(3,4) & $\C_2 \times \D_8$ \\
 &  & Y & 12.4 & (1,2)(3,4), (1,2)(3,5,4) & $\C_2 \times \Sym(3) \cong \D_{12}$ \\
 &  &  & 12.3 & (1,3)(4,6), (1,4)(2,6)(3,5) & $\C_2 \times \Sym(3) \cong \D_{12}$ \\
 &  &  & 12.2 & (1,5,3)(2,6,4), (1,2)(3,4) & $\Alt(4)$ \\
 & Y & Y & 12.1 & (1,4,2), (1,2)(3,4) & $\Alt(4)$ \\
 &  & Y & 10.1 & (1,5)(2,4), (1,3)(4,5) & $\D_{10}$ \\
 &  &  & 9.1 & (1,2,3), (4,5,6) & $\C_3 \times \C_3$ \\
 &  &  & 8.7 & (3,6)(4,5), (1,2)(3,4) & $\D_8$ \\
 &  &  & 8.6 & (1,2), (3,4,5,6) & $\C_2 \times \C_4$ \\
 & Y & Y & 8.5 & (1,2), (1,4,2,3) & $\D_8$ \\
 &  &  & 8.4 & (1,2)(3,4,5,6), (4,6) & $\D_8$ \\
 &  &  & 8.3 & (3,5,4,6), (1,2)(3,4) & $\D_8$ \\
 &  &  & 8.2 & (3,4)(5,6), (3,6)(4,5), (1,2) & $\C_2 \times \C_2 \times \C_2$ \\
 &  &  & 8.1 & (1,2), (3,4), (5,6) & $\C_2 \times \C_2 \times \C_2$ \\
 &  &  & 6.6 & (1,3)(4,6), (1,2,3)(4,5,6) &  $\Sym(3)$ \\
 &  & Y & 6.5 & (1,2), (3,4,5) & $\C_6$ \\
 &  & Y & 6.4 & (3,4,5), (1,2)(3,4) & $\Sym(3)$ \\
 &  &  & 6.3 & (1,4,3,6,2,5) & $\C_6$ \\
Y & Y & Y & 6.2 & (1,2,3), (2,3) & $\Sym(3)$ \\
 &  &  & 6.1 & (1,2,3)(4,5,6), (1,6)(2,5)(3,4) & $\Sym(3)$ \\
 &  & Y & 5.1 & (1,3,5,2,4) & $\C_5$ \\
 &  &  & 4.7 & (1,2)(3,4,5,6) & $\C_4$ \\
 & Y & Y & 4.6 & (1,3,2,4) & $\C_4$ \\
 & Y & Y & 4.5 & (3,4), (1,2)(3,4) & $\C_2 \times \C_2$ \\
 &  &  & 4.4 & (3,4)(5,6), (1,2)(3,6)(4,5) & $\C_2 \times \C_2$ \\
 &  &  & 4.3 & (3,4)(5,6), (1,2) & $\C_2 \times \C_2$ \\
 & Y & Y & 4.2 & (1,4)(2,3), (1,2)(3,4) & $\C_2 \times \C_2$ \\
 &  &  & 4.1 & (3,4)(5,6), (1,2)(3,4) & $\C_2 \times \C_2$\\
 &  &  & 3.2 & (1,2,3)(4,5,6) & $\C_3$\\
Y & Y & Y & 3.1 & (1,2,3) & $\C_3$ \\
 & Y & Y & 2.3 & (1,2)(3,4) & $\C_2$ \\
 &  &  & 2.2 & (1,6)(2,5)(3,4) & $\C_2$ \\
Y & Y & Y & 2.1 & (1,2) & $\C_2$ \\
Y & Y & Y & 1.1 &  & 1\\
\hline
\end{tabular}
\caption{Conjugacy classes of subgroups of $\Sym(6)$.}\label{table:classes}
\end{table}

\begin{table}
\centering
\begin{tabular}{c|cc}
 & 6.2 & 2.1\\
\hline
6.2 & 3 & 1\Tstrut\\
2.1 & 1 & -\\
\end{tabular}
\caption{Possibly irreducible $(3,3)$-groups.}\label{table:33T}
\end{table}

\begin{table}
\centering
\begin{tabular}{c|cc}
 & 6.2 & 2.1\\
\hline
24.4 & 9 & 9\Tstrut\\
12.1 & 4 & 2\\
8.5 & 2 & 1\\
6.2 & 14 & 2\\
4.5 & 10 & -\\
2.1 & 6 & -\\
\end{tabular}
\caption{Possibly irreducible $(3,4)$-groups.}
\end{table}

\begin{table}
\centering
\begin{tabular}{c|cc}
 & 6.2 & 2.1\\
\hline
120.1 & 30 & 39\Tstrut\\
60.1 & 10 & 12\\
24.4 & 35 & 18\\
20.1 & 2 & 1\\
12.4 & 85 & 19\\
12.1 & 8 & 4\\
10.1 & 4 & 1\\
8.5 & 22 & 2\\
6.4 & 28 & 2\\
6.2 & 54 & 5\\
4.5 & 50 & -\\
4.2 & 2 & -\\
2.3 & 6 & -\\
2.1 & 18 & -\\
\end{tabular}
\caption{Possibly irreducible $(3,5)$-groups.}
\end{table}

\renewcommand{\arraystretch}{0.9}

\begin{table}
\centering
\small
\begin{tabular}{c|cc}
 & 6.2 & 2.1\\
\hline
$\{0,2\}^*$ & 7 & -\Tstrut\\
$\{2\}$ & 4 & -\\
$\{0,1\}^*$ & 22 & -\\
$\{0,1\}$ & 3 & -\\
$\{1\}^*$ & 4 & 22\\
$\{1\}$ & 1 & 7\\
$\{0\}^{**}$ & 2 & -\\
$\{0\}^*$ & 19 & 45\\
$\{0\}$ & 35 & 33\\
\hline
120.2 & 41 & 31\Tstrut\\
120.1 & 154 & 78\\
72.1 & 9 & 30\\
60.2 & 30 & 19\\
60.1 & 37 & 24\\
48.2 & 33 & 32\\
48.1 & 215 & 121\\
36.3 & - & 4\\
36.2 & 58 & 14\\
36.1 & 4 & 8\\
24.6 & 60 & 36\\
24.5 & 2 & 5\\
24.4 & 141 & 45\\
24.3 & 12 & 7\\
24.2 & 1 & 1\\
24.1 & 64 & 26\\
20.1 & 4 & 2\\
18.3 & 4 & -\\
18.2 & 4 & 4\\
18.1 & 9 & -\\
16.1 & 93 & 12\\
12.4 & 364 & 38\\
12.3 & 10 & 7\\
12.1 & 20 & 10\\
10.1 & 18 & 2\\
8.7 & 23 & 2\\
8.6 & 3 & -\\
8.5 & 108 & 5\\
8.4 & 13 & 5\\
8.3 & 54 & 2\\
8.2 & 15 & -\\
8.1 & 124 & -\\
6.6 & 23 & 3\\
6.4 & 88 & 4\\
6.2 & 136 & 8\\
4.5 & 210 & -\\
4.4 & 9 & -\\
4.3 & 106 & -\\
4.2 & 12 & -\\
4.1 & 36 & -\\
2.3 & 36 & -\\
2.1 & 50 & -\\
\end{tabular}
\caption{Possibly irreducible $(3,6)$-groups.}
\end{table}

\renewcommand{\arraystretch}{1}

\begin{table}
\centering
\begin{tabular}{c|ccc}
 & 24.4 & 12.1 & 8.5\\
\hline
24.4 & 1 & - & -\Tstrut\\
12.1 & - & - & 1\\
8.5 & - & 1 & -\\
\end{tabular}
\caption{Possibly irreducible torsion-free $(4,4)$-groups.}\label{table:44}
\end{table}

\begin{table}
\centering
\begin{tabular}{c|ccccccc}
 & 24.4 & 12.1 & 8.5 & 6.2 & 4.5 & 3.1 & 2.1\\
\hline
24.4 & 57 & 17 & 13 & 168 & 126 & 4 & 95\Tstrut\\
12.1 & 17 & 1 & 5 & 10 & 19 & - & 8\\
8.5 & 13 & 5 & 1 & 41 & 9 & 4 & 8\\
6.2 & 168 & 10 & 41 & 38 & 49 & - & 12\\
4.5 & 126 & 19 & 9 & 49 & 1 & - & -\\
3.1 & 4 & - & 4 & - & - & - & -\\
2.1 & 95 & 8 & 8 & 12 & - & - & -\\
\end{tabular}
\caption{Possibly irreducible $(4,4)$-groups with torsion.}
\end{table}

\begin{table}
\centering
\begin{tabular}{c|ccccccc}
 & 24.4 & 12.1 & 8.5 & 6.2 & 4.5 & 3.1 & 2.1\\
\hline
120.1 & 1833 & 64 & 81 & 1897 & 1436 & - & 1215\Tstrut\\
60.1 & 180 & 16 & 28 & 217 & 275 & - & 166\\
24.4 & 1739 & 49 & 301 & 781 & 679 & 4 & 190\\
20.1 & 28 & 2 & 2 & 13 & 21 & - & 9\\
12.4 & 2439 & 123 & 241 & 1431 & 613 & - & 192\\
12.1 & 122 & 2 & 48 & 20 & 84 & - & 16\\
10.1 & 36 & 2 & 2 & 30 & 18 & - & 7\\
8.5 & 480 & 38 & 109 & 269 & 50 & 4 & 16\\
6.5 & 66 & 1 & 16 & - & 4 & - & -\\
6.4 & 291 & 12 & 38 & 109 & 48 & - & 12\\
6.2 & 1496 & 36 & 277 & 300 & 181 & - & 30\\
4.6 & 4 & - & 6 & - & - & - & -\\
4.5 & 1879 & 80 & 117 & 255 & 3 & - & -\\
4.2 & 31 & 5 & 10 & 14 & - & - & -\\
3.1 & 30 & - & 32 & - & - & - & -\\
2.3 & 128 & 9 & 6 & 12 & - & - & -\\
2.1 & 592 & 20 & 35 & 36 & - & - & -\\
\end{tabular}
\caption{Possibly irreducible $(4,5)$-groups.}
\end{table}

\begin{table}
\centering
\begin{tabular}{c|cccc}
 & 24.4 & 12.1 & 8.5 & 4.5\\
\hline
$\{1\}$ & 3 & 2 & - & 1\Tstrut\\
$\{0\}^*$ & 2 & - & 3 & 1\\
$\{0\}$ & 2 & - & 2 & -\\
\hline
120.2 & 5 & - & 2 & 1\Tstrut\\
72.1 & 1 & - & - & -\\
60.2 & 4 & - & 3 & 1\\
48.2 & 5 & 2 & 1 & 4\\
48.1 & 2 & - & - & -\\
36.3 & - & 1 & 2 & 1\\
36.2 & 3 & 1 & 3 & 1\\
24.6 & 2 & - & - & -\\
24.5 & - & - & - & 1\\
24.4 & 5 & - & - & -\\
24.3 & 3 & 1 & 1 & 2\\
24.1 & - & - & 5 & -\\
12.3 & - & 1 & - & 1\\
12.1 & - & - & 6 & -\\
9.1 & 2 & - & 2 & -\\
8.7 & 1 & 1 & - & -\\
8.5 & 3 & 3 & - & -\\
8.4 & - & 2 & - & -\\
8.3 & 1 & 1 & - & -\\
6.6 & 4 & 2 & 1 & 1\\
\end{tabular}
\caption{Possibly irreducible torsion-free $(4,6)$-groups.}
\end{table}

\begin{table}
\centering
\begin{tabular}{c|ccccc}
 & 24.4 & 12.1 & 8.5 & 4.5 & 2.1\\
\hline
$\{1\}^*$ & 18 & - & 8 & - & -\Tstrut\\
$\{1\}$ & 11 & 2 & 4 & - & -\\
$\{0\}^{**}$ & 1 & 1 & - & - & -\\
$\{0\}^*$ & 13 & 2 & 10 & 15 & 6\\
$\{0\}$ & 9 & 3 & 3 & 3 & 2\\
\hline
120.2 & 18 & 2 & 11 & 11 & 4\Tstrut\\
72.1 & 13 & 2 & 10 & 5 & 2\\
60.2 & 5 & - & 3 & - & -\\
48.2 & 11 & - & 5 & - & -\\
48.1 & 20 & 4 & 6 & - & -\\
36.3 & 15 & - & 7 & 6 & 2\\
36.1 & 2 & - & - & - & -\\
24.6 & 21 & 3 & 12 & - & -\\
24.5 & 2 & - & 1 & - & -\\
24.4 & 15 & 3 & 12 & - & -\\
24.3 & 3 & 1 & 2 & - & -\\
24.2 & - & - & 2 & - & -\\
24.1 & 17 & - & 11 & - & -\\
18.2 & 6 & 1 & 6 & - & -\\
12.3 & 8 & - & 2 & - & -\\
12.2 & - & - & 1 & - & -\\
12.1 & 8 & - & 4 & - & -\\
8.7 & 15 & 3 & 2 & - & -\\
8.6 & 7 & - & 1 & - & -\\
8.5 & 19 & 3 & 6 & - & -\\
8.4 & 24 & 4 & 6 & - & -\\
8.3 & 17 & 3 & 2 & - & -\\
4.7 & 4 & - & - & - & -\\
4.6 & 4 & - & 2 & - & -\\
4.4 & 2 & - & - & - & -\\
4.3 & 2 & - & - & - & -\\
4.1 & 2 & - & - & - & -\\
\end{tabular}
\captionsetup{justification=centering}
\caption{Possibly irreducible $(4,6)$-groups\\ with torsion and $\tau_1 = \tau_2 = 0$.}
\end{table}

\begin{landscape}
\renewcommand{\arraystretch}{0.87}
\begin{table}
\scriptsize
\centering
\setlength{\tabcolsep}{0.4em}
\begin{tabular}{c|ccccccc|cccccccccccccc}
 & $\{0,1\}^*$ & $\{0,1\}$ & $\{1\}$ & $\{0\}^{**}$ & $\{0\}^{\prime*}$ & $\{0\}^*$ & $\{0\}$ & 120.2 & 72.1 & 60.2 & 48.2 & 48.1 & 36.3 & 36.2 & 36.1 & 24.6 & 24.5 & 24.4 & 24.3 & 24.2 & 24.1\\
\hline
$\{0,1\}^*$ & 80 & 125 & 218 & 19 & 13 & 145 & 103 & 71 & 9 & 28 & 128 & - & - & 99 & - & - & 13 & - & 29 & - & -\Tstrut\\
$\{0,1\}$ & 125 & 68 & 193 & 16 & 15 & 127 & 95 & 67 & 42 & 22 & 102 & - & - & 99 & 5 & - & 8 & - & 15 & - & -\\
$\{1\}$ & 218 & 193 & 210 & 19 & 14 & 277 & 185 & 115 & 28 & 44 & 31 & 158 & 2 & 70 & - & 48 & 4 & 170 & 12 & - & 108\\
$\{0\}^{**}$ & 19 & 16 & 19 & - & 2 & 22 & 15 & 3 & - & 2 & 12 & 12 & - & 5 & - & 5 & 1 & 10 & 3 & - & -\\
$\{0\}^{\prime*}$ & 13 & 15 & 14 & 2 & 2 & 7 & 6 & 9 & 5 & 3 & 6 & - & 2 & 5 & - & - & - & - & 3 & - & -\\
$\{0\}^*$ & 145 & 127 & 277 & 22 & 7 & 90 & 122 & 65 & 24 & 32 & 125 & 297 & 6 & 97 & - & 61 & 10 & 291 & 28 & - & 35\\
$\{0\}$ & 103 & 95 & 185 & 15 & 6 & 122 & 52 & 68 & 54 & 28 & 79 & 94 & - & 91 & 4 & 27 & 9 & 85 & 24 & - & 13\\
\hline
120.2 & 71 & 67 & 115 & 3 & 9 & 65 & 68 & 18 & 29 & 12 & 60 & 103 & 2 & 49 & - & 21 & 8 & 86 & 18 & - & 3\Tstrut\\
72.1 & 9 & 42 & 28 & - & 5 & 24 & 54 & 29 & - & 21 & 13 & 16 & - & 29 & - & 4 & 5 & 20 & 9 & 13 & 4\\
60.2 & 28 & 22 & 44 & 2 & 3 & 32 & 28 & 12 & 21 & 2 & 43 & 45 & - & 19 & 1 & 13 & 2 & 22 & 9 & - & 6\\
48.2 & 128 & 102 & 31 & 12 & 6 & 125 & 79 & 60 & 13 & 43 & 101 & 203 & - & 106 & - & 42 & 17 & 212 & 46 & - & 125\\
48.1 & - & - & 158 & 12 & - & 297 & 94 & 103 & 16 & 45 & 203 & 37 & - & 26 & 2 & 19 & 23 & 70 & 36 & 2 & -\\
36.3 & - & - & 2 & - & 2 & 6 & - & 2 & - & - & - & - & - & - & - & - & - & - & - & - & 12\\
36.2 & 99 & 99 & 70 & 5 & 5 & 97 & 91 & 49 & 29 & 19 & 106 & 26 & - & 16 & 4 & 10 & 16 & 12 & 24 & 20 & 12\\
36.1 & - & 5 & - & - & - & - & 4 & - & - & 1 & - & 2 & - & 4 & - & - & - & 4 & 1 & - & 2\\
24.6 & - & - & 48 & 5 & - & 61 & 27 & 21 & 4 & 13 & 42 & 19 & - & 10 & - & 3 & 9 & 11 & 8 & 2 & -\\
24.5 & 13 & 8 & 4 & 1 & - & 10 & 9 & 8 & 5 & 2 & 17 & 23 & - & 16 & - & 9 & 2 & 26 & 7 & - & 16\\
24.4 & - & - & 170 & 10 & - & 291 & 85 & 86 & 20 & 22 & 212 & 70 & - & 12 & 4 & 11 & 26 & 19 & 32 & - & -\\
24.3 & 29 & 15 & 12 & 3 & 3 & 28 & 24 & 18 & 9 & 9 & 46 & 36 & - & 24 & 1 & 8 & 7 & 32 & 8 & - & 15\\
24.2 & - & - & - & - & - & - & - & - & 13 & - & - & 2 & - & 20 & - & 2 & - & - & - & - & 2\\
24.1 & - & - & 108 & - & - & 35 & 13 & 3 & 4 & 6 & 125 & - & 12 & 12 & 2 & - & 16 & - & 15 & 2 & -\\
18.3 & - & - & 10 & 2 & 2 & 1 & 1 & 1 & - & 1 & - & - & 1 & - & - & - & - & - & - & - & -\\
18.2 & - & 4 & 9 & - & 3 & - & 9 & 2 & - & - & 7 & 4 & - & 6 & - & - & 2 & 8 & 2 & 3 & -\\
18.1 & 21 & 18 & 54 & 2 & 2 & 25 & 21 & 6 & 5 & 2 & 32 & 16 & - & 6 & - & 4 & 3 & 8 & 6 & - & 4\\
16.1 & - & - & 67 & - & - & 283 & 78 & 35 & 11 & 20 & 145 & 29 & 8 & 31 & 1 & 5 & 10 & 30 & 25 & 8 & 54\\
12.3 & - & 6 & 2 & - & 2 & 1 & 12 & 5 & - & 6 & 11 & 6 & - & 9 & - & - & 2 & 12 & 5 & - & 16\\
12.2 & - & - & - & - & - & - & - & - & 4 & - & - & - & - & 2 & - & - & - & - & - & - & -\\
12.1 & - & - & 68 & - & - & 28 & 12 & 2 & 8 & 2 & 130 & - & 4 & 4 & 2 & - & 14 & - & 16 & - & -\\
9.1 & 10 & 10 & 6 & 1 & 1 & 11 & 9 & 3 & 4 & 3 & 26 & 10 & - & 4 & - & 2 & 5 & 4 & 7 & 9 & -\\
8.7 & - & - & 14 & - & - & 81 & 24 & 7 & 5 & 7 & 39 & 9 & 4 & 11 & - & 4 & 7 & 6 & 5 & 2 & 13\\
8.6 & - & - & 18 & - & - & 36 & 6 & 11 & 4 & 4 & 44 & - & 8 & 8 & - & - & 7 & - & 6 & 3 & -\\
8.5 & - & - & 68 & - & - & 429 & 139 & 55 & 33 & 19 & 254 & 30 & 8 & 25 & 2 & 18 & 35 & 18 & 41 & - & 48\\
8.4 & - & - & - & - & - & 54 & 57 & 19 & - & 15 & 7 & - & 8 & 12 & - & - & 2 & - & 4 & - & 22\\
8.3 & - & - & 35 & - & - & 96 & 30 & 12 & 2 & 6 & 43 & 7 & 4 & 11 & - & 3 & 8 & 6 & 5 & 2 & 10\\
8.2 & - & - & 2 & - & - & 19 & 8 & 3 & 2 & - & 5 & - & 2 & 2 & - & - & - & - & - & 1 & -\\
8.1 & - & - & 68 & - & - & 55 & 22 & 11 & - & 11 & 93 & - & 8 & 8 & 1 & - & 17 & - & 14 & 1 & -\\
6.6 & 39 & 40 & 10 & 2 & 2 & 40 & 38 & 17 & 10 & 7 & 66 & 30 & - & 9 & 1 & 10 & 11 & 12 & 16 & - & 20\\
4.7 & - & - & - & - & - & 4 & 4 & - & - & - & - & - & - & - & - & - & - & - & - & - & -\\
4.6 & - & - & 14 & - & - & 46 & 14 & 10 & 8 & 2 & 57 & - & 4 & 4 & - & - & 8 & - & 9 & - & -\\
4.5 & - & - & 44 & - & - & 72 & 16 & 12 & 8 & 4 & 136 & - & 4 & 4 & - & - & 20 & - & 24 & - & -\\
4.4 & - & - & - & - & - & 5 & 7 & 2 & - & - & 3 & - & 2 & 2 & - & - & 1 & - & - & - & -\\
4.3 & - & - & 48 & - & - & 88 & 12 & 10 & 4 & 6 & 102 & - & 6 & 6 & - & - & 18 & - & 14 & 1 & -\\
4.2 & - & - & - & - & - & 30 & 12 & 4 & 4 & - & 10 & - & 2 & 2 & - & - & 2 & - & - & - & -\\
4.1 & - & - & 10 & - & - & 16 & 2 & 4 & 2 & 2 & 34 & - & 2 & 2 & - & - & 4 & - & 6 & - & -\\
2.3 & - & - & 24 & - & - & 27 & 9 & 6 & 4 & 2 & 71 & - & 2 & 2 & - & - & 11 & - & 12 & - & -\\
\end{tabular}
\caption{Possibly irreducible torsion-free $(6,6)$-groups. (Part 1/2)}\label{table:66}
\end{table}

\begin{table}
\scriptsize
\centering
\setlength{\tabcolsep}{0.4em}
\begin{tabular}{c|cccccccccccccccccccccccc}
 & 18.3 & 18.2 & 18.1 & 16.1 & 12.3 & 12.2 & 12.1 & 9.1 & 8.7 & 8.6 & 8.5 & 8.4 & 8.3 & 8.2 & 8.1 & 6.6 & 4.7 & 4.6 & 4.5 & 4.4 & 4.3 & 4.2 & 4.1 & 2.3\\
\hline
$\{0,1\}^*$ & - & - & 21 & - & - & - & - & 10 & - & - & - & - & - & - & - & 39 & - & - & - & - & - & - & - & -\Tstrut\\
$\{0,1\}$ & - & 4 & 18 & - & 6 & - & - & 10 & - & - & - & - & - & - & - & 40 & - & - & - & - & - & - & - & -\\
$\{1\}$ & 10 & 9 & 54 & 67 & 2 & - & 68 & 6 & 14 & 18 & 68 & - & 35 & 2 & 68 & 10 & - & 14 & 44 & - & 48 & - & 10 & 24\\
$\{0\}^{**}$ & 2 & - & 2 & - & - & - & - & 1 & - & - & - & - & - & - & - & 2 & - & - & - & - & - & - & - & -\\
$\{0\}^{\prime*}$ & 2 & 3 & 2 & - & 2 & - & - & 1 & - & - & - & - & - & - & - & 2 & - & - & - & - & - & - & - & -\\
$\{0\}^*$ & 1 & - & 25 & 283 & 1 & - & 28 & 11 & 81 & 36 & 429 & 54 & 96 & 19 & 55 & 40 & 4 & 46 & 72 & 5 & 88 & 30 & 16 & 27\\
$\{0\}$ & 1 & 9 & 21 & 78 & 12 & - & 12 & 9 & 24 & 6 & 139 & 57 & 30 & 8 & 22 & 38 & 4 & 14 & 16 & 7 & 12 & 12 & 2 & 9\\
\hline
120.2 & 1 & 2 & 6 & 35 & 5 & - & 2 & 3 & 7 & 11 & 55 & 19 & 12 & 3 & 11 & 17 & - & 10 & 12 & 2 & 10 & 4 & 4 & 6\Tstrut\\
72.1 & - & - & 5 & 11 & - & 4 & 8 & 4 & 5 & 4 & 33 & - & 2 & 2 & - & 10 & - & 8 & 8 & - & 4 & 4 & 2 & 4\\
60.2 & 1 & - & 2 & 20 & 6 & - & 2 & 3 & 7 & 4 & 19 & 15 & 6 & - & 11 & 7 & - & 2 & 4 & - & 6 & - & 2 & 2\\
48.2 & - & 7 & 32 & 145 & 11 & - & 130 & 26 & 39 & 44 & 254 & 7 & 43 & 5 & 93 & 66 & - & 57 & 136 & 3 & 102 & 10 & 34 & 71\\
48.1 & - & 4 & 16 & 29 & 6 & - & - & 10 & 9 & - & 30 & - & 7 & - & - & 30 & - & - & - & - & - & - & - & -\\
36.3 & 1 & - & - & 8 & - & - & 4 & - & 4 & 8 & 8 & 8 & 4 & 2 & 8 & - & - & 4 & 4 & 2 & 6 & 2 & 2 & 2\\
36.2 & - & 6 & 6 & 31 & 9 & 2 & 4 & 4 & 11 & 8 & 25 & 12 & 11 & 2 & 8 & 9 & - & 4 & 4 & 2 & 6 & 2 & 2 & 2\\
36.1 & - & - & - & 1 & - & - & 2 & - & - & - & 2 & - & - & - & 1 & 1 & - & - & - & - & - & - & - & -\\
24.6 & - & - & 4 & 5 & - & - & - & 2 & 4 & - & 18 & - & 3 & - & - & 10 & - & - & - & - & - & - & - & -\\
24.5 & - & 2 & 3 & 10 & 2 & - & 14 & 5 & 7 & 7 & 35 & 2 & 8 & - & 17 & 11 & - & 8 & 20 & 1 & 18 & 2 & 4 & 11\\
24.4 & - & 8 & 8 & 30 & 12 & - & - & 4 & 6 & - & 18 & - & 6 & - & - & 12 & - & - & - & - & - & - & - & -\\
24.3 & - & 2 & 6 & 25 & 5 & - & 16 & 7 & 5 & 6 & 41 & 4 & 5 & - & 14 & 16 & - & 9 & 24 & - & 14 & - & 6 & 12\\
24.2 & - & 3 & - & 8 & - & - & - & 9 & 2 & 3 & - & - & 2 & 1 & 1 & - & - & - & - & - & 1 & - & - & -\\
24.1 & - & - & 4 & 54 & 16 & - & - & - & 13 & - & 48 & 22 & 10 & - & - & 20 & - & - & - & - & - & - & - & -\\
18.3 & 1 & - & - & - & - & - & - & - & - & - & - & - & - & - & - & - & - & - & - & - & - & - & - & -\\
18.2 & - & - & - & 2 & - & 1 & - & - & 1 & - & 8 & - & 1 & - & - & 4 & - & - & - & - & - & - & - & -\\
18.1 & - & - & - & 6 & 3 & - & 2 & - & 2 & - & 6 & 5 & 2 & - & 1 & 3 & - & - & - & - & - & - & - & -\\
16.1 & - & 2 & 6 & - & 11 & - & 42 & 4 & - & - & - & - & - & - & - & 30 & - & - & - & - & - & - & - & -\\
12.3 & - & - & 3 & 11 & - & - & 14 & 2 & 3 & 2 & 26 & - & 4 & 1 & 9 & 6 & - & 4 & 12 & - & 8 & 2 & 4 & 6\\
12.2 & - & 1 & - & - & - & - & - & 1 & - & - & - & - & - & - & - & - & - & - & - & - & - & - & - & -\\
12.1 & - & - & 2 & 42 & 14 & - & - & - & 6 & - & 18 & 12 & 6 & - & - & 6 & - & - & - & - & - & - & - & -\\
9.1 & - & - & - & 4 & 2 & 1 & - & - & 2 & - & 4 & 4 & 2 & - & - & 2 & - & - & - & - & - & - & - & -\\
8.7 & - & 1 & 2 & - & 3 & - & 6 & 2 & - & - & - & - & - & - & - & 10 & - & - & - & - & - & - & - & -\\
8.6 & - & - & - & - & 2 & - & - & - & - & - & - & - & - & - & - & 8 & - & - & - & - & - & - & - & -\\
8.5 & - & 8 & 6 & - & 26 & - & 18 & 4 & - & - & - & - & - & - & - & 22 & - & - & - & - & - & - & - & -\\
8.4 & - & - & 5 & - & - & - & 12 & 4 & - & - & - & - & - & - & - & 3 & - & - & - & - & - & - & - & -\\
8.3 & - & 1 & 2 & - & 4 & - & 6 & 2 & - & - & - & - & - & - & - & 10 & - & - & - & - & - & - & - & -\\
8.2 & - & - & - & - & 1 & - & - & - & - & - & - & - & - & - & - & 2 & - & - & - & - & - & - & - & -\\
8.1 & - & - & 1 & - & 9 & - & - & - & - & - & - & - & - & - & - & 7 & - & - & - & - & - & - & - & -\\
6.6 & - & 4 & 3 & 30 & 6 & - & 6 & 2 & 10 & 8 & 22 & 3 & 10 & 2 & 7 & 4 & - & 4 & 4 & 2 & 6 & 2 & 2 & 2\\
4.7 & - & - & - & - & - & - & - & - & - & - & - & - & - & - & - & - & - & - & - & - & - & - & - & -\\
4.6 & - & - & - & - & 4 & - & - & - & - & - & - & - & - & - & - & 4 & - & - & - & - & - & - & - & -\\
4.5 & - & - & - & - & 12 & - & - & - & - & - & - & - & - & - & - & 4 & - & - & - & - & - & - & - & -\\
4.4 & - & - & - & - & - & - & - & - & - & - & - & - & - & - & - & 2 & - & - & - & - & - & - & - & -\\
4.3 & - & - & - & - & 8 & - & - & - & - & - & - & - & - & - & - & 6 & - & - & - & - & - & - & - & -\\
4.2 & - & - & - & - & 2 & - & - & - & - & - & - & - & - & - & - & 2 & - & - & - & - & - & - & - & -\\
4.1 & - & - & - & - & 4 & - & - & - & - & - & - & - & - & - & - & 2 & - & - & - & - & - & - & - & -\\
2.3 & - & - & - & - & 6 & - & - & - & - & - & - & - & - & - & - & 2 & - & - & - & - & - & - & - & -\\
\end{tabular}
\caption{Possibly irreducible torsion-free $(6,6)$-groups. (Part 2/2)}\label{table:66b}
\end{table}
\renewcommand{\arraystretch}{1}
\end{landscape}

\begin{landscape}
\renewcommand{\arraystretch}{0.87}
\begin{table}
\scriptsize
\centering
\setlength{\tabcolsep}{0.4em}
\begin{tabular}{c|ccccccccccc|cccccccccc}
 & $\{0,2\}^*$ & $\{0,2\}$ & $\{2\}$ & $\{0,1\}^*$ & $\{0,1\}$ & $\{1\}^{**}$ & $\{1\}^*$ & $\{1\}$ & $\{0\}^{**}$ & $\{0\}^*$ & $\{0\}$ & 120.2 & 72.1 & 60.2 & 48.2 & 48.1 & 36.3 & 36.1 & 24.6 & 24.5 & 24.4\\
\hline
$\{0,2\}^*$ & 190 & 330 & 764 & 267 & 361 & 69 & - & 603 & 61 & 465 & 538 & 179 & 579 & - & 150 & - & 22 & 47 & - & 2 & -\Tstrut\\
$\{0,2\}$ & 330 & 174 & 667 & 321 & 402 & 46 & - & 591 & 48 & 450 & 467 & 173 & 486 & - & 123 & - & 28 & 47 & - & 1 & -\\
$\{2\}$ & 764 & 667 & 641 & 708 & 742 & 57 & - & 1167 & 82 & 898 & 723 & 344 & 859 & - & 28 & - & 289 & 53 & - & 2 & -\\
$\{0,1\}^*$ & 267 & 321 & 708 & 191 & 377 & 62 & - & 596 & 70 & 564 & 479 & 250 & 507 & 29 & 352 & - & 76 & 45 & - & 26 & -\\
$\{0,1\}$ & 361 & 402 & 742 & 377 & 266 & 44 & - & 670 & 69 & 569 & 645 & 274 & 733 & 29 & 422 & - & 64 & 56 & - & 31 & -\\
$\{1\}^{**}$ & 69 & 46 & 57 & 62 & 44 & 5 & - & 47 & 4 & 48 & 71 & 13 & 61 & - & - & - & 22 & - & - & - & -\\
$\{1\}^*$ & - & - & - & - & - & - & 76 & 69 & 9 & 181 & 32 & 17 & 85 & - & 5 & 4179 & 23 & 6 & 998 & 1 & 1424\\
$\{1\}$ & 603 & 591 & 1167 & 596 & 670 & 47 & 69 & 447 & 76 & 1004 & 777 & 374 & 581 & 92 & 134 & 2558 & 115 & 26 & 805 & 20 & 834\\
$\{0\}^{**}$ & 61 & 48 & 82 & 70 & 69 & 4 & 9 & 76 & 5 & 70 & 90 & 30 & 71 & 5 & 15 & 102 & 14 & - & 19 & 2 & 22\\
$\{0\}^*$ & 465 & 450 & 898 & 564 & 569 & 48 & 181 & 1004 & 70 & 521 & 705 & 347 & 972 & 28 & 308 & 6322 & 167 & 80 & 1694 & 31 & 1986\\
$\{0\}$ & 538 & 467 & 723 & 479 & 645 & 71 & 32 & 777 & 90 & 705 & 314 & 300 & 529 & 25 & 229 & 1632 & 110 & 39 & 407 & 14 & 620\\
\hline
120.2 & 179 & 173 & 344 & 250 & 274 & 13 & 17 & 374 & 30 & 347 & 300 & 70 & 358 & 12 & 142 & 1134 & 78 & 18 & 363 & 14 & 345\Tstrut\\
72.1 & 579 & 486 & 859 & 507 & 733 & 61 & 85 & 581 & 71 & 972 & 529 & 358 & 575 & 29 & 263 & 1183 & 193 & 53 & 397 & 20 & 409\\
60.2 & - & - & - & 29 & 29 & - & - & 92 & 5 & 28 & 25 & 12 & 29 & 4 & 11 & 133 & 6 & 2 & 30 & 2 & 33\\
48.2 & 150 & 123 & 28 & 352 & 422 & - & 5 & 134 & 15 & 308 & 229 & 142 & 263 & 11 & 106 & 1199 & 36 & 17 & 390 & 22 & 435\\
48.1 & - & - & - & - & - & - & 4179 & 2558 & 102 & 6322 & 1632 & 1134 & 1183 & 133 & 1199 & 2662 & 310 & 185 & 1136 & 114 & 1009\\
36.3 & 22 & 28 & 289 & 76 & 64 & 22 & 23 & 115 & 14 & 167 & 110 & 78 & 193 & 6 & 36 & 310 & 30 & 11 & 93 & 3 & 89\\
36.1 & 47 & 47 & 53 & 45 & 56 & - & 6 & 26 & - & 80 & 39 & 18 & 53 & 2 & 17 & 185 & 11 & 3 & 49 & 1 & 75\\
24.6 & - & - & - & - & - & - & 998 & 805 & 19 & 1694 & 407 & 363 & 397 & 30 & 390 & 1136 & 93 & 49 & 127 & 33 & 231\\
24.5 & 2 & 1 & 2 & 26 & 31 & - & 1 & 20 & 2 & 31 & 14 & 14 & 20 & 2 & 22 & 114 & 3 & 1 & 33 & 3 & 43\\
24.4 & - & - & - & - & - & - & 1424 & 834 & 22 & 1986 & 620 & 345 & 409 & 33 & 435 & 1009 & 89 & 75 & 231 & 43 & 107\\
24.3 & 3 & 2 & 2 & 40 & 55 & - & 1 & 32 & 5 & 48 & 31 & 27 & 25 & 2 & 24 & 156 & 3 & 1 & 51 & 6 & 74\\
24.2 & - & - & - & - & - & - & - & - & - & - & - & - & 56 & - & - & 12 & 8 & - & 12 & - & 117\\
24.1 & - & - & - & - & - & - & - & 928 & 42 & 751 & 200 & 141 & 130 & - & 190 & 602 & 15 & 23 & 112 & 25 & 126\\
18.2 & 60 & 41 & - & 19 & 26 & - & 2 & 68 & 5 & 95 & 48 & 50 & 94 & 2 & 41 & 96 & 22 & 2 & 34 & 11 & 27\\
16.1 & - & - & - & - & - & - & 1122 & 1269 & 66 & 2805 & 625 & 525 & 465 & 53 & 553 & 1954 & 147 & 53 & 464 & 75 & 542\\
12.3 & 86 & 50 & - & 40 & 102 & - & 1 & 38 & 5 & 93 & 36 & 37 & 58 & 1 & 45 & 182 & 8 & 3 & 60 & 4 & 60\\
12.2 & - & - & - & - & - & - & - & - & - & - & - & - & 3 & - & - & 2 & 1 & - & 2 & - & 7\\
12.1 & - & - & - & - & - & - & - & 206 & 10 & 186 & 56 & 32 & 28 & - & 36 & 114 & - & 5 & 22 & 4 & 24\\
8.7 & - & - & - & - & - & - & 270 & 276 & 25 & 749 & 169 & 122 & 105 & 14 & 156 & 369 & 37 & 16 & 82 & 23 & 102\\
8.6 & - & - & - & - & - & - & 161 & 224 & 26 & 477 & 120 & 139 & 119 & 8 & 134 & 258 & 48 & 3 & 26 & 23 & 48\\
8.5 & - & - & - & - & - & - & 704 & 560 & 40 & 1312 & 319 & 232 & 229 & 31 & 338 & 429 & 47 & 36 & 101 & 51 & 124\\
8.4 & - & - & - & - & - & - & 822 & 484 & 20 & 1787 & 303 & 316 & 330 & 25 & 186 & 504 & 61 & 38 & 126 & 15 & 150\\
8.3 & - & - & - & - & - & - & 284 & 446 & 25 & 851 & 194 & 138 & 127 & 15 & 156 & 443 & 36 & 18 & 91 & 20 & 118\\
8.2 & - & - & - & - & - & - & - & 106 & 10 & 121 & 34 & 31 & 20 & 2 & 34 & 72 & 10 & - & 8 & 5 & 16\\
8.1 & - & - & - & - & - & - & - & 227 & 4 & 1680 & 206 & 204 & 138 & - & 123 & 18 & 53 & 4 & 2 & 14 & 4\\
4.7 & - & - & - & - & - & - & 62 & - & - & 132 & 28 & - & 12 & - & 4 & 44 & - & - & 4 & - & 8\\
4.6 & - & - & - & - & - & - & 48 & 74 & 8 & 142 & 42 & 32 & 30 & 4 & 67 & 44 & 6 & - & 4 & 12 & 8\\
4.5 & - & - & - & - & - & - & - & - & - & 1598 & 202 & 170 & 120 & - & - & - & 44 & - & - & - & -\\
4.4 & - & - & - & - & - & - & - & 102 & 12 & 128 & 22 & 30 & 23 & 2 & 24 & 36 & 5 & 2 & 4 & 3 & 8\\
4.3 & - & - & - & - & - & - & - & 142 & 8 & 1401 & 200 & 192 & 118 & - & 79 & 36 & 58 & 2 & 4 & 10 & 8\\
4.2 & - & - & - & - & - & - & - & 64 & 8 & 51 & 15 & 14 & 10 & 2 & 28 & - & 2 & - & - & 4 & -\\
4.1 & - & - & - & - & - & - & - & 32 & 4 & 416 & 58 & 53 & 36 & - & 22 & 18 & 17 & - & 2 & 4 & 4\\
2.3 & - & - & - & - & - & - & - & - & - & 157 & 23 & 20 & 14 & - & - & - & 6 & - & - & - & -\\
2.1 & - & - & - & - & - & - & - & - & - & 245 & 37 & 29 & 21 & - & - & - & 8 & - & - & - & -\\
\end{tabular}
\caption{Possibly irreducible $(6,6)$-groups with torsion and $\tau_1 = \tau_2 = 0$. (Part 1/2)}\label{table:6600T}
\end{table}

\begin{table}
\scriptsize
\centering
\setlength{\tabcolsep}{0.4em}
\begin{tabular}{c|cccccccccccccccccccccccc}
 & 24.3 & 24.2 & 24.1 & 18.2 & 16.1 & 12.3 & 12.2 & 12.1 & 8.7 & 8.6 & 8.5 & 8.4 & 8.3 & 8.2 & 8.1 & 4.7 & 4.6 & 4.5 & 4.4 & 4.3 & 4.2 & 4.1 & 2.3 & 2.1\\
\hline
$\{0,2\}^*$ & 3 & - & - & 60 & - & 86 & - & - & - & - & - & - & - & - & - & - & - & - & - & - & - & - & - & -\Tstrut\\
$\{0,2\}$ & 2 & - & - & 41 & - & 50 & - & - & - & - & - & - & - & - & - & - & - & - & - & - & - & - & - & -\\
$\{2\}$ & 2 & - & - & - & - & - & - & - & - & - & - & - & - & - & - & - & - & - & - & - & - & - & - & -\\
$\{0,1\}^*$ & 40 & - & - & 19 & - & 40 & - & - & - & - & - & - & - & - & - & - & - & - & - & - & - & - & - & -\\
$\{0,1\}$ & 55 & - & - & 26 & - & 102 & - & - & - & - & - & - & - & - & - & - & - & - & - & - & - & - & - & -\\
$\{1\}^{**}$ & - & - & - & - & - & - & - & - & - & - & - & - & - & - & - & - & - & - & - & - & - & - & - & -\\
$\{1\}^*$ & 1 & - & - & 2 & 1122 & 1 & - & - & 270 & 161 & 704 & 822 & 284 & - & - & 62 & 48 & - & - & - & - & - & - & -\\
$\{1\}$ & 32 & - & 928 & 68 & 1269 & 38 & - & 206 & 276 & 224 & 560 & 484 & 446 & 106 & 227 & - & 74 & - & 102 & 142 & 64 & 32 & - & -\\
$\{0\}^{**}$ & 5 & - & 42 & 5 & 66 & 5 & - & 10 & 25 & 26 & 40 & 20 & 25 & 10 & 4 & - & 8 & - & 12 & 8 & 8 & 4 & - & -\\
$\{0\}^*$ & 48 & - & 751 & 95 & 2805 & 93 & - & 186 & 749 & 477 & 1312 & 1787 & 851 & 121 & 1680 & 132 & 142 & 1598 & 128 & 1401 & 51 & 416 & 157 & 245\\
$\{0\}$ & 31 & - & 200 & 48 & 625 & 36 & - & 56 & 169 & 120 & 319 & 303 & 194 & 34 & 206 & 28 & 42 & 202 & 22 & 200 & 15 & 58 & 23 & 37\\
\hline
120.2 & 27 & - & 141 & 50 & 525 & 37 & - & 32 & 122 & 139 & 232 & 316 & 138 & 31 & 204 & - & 32 & 170 & 30 & 192 & 14 & 53 & 20 & 29\Tstrut\\
72.1 & 25 & 56 & 130 & 94 & 465 & 58 & 3 & 28 & 105 & 119 & 229 & 330 & 127 & 20 & 138 & 12 & 30 & 120 & 23 & 118 & 10 & 36 & 14 & 21\\
60.2 & 2 & - & - & 2 & 53 & 1 & - & - & 14 & 8 & 31 & 25 & 15 & 2 & - & - & 4 & - & 2 & - & 2 & - & - & -\\
48.2 & 24 & - & 190 & 41 & 553 & 45 & - & 36 & 156 & 134 & 338 & 186 & 156 & 34 & 123 & 4 & 67 & - & 24 & 79 & 28 & 22 & - & -\\
48.1 & 156 & 12 & 602 & 96 & 1954 & 182 & 2 & 114 & 369 & 258 & 429 & 504 & 443 & 72 & 18 & 44 & 44 & - & 36 & 36 & - & 18 & - & -\\
36.3 & 3 & 8 & 15 & 22 & 147 & 8 & 1 & - & 37 & 48 & 47 & 61 & 36 & 10 & 53 & - & 6 & 44 & 5 & 58 & 2 & 17 & 6 & 8\\
36.1 & 1 & - & 23 & 2 & 53 & 3 & - & 5 & 16 & 3 & 36 & 38 & 18 & - & 4 & - & - & - & 2 & 2 & - & - & - & -\\
24.6 & 51 & 12 & 112 & 34 & 464 & 60 & 2 & 22 & 82 & 26 & 101 & 126 & 91 & 8 & 2 & 4 & 4 & - & 4 & 4 & - & 2 & - & -\\
24.5 & 6 & - & 25 & 11 & 75 & 4 & - & 4 & 23 & 23 & 51 & 15 & 20 & 5 & 14 & - & 12 & - & 3 & 10 & 4 & 4 & - & -\\
24.4 & 74 & 117 & 126 & 27 & 542 & 60 & 7 & 24 & 102 & 48 & 124 & 150 & 118 & 16 & 4 & 8 & 8 & - & 8 & 8 & - & 4 & - & -\\
24.3 & 5 & - & 46 & 15 & 82 & 6 & - & 8 & 26 & 22 & 53 & 26 & 24 & 6 & 24 & - & 9 & - & 4 & 16 & 6 & 4 & - & -\\
24.2 & - & - & 6 & 10 & 54 & - & - & - & 11 & 10 & 44 & 22 & 13 & 3 & 1 & - & 15 & - & 2 & 1 & - & - & - & -\\
24.1 & 46 & 6 & - & 20 & 366 & 26 & - & - & 71 & - & 78 & 104 & 74 & - & - & - & - & - & - & - & - & - & - & -\\
18.2 & 15 & 10 & 20 & 10 & 38 & 8 & - & 5 & 15 & - & 26 & 44 & 15 & - & - & - & - & - & - & - & - & - & - & -\\
16.1 & 82 & 54 & 366 & 38 & 148 & 67 & 6 & 58 & 34 & 32 & 108 & 95 & 46 & - & - & - & 16 & - & - & - & - & - & - & -\\
12.3 & 6 & - & 26 & 8 & 67 & 5 & - & - & 17 & 14 & 28 & 19 & 24 & 5 & 12 & - & 4 & - & 6 & 10 & 4 & 2 & - & -\\
12.2 & - & - & - & - & 6 & - & - & - & 1 & 1 & 4 & 4 & 1 & - & - & - & 1 & - & - & - & - & - & - & -\\
12.1 & 8 & - & - & 5 & 58 & - & - & - & 14 & - & 12 & 18 & 14 & - & - & - & - & - & - & - & - & - & - & -\\
8.7 & 26 & 11 & 71 & 15 & 34 & 17 & 1 & 14 & 3 & 4 & 14 & 12 & 6 & - & - & - & 2 & - & - & - & - & - & - & -\\
8.6 & 22 & 10 & - & - & 32 & 14 & 1 & - & 4 & - & 8 & 8 & 4 & - & - & - & - & - & - & - & - & - & - & -\\
8.5 & 53 & 44 & 78 & 26 & 108 & 28 & 4 & 12 & 14 & 8 & 22 & 36 & 18 & - & - & - & 4 & - & - & - & - & - & - & -\\
8.4 & 26 & 22 & 104 & 44 & 95 & 19 & 4 & 18 & 12 & 8 & 36 & 19 & 16 & - & - & - & 4 & - & - & - & - & - & - & -\\
8.3 & 24 & 13 & 74 & 15 & 46 & 24 & 1 & 14 & 6 & 4 & 18 & 16 & 5 & - & - & - & 2 & - & - & - & - & - & - & -\\
8.2 & 6 & 3 & - & - & - & 5 & - & - & - & - & - & - & - & - & - & - & - & - & - & - & - & - & - & -\\
8.1 & 24 & 1 & - & - & - & 12 & - & - & - & - & - & - & - & - & - & - & - & - & - & - & - & - & - & -\\
4.7 & - & - & - & - & - & - & - & - & - & - & - & - & - & - & - & - & - & - & - & - & - & - & - & -\\
4.6 & 9 & 15 & - & - & 16 & 4 & 1 & - & 2 & - & 4 & 4 & 2 & - & - & - & - & - & - & - & - & - & - & -\\
4.5 & - & - & - & - & - & - & - & - & - & - & - & - & - & - & - & - & - & - & - & - & - & - & - & -\\
4.4 & 4 & 2 & - & - & - & 6 & - & - & - & - & - & - & - & - & - & - & - & - & - & - & - & - & - & -\\
4.3 & 16 & 1 & - & - & - & 10 & - & - & - & - & - & - & - & - & - & - & - & - & - & - & - & - & - & -\\
4.2 & 6 & - & - & - & - & 4 & - & - & - & - & - & - & - & - & - & - & - & - & - & - & - & - & - & -\\
4.1 & 4 & - & - & - & - & 2 & - & - & - & - & - & - & - & - & - & - & - & - & - & - & - & - & - & -\\
2.3 & - & - & - & - & - & - & - & - & - & - & - & - & - & - & - & - & - & - & - & - & - & - & - & -\\
2.1 & - & - & - & - & - & - & - & - & - & - & - & - & - & - & - & - & - & - & - & - & - & - & - & -\\
\end{tabular}
\caption{Possibly irreducible $(6,6)$-groups with torsion and $\tau_1 = \tau_2 = 0$. (Part 2/2)}\label{table:6600Tb}
\end{table}
\renewcommand{\arraystretch}{1}
\end{landscape}

\section{Virtually simple \texorpdfstring{$(d_1,d_2)$-groups}{(d_1,d_2)-groups}}\label{section:simple}

In~\cite{Burger2} and~\cite{Rattaggi}, the authors constructed virtually simple torsion-free $(d_1,d_2)$-groups for different values of $(d_1,d_2)$, for instance $(6,16)$ and $(10,10)$. More recently and with the same ideas, Bondarenko and Kivva constructed two virtually simple torsion-free $(8,8)$-groups in \cite{Bondarenko}.

In this section we find a list of virtually simple $(6,6)$-groups and $(4,5)$-groups. We also give a virtually simple $(2n,2n+1)$-group for each $n \geq 2$. We then end with virtually simple $(6,4n)$-groups with $n \geq 2$ so that the projection on the $6$-regular tree $T$ Chabauty converges to $\Aut(T)$ when $n$ goes to infinity.

\subsection{Virtually simple \texorpdfstring{$(6,6)$-groups}{(6,6)-groups}}
\label{subsection:66}

The idea for constructing virtually simple $(d_1,d_2)$-groups is to use the \textit{Normal Subgroup Theorem} \cite[Theorem~4.1]{Burger2} due to Burger and Mozes, stating that if $\Gamma$ is a $(d_1,d_2)$-group with $H_t$ being $2$-transitive on $\partial T_t$ and $[H_t : H_t^{(\infty)}] < \infty$ for each $t \in \{1,2\}$, then any non-trivial normal subgroup of $\Gamma$ has finite index (i.e.\ $\Gamma$ is \textbf{just-infinite}). Bader and Shalom later proved a generalization of that theorem in~\cite{Bader-Shalom}. We give below a statement which is a consequence of their result. We call it the \textit{Normal Subgroup Theorem} (NST) for future references. A tree is \textbf{thick} if each of its vertices has at least $3$ neighbors.

\begin{theorem*}[Normal Subgroup Theorem, Bader--Shalom]
Let $T_1$ and $T_2$ be two locally finite thick trees and let $\Gamma \leq \Aut(T_1) \times \Aut(T_2)$ be a cocompact lattice such that $\overline{\proj_t(\Gamma)}$ is $2$-transitive on $\partial T_t$ for each $t \in \{1,2\}$. Then $\Gamma$ and all its finite index subgroups are just-infinite. In particular, $\Gamma$ is either residually finite or virtually simple.
\end{theorem*}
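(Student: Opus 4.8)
The plan is to derive the statement from the Bader--Shalom normal subgroup theorem for irreducible lattices in products of locally compact groups, with the geometric input coming entirely from the structure theory of boundary-transitive tree automorphism groups. Write $H_t=\overline{\proj_t(\Gamma)}\leq\Aut(T_t)$ for $t\in\{1,2\}$. Since $H_1\times H_2$ is a closed subgroup of $\Aut(T_1)\times\Aut(T_2)$ containing the cocompact lattice $\Gamma$, the group $\Gamma$ is a cocompact lattice in $H_1\times H_2$ as well, and its projection to each factor is dense by the very definition of $H_t$; thus $\Gamma$ is an \emph{irreducible} cocompact lattice in $H_1\times H_2$. First I would record the properties of the factors needed to apply the theorem: each $H_t$ acts on the thick locally finite tree $T_t$ with compact open vertex stabilisers, and (because $\Gamma$ acts cocompactly on $T_1\times T_2$, so $\proj_t(\Gamma)$ and hence $H_t$ act with finitely many vertex orbits) it is a compactly generated totally disconnected locally compact second countable group; $2$-transitivity of $H_t$ on $\partial T_t$ forces $H_t$ to be non-discrete (a point stabiliser on $\partial T_t$ acts transitively on the uncountable complement) and non-amenable, and by the Burger--Mozes structure theory of boundary-$2$-transitive groups $H_t$ is ``almost topologically simple'' --- it has a cocompact topologically simple closed normal subgroup $H_t^{(\infty)}$ of finite index --- so in particular $\mathrm{Z}(H_t)=\{1\}$ and $H_t$ has no non-trivial compact normal subgroup. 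Thickness of $T_t$ is exactly what makes $\partial T_t$ perfect and guarantees all of this.

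Next I would invoke the Bader--Shalom normal subgroup theorem applied to $\Gamma<H_1\times H_2$: the $2$-transitive action of $H_t$ on the perfect boundary $\partial T_t$ with its natural quasi-invariant measure class supplies the mixing amenable-boundary input of their framework, while compact generation and almost-simplicity of $H_t$ supply the remaining hypotheses. The conclusion is that every normal subgroup of $\Gamma$ is either finite and central, or of finite index. But $\mathrm{Z}(\Gamma)$ embeds into $\mathrm{Z}(H_1)\times\mathrm{Z}(H_2)=\{1\}$ (a central element of $\Gamma$ centralises the dense subgroup $\proj_t(\Gamma)$ of $H_t$, hence all of $H_t$), so ``finite and central'' means trivial; thus $\Gamma$ is just-infinite. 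The same applies to any finite-index subgroup $\Gamma'\leq\Gamma$: it is again a cocompact lattice in $\Aut(T_1)\times\Aut(T_2)$ whose projections are dense in the closed finite-index subgroups $\overline{\proj_t(\Gamma')}\leq H_t$, and these still contain $H_t^{(\infty)}$ and remain boundary-$2$-transitive by Burger--Mozes, so the theorem applies to $\Gamma'$ verbatim and $\Gamma'$ is just-infinite too.

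Finally, for the ``in particular'' I would run the standard finite-residual argument. If $\Gamma$ is residually finite we are in the first case. Otherwise let $R$ be the finite residual of $\Gamma$, the intersection of all its finite-index subgroups; it is characteristic and non-trivial, so $[\Gamma:R]<\infty$ since $\Gamma$ is just-infinite. To see $R$ is simple, take $\{1\}\neq M\trianglelefteq R$. As $R$ has finite index in $\Gamma$ it is itself just-infinite, so $[R:M]<\infty$; since $R\trianglelefteq\Gamma$ has finite index, $M$ has only finitely many $\Gamma$-conjugates, each of finite index in $R$, so $N:=\bigcap_{\gamma\in\Gamma}\gamma M\gamma^{-1}$ is a finite-index normal subgroup of $\Gamma$. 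Because $R$ lies inside every finite-index subgroup of $\Gamma$, we get $R\subseteq N\subseteq M\subseteq R$, hence $M=R$. Thus $R$ is a simple normal subgroup of finite index, i.e.\ $\Gamma$ is virtually simple.

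The step I expect to be the real work is the first one: matching the concrete situation (the $H_t$ closed and $2$-transitive on the ends of a thick tree) to the precise hypotheses under which Bader--Shalom's theorem is stated, that is, checking that boundary-$2$-transitivity yields exactly the required ``mixing boundary'' together with the absence of amenable or compact normal obstructions. Everything else --- the reduction to an irreducible lattice in $H_1\times H_2$, the triviality of $\mathrm{Z}(\Gamma)$, passing to finite-index subgroups, and the finite-residual dichotomy --- is routine.
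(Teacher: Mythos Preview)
Your proposal is correct and follows essentially the same route as the paper: reduce to the irreducible lattice $\Gamma<H_1\times H_2$, verify the Bader--Shalom hypotheses using the structure of boundary-$2$-transitive tree groups, and conclude with the standard finite-residual dichotomy. The only cosmetic difference is the packaging of the hypothesis check: the paper directly verifies that each $H_t$ is \emph{just non-compact} with no non-trivial abelian normal subgroup (the exact hypotheses of \cite[Theorem~1.1]{Bader-Shalom}), rather than passing through the stronger almost-topological-simplicity statement you invoke, and it cites \cite[Proposition~3.1.2]{Burger} up front to reduce the ``all finite index subgroups'' claim to the case of $\Gamma$ itself.
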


\begin{proof}
By \cite[Proposition~3.1.2]{Burger}, all finite index subgroups of a closed subgroup of $\Aut(T_t)$ acting $2$-transitively on $\partial T_t$ also acts $2$-transitively on $\partial T_t$. Up to replacing $\Gamma$ by a finite index subgroup, we can therefore just show that $\Gamma$ is just-infinite.

This is a consequence of \cite[Theorem~1.1]{Bader-Shalom}, modulo the fact that if $H$ is a closed subgroup of $\Aut(T)$ acting $2$-transitively on $\partial T$ (with $T$ being a locally finite thick tree), then $H$ is just non-compact (i.e.\ it is non-compact and all its non-trivial normal subgroups are cocompact) and $H$ does not contain any non-trivial abelian normal subgroup. This is an easy exercise: it suffices to remember that a non-trivial normal subgroup of a $2$-transitive group is transitive, and to use the characterizations of the $2$-transitivity on $\partial T$ given in~\cite[Lemma~3.1.1]{Burger}.

Let us now show that $\Gamma$ is residually finite or virtually simple. As $\Gamma$ is just-infinite, $\Gamma^{(\infty)}$ is trivial or has finite index in $\Gamma$. If $\Gamma^{(\infty)} = 1$ then $\Gamma$ is residually finite. On the contrary, if $\Gamma^{(\infty)}$ has finite index in $\Gamma$ then it is also just-infinite. So any non-trivial normal subgroup $N$ of $\Gamma^{(\infty)}$ has finite index in $\Gamma^{(\infty)}$ and thus in $\Gamma$, hence $N = \Gamma^{(\infty)}$. This means that $\Gamma^{(\infty)}$ is simple.
\end{proof}

In this subsection, we first present a torsion-free $(4,4)$-group $\Gamma_{4,4}$ which is not residually finite. The NST does not directly apply to $\Gamma_{4,4}$, but the strategy is then to embed $\Gamma_{4,4}$ in some other $(d_1,d_2)$-group $\Gamma$ on which the NST can be used. Then $\Gamma$ cannot be residually finite because it contains $\Gamma_{4,4}$, and hence it must be virtually simple.

Let $\Gamma_{4,4}$ be the torsion-free $(4,4)$-group associated to the four squares in Figure~\ref{picture:44}. The local action of $\Gamma_{4,4}$ on $T_1$ (resp.\ $T_2$) is $\D_8$ (resp.\ $\Alt(4)$), and it is possibly irreducible: it appears in Table~\ref{table:44}.

We show that $\Gamma_{4,4}$ is non-residually finite. This was already proved in \cite[Theorem~15]{Bondarenko} and \cite[Corollary~6.4]{CapraceWesolek} but we here prove it by finding an explicit non-trivial element $\gamma \in \Gamma_{4,4}^{(\infty)}$. In other words $\gamma$ is a non-trivial element of $\Gamma_{4,4}$ such that $\varphi(\gamma) = 1$ for any finite quotient $\varphi \colon \Gamma_{4,4} \to Q$. The ideas of this proof are due to Caprace and are already written in \cite[Remark~4.19]{CapraceNote} but we give here some additional details.

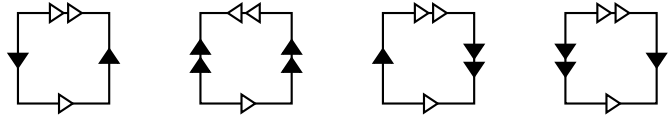
\begin{figure}
\centering
\begin{pspicture*}(-0.2,4.6)(8.6,6.2)
\fontsize{10pt}{10pt}\selectfont
\psset{unit=1.2cm}

\pspolygon(0,4)(1,4)(1,5)(0,5)

\pspolygon[fillstyle=solid,fillcolor=white](0.45,4.1)(0.45,3.9)(0.6,4)
\pspolygon[fillstyle=solid,fillcolor=white](0.35,5.1)(0.35,4.9)(0.5,5)
\pspolygon[fillstyle=solid,fillcolor=white](0.55,5.1)(0.55,4.9)(0.7,5)
\pspolygon[fillstyle=solid,fillcolor=black](0.9,4.45)(1.1,4.45)(1,4.6)
\pspolygon[fillstyle=solid,fillcolor=black](-0.1,4.55)(0.1,4.55)(0,4.4)

\pspolygon(2,4)(3,4)(3,5)(2,5)

\pspolygon[fillstyle=solid,fillcolor=white](2.45,4.1)(2.45,3.9)(2.6,4)
\pspolygon[fillstyle=solid,fillcolor=white](2.65,5.1)(2.65,4.9)(2.5,5)
\pspolygon[fillstyle=solid,fillcolor=white](2.45,5.1)(2.45,4.9)(2.3,5)
\pspolygon[fillstyle=solid,fillcolor=black](2.9,4.55)(3.1,4.55)(3,4.7)
\pspolygon[fillstyle=solid,fillcolor=black](2.9,4.35)(3.1,4.35)(3,4.5)
\pspolygon[fillstyle=solid,fillcolor=black](1.9,4.55)(2.1,4.55)(2,4.7)
\pspolygon[fillstyle=solid,fillcolor=black](1.9,4.35)(2.1,4.35)(2,4.5)

\pspolygon(4,4)(5,4)(5,5)(4,5)

\pspolygon[fillstyle=solid,fillcolor=white](4.45,4.1)(4.45,3.9)(4.6,4)
\pspolygon[fillstyle=solid,fillcolor=white](4.35,5.1)(4.35,4.9)(4.5,5)
\pspolygon[fillstyle=solid,fillcolor=white](4.55,5.1)(4.55,4.9)(4.7,5)
\pspolygon[fillstyle=solid,fillcolor=black](4.9,4.45)(5.1,4.45)(5,4.3)
\pspolygon[fillstyle=solid,fillcolor=black](4.9,4.65)(5.1,4.65)(5,4.5)
\pspolygon[fillstyle=solid,fillcolor=black](3.9,4.45)(4.1,4.45)(4,4.6)

\pspolygon(6,4)(7,4)(7,5)(6,5)

\pspolygon[fillstyle=solid,fillcolor=white](6.45,4.1)(6.45,3.9)(6.6,4)
\pspolygon[fillstyle=solid,fillcolor=white](6.35,5.1)(6.35,4.9)(6.5,5)
\pspolygon[fillstyle=solid,fillcolor=white](6.55,5.1)(6.55,4.9)(6.7,5)
\pspolygon[fillstyle=solid,fillcolor=black](6.9,4.55)(7.1,4.55)(7,4.4)
\pspolygon[fillstyle=solid,fillcolor=black](5.9,4.45)(6.1,4.45)(6,4.3)
\pspolygon[fillstyle=solid,fillcolor=black](5.9,4.65)(6.1,4.65)(6,4.5)
\end{pspicture*}
\caption{The torsion-free $(4,4)$-group $\Gamma_{4,4}$}\label{picture:44}
\end{figure}

\begin{proposition}\label{proposition:nonrf}
The group $\Gamma_{4,4}$ is irreducible and not residually finite. Moreover, $[a_1^3, a_2^4]$ and $[a_2^3, a_1^4]$ are non-trivial elements of $\Gamma_{4,4}^{(\infty)}$.
\end{proposition}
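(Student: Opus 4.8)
The plan is to deduce irreducibility from non-residual finiteness, and then to prove non-residual finiteness by exhibiting the advertised elements in the finite residual; the latter is where all the genuine work lies.

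\textbf{Irreducibility.} I would first note that, for torsion-free $(4,4)$-groups, irreducibility is a formal consequence of non-residual finiteness. If $\Gamma_{4,4}$ were reducible it would, by definition, be commensurable to a product $\Gamma_1 \times \Gamma_2$ of cocompact lattices $\Gamma_t \leq \Aut(T_t)$; being torsion-free cocompact lattices in $\Aut(T_t)$, each $\Gamma_t$ is a finitely generated free group, so $\Gamma_1 \times \Gamma_2$ is residually finite. Residual finiteness is inherited by subgroups and by finite overgroups, hence is a commensurability invariant, so $\Gamma_{4,4}$ would itself be residually finite. Contrapositively, once $\Gamma_{4,4}$ is known to be non-residually finite it is automatically irreducible. (Consistently, $\Gamma_{4,4}$ appears among the possibly irreducible groups of Table~\ref{table:44}.) Moreover, by Lemma~\ref{lemma:torsionfree} the vertex stabilizer $\langle a_1,a_2\rangle = \Gamma_{4,4}(v_2)$ is torsion-free, and since $\Gamma_{4,4}$ acts freely while $\langle a_1,a_2\rangle$ acts transitively on the vertices of the $4$-regular tree $T_1\times\{v_2\}$, this stabilizer is free of rank $2$ on $\{a_1,a_2\}$; hence $[a_1^3,a_2^4]$ is a nontrivial (cyclically reduced, length $14$) element of $\langle a_1,a_2\rangle$, and therefore of $\Gamma_{4,4}$. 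Likewise for $[a_2^3,a_1^4]$.

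\textbf{The element lies in the finite residual.} Let $\varphi\colon\Gamma_{4,4}\to Q$ be any homomorphism onto a finite group; one must show $\varphi([a_1^3,a_2^4])=1$. Reading the four geometric squares of Figure~\ref{picture:44} one records the relevant local data: on $T_1$ the local action is $\D_8$, with $\proj_1(a_1),\proj_1(a_2)$ hyperbolic of translation length $1$, while on $T_2$ the local action is $\Alt(4)$, with $\proj_2(a_1),\proj_2(a_2)$ elliptic (they fix $v_2$, since $a_1,a_2$ do) and acting on the four edges at $v_2$ as $3$-cycles. Because a $3$-cycle cubes to the identity, $\proj_2(a_1^3)$ and $\proj_2(a_2^3)$ fix $B(v_2,1)$ pointwise. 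Using the defining relations of $\Gamma_{4,4}$ one then rewrites conjugates of $a_1^3$ and of $a_2^4$ by suitable words in the generators as words that again lie deep in the corresponding congruence subgroups, and iterates. Following Caprace's idea (see~\cite[Remark~4.19]{CapraceNote}), this produces inside $\Gamma_{4,4}$ a cyclic chain of conjugacy relations, in the spirit of the argument showing that Higman's group has no nontrivial finite quotient, which forces $\varphi([a_1^3,a_2^4])$ to be conjugate in $Q$ to a proper power of itself around a cycle whose only solution is the identity; hence $\varphi([a_1^3,a_2^4])=1$, so $[a_1^3,a_2^4]\in\Gamma_{4,4}^{(\infty)}$. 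Running the same computation with the roles of $T_1$ and $T_2$ exchanged handles $[a_2^3,a_1^4]$. Alternatively, non-residual finiteness of $\Gamma_{4,4}$ is already recorded in~\cite[Theorem~15]{Bondarenko} and~\cite[Corollary~6.4]{CapraceWesolek}.

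\textbf{Main obstacle.} The formal part (irreducibility from non-residual finiteness, and non-triviality of the commutators from freeness of the vertex stabilizer) is routine. The genuine difficulty is the last paragraph: extracting from the four geometric squares the precise combinatorial identities that realise the cyclic divisibility obstruction. This amounts to tracking carefully how the defining relators propagate the action of $a_1^3$ and $a_2^4$ through successive balls of $T_1$ and $T_2$, and this bookkeeping is the step I expect to consume most of the effort.
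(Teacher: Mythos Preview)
Your plan has a circularity problem and a misidentified mechanism.

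\textbf{Circularity in the irreducibility step.} Deducing irreducibility from non-residual finiteness is valid in the abstract, but the concrete argument you point to for the finite residual---Caprace's idea in \cite[Remark~4.19]{CapraceNote}, which is exactly what the paper implements---\emph{uses} irreducibility as an input. Specifically, one needs that $\proj_1(\Gamma_{4,4}(v_1))$ is infinite (equivalently, $H_1$ is non-discrete) in order to find, for each finite quotient $\varphi$, a nontrivial element $\gamma \in \Fix_{\Gamma_{4,4}}(B(v_1,1)) \cap \ker\varphi$ that still moves something in $T_1$. The paper therefore establishes irreducibility \emph{first}, independently, via Weiss's bound \cite[Theorem~1.1]{Weiss}: it exhibits $(a_1a_2)^{81}$ as an element fixing $B(v_2,4)$ but not $B(v_2,5)$. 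Your route would need an argument for the finite residual that does not invoke irreducibility, and you have not supplied one.

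\textbf{The mechanism is not Higman-style.} The actual argument is not a cycle of ``conjugate to a proper power'' relations. It is the inclusion $[C_G(H),\overline{H}]\subseteq G^{(\infty)}$ applied with $H = B_{a_1} = \Gamma_{4,4}(v_1,a_1(v_1))$. One verifies from the squares the four identities $b_1^{\pm 1}a_1^3 b_1^{\mp 1}=a_2^3$, $b_2 a_1^3 b_2^{-1}=a_2^3$, $b_2^{-1}a_1^3 b_2=a_2^{-3}$, which together show $a_1^3 \in C_{\Gamma_{4,4}}(B_{a_1})$. The substantial part is then showing $a_2^4 \in \overline{B_{a_1}}$: using irreducibility and the $\D_8$ local action on $T_1$ (where opposite vertices of the square are $a_i(v_1)$ and $a_i^{-1}(v_1)$), one shows that for every finite quotient $\varphi$ one has $\varphi(a_1^2)\in\varphi(B^{(2)})$ or $\varphi(a_2^2)\in\varphi(B^{(2)})$; an explicit automorphism $a_1\leftrightarrow a_2$, $b_i\mapsto b_i^{-1}$ of $\Gamma_{4,4}$ then forces both. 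This is not a divisibility obstruction \`a la Higman; there is no element conjugate to a proper power of itself, and your sketch does not produce the identities needed to run either argument. Your observation that $\langle a_1,a_2\rangle$ is free (hence the commutators are nontrivial) is correct and useful, but the core of the proof lies elsewhere.
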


\begin{proof}
Irreducibility has been proven in \cite[Theorem~3]{JW}, but it can also be established by using \cite[Theorem~1.1]{Weiss}. Indeed, in our situation the result of Weiss implies that, if $\Gamma_{4,4}$ was reducible, then $\Fix_{\proj_2(\Gamma_{4,4})}(B(v_2,4))$ would be trivial. So for proving irreducibility we just need to find some element in $\Gamma_{4,4}$ fixing $B(v_2,4)$ but not $B(v_2,5)$. We claim that $(a_1a_2)^{81}$ is such an element. First, we can compute that $(a_1a_2)^3$ fixes $B(v_2,1)$. Then, for each vertex $w$ at distance~$1$ from $v_2$, $(a_1a_2)^3$ can only act trivially or as a $3$-cycle on the three neighbors of $w$ different from $v_2$ (because the local action is $\Alt(4)$). So $(a_1a_2)^9$ fixes $B(v_2,2)$. Continuing with the same argument, we obtain that $(a_1a_2)^{81}$ fixes $B(v_2,4)$. Finally, the fact that $(a_1a_2)^{81}$ does not fix $B(v_2,5)$ can be proved by drawing a $162 \times 5$ rectangle. This can be automatized with a computer, and we get for instance that $(a_1a_2)^{81}(b_1^5(v_2)) = b_1^4b_2^{-1}(v_2)$. 

The strategy to find a non-trivial element in $\Gamma_{4,4}^{(\infty)}$ is to use that for any group $G$ and any subgroup $H \leq G$, the inclusion $[C_G(H), \overline{H}] \subseteq G^{(\infty)}$ holds, where $\overline{H}$ is the profinite closure of $H$ (see~\cite[Lemma~4.13]{CapraceNote}). Here we take $G = \Gamma_{4,4}$. For $H$ we consider $B_{a_1} = \Gamma_{4,4}(v_1, a_1(v_1))$, i.e.\ the fixator of $a_1(v_1)$ in $B = \langle b_1, b_2 \rangle = \Gamma_{4,4}(v_1)$. We claim that $a_1^3 \in C_{\Gamma_{4,4}}(B_{a_1})$ and $a_2^4 \in \overline{B_{a_1}}$. This will thus show that $[a_1^3, a_2^4] \in \Gamma_{4,4}^{(\infty)}$.

Since $B$ acts transitively on the four vertices adjacent to $v_1$ in $T_1$, the subgroup $B_{a_1}$ has index~$4$ in $B$. Using the Reidemeister--Schreier method, we could find the following set of generators for $B_{a_1}$:
$$B_{a_1} = \langle b_1b_2, b_1^{-1}b_2, b_2b_1b_2^2, b_2b_1^{-1}b_2^2, b_2^4 \rangle.$$
From the geometric squares, it can be checked that $a_1^3$ centralizes $B_{a_1}$, i.e.\ $a_1^3 \in C_{\Gamma_{4,4}}(B_{a_1})$. This indeed directly follows from the equalities
\begin{align*}
b_1a_1^3b_1^{-1} &= a_2^3,\\
b_1^{-1}a_1^3b_1 &= a_2^3,\\
b_2a_1^3b_2^{-1} &= a_2^3,\\
b_2^{-1}a_1^3b_2 &= a_2^{-3}.
\end{align*}

Note also that $B_{a_1}$ is contained in $B^{(2)}$, the index~$2$ subgroup of $B$ consisting of elements whose length is even (with respect to the generators $b_1$ and $b_2$). Our next goal is to show that $a_2^2 \in \overline{B^{(2)}}$, and it will then follow that $a_2^4 \in \overline{B_{a_1}}$ as wanted.

\begin{figure}[b!]
\centering
\begin{pspicture*}(-0.8,-0.4)(5,3.4)
\fontsize{10pt}{10pt}\selectfont
\psset{unit=1cm}

\psline(0,0)(4,0)
\rput(1.5,0.2){$h$}
\rput(3.5,0.2){$x'$}
\psline(0,0)(0,3)
\rput(0.25,1.5){$\gamma$}
\psline(0,3)(4,3)
\rput(1.5,2.8){$h$}
\rput(3.5,2.8){$x$}
\psline(3,0)(3,3)
\psline(4,0)(4,3)
\rput(3.7,1.5){$\gamma'$}

\psdot(0,0)
\rput(0,-0.25){$(v_1,v_2)$}
\psdot(0,3)
\rput(0,3.25){$(v_1,\gamma(v_2))$}
\psdot(3,0)
\rput(2.9,-0.25){$(w,v_2)$}
\psdot(3,3)
\rput(2.75,3.25){$(w,\gamma(v_2))$}
\psdot(4,0)
\rput(4.1,-0.25){$(z',v_2)$}
\psdot(4,3)
\rput(4.25,3.25){$(z,\gamma(v_2))$}
\end{pspicture*}
\caption{Illustration of Proposition~\ref{proposition:nonrf}}\label{picture:nonrf}
\end{figure}
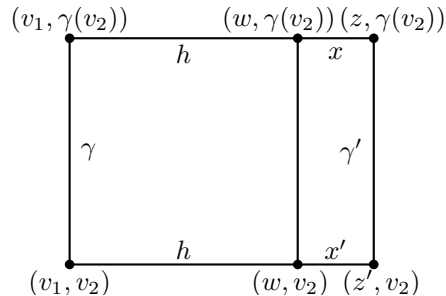

Consider a finite quotient $\varphi \colon \Gamma_{4,4} \to Q$. Since $\Gamma_{4,4}$ is irreducible, the projection $\proj_1(\Gamma_{4,4}(v_1))$ is infinite. Hence, the finite index subgroup $\proj_1(\Fix_{\Gamma_{4,4}}(B(v_1,1)) \cap  B^{(2)} \cap \ker \varphi)$ is also infinite. Let $\gamma$ be an element of $\Fix_{\Gamma_{4,4}}(B(v_1,1)) \cap  B^{(2)} \cap \ker \varphi$ such that $\proj_1(\gamma)$ is non-trivial. In $T_1$, there is a vertex $w \neq v_1$ such that $\gamma$ fixes the path from $v_1$ to $w$ but does not fix some neighbor $z$ of $w$: $\gamma(z) = z' \neq z$. Write $w = h(v_1)$ with $h \in \langle a_1, a_2 \rangle$, $z = hx(v_1)$ with $x \in \{a_1, a_1^{-1}, a_2, a_2^{-1}\}$ and $z' = hx'(v_1)$ with $x' \in \{a_1, a_1^{-1}, a_2, a_2^{-1}\}$, see Figure~\ref{picture:nonrf}. Recall that $\proj_1(\Gamma_{4,4}(v_1))$ acts on the four neighbors of $v_1$ as $\D_8$ acting on the four vertices of a square (where $a_1(v_1)$ and $a_1^{-1}(v_1)$ correspond to opposite vertices of the square). We thus have the same local action around $w$, and the fact that $\gamma$ fixes $w$ and some neighbor of $w$ while not fixing $z$ implies that $x' = x^{-1}$. On Figure~\ref{picture:nonrf} we see that $h x^{-1} \gamma' = \gamma h x$ for some $\gamma' \in B^{(2)}$. Using the fact that $\varphi(\gamma) = 1$, this implies that $\varphi(\gamma') = \varphi(x^2)$. We can summarize this by saying that, for each finite quotient $\varphi \colon \Gamma_{4,4} \to Q$, either $\varphi(a_1^2) \in \varphi(B^{(2)})$ or $\varphi(a_2^2) \in \varphi(B^{(2)})$ $(*)$. In fact, we can even say that there exists $k \in \{1,2\}$ such that $\varphi(a_k^2) \in \varphi(B^{(2)})$ for all finite quotients $\varphi \colon \Gamma_{4,4} \to Q$ $(**)$. Indeed, if $(**)$ was not true then we would have two finite quotients $\varphi_1 \colon \Gamma_{4,4} \to Q_1$ and $\varphi_2 \colon \Gamma_{4,4} \to Q_2$ with $\varphi_1(a_1^2) \not\in \varphi_1(B^{(2)})$ and $\varphi_2(a_2^2) \not\in \varphi_2(B^{(2)})$, and the new finite quotient $(\varphi_1 \times \varphi_2) \colon \Gamma_{4,4} \to Q_1 \times Q_2$ would give a contradiction with $(*)$. Now there suffices to remark that $\Gamma_{4,4}$ has an automorphism defined by $a_1 \mapsto a_2$, $a_2 \mapsto a_1$, $b_1 \mapsto b_1^{-1}$ and $b_2 \mapsto b_2^{-1}$. Therefore, $(**)$ even tells us that $\varphi(a_1^2)$ and $\varphi(a_2^2)$ both belong to $\varphi(B^{(2)})$ for all finite quotients $\varphi \colon \Gamma_{4,4} \to Q$. In particular, we have $a_2^2 \in \overline{B^{(2)}}$ as wanted.

Remark that, thanks to the automorphism of $\Gamma_{4,4}$ defined above, we also obtain $[a_2^3, a_1^4] \in \Gamma_{4,4}^{(\infty)}$.
\end{proof}

Using GAP, we could search for $(d_1,d_2)$-groups $\Gamma$ with $d_1, d_2 \geq 6$, containing $\Gamma_{4,4}$ (in the sense that the four geometric squares defining $\Gamma_{4,4}$ are part of the geometric squares defining $\Gamma$) and such that $\underline{H_1}(v_1) \geq \Alt(d_1)$ and $\underline{H_2}(v_2) \geq \Alt(d_2)$. We say that $\Gamma$ satisfies $(*)$ if the above conditions are true. Since $\Gamma_{4,4}$ is irreducible, a group $\Gamma$ satisfying $(*)$ is also irreducible and $H_t$ is $2$-transitive on $\partial T_t$ for each $t \in \{1,2\}$ (see \cite[Propositions~3.3.1 and~3.3.2]{Burger}). (We even know by Theorem~\ref{theorem:Raduclassification} that $H_t$ belongs to $\mathcal{G}'_{(i)}$ for some legal coloring $i$ of $T_t$.) Thus $\Gamma$ is virtually simple, by the NST.

We could find torsion-free $(6,8)$-groups and torsion-free $(8,6)$-groups satisfying $(*)$, by adding one (resp.\ two) horizontal generator(s), two (resp.\ one) vertical generator(s) and $8$ geometric squares to the ones of $\Gamma_{4,4}$. We could also show that there does not exist any torsion-free $(6,6)$-group satisfying $(*)$. However, there exists $(6,6)$-groups (with torsion) with $(*)$. In total, there are $160$ equivalence classes of such groups. We give all these groups in Tables~\ref{table:simple1}--\ref{table:simple5}, by giving the geometric squares that must be added to the four geometric squares $a_1b_1a_2^{-1}b_1$, $a_1b_2a_2b_2^{-1}$, $a_1b_2^{-1}a_2^{-1}b_1^{-1}$ and $a_1b_1^{-1}a_2^{-1}b_2$ defining $\Gamma_{4,4}$. We call them $\Gamma_{6,6,1}, \ldots, \Gamma_{6,6,160}$. Some remarks follow about these groups:

\begin{itemize}
\item The index of the simple subgroup $\Gamma_{6,6,k}^{(\infty)}$ of $\Gamma_{6,6,k}$ can be computed by using the fact that $[a_1^3, a_2^4] \in \Gamma_{4,4}^{(\infty)} \leq \Gamma_{6,6,k}^{(\infty)}$ (see Proposition~\ref{proposition:nonrf}). Indeed, let $Q$ be the group obtained by adding the relator $[a_1^3, a_2^4]$ to the relators of $\Gamma_{6,6,k}$. Then the kernel of the projection $\Gamma_{6,6,k} \to Q$ is the smallest normal subgroup of $\Gamma_{6,6,k}$ containing $[a_1^3, a_2^4]$, i.e.\ it must be $\Gamma_{6,6,k}^{(\infty)}$. Hence, we just need to compute (with GAP) the order of the finite group $Q$ obtained as above and this gives us the index of $\Gamma_{6,6,k}^{(\infty)}$ in $\Gamma_{6,6,k}$. As written in the tables, for all groups $\Gamma_{6,6,k}$ with $k \in \{1, \ldots, 160\} \setminus \{104,116\}$, we obtain that $|Q|\ = 4$. As $[\Gamma_{6,6,k} : \Gamma_{6,6,k}^+] = 4$, this implies that $\Gamma_{6,6,k}^{(\infty)} = \Gamma_{6,6,k}^+$ i.e.\ $\Gamma_{6,6,k}^+$ is the simple subgroup of finite index in $\Gamma_{6,6,k}$. For $\Gamma_{6,6,104}$ and $\Gamma_{6,6,116}$, we get $|Q|\ = 12$. More precisely, $Q \cong (\C_2)^2 \times \C_3$. So $\Gamma_{6,6,k}^{(\infty)}$ is a subgroup of index $3$ of $\Gamma_{6,6,k}^+$ when $k \in \{104,116\}$.
\item The groups $H_1 = \overline{\proj_1(\Gamma_{6,6,k})}$ and $H_2 = \overline{\proj_2(\Gamma_{6,6,k})}$ are given, using the notation of \S\ref{subsection:inventory}. As explained above, we could compute that $\bigslant{\Gamma_{6,6,k}}{\Gamma_{6,6,k}^{(\infty)}} \cong (\C_2)^2$ or $(\C_2)^2 \times \C_3$ for each $k \in \{1,\ldots,160\}$. This explains why $H_1$ and $H_2$ never take the form $X^{**}$ or $X^{\prime*}$ for some $X \subset_f \N$ : recall from Theorem~\ref{theorem:Radusimple} that $G_{(i)}^+(X,X)$ is simple and that $[G_{(i)}(X^*,X^*) : G_{(i)}^+(X,X)] = 8$ and $\bigslant{G'_{(i)}(X,X)^*}{G_{(i)}^+(X,X)} \cong \C_4$.
\item For the last column, recall that $X_{\Gamma_{6,6,k}^+}$ is a $(d_1,d_2)$-complex, as defined in Definition~\ref{definition:complex}. We write $\Aut(X_{\Gamma_{6,6,k}^+})$ for the set of automorphisms of that complex which do not exchange horizontal and vertical edges. We already know by hypothesis that $\Aut(X_{\Gamma_{6,6,k}^+}) \geq \C_2 \times \C_2$, and we computed the number of automorphisms of $X_{\Gamma_{6,6,k}^+}$ that fix the four vertices $v_{00}, v_{10}, v_{11}$ and $v_{01}$. As written in the tables, for each $k \in \{1, \ldots, 160\}$ we could observe that there is at most one non-trivial such automorphism, so that $\Aut(X_{\Gamma_{6,6,k}^+}) \cong \C_2 \times \C_2$ or $\C_2 \times \C_2 \times \C_2$.
\begin{itemize}
\item If $\Aut(X_{\Gamma_{6,6,k}^+}) \cong \C_2 \times \C_2$, then there is exactly one good $\C_2 \times \C_2$-action on $X_{\Gamma_{6,6,k}^+}$, so this means that $\Gamma_{6,6,k}$ is the only $(6,6)$-group whose type-preserving subgroup is $\Gamma_{6,6,k}^+$.
\item If $\Aut(X_{\Gamma_{6,6,k}^+}) \cong \C_2 \times \C_2 \times \C_2$, then there are four good $\C_2 \times \C_2$-actions on $X_{\Gamma_{6,6,k}^+}$. This means that there are four $(6,6)$-groups whose type-preserving subgroup is $\Gamma_{6,6,k}^+$. For each such $k$ we could compute the three new $(6,6)$-groups containing $\Gamma_{6,6,k}^+$, but it directly appears that they have much more torsion, i.e.\ their $\tau_1$ and $\tau_2$ satisfy $\tau_1+\tau_2 \geq 4$. In particular, none of the new groups obtained in that way is equivalent to some $\Gamma_{6,6,k'}$.
\end{itemize}
From this discussion it follows that all $\Gamma_{6,6,k}^+$ are pairwise non-conjugate in $\Aut(T_1 \times T_2)$. By~\cite[Corollary~1.1.22]{BMZ}, this also means that they are all pairwise non-isomorphic.
\end{itemize}

We summarize some of those results in the next theorem.

\begin{theorem}[Theorem~\ref{maintheorem:simple66-45}(i)]
Let $\Gamma_{6,6,k}$ ($k \in \{1,\ldots,160\}$) be one of the $(6,6)$-groups given by Tables~\ref{table:simple1}--\ref{table:simple5}.
\begin{itemize}
\item If $k \not \in \{104,116\}$, then $\Gamma_{6,6,k}^+$ is simple.
\item If $k \in \{104,116\}$, then $\Gamma_{6,6,k}^+$ has a simple subgroup of index~$3$.
\end{itemize}
Moreover, all simple groups $\Gamma_{6,6,k}^{(\infty)}$ are pairwise non-isomorphic. In particular, the groups $\Gamma_{6,6,k}$ are pairwise non commensurable.
\end{theorem}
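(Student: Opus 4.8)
The plan is to treat each $\Gamma_{6,6,k}$ exactly as the $(6,8)$- and $(8,6)$-groups were treated above: verify that the Normal Subgroup Theorem (NST) applies, use the embedded copy of $\Gamma_{4,4}$ to exclude residual finiteness, and then extract the index of the simple subgroup from a finite-quotient computation. First I would record that, by construction, $\Gamma_{4,4}\leq\Gamma_{6,6,k}$ (the four geometric squares defining $\Gamma_{4,4}$ are among those of $\Gamma_{6,6,k}$) and that $\Gamma_{6,6,k}$ satisfies $(*)$, i.e.\ $\underline{H_1}(v_1)\geq\Alt(6)$ and $\underline{H_2}(v_2)\geq\Alt(6)$. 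Since $\Gamma_{4,4}$ is irreducible by Proposition~\ref{proposition:nonrf}, so is $\Gamma_{6,6,k}$; hence $H_1$ and $H_2$ are non-discrete, and by \cite[Propositions~3.3.1 and~3.3.2]{Burger} each $H_t$ is $2$-transitive on $\partial T_t$ (and, by Theorem~\ref{theorem:Raduclassification}, conjugate to a member of $\mathcal{G}'_{(i)}$). The NST then yields that $\Gamma_{6,6,k}$ and all its finite-index subgroups are just-infinite, so $\Gamma_{6,6,k}$ is residually finite or virtually simple. To rule out the former, note that by Proposition~\ref{proposition:nonrf} the element $[a_1^3,a_2^4]$ is a non-trivial element of $\Gamma_{4,4}^{(\infty)}\subseteq\Gamma_{6,6,k}^{(\infty)}$, so $\Gamma_{6,6,k}^{(\infty)}\neq 1$; by just-infiniteness it has finite index, and, as in the proof of the NST, it is then simple. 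Thus $\Gamma_{6,6,k}$ is virtually simple with simple subgroup $\Gamma_{6,6,k}^{(\infty)}$.

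Next I would determine the index $[\Gamma_{6,6,k}:\Gamma_{6,6,k}^{(\infty)}]$. Put $N=\langle\langle[a_1^3,a_2^4]\rangle\rangle\trianglelefteq\Gamma_{6,6,k}$; then $1\neq N\trianglelefteq\Gamma_{6,6,k}^{(\infty)}$, so simplicity of $\Gamma_{6,6,k}^{(\infty)}$ forces $N=\Gamma_{6,6,k}^{(\infty)}$. Hence the finite group $Q=\Gamma_{6,6,k}/N$, obtained in GAP by adjoining the single relator $[a_1^3,a_2^4]$ to the presentation of $\Gamma_{6,6,k}$, has order $[\Gamma_{6,6,k}:\Gamma_{6,6,k}^{(\infty)}]$. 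Since $\Gamma_{6,6,k}/\Gamma_{6,6,k}^+\cong(\C_2)^2$ is abelian, $\Gamma_{6,6,k}^{(\infty)}\subseteq\Gamma_{6,6,k}^+$; so $|Q|=4$ forces $\Gamma_{6,6,k}^{(\infty)}=\Gamma_{6,6,k}^+$, which is the case $k\notin\{104,116\}$, while $|Q|=12$ forces $[\Gamma_{6,6,k}^+:\Gamma_{6,6,k}^{(\infty)}]=3$, which is the case $k\in\{104,116\}$. This settles the first two bullet points of the statement.

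For the \emph{moreover} part I would invoke the discussion of the complexes $X_{\Gamma_{6,6,k}^+}$ carried out just before the statement: each carries either one or four good $\C_2\times\C_2$-actions, and in the latter case the three actions other than the one producing $\Gamma_{6,6,k}$ yield $(6,6)$-groups with $\tau_1+\tau_2\geq 4$, hence inequivalent to any $\Gamma_{6,6,k'}$. Consequently the groups $\Gamma_{6,6,k}^+$ are pairwise non-conjugate in $\Aut(T_1\times T_2)$, and \cite[Corollary~1.1.22]{BMZ} upgrades this to pairwise non-isomorphism as abstract groups. Each $\Gamma_{6,6,k}^{(\infty)}$ is a cocompact lattice with non-discrete projections (being of finite index in $\Gamma_{6,6,k}$), and a routine normalizer argument recovers $\Gamma_{6,6,k}^+$ from $\Gamma_{6,6,k}^{(\infty)}$ up to conjugacy, so the pairwise non-conjugacy of the $\Gamma_{6,6,k}^+$ passes to the $\Gamma_{6,6,k}^{(\infty)}$, and \cite[Corollary~1.1.22]{BMZ} applied to these lattices gives that the simple groups $\Gamma_{6,6,k}^{(\infty)}$ are pairwise non-isomorphic. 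Finally, if $\Gamma_{6,6,k}$ and $\Gamma_{6,6,k'}$ were abstractly commensurable, they would have a common finite-index subgroup $\Delta$ (up to isomorphism); but in a just-infinite virtually simple group the simple part is contained in every finite-index subgroup and coincides with the terminal term of its derived series, so $\Delta^{(\infty)}=\Gamma_{6,6,k}^{(\infty)}$ and $\Delta^{(\infty)}=\Gamma_{6,6,k'}^{(\infty)}$, whence $\Gamma_{6,6,k}^{(\infty)}\cong\Gamma_{6,6,k'}^{(\infty)}$ — contradicting the previous sentence.

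The conceptually quick part is the first paragraph: virtual simplicity is immediate from Proposition~\ref{proposition:nonrf} and the NST. The main obstacle is twofold, and is essentially bookkeeping and computation rather than new ideas. On the computational side one must actually produce and certify the list of the $160$ complexes, check $(*)$ for each (which passes through Theorem~\ref{theorem:Raduclassification} and the $G^{(t)}_\Gamma$ machinery), and verify $|Q|\in\{4,12\}$ with GAP. On the structural side, the delicate point is the last paragraph: one has to make precise the "normalizer argument" that transfers non-conjugacy from the $\Gamma_{6,6,k}^+$ to the $\Gamma_{6,6,k}^{(\infty)}$ (in particular for the two groups $k\in\{104,116\}$, where $\Gamma_{6,6,k}^{(\infty)}$ is a proper index-$3$ subgroup of $\Gamma_{6,6,k}^+$), and to control the three "extra" $\C_2\times\C_2$-actions on those complexes $X_{\Gamma_{6,6,k}^+}$ whose automorphism group is $(\C_2)^3$.
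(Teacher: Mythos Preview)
Your treatment of virtual simplicity and of the index $[\Gamma_{6,6,k}:\Gamma_{6,6,k}^{(\infty)}]$ via the quotient $Q=\Gamma_{6,6,k}/\langle\!\langle[a_1^3,a_2^4]\rangle\!\rangle$ is exactly what the paper does (it is the ``discussion above'' to which the paper's proof refers).

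The divergence is in the \emph{moreover} clause. You propose a normalizer argument to pass non-conjugacy from the $\Gamma_{6,6,k}^+$ to the $\Gamma_{6,6,k}^{(\infty)}$, and you correctly flag this as the delicate point (note for instance that for $k\in\{104,116\}$ one has $\Aut(X_{\Gamma_{6,6,k}^+})\cong(\C_2)^3$, so the normalizer of $\Gamma_{6,6,k}^+$ already strictly contains $\Gamma_{6,6,k}$, and it is not obvious what the normalizer of the index-$3$ subgroup $\Gamma_{6,6,k}^{(\infty)}$ is). The paper sidesteps this entirely with a three-case argument. For $k,k'\notin\{104,116\}$ one has $\Gamma_{6,6,k}^{(\infty)}=\Gamma_{6,6,k}^+$, so non-isomorphism is immediate from the non-isomorphism of the $\Gamma_{6,6,k}^+$. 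For $k\in\{104,116\}$ and $k'\notin\{104,116\}$, the paper invokes \cite[Theorem~1.4.1]{BMZ}, which distinguishes $\Gamma_{6,6,k}^{(\infty)}$ (of index~$12$, hence covolume~$12$) from $\Gamma_{6,6,k'}^{(\infty)}$ (of covolume~$4$). Finally, to separate $\Gamma_{6,6,104}^{(\infty)}$ from $\Gamma_{6,6,116}^{(\infty)}$, the paper compares the closures of their projections on $T_2$: from the tables, $H_2$ equals $G_{(i)}(\{1\},\{1\})$ for $k=104$ and $G_{(i)}(\{1,2\},\{1,2\})$ for $k=116$, and Theorem~\ref{theorem:Radusimple} shows these (and their type-preserving simple subgroups) are non-isomorphic; combined with \cite[Corollary~1.1.22]{BMZ} this rules out an abstract isomorphism. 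This route is cleaner and avoids precisely the obstacle you identified.
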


\begin{proof}
See the discussion above. Note that $\Gamma_{6,6,k}^{(\infty)} \not \cong \Gamma_{6,6,k'}^{(\infty)}$ for each $k \in \{104,116\}$ and each $k' \in \{1, \ldots, 160\} \setminus \{104,116\}$, see~\cite[Theorem~1.4.1]{BMZ}. We also have $\Gamma_{6,6,104}^{(\infty)} \not \cong \Gamma_{6,6,116}^{(\infty)}$. Indeed, they are not conjugate in $\Aut(T_1 \times T_2)$ since $\overline{\proj_2(\Gamma_{6,6,104}^{(\infty)})} \not \cong \overline{\proj_2(\Gamma_{6,6,116}^{(\infty)})}$ (see Theorem~\ref{theorem:Radusimple}).
\end{proof}

\begin{proof}[Proof of Corollary~\ref{maincorollary:presentations}(i)]
This group is $\Gamma_{6,6,2}$, see Table~\ref{table:simple1}.
\end{proof}

\begin{landscape}
\begin{table}
\scriptsize
\centering
\begin{tabular}{|c|c|c|l|c|c|c|c|}
\hline
Name & $\tau_1$ & $\tau_2$ & Squares: $a_1 b_1 a_2^{-1} b_1$, $a_1 b_2 a_2 b_2^{-1}$, $a_1 b_2^{-1} a_2^{-1} b_1^{-1}$, $a_1 b_1^{-1} a_2^{-1} b_2$ + & $H_1$ & $H_2$
 & $[\Gamma : \Gamma^{(\infty)}]$ & $\Aut(X_{\Gamma^+})$\\
\hline
$\Gamma_{6,6,1}$ & 0 & 0 & $ a_1 b_3 a_1 b_3 $, $ a_1 b_3^{-1} a_1 b_3^{-1} $, $ a_2 b_3 a_2 b_3 $, $ a_2 b_3^{-1} a_3 b_3^{-1} $, $ a_3 b_1 a_3^{-1} b_1^{-1} $, $ a_3 b_2 a_3 b_3 $, $ a_3 b_2^{-1} a_3 b_2^{-1} $ & $\{0\}$ & $\{0\}^*$ & 4 & $\C_2 \times \C_2$\\
$\Gamma_{6,6,2}$ & 0 & 0 & $ a_1 b_3 a_1 b_3^{-1} $, $ a_2 b_3 a_2 b_3 $, $ a_2 b_3^{-1} a_3 b_3^{-1} $, $ a_3 b_1 a_3^{-1} b_1^{-1} $, $ a_3 b_2 a_3 b_3 $, $ a_3 b_2^{-1} a_3 b_2^{-1} $ & $\{0\}$ & $\{1\}$ & 4 & $\C_2 \times \C_2$\\
$\Gamma_{6,6,3}$ & 0 & 0 & $ a_1 b_3 a_1 b_3^{-1} $, $ a_2 b_3 a_2 b_3 $, $ a_2 b_3^{-1} a_3 b_3^{-1} $, $ a_3 b_1 a_3^{-1} b_1 $, $ a_3 b_2 a_3 b_3 $, $ a_3 b_2^{-1} a_3 b_2^{-1} $ & $\{0\}$ & $\{2\}$ & 4 & $\C_2 \times \C_2$\\
$\Gamma_{6,6,4}$ & 0 & 0 & $ a_1 b_3 a_1 b_3 $, $ a_1 b_3^{-1} a_1 b_3^{-1} $, $ a_2 b_3 a_2 b_3 $, $ a_2 b_3^{-1} a_3 b_3^{-1} $, $ a_3 b_1 a_3^{-1} b_1 $, $ a_3 b_2 a_3 b_3 $, $ a_3 b_2^{-1} a_3 b_2^{-1} $ & $\{0\}$ & $\{2\}$ & 4 & $\C_2 \times \C_2$\\
$\Gamma_{6,6,5}$ & 0 & 0 & $ a_1 b_3 a_1 b_3 $, $ a_1 b_3^{-1} a_2 b_3^{-1} $, $ a_2 b_3 a_3 b_3 $, $ a_3 b_1 a_3 b_3^{-1} $, $ a_3 b_2 a_3^{-1} b_2^{-1} $, $ a_3 b_1^{-1} a_3 b_1^{-1} $ & $\{0\}^*$ & $\{0\}^*$ & 4 & $\C_2 \times \C_2 \times \C_2$\\
$\Gamma_{6,6,6}$ & 0 & 0 & $ a_1 b_3 a_1 b_3 $, $ a_1 b_3^{-1} a_2 b_3^{-1} $, $ a_2 b_3 a_3 b_3 $, $ a_3 b_1 a_3 b_1 $, $ a_3 b_2 a_3^{-1} b_2^{-1} $, $ a_3 b_3^{-1} a_3 b_1^{-1} $ & $\{0\}^*$ & $\{0\}^*$ & 4 & $\C_2 \times \C_2 \times \C_2$\\
$\Gamma_{6,6,7}$ & 0 & 0 & $ a_1 b_3 a_1 b_3 $, $ a_1 b_3^{-1} a_3 b_3^{-1} $, $ a_2 b_3 a_2^{-1} b_3 $, $ a_3 b_1 a_3 b_3 $, $ a_3 b_2 a_3^{-1} b_2^{-1} $, $ a_3 b_1^{-1} a_3 b_1^{-1} $ & $\{0\}^*$ & $\{0\}^*$ & 4 & $\C_2 \times \C_2 \times \C_2$\\
$\Gamma_{6,6,8}$ & 0 & 0 & $ a_1 b_3 a_1 b_3 $, $ a_1 b_3^{-1} a_3 b_3^{-1} $, $ a_2 b_3 a_2^{-1} b_3 $, $ a_3 b_1 a_3 b_1 $, $ a_3 b_2 a_3^{-1} b_2^{-1} $, $ a_3 b_3 a_3 b_1^{-1} $ & $\{0\}^*$ & $\{0\}^*$ & 4 & $\C_2 \times \C_2 \times \C_2$\\
$\Gamma_{6,6,9}$ & 0 & 0 & $ a_1 b_3 a_1 b_3 $, $ a_1 b_3^{-1} a_2^{-1} b_3^{-1} $, $ a_2 b_3^{-1} a_3 b_3^{-1} $, $ a_3 b_1 a_3 b_3 $, $ a_3 b_2 a_3^{-1} b_2^{-1} $, $ a_3 b_1^{-1} a_3 b_1^{-1} $ & $\{0\}^*$ & $\{0\}^*$ & 4 & $\C_2 \times \C_2 \times \C_2$\\
$\Gamma_{6,6,10}$ & 0 & 0 & $ a_1 b_3 a_1 b_3 $, $ a_1 b_3^{-1} a_2^{-1} b_3^{-1} $, $ a_2 b_3^{-1} a_3 b_3^{-1} $, $ a_3 b_1 a_3 b_1 $, $ a_3 b_2 a_3^{-1} b_2^{-1} $, $ a_3 b_3 a_3 b_1^{-1} $ & $\{0\}^*$ & $\{0\}^*$ & 4 & $\C_2 \times \C_2 \times \C_2$\\
$\Gamma_{6,6,11}$ & 0 & 0 & $ a_1 b_3 a_1^{-1} b_3^{-1} $, $ a_2 b_3 a_2 b_3 $, $ a_2 b_3^{-1} a_3 b_3^{-1} $, $ a_3 b_1 a_3 b_3 $, $ a_3 b_2 a_3^{-1} b_2^{-1} $, $ a_3 b_1^{-1} a_3 b_1^{-1} $ & $\{0\}^*$ & $\{1\}$ & 4 & $\C_2 \times \C_2 \times \C_2$\\
$\Gamma_{6,6,12}$ & 0 & 0 & $ a_1 b_3 a_1^{-1} b_3^{-1} $, $ a_2 b_3 a_2 b_3 $, $ a_2 b_3^{-1} a_3 b_3^{-1} $, $ a_3 b_1 a_3 b_1 $, $ a_3 b_2 a_3^{-1} b_2^{-1} $, $ a_3 b_3 a_3 b_1^{-1} $ & $\{0\}^*$ & $\{1\}$ & 4 & $\C_2 \times \C_2 \times \C_2$\\
$\Gamma_{6,6,13}$ & 0 & 0 & $ a_1 b_3 a_1^{-1} b_3^{-1} $, $ a_2 b_3 a_2 b_3 $, $ a_2 b_3^{-1} a_3 b_3^{-1} $, $ a_3 b_1 a_3 b_3 $, $ a_3 b_2 a_3^{-1} b_2 $, $ a_3 b_1^{-1} a_3 b_1^{-1} $ & $\{0\}^*$ & $\{2\}$ & 4 & $\C_2 \times \C_2 \times \C_2$\\
$\Gamma_{6,6,14}$ & 0 & 0 & $ a_1 b_3 a_1^{-1} b_3^{-1} $, $ a_2 b_3 a_2 b_3 $, $ a_2 b_3^{-1} a_3 b_3^{-1} $, $ a_3 b_1 a_3 b_1 $, $ a_3 b_2 a_3^{-1} b_2 $, $ a_3 b_3 a_3 b_1^{-1} $ & $\{0\}^*$ & $\{2\}$ & 4 & $\C_2 \times \C_2 \times \C_2$\\
$\Gamma_{6,6,15}$ & 0 & 0 & $ a_1 b_3 a_1 b_3 $, $ a_1 b_3^{-1} a_3 b_3^{-1} $, $ a_2 b_3 a_2^{-1} b_3 $, $ a_3 b_1 a_3 b_3 $, $ a_3 b_2 a_3^{-1} b_2 $, $ a_3 b_1^{-1} a_3 b_1^{-1} $ & $\{0\}^*$ & $\{2\}$ & 4 & $\C_2 \times \C_2 \times \C_2$\\
$\Gamma_{6,6,16}$ & 0 & 0 & $ a_1 b_3 a_1 b_3 $, $ a_1 b_3^{-1} a_3 b_3^{-1} $, $ a_2 b_3 a_2^{-1} b_3 $, $ a_3 b_1 a_3 b_1 $, $ a_3 b_2 a_3^{-1} b_2 $, $ a_3 b_3 a_3 b_1^{-1} $ & $\{0\}^*$ & $\{2\}$ & 4 & $\C_2 \times \C_2 \times \C_2$\\
$\Gamma_{6,6,17}$ & 0 & 0 & $ a_1 b_3 a_1 b_3 $, $ a_1 b_3^{-1} a_2 b_3^{-1} $, $ a_2 b_3 a_3 b_3 $, $ a_3 b_1 a_3 b_3^{-1} $, $ a_3 b_2 a_3^{-1} b_2 $, $ a_3 b_1^{-1} a_3 b_1^{-1} $ & $\{0\}^*$ & $\{0,2\}$ & 4 & $\C_2 \times \C_2 \times \C_2$\\
$\Gamma_{6,6,18}$ & 0 & 0 & $ a_1 b_3 a_1 b_3 $, $ a_1 b_3^{-1} a_2 b_3^{-1} $, $ a_2 b_3 a_3 b_3 $, $ a_3 b_1 a_3 b_1 $, $ a_3 b_2 a_3^{-1} b_2 $, $ a_3 b_3^{-1} a_3 b_1^{-1} $ & $\{0\}^*$ & $\{0,2\}$ & 4 & $\C_2 \times \C_2 \times \C_2$\\
$\Gamma_{6,6,19}$ & 0 & 0 & $ a_1 b_3 a_1 b_3 $, $ a_1 b_3^{-1} a_2^{-1} b_3^{-1} $, $ a_2 b_3^{-1} a_3 b_3^{-1} $, $ a_3 b_1 a_3 b_3 $, $ a_3 b_2 a_3^{-1} b_2 $, $ a_3 b_1^{-1} a_3 b_1^{-1} $ & $\{0\}^*$ & $\{0,2\}$ & 4 & $\C_2 \times \C_2 \times \C_2$\\
$\Gamma_{6,6,20}$ & 0 & 0 & $ a_1 b_3 a_1 b_3 $, $ a_1 b_3^{-1} a_2^{-1} b_3^{-1} $, $ a_2 b_3^{-1} a_3 b_3^{-1} $, $ a_3 b_1 a_3 b_1 $, $ a_3 b_2 a_3^{-1} b_2 $, $ a_3 b_3 a_3 b_1^{-1} $ & $\{0\}^*$ & $\{0,2\}$ & 4 & $\C_2 \times \C_2 \times \C_2$\\
$\Gamma_{6,6,21}$ & 0 & 0 & $ a_1 b_3 a_1 b_3 $, $ a_1 b_3^{-1} a_1 b_3^{-1} $, $ a_2 b_3 a_2 b_3 $, $ a_2 b_3^{-1} a_3 b_3^{-1} $, $ a_3 b_1 a_3 b_1^{-1} $, $ a_3 b_2 a_3 b_3 $, $ a_3 b_2^{-1} a_3 b_2^{-1} $ & $\{1\}$ & $\{0\}^*$ & 4 & $\C_2 \times \C_2$\\
$\Gamma_{6,6,22}$ & 0 & 0 & $ a_1 b_3 a_1 b_3 $, $ a_1 b_3^{-1} a_2 b_3^{-1} $, $ a_2 b_3 a_3 b_3 $, $ a_3 b_1 a_3 b_3^{-1} $, $ a_3 b_2 a_3 b_2^{-1} $, $ a_3 b_1^{-1} a_3 b_1^{-1} $ & $\{1\}$ & $\{0\}^*$ & 4 & $\C_2 \times \C_2 \times \C_2$\\
$\Gamma_{6,6,23}$ & 0 & 0 & $ a_1 b_3 a_1 b_3 $, $ a_1 b_3^{-1} a_2 b_3^{-1} $, $ a_2 b_3 a_3 b_3 $, $ a_3 b_1 a_3 b_1 $, $ a_3 b_2 a_3 b_2^{-1} $, $ a_3 b_3^{-1} a_3 b_1^{-1} $ & $\{1\}$ & $\{0\}^*$ & 4 & $\C_2 \times \C_2 \times \C_2$\\
$\Gamma_{6,6,24}$ & 0 & 0 & $ a_1 b_3 a_1 b_3 $, $ a_1 b_3^{-1} a_3 b_3^{-1} $, $ a_2 b_3 a_2^{-1} b_3 $, $ a_3 b_1 a_3 b_3 $, $ a_3 b_2 a_3 b_2^{-1} $, $ a_3 b_1^{-1} a_3 b_1^{-1} $ & $\{1\}$ & $\{0\}^*$ & 4 & $\C_2 \times \C_2 \times \C_2$\\
$\Gamma_{6,6,25}$ & 0 & 0 & $ a_1 b_3 a_1 b_3 $, $ a_1 b_3^{-1} a_3 b_3^{-1} $, $ a_2 b_3 a_2^{-1} b_3 $, $ a_3 b_1 a_3 b_1 $, $ a_3 b_2 a_3 b_2^{-1} $, $ a_3 b_3 a_3 b_1^{-1} $ & $\{1\}$ & $\{0\}^*$ & 4 & $\C_2 \times \C_2 \times \C_2$\\
$\Gamma_{6,6,26}$ & 0 & 0 & $ a_1 b_3 a_1 b_3 $, $ a_1 b_3^{-1} a_2^{-1} b_3^{-1} $, $ a_2 b_3^{-1} a_3 b_3^{-1} $, $ a_3 b_1 a_3 b_3 $, $ a_3 b_2 a_3 b_2^{-1} $, $ a_3 b_1^{-1} a_3 b_1^{-1} $ & $\{1\}$ & $\{0\}^*$ & 4 & $\C_2 \times \C_2 \times \C_2$\\
$\Gamma_{6,6,27}$ & 0 & 0 & $ a_1 b_3 a_1 b_3 $, $ a_1 b_3^{-1} a_2^{-1} b_3^{-1} $, $ a_2 b_3^{-1} a_3 b_3^{-1} $, $ a_3 b_1 a_3 b_1 $, $ a_3 b_2 a_3 b_2^{-1} $, $ a_3 b_3 a_3 b_1^{-1} $ & $\{1\}$ & $\{0\}^*$ & 4 & $\C_2 \times \C_2 \times \C_2$\\
$\Gamma_{6,6,28}$ & 0 & 0 & $ a_1 b_3 a_1 b_3^{-1} $, $ a_2 b_3 a_2 b_3 $, $ a_2 b_3^{-1} a_3 b_3^{-1} $, $ a_3 b_1 a_3 b_1^{-1} $, $ a_3 b_2 a_3 b_3 $, $ a_3 b_2^{-1} a_3 b_2^{-1} $ & $\{1\}$ & $\{1\}$ & 4 & $\C_2 \times \C_2$\\
$\Gamma_{6,6,29}$ & 0 & 0 & $ a_1 b_3 a_1^{-1} b_3^{-1} $, $ a_2 b_3 a_2 b_3 $, $ a_2 b_3^{-1} a_3 b_3^{-1} $, $ a_3 b_1 a_3 b_3 $, $ a_3 b_2 a_3 b_2^{-1} $, $ a_3 b_1^{-1} a_3 b_1^{-1} $ & $\{1\}$ & $\{1\}$ & 4 & $\C_2 \times \C_2 \times \C_2$\\
$\Gamma_{6,6,30}$ & 0 & 0 & $ a_1 b_3 a_1^{-1} b_3^{-1} $, $ a_2 b_3 a_2 b_3 $, $ a_2 b_3^{-1} a_3 b_3^{-1} $, $ a_3 b_1 a_3 b_1 $, $ a_3 b_2 a_3 b_2^{-1} $, $ a_3 b_3 a_3 b_1^{-1} $ & $\{1\}$ & $\{1\}$ & 4 & $\C_2 \times \C_2 \times \C_2$\\
$\Gamma_{6,6,31}$ & 0 & 0 & $ a_1 b_3 a_1 b_3^{-1} $, $ a_2 b_3 a_2 b_3 $, $ a_2 b_3^{-1} a_3 b_3^{-1} $, $ a_3 b_1 a_3 b_1 $, $ a_3 b_2 a_3 b_3 $, $ a_3 b_2^{-1} a_3 b_2^{-1} $, $ a_3 b_1^{-1} a_3 b_1^{-1} $ & $\{1\}$ & $\{2\}$ & 4 & $\C_2 \times \C_2$\\
$\Gamma_{6,6,32}$ & 0 & 0 & $ a_1 b_3 a_1^{-1} b_3^{-1} $, $ a_2 b_3 a_2 b_3 $, $ a_2 b_3^{-1} a_3 b_3^{-1} $, $ a_3 b_1 a_3 b_3 $, $ a_3 b_2 a_3 b_2 $, $ a_3 b_2^{-1} a_3 b_2^{-1} $, $ a_3 b_1^{-1} a_3 b_1^{-1} $ & $\{1\}$ & $\{2\}$ & 4 & $\C_2 \times \C_2 \times \C_2$\\
\hline
\end{tabular}
\caption{Some virtually simple $(6,6)$-groups. (Part 1/5)}\label{table:simple1}
\end{table}

\begin{table}
\scriptsize
\centering
\begin{tabular}{|c|c|c|l|c|c|c|c|}
\hline
Name & $\tau_1$ & $\tau_2$ & Squares: $a_1 b_1 a_2^{-1} b_1$, $a_1 b_2 a_2 b_2^{-1}$, $a_1 b_2^{-1} a_2^{-1} b_1^{-1}$, $a_1 b_1^{-1} a_2^{-1} b_2$ + & $H_1$ & $H_2$
 & $[\Gamma : \Gamma^{(\infty)}]$ & $\Aut(X_{\Gamma^+})$\\
\hline
$\Gamma_{6,6,33}$ & 0 & 0 & $ a_1 b_3 a_1^{-1} b_3^{-1} $, $ a_2 b_3 a_2 b_3 $, $ a_2 b_3^{-1} a_3 b_3^{-1} $, $ a_3 b_1 a_3 b_1 $, $ a_3 b_2 a_3 b_2 $, $ a_3 b_3 a_3 b_1^{-1} $, $ a_3 b_2^{-1} a_3 b_2^{-1} $ & $\{1\}$ & $\{2\}$ & 4 & $\C_2 \times \C_2 \times \C_2$\\
$\Gamma_{6,6,34}$ & 0 & 0 & $ a_1 b_3 a_1 b_3 $, $ a_1 b_3^{-1} a_1 b_3^{-1} $, $ a_2 b_3 a_2 b_3 $, $ a_2 b_3^{-1} a_3 b_3^{-1} $, $ a_3 b_1 a_3 b_1 $, $ a_3 b_2 a_3 b_3 $, $ a_3 b_2^{-1} a_3 b_2^{-1} $, $ a_3 b_1^{-1} a_3 b_1^{-1} $ & $\{1\}$ & $\{2\}$ & 4 & $\C_2 \times \C_2$\\
$\Gamma_{6,6,35}$ & 0 & 0 & $ a_1 b_3 a_1 b_3 $, $ a_1 b_3^{-1} a_3 b_3^{-1} $, $ a_2 b_3 a_2^{-1} b_3 $, $ a_3 b_1 a_3 b_3 $, $ a_3 b_2 a_3 b_2 $, $ a_3 b_2^{-1} a_3 b_2^{-1} $, $ a_3 b_1^{-1} a_3 b_1^{-1} $ & $\{1\}$ & $\{2\}$ & 4 & $\C_2 \times \C_2 \times \C_2$\\
$\Gamma_{6,6,36}$ & 0 & 0 & $ a_1 b_3 a_1 b_3 $, $ a_1 b_3^{-1} a_3 b_3^{-1} $, $ a_2 b_3 a_2^{-1} b_3 $, $ a_3 b_1 a_3 b_1 $, $ a_3 b_2 a_3 b_2 $, $ a_3 b_3 a_3 b_1^{-1} $, $ a_3 b_2^{-1} a_3 b_2^{-1} $ & $\{1\}$ & $\{2\}$ & 4 & $\C_2 \times \C_2 \times \C_2$\\
$\Gamma_{6,6,37}$ & 0 & 0 & $ a_1 b_3 a_1 b_3 $, $ a_1 b_3^{-1} a_2 b_3^{-1} $, $ a_2 b_3 a_3 b_3 $, $ a_3 b_1 a_3 b_3^{-1} $, $ a_3 b_2 a_3 b_2 $, $ a_3 b_2^{-1} a_3 b_2^{-1} $, $ a_3 b_1^{-1} a_3 b_1^{-1} $ & $\{1\}$ & $\{0,2\}$ & 4 & $\C_2 \times \C_2 \times \C_2$\\
$\Gamma_{6,6,38}$ & 0 & 0 & $ a_1 b_3 a_1 b_3 $, $ a_1 b_3^{-1} a_2 b_3^{-1} $, $ a_2 b_3 a_3 b_3 $, $ a_3 b_1 a_3 b_1 $, $ a_3 b_2 a_3 b_2 $, $ a_3 b_3^{-1} a_3 b_1^{-1} $, $ a_3 b_2^{-1} a_3 b_2^{-1} $ & $\{1\}$ & $\{0,2\}$ & 4 & $\C_2 \times \C_2 \times \C_2$\\
$\Gamma_{6,6,39}$ & 0 & 0 & $ a_1 b_3 a_1 b_3 $, $ a_1 b_3^{-1} a_2^{-1} b_3^{-1} $, $ a_2 b_3^{-1} a_3 b_3^{-1} $, $ a_3 b_1 a_3 b_3 $, $ a_3 b_2 a_3 b_2 $, $ a_3 b_2^{-1} a_3 b_2^{-1} $, $ a_3 b_1^{-1} a_3 b_1^{-1} $ & $\{1\}$ & $\{0,2\}$ & 4 & $\C_2 \times \C_2 \times \C_2$\\
$\Gamma_{6,6,40}$ & 0 & 0 & $ a_1 b_3 a_1 b_3 $, $ a_1 b_3^{-1} a_2^{-1} b_3^{-1} $, $ a_2 b_3^{-1} a_3 b_3^{-1} $, $ a_3 b_1 a_3 b_1 $, $ a_3 b_2 a_3 b_2 $, $ a_3 b_3 a_3 b_1^{-1} $, $ a_3 b_2^{-1} a_3 b_2^{-1} $ & $\{1\}$ & $\{0,2\}$ & 4 & $\C_2 \times \C_2 \times \C_2$\\
$\Gamma_{6,6,41}$ & 0 & 0 & $ a_1 b_3 a_1 b_3 $, $ a_1 b_3^{-1} a_2 b_3^{-1} $, $ a_2 b_3 a_3 b_3 $, $ a_3 b_1 a_3 b_2^{-1} $, $ a_3 b_2 a_3 b_3^{-1} $, $ a_3 b_1^{-1} a_3 b_1^{-1} $ & $\{0,1\}$ & $\{0\}^*$ & 4 & $\C_2 \times \C_2$\\
$\Gamma_{6,6,42}$ & 0 & 0 & $ a_1 b_3 a_1 b_3 $, $ a_1 b_3^{-1} a_2 b_3^{-1} $, $ a_2 b_3 a_3 b_3 $, $ a_3 b_1 a_3 b_1 $, $ a_3 b_2 a_3 b_1^{-1} $, $ a_3 b_3^{-1} a_3 b_2^{-1} $ & $\{0,1\}$ & $\{0\}^*$ & 4 & $\C_2 \times \C_2$\\
$\Gamma_{6,6,43}$ & 0 & 0 & $ a_1 b_3 a_1 b_3 $, $ a_1 b_3^{-1} a_3 b_3^{-1} $, $ a_2 b_3 a_2^{-1} b_3 $, $ a_3 b_1 a_3 b_2^{-1} $, $ a_3 b_2 a_3 b_3 $, $ a_3 b_1^{-1} a_3 b_1^{-1} $ & $\{0,1\}$ & $\{0\}^*$ & 4 & $\C_2 \times \C_2$\\
$\Gamma_{6,6,44}$ & 0 & 0 & $ a_1 b_3 a_1 b_3 $, $ a_1 b_3^{-1} a_3 b_3^{-1} $, $ a_2 b_3 a_2^{-1} b_3 $, $ a_3 b_1 a_3 b_1 $, $ a_3 b_2 a_3 b_1^{-1} $, $ a_3 b_3 a_3 b_2^{-1} $ & $\{0,1\}$ & $\{0\}^*$ & 4 & $\C_2 \times \C_2$\\
$\Gamma_{6,6,45}$ & 0 & 0 & $ a_1 b_3 a_1 b_3 $, $ a_1 b_3^{-1} a_2^{-1} b_3^{-1} $, $ a_2 b_3^{-1} a_3 b_3^{-1} $, $ a_3 b_1 a_3 b_2^{-1} $, $ a_3 b_2 a_3 b_3 $, $ a_3 b_1^{-1} a_3 b_1^{-1} $ & $\{0,1\}$ & $\{0\}^*$ & 4 & $\C_2 \times \C_2$\\
$\Gamma_{6,6,46}$ & 0 & 0 & $ a_1 b_3 a_1 b_3 $, $ a_1 b_3^{-1} a_2^{-1} b_3^{-1} $, $ a_2 b_3^{-1} a_3 b_3^{-1} $, $ a_3 b_1 a_3 b_1 $, $ a_3 b_2 a_3 b_1^{-1} $, $ a_3 b_3 a_3 b_2^{-1} $ & $\{0,1\}$ & $\{0\}^*$ & 4 & $\C_2 \times \C_2$\\
$\Gamma_{6,6,47}$ & 0 & 0 & $ a_1 b_3 a_1^{-1} b_3^{-1} $, $ a_2 b_3 a_2 b_3 $, $ a_2 b_3^{-1} a_3 b_3^{-1} $, $ a_3 b_1 a_3 b_2^{-1} $, $ a_3 b_2 a_3 b_3 $, $ a_3 b_1^{-1} a_3 b_1^{-1} $ & $\{0,1\}$ & $\{1\}$ & 4 & $\C_2 \times \C_2$\\
$\Gamma_{6,6,48}$ & 0 & 0 & $ a_1 b_3 a_1^{-1} b_3^{-1} $, $ a_2 b_3 a_2 b_3 $, $ a_2 b_3^{-1} a_3 b_3^{-1} $, $ a_3 b_1 a_3 b_1 $, $ a_3 b_2 a_3 b_1^{-1} $, $ a_3 b_3 a_3 b_2^{-1} $ & $\{0,1\}$ & $\{1\}$ & 4 & $\C_2 \times \C_2$\\
$\Gamma_{6,6,49}$ & 0 & 0 & $ a_1 b_3 a_1 b_3 $, $ a_1 b_3^{-1} a_1 b_3^{-1} $, $ a_2 b_3 a_2 b_3 $, $ a_2 b_3^{-1} a_3 b_3^{-1} $, $ a_3 b_1 a_3 b_2^{-1} $, $ a_3 b_2 a_3 b_2 $, $ a_3 b_3 a_3 b_1^{-1} $ & $\{0,1\}^*$ & $\{0\}^*$ & 4 & $\C_2 \times \C_2$\\
$\Gamma_{6,6,50}$ & 0 & 0 & $ a_1 b_3 a_1 b_3 $, $ a_1 b_3^{-1} a_1 b_3^{-1} $, $ a_2 b_3 a_2 b_3 $, $ a_2 b_3^{-1} a_3 b_3^{-1} $, $ a_3 b_1 a_3 b_3 $, $ a_3 b_2 a_3 b_1^{-1} $, $ a_3 b_2^{-1} a_3 b_2^{-1} $ & $\{0,1\}^*$ & $\{0\}^*$ & 4 & $\C_2 \times \C_2$\\
$\Gamma_{6,6,51}$ & 0 & 0 & $ a_1 b_3 a_1 b_3^{-1} $, $ a_2 b_3 a_2 b_3 $, $ a_2 b_3^{-1} a_3 b_3^{-1} $, $ a_3 b_1 a_3 b_2^{-1} $, $ a_3 b_2 a_3 b_2 $, $ a_3 b_3 a_3 b_1^{-1} $ & $\{0,1\}^*$ & $\{1\}$ & 4 & $\C_2 \times \C_2$\\
$\Gamma_{6,6,52}$ & 0 & 0 & $ a_1 b_3 a_1 b_3^{-1} $, $ a_2 b_3 a_2 b_3 $, $ a_2 b_3^{-1} a_3 b_3^{-1} $, $ a_3 b_1 a_3 b_3 $, $ a_3 b_2 a_3 b_1^{-1} $, $ a_3 b_2^{-1} a_3 b_2^{-1} $ & $\{0,1\}^*$ & $\{1\}$ & 4 & $\C_2 \times \C_2$\\
$\Gamma_{6,6,53}$ & 0 & 0 & $ a_1 b_3 a_1 b_3 $, $ a_1 b_3^{-1} a_1 b_3^{-1} $, $ a_2 b_3 a_2 b_3 $, $ a_2 b_3^{-1} a_3 b_3^{-1} $, $ a_3 b_1 a_3 b_3 $, $ a_3 b_2 a_3 b_2^{-1} $, $ a_3 b_1^{-1} a_3 b_1^{-1} $ & $\{2\}$ & $\{0\}^*$ & 4 & $\C_2 \times \C_2 \times \C_2$\\
$\Gamma_{6,6,54}$ & 0 & 0 & $ a_1 b_3 a_1 b_3 $, $ a_1 b_3^{-1} a_1 b_3^{-1} $, $ a_2 b_3 a_2 b_3 $, $ a_2 b_3^{-1} a_3 b_3^{-1} $, $ a_3 b_1 a_3 b_3 $, $ a_3 b_2 a_3^{-1} b_2^{-1} $, $ a_3 b_1^{-1} a_3 b_1^{-1} $ & $\{2\}$ & $\{0\}^*$ & 4 & $\C_2 \times \C_2 \times \C_2$\\
$\Gamma_{6,6,55}$ & 0 & 0 & $ a_1 b_3 a_1 b_3 $, $ a_1 b_3^{-1} a_1 b_3^{-1} $, $ a_2 b_3 a_2 b_3 $, $ a_2 b_3^{-1} a_3 b_3^{-1} $, $ a_3 b_1 a_3 b_1 $, $ a_3 b_2 a_3 b_2^{-1} $, $ a_3 b_3 a_3 b_1^{-1} $ & $\{2\}$ & $\{0\}^*$ & 4 & $\C_2 \times \C_2 \times \C_2$\\
$\Gamma_{6,6,56}$ & 0 & 0 & $ a_1 b_3 a_1 b_3 $, $ a_1 b_3^{-1} a_1 b_3^{-1} $, $ a_2 b_3 a_2 b_3 $, $ a_2 b_3^{-1} a_3 b_3^{-1} $, $ a_3 b_1 a_3 b_1 $, $ a_3 b_2 a_3^{-1} b_2^{-1} $, $ a_3 b_3 a_3 b_1^{-1} $ & $\{2\}$ & $\{0\}^*$ & 4 & $\C_2 \times \C_2 \times \C_2$\\
$\Gamma_{6,6,57}$ & 0 & 0 & $ a_1 b_3 a_1 b_3 $, $ a_1 b_3^{-1} a_2 b_3^{-1} $, $ a_2 b_3 a_3 b_3 $, $ a_3 b_1 a_3 b_1^{-1} $, $ a_3 b_2 a_3 b_3^{-1} $, $ a_3 b_2^{-1} a_3 b_2^{-1} $ & $\{2\}$ & $\{0\}^*$ & 4 & $\C_2 \times \C_2$\\
$\Gamma_{6,6,58}$ & 0 & 0 & $ a_1 b_3 a_1 b_3 $, $ a_1 b_3^{-1} a_2 b_3^{-1} $, $ a_2 b_3 a_3 b_3 $, $ a_3 b_1 a_3^{-1} b_1^{-1} $, $ a_3 b_2 a_3 b_3^{-1} $, $ a_3 b_2^{-1} a_3 b_2^{-1} $ & $\{2\}$ & $\{0\}^*$ & 4 & $\C_2 \times \C_2$\\
$\Gamma_{6,6,59}$ & 0 & 0 & $ a_1 b_3 a_1 b_3 $, $ a_1 b_3^{-1} a_3 b_3^{-1} $, $ a_2 b_3 a_2^{-1} b_3 $, $ a_3 b_1 a_3 b_1^{-1} $, $ a_3 b_2 a_3 b_3 $, $ a_3 b_2^{-1} a_3 b_2^{-1} $ & $\{2\}$ & $\{0\}^*$ & 4 & $\C_2 \times \C_2$\\
$\Gamma_{6,6,60}$ & 0 & 0 & $ a_1 b_3 a_1 b_3 $, $ a_1 b_3^{-1} a_3 b_3^{-1} $, $ a_2 b_3 a_2^{-1} b_3 $, $ a_3 b_1 a_3^{-1} b_1^{-1} $, $ a_3 b_2 a_3 b_3 $, $ a_3 b_2^{-1} a_3 b_2^{-1} $ & $\{2\}$ & $\{0\}^*$ & 4 & $\C_2 \times \C_2$\\
$\Gamma_{6,6,61}$ & 0 & 0 & $ a_1 b_3 a_1 b_3 $, $ a_1 b_3^{-1} a_2^{-1} b_3^{-1} $, $ a_2 b_3^{-1} a_3 b_3^{-1} $, $ a_3 b_1 a_3 b_1^{-1} $, $ a_3 b_2 a_3 b_3 $, $ a_3 b_2^{-1} a_3 b_2^{-1} $ & $\{2\}$ & $\{0\}^*$ & 4 & $\C_2 \times \C_2$\\
$\Gamma_{6,6,62}$ & 0 & 0 & $ a_1 b_3 a_1 b_3 $, $ a_1 b_3^{-1} a_2^{-1} b_3^{-1} $, $ a_2 b_3^{-1} a_3 b_3^{-1} $, $ a_3 b_1 a_3^{-1} b_1^{-1} $, $ a_3 b_2 a_3 b_3 $, $ a_3 b_2^{-1} a_3 b_2^{-1} $ & $\{2\}$ & $\{0\}^*$ & 4 & $\C_2 \times \C_2$\\
$\Gamma_{6,6,63}$ & 0 & 0 & $ a_1 b_3 a_1 b_3^{-1} $, $ a_2 b_3 a_2 b_3 $, $ a_2 b_3^{-1} a_3 b_3^{-1} $, $ a_3 b_1 a_3 b_3 $, $ a_3 b_2 a_3 b_2^{-1} $, $ a_3 b_1^{-1} a_3 b_1^{-1} $ & $\{2\}$ & $\{1\}$ & 4 & $\C_2 \times \C_2 \times \C_2$\\
$\Gamma_{6,6,64}$ & 0 & 0 & $ a_1 b_3 a_1 b_3^{-1} $, $ a_2 b_3 a_2 b_3 $, $ a_2 b_3^{-1} a_3 b_3^{-1} $, $ a_3 b_1 a_3 b_3 $, $ a_3 b_2 a_3^{-1} b_2^{-1} $, $ a_3 b_1^{-1} a_3 b_1^{-1} $ & $\{2\}$ & $\{1\}$ & 4 & $\C_2 \times \C_2 \times \C_2$\\
\hline
\end{tabular}
\caption{Some virtually simple $(6,6)$-groups. (Part 2/5)}\label{table:simple2}
\end{table}

\begin{table}
\scriptsize
\centering
\begin{tabular}{|c|c|c|l|c|c|c|c|}
\hline
Name & $\tau_1$ & $\tau_2$ & Squares: $a_1 b_1 a_2^{-1} b_1$, $a_1 b_2 a_2 b_2^{-1}$, $a_1 b_2^{-1} a_2^{-1} b_1^{-1}$, $a_1 b_1^{-1} a_2^{-1} b_2$ + & $H_1$ & $H_2$
 & $[\Gamma : \Gamma^{(\infty)}]$ & $\Aut(X_{\Gamma^+})$\\
\hline
$\Gamma_{6,6,65}$ & 0 & 0 & $ a_1 b_3 a_1 b_3^{-1} $, $ a_2 b_3 a_2 b_3 $, $ a_2 b_3^{-1} a_3 b_3^{-1} $, $ a_3 b_1 a_3 b_1 $, $ a_3 b_2 a_3 b_2^{-1} $, $ a_3 b_3 a_3 b_1^{-1} $ & $\{2\}$ & $\{1\}$ & 4 & $\C_2 \times \C_2 \times \C_2$\\
$\Gamma_{6,6,66}$ & 0 & 0 & $ a_1 b_3 a_1 b_3^{-1} $, $ a_2 b_3 a_2 b_3 $, $ a_2 b_3^{-1} a_3 b_3^{-1} $, $ a_3 b_1 a_3 b_1 $, $ a_3 b_2 a_3^{-1} b_2^{-1} $, $ a_3 b_3 a_3 b_1^{-1} $ & $\{2\}$ & $\{1\}$ & 4 & $\C_2 \times \C_2 \times \C_2$\\
$\Gamma_{6,6,67}$ & 0 & 0 & $ a_1 b_3 a_1^{-1} b_3^{-1} $, $ a_2 b_3 a_2 b_3 $, $ a_2 b_3^{-1} a_3 b_3^{-1} $, $ a_3 b_1 a_3 b_1^{-1} $, $ a_3 b_2 a_3 b_3 $, $ a_3 b_2^{-1} a_3 b_2^{-1} $ & $\{2\}$ & $\{1\}$ & 4 & $\C_2 \times \C_2$\\
$\Gamma_{6,6,68}$ & 0 & 0 & $ a_1 b_3 a_1^{-1} b_3^{-1} $, $ a_2 b_3 a_2 b_3 $, $ a_2 b_3^{-1} a_3 b_3^{-1} $, $ a_3 b_1 a_3^{-1} b_1^{-1} $, $ a_3 b_2 a_3 b_3 $, $ a_3 b_2^{-1} a_3 b_2^{-1} $ & $\{2\}$ & $\{1\}$ & 4 & $\C_2 \times \C_2$\\
$\Gamma_{6,6,69}$ & 0 & 0 & $ a_1 b_3 a_1 b_3^{-1} $, $ a_2 b_3 a_2 b_3 $, $ a_2 b_3^{-1} a_3 b_3^{-1} $, $ a_3 b_1 a_3 b_3 $, $ a_3 b_2 a_3 b_2 $, $ a_3 b_2^{-1} a_3 b_2^{-1} $, $ a_3 b_1^{-1} a_3 b_1^{-1} $ & $\{2\}$ & $\{2\}$ & 4 & $\C_2 \times \C_2 \times \C_2$\\
$\Gamma_{6,6,70}$ & 0 & 0 & $ a_1 b_3 a_1 b_3^{-1} $, $ a_2 b_3 a_2 b_3 $, $ a_2 b_3^{-1} a_3 b_3^{-1} $, $ a_3 b_1 a_3 b_3 $, $ a_3 b_2 a_3^{-1} b_2 $, $ a_3 b_1^{-1} a_3 b_1^{-1} $ & $\{2\}$ & $\{2\}$ & 4 & $\C_2 \times \C_2 \times \C_2$\\
$\Gamma_{6,6,71}$ & 0 & 0 & $ a_1 b_3 a_1 b_3^{-1} $, $ a_2 b_3 a_2 b_3 $, $ a_2 b_3^{-1} a_3 b_3^{-1} $, $ a_3 b_1 a_3 b_1 $, $ a_3 b_2 a_3 b_2 $, $ a_3 b_3 a_3 b_1^{-1} $, $ a_3 b_2^{-1} a_3 b_2^{-1} $ & $\{2\}$ & $\{2\}$ & 4 & $\C_2 \times \C_2 \times \C_2$\\
$\Gamma_{6,6,72}$ & 0 & 0 & $ a_1 b_3 a_1 b_3^{-1} $, $ a_2 b_3 a_2 b_3 $, $ a_2 b_3^{-1} a_3 b_3^{-1} $, $ a_3 b_1 a_3 b_1 $, $ a_3 b_2 a_3^{-1} b_2 $, $ a_3 b_3 a_3 b_1^{-1} $ & $\{2\}$ & $\{2\}$ & 4 & $\C_2 \times \C_2 \times \C_2$\\
$\Gamma_{6,6,73}$ & 0 & 0 & $ a_1 b_3 a_1^{-1} b_3^{-1} $, $ a_2 b_3 a_2 b_3 $, $ a_2 b_3^{-1} a_3 b_3^{-1} $, $ a_3 b_1 a_3 b_1 $, $ a_3 b_2 a_3 b_3 $, $ a_3 b_2^{-1} a_3 b_2^{-1} $, $ a_3 b_1^{-1} a_3 b_1^{-1} $ & $\{2\}$ & $\{2\}$ & 4 & $\C_2 \times \C_2$\\
$\Gamma_{6,6,74}$ & 0 & 0 & $ a_1 b_3 a_1^{-1} b_3^{-1} $, $ a_2 b_3 a_2 b_3 $, $ a_2 b_3^{-1} a_3 b_3^{-1} $, $ a_3 b_1 a_3^{-1} b_1 $, $ a_3 b_2 a_3 b_3 $, $ a_3 b_2^{-1} a_3 b_2^{-1} $ & $\{2\}$ & $\{2\}$ & 4 & $\C_2 \times \C_2$\\
$\Gamma_{6,6,75}$ & 0 & 0 & $ a_1 b_3 a_1 b_3 $, $ a_1 b_3^{-1} a_1 b_3^{-1} $, $ a_2 b_3 a_2 b_3 $, $ a_2 b_3^{-1} a_3 b_3^{-1} $, $ a_3 b_1 a_3 b_3 $, $ a_3 b_2 a_3 b_2 $, $ a_3 b_2^{-1} a_3 b_2^{-1} $, $ a_3 b_1^{-1} a_3 b_1^{-1} $ & $\{2\}$ & $\{2\}$ & 4 & $\C_2 \times \C_2 \times \C_2$\\
$\Gamma_{6,6,76}$ & 0 & 0 & $ a_1 b_3 a_1 b_3 $, $ a_1 b_3^{-1} a_1 b_3^{-1} $, $ a_2 b_3 a_2 b_3 $, $ a_2 b_3^{-1} a_3 b_3^{-1} $, $ a_3 b_1 a_3 b_3 $, $ a_3 b_2 a_3^{-1} b_2 $, $ a_3 b_1^{-1} a_3 b_1^{-1} $ & $\{2\}$ & $\{2\}$ & 4 & $\C_2 \times \C_2 \times \C_2$\\
$\Gamma_{6,6,77}$ & 0 & 0 & $ a_1 b_3 a_1 b_3 $, $ a_1 b_3^{-1} a_1 b_3^{-1} $, $ a_2 b_3 a_2 b_3 $, $ a_2 b_3^{-1} a_3 b_3^{-1} $, $ a_3 b_1 a_3 b_1 $, $ a_3 b_2 a_3 b_2 $, $ a_3 b_3 a_3 b_1^{-1} $, $ a_3 b_2^{-1} a_3 b_2^{-1} $ & $\{2\}$ & $\{2\}$ & 4 & $\C_2 \times \C_2 \times \C_2$\\
$\Gamma_{6,6,78}$ & 0 & 0 & $ a_1 b_3 a_1 b_3 $, $ a_1 b_3^{-1} a_1 b_3^{-1} $, $ a_2 b_3 a_2 b_3 $, $ a_2 b_3^{-1} a_3 b_3^{-1} $, $ a_3 b_1 a_3 b_1 $, $ a_3 b_2 a_3^{-1} b_2 $, $ a_3 b_3 a_3 b_1^{-1} $ & $\{2\}$ & $\{2\}$ & 4 & $\C_2 \times \C_2 \times \C_2$\\
$\Gamma_{6,6,79}$ & 0 & 0 & $ a_1 b_3 a_1 b_3 $, $ a_1 b_3^{-1} a_3 b_3^{-1} $, $ a_2 b_3 a_2^{-1} b_3 $, $ a_3 b_1 a_3 b_1 $, $ a_3 b_2 a_3 b_3 $, $ a_3 b_2^{-1} a_3 b_2^{-1} $, $ a_3 b_1^{-1} a_3 b_1^{-1} $ & $\{2\}$ & $\{2\}$ & 4 & $\C_2 \times \C_2$\\
$\Gamma_{6,6,80}$ & 0 & 0 & $ a_1 b_3 a_1 b_3 $, $ a_1 b_3^{-1} a_3 b_3^{-1} $, $ a_2 b_3 a_2^{-1} b_3 $, $ a_3 b_1 a_3^{-1} b_1 $, $ a_3 b_2 a_3 b_3 $, $ a_3 b_2^{-1} a_3 b_2^{-1} $ & $\{2\}$ & $\{2\}$ & 4 & $\C_2 \times \C_2$\\
$\Gamma_{6,6,81}$ & 0 & 0 & $ a_1 b_3 a_1 b_3 $, $ a_1 b_3^{-1} a_2 b_3^{-1} $, $ a_2 b_3 a_3 b_3 $, $ a_3 b_1 a_3 b_1 $, $ a_3 b_2 a_3 b_3^{-1} $, $ a_3 b_2^{-1} a_3 b_2^{-1} $, $ a_3 b_1^{-1} a_3 b_1^{-1} $ & $\{2\}$ & $\{0,2\}$ & 4 & $\C_2 \times \C_2$\\
$\Gamma_{6,6,82}$ & 0 & 0 & $ a_1 b_3 a_1 b_3 $, $ a_1 b_3^{-1} a_2 b_3^{-1} $, $ a_2 b_3 a_3 b_3 $, $ a_3 b_1 a_3^{-1} b_1 $, $ a_3 b_2 a_3 b_3^{-1} $, $ a_3 b_2^{-1} a_3 b_2^{-1} $ & $\{2\}$ & $\{0,2\}$ & 4 & $\C_2 \times \C_2$\\
$\Gamma_{6,6,83}$ & 0 & 0 & $ a_1 b_3 a_1 b_3 $, $ a_1 b_3^{-1} a_2^{-1} b_3^{-1} $, $ a_2 b_3^{-1} a_3 b_3^{-1} $, $ a_3 b_1 a_3 b_1 $, $ a_3 b_2 a_3 b_3 $, $ a_3 b_2^{-1} a_3 b_2^{-1} $, $ a_3 b_1^{-1} a_3 b_1^{-1} $ & $\{2\}$ & $\{0,2\}$ & 4 & $\C_2 \times \C_2$\\
$\Gamma_{6,6,84}$ & 0 & 0 & $ a_1 b_3 a_1 b_3 $, $ a_1 b_3^{-1} a_2^{-1} b_3^{-1} $, $ a_2 b_3^{-1} a_3 b_3^{-1} $, $ a_3 b_1 a_3^{-1} b_1 $, $ a_3 b_2 a_3 b_3 $, $ a_3 b_2^{-1} a_3 b_2^{-1} $ & $\{2\}$ & $\{0,2\}$ & 4 & $\C_2 \times \C_2$\\
$\Gamma_{6,6,85}$ & 0 & 0 & $ a_1 b_3 a_1 b_3 $, $ a_1 b_3^{-1} a_2 b_3^{-1} $, $ a_2 b_3 a_3 b_3 $, $ a_3 b_1 a_3 b_2^{-1} $, $ a_3 b_2 a_3 b_2 $, $ a_3 b_3^{-1} a_3 b_1^{-1} $ & $\{0,2\}$ & $\{0\}^*$ & 4 & $\C_2 \times \C_2$\\
$\Gamma_{6,6,86}$ & 0 & 0 & $ a_1 b_3 a_1 b_3 $, $ a_1 b_3^{-1} a_2 b_3^{-1} $, $ a_2 b_3 a_3 b_3 $, $ a_3 b_1 a_3 b_3^{-1} $, $ a_3 b_2 a_3 b_1^{-1} $, $ a_3 b_2^{-1} a_3 b_2^{-1} $ & $\{0,2\}$ & $\{0\}^*$ & 4 & $\C_2 \times \C_2$\\
$\Gamma_{6,6,87}$ & 0 & 0 & $ a_1 b_3 a_1 b_3 $, $ a_1 b_3^{-1} a_3 b_3^{-1} $, $ a_2 b_3 a_2^{-1} b_3 $, $ a_3 b_1 a_3 b_2^{-1} $, $ a_3 b_2 a_3 b_2 $, $ a_3 b_3 a_3 b_1^{-1} $ & $\{0,2\}$ & $\{0\}^*$ & 4 & $\C_2 \times \C_2$\\
$\Gamma_{6,6,88}$ & 0 & 0 & $ a_1 b_3 a_1 b_3 $, $ a_1 b_3^{-1} a_3 b_3^{-1} $, $ a_2 b_3 a_2^{-1} b_3 $, $ a_3 b_1 a_3 b_3 $, $ a_3 b_2 a_3 b_1^{-1} $, $ a_3 b_2^{-1} a_3 b_2^{-1} $ & $\{0,2\}$ & $\{0\}^*$ & 4 & $\C_2 \times \C_2$\\
$\Gamma_{6,6,89}$ & 0 & 0 & $ a_1 b_3 a_1 b_3 $, $ a_1 b_3^{-1} a_2^{-1} b_3^{-1} $, $ a_2 b_3^{-1} a_3 b_3^{-1} $, $ a_3 b_1 a_3 b_2^{-1} $, $ a_3 b_2 a_3 b_2 $, $ a_3 b_3 a_3 b_1^{-1} $ & $\{0,2\}$ & $\{0\}^*$ & 4 & $\C_2 \times \C_2$\\
$\Gamma_{6,6,90}$ & 0 & 0 & $ a_1 b_3 a_1 b_3 $, $ a_1 b_3^{-1} a_2^{-1} b_3^{-1} $, $ a_2 b_3^{-1} a_3 b_3^{-1} $, $ a_3 b_1 a_3 b_3 $, $ a_3 b_2 a_3 b_1^{-1} $, $ a_3 b_2^{-1} a_3 b_2^{-1} $ & $\{0,2\}$ & $\{0\}^*$ & 4 & $\C_2 \times \C_2$\\
$\Gamma_{6,6,91}$ & 0 & 0 & $ a_1 b_3 a_1^{-1} b_3^{-1} $, $ a_2 b_3 a_2 b_3 $, $ a_2 b_3^{-1} a_3 b_3^{-1} $, $ a_3 b_1 a_3 b_2^{-1} $, $ a_3 b_2 a_3 b_2 $, $ a_3 b_3 a_3 b_1^{-1} $ & $\{0,2\}$ & $\{1\}$ & 4 & $\C_2 \times \C_2$\\
$\Gamma_{6,6,92}$ & 0 & 0 & $ a_1 b_3 a_1^{-1} b_3^{-1} $, $ a_2 b_3 a_2 b_3 $, $ a_2 b_3^{-1} a_3 b_3^{-1} $, $ a_3 b_1 a_3 b_3 $, $ a_3 b_2 a_3 b_1^{-1} $, $ a_3 b_2^{-1} a_3 b_2^{-1} $ & $\{0,2\}$ & $\{1\}$ & 4 & $\C_2 \times \C_2$\\
$\Gamma_{6,6,93}$ & 0 & 0 & $ a_1 b_3 a_1 b_3 $, $ a_1 b_3^{-1} a_1 b_3^{-1} $, $ a_2 b_3 a_2 b_3 $, $ a_2 b_3^{-1} a_3 b_3^{-1} $, $ a_3 b_1 a_3 b_2^{-1} $, $ a_3 b_2 a_3 b_3 $, $ a_3 b_1^{-1} a_3 b_1^{-1} $ & $\{0,2\}^*$ & $\{0\}^*$ & 4 & $\C_2 \times \C_2$\\
$\Gamma_{6,6,94}$ & 0 & 0 & $ a_1 b_3 a_1 b_3 $, $ a_1 b_3^{-1} a_1 b_3^{-1} $, $ a_2 b_3 a_2 b_3 $, $ a_2 b_3^{-1} a_3 b_3^{-1} $, $ a_3 b_1 a_3 b_1 $, $ a_3 b_2 a_3 b_1^{-1} $, $ a_3 b_3 a_3 b_2^{-1} $ & $\{0,2\}^*$ & $\{0\}^*$ & 4 & $\C_2 \times \C_2$\\
$\Gamma_{6,6,95}$ & 0 & 0 & $ a_1 b_3 a_1 b_3^{-1} $, $ a_2 b_3 a_2 b_3 $, $ a_2 b_3^{-1} a_3 b_3^{-1} $, $ a_3 b_1 a_3 b_2^{-1} $, $ a_3 b_2 a_3 b_3 $, $ a_3 b_1^{-1} a_3 b_1^{-1} $ & $\{0,2\}^*$ & $\{1\}$ & 4 & $\C_2 \times \C_2$\\
$\Gamma_{6,6,96}$ & 0 & 0 & $ a_1 b_3 a_1 b_3^{-1} $, $ a_2 b_3 a_2 b_3 $, $ a_2 b_3^{-1} a_3 b_3^{-1} $, $ a_3 b_1 a_3 b_1 $, $ a_3 b_2 a_3 b_1^{-1} $, $ a_3 b_3 a_3 b_2^{-1} $ & $\{0,2\}^*$ & $\{1\}$ & 4 & $\C_2 \times \C_2$\\
\hline
\end{tabular}
\caption{Some virtually simple $(6,6)$-groups. (Part 3/5)}\label{table:simple3}
\end{table}

\begin{table}
\scriptsize
\centering
\begin{tabular}{|c|c|c|l|c|c|c|c|}
\hline
Name & $\tau_1$ & $\tau_2$ & Squares: $a_1 b_1 a_2^{-1} b_1$, $a_1 b_2 a_2 b_2^{-1}$, $a_1 b_2^{-1} a_2^{-1} b_1^{-1}$, $a_1 b_1^{-1} a_2^{-1} b_2$ + & $H_1$ & $H_2$
 & $[\Gamma : \Gamma^{(\infty)}]$ & $\Aut(X_{\Gamma^+})$\\
\hline
$\Gamma_{6,6,97}$ & 2 & 0 & $ a_1 b_3 a_1 b_3 $, $ a_1 b_3^{-1} a_2 b_3^{-1} $, $ a_2 b_3 A_3 b_3 $, $ A_3 b_1 A_3 b_1 $, $ A_3 b_2 A_4 b_2 $, $ A_4 b_1 A_4 b_3^{-1} $ & $\{0\}$ & $\{0\}^*$ & 4 & $\C_2 \times \C_2 \times \C_2$\\
$\Gamma_{6,6,98}$ & 2 & 0 & $ a_1 b_3 a_1 b_3 $, $ a_1 b_3^{-1} a_2 b_3^{-1} $, $ a_2 b_3 A_3 b_3 $, $ A_3 b_1 A_3 b_1 $, $ A_3 b_2 A_4 b_2 $, $ A_4 b_1 A_4 b_3 $ & $\{0\}$ & $\{0\}^*$ & 4 & $\C_2 \times \C_2 \times \C_2$\\
$\Gamma_{6,6,99}$ & 2 & 0 & $ a_1 b_3 a_1 b_3 $, $ a_1 b_3^{-1} A_3 b_3^{-1} $, $ a_2 b_3 a_2^{-1} b_3 $, $ A_3 b_1 A_3 b_1 $, $ A_3 b_2 A_4 b_2 $, $ A_4 b_1 A_4 b_3^{-1} $ & $\{0\}$ & $\{0\}^*$ & 4 & $\C_2 \times \C_2 \times \C_2$\\
$\Gamma_{6,6,100}$ & 2 & 0 & $ a_1 b_3 a_1 b_3 $, $ a_1 b_3^{-1} A_3 b_3^{-1} $, $ a_2 b_3 a_2^{-1} b_3 $, $ A_3 b_1 A_3 b_1 $, $ A_3 b_2 A_4 b_2 $, $ A_4 b_1 A_4 b_3 $ & $\{0\}$ & $\{0\}^*$ & 4 & $\C_2 \times \C_2 \times \C_2$\\
$\Gamma_{6,6,101}$ & 2 & 0 & $ a_1 b_3 a_1 b_3 $, $ a_1 b_3^{-1} a_2^{-1} b_3^{-1} $, $ a_2 b_3^{-1} A_3 b_3^{-1} $, $ A_3 b_1 A_3 b_1 $, $ A_3 b_2 A_4 b_2 $, $ A_4 b_1 A_4 b_3^{-1} $ & $\{0\}$ & $\{0\}^*$ & 4 & $\C_2 \times \C_2 \times \C_2$\\
$\Gamma_{6,6,102}$ & 2 & 0 & $ a_1 b_3 a_1 b_3 $, $ a_1 b_3^{-1} a_2^{-1} b_3^{-1} $, $ a_2 b_3^{-1} A_3 b_3^{-1} $, $ A_3 b_1 A_3 b_1 $, $ A_3 b_2 A_4 b_2 $, $ A_4 b_1 A_4 b_3 $ & $\{0\}$ & $\{0\}^*$ & 4 & $\C_2 \times \C_2 \times \C_2$\\
$\Gamma_{6,6,103}$ & 2 & 0 & $ a_1 b_3 a_1^{-1} b_3^{-1} $, $ a_2 b_3 a_2 b_3 $, $ a_2 b_3^{-1} A_3 b_3^{-1} $, $ A_3 b_1 A_3 b_1^{-1} $, $ A_3 b_2 A_4 b_2^{-1} $, $ A_3 b_2^{-1} A_4 b_2 $, $ A_4 b_1 A_4 b_3^{-1} $ & $\{0\}$ & $\{1\}$ & 4 & $\C_2 \times \C_2 \times \C_2$\\
$\Gamma_{6,6,104}$ & 2 & 0 & $ a_1 b_3 a_1^{-1} b_3^{-1} $, $ a_2 b_3 a_2 b_3 $, $ a_2 b_3^{-1} A_3 b_3^{-1} $, $ A_3 b_1 A_3 b_1^{-1} $, $ A_3 b_2 A_4 b_2^{-1} $, $ A_3 b_2^{-1} A_4 b_2 $, $ A_4 b_1 A_4 b_3 $ & $\{0\}$ & $\{1\}$ & 12 & $\C_2 \times \C_2 \times \C_2$\\
$\Gamma_{6,6,105}$ & 2 & 0 & $ a_1 b_3 a_1^{-1} b_3^{-1} $, $ a_2 b_3 a_2 b_3 $, $ a_2 b_3^{-1} A_3 b_3^{-1} $, $ A_3 b_1 A_3 b_1 $, $ A_3 b_2 A_4 b_2 $, $ A_4 b_1 A_4 b_3^{-1} $ & $\{0\}$ & $\{1\}$ & 4 & $\C_2 \times \C_2 \times \C_2$\\
$\Gamma_{6,6,106}$ & 2 & 0 & $ a_1 b_3 a_1^{-1} b_3^{-1} $, $ a_2 b_3 a_2 b_3 $, $ a_2 b_3^{-1} A_3 b_3^{-1} $, $ A_3 b_1 A_3 b_1 $, $ A_3 b_2 A_4 b_2 $, $ A_4 b_1 A_4 b_3 $ & $\{0\}$ & $\{1\}$ & 4 & $\C_2 \times \C_2 \times \C_2$\\
$\Gamma_{6,6,107}$ & 2 & 0 & $ a_1 b_3 a_1 b_3 $, $ a_1 b_3^{-1} a_2 b_3^{-1} $, $ a_2 b_3 A_3 b_3 $, $ A_3 b_1 A_3 b_1^{-1} $, $ A_3 b_2 A_4 b_2^{-1} $, $ A_3 b_2^{-1} A_4 b_2 $, $ A_4 b_1 A_4 b_3^{-1} $ & $\{0\}$ & $\{1\}$ & 4 & $\C_2 \times \C_2 \times \C_2$\\
$\Gamma_{6,6,108}$ & 2 & 0 & $ a_1 b_3 a_1 b_3 $, $ a_1 b_3^{-1} a_2 b_3^{-1} $, $ a_2 b_3 A_3 b_3 $, $ A_3 b_1 A_3 b_1^{-1} $, $ A_3 b_2 A_4 b_2^{-1} $, $ A_3 b_2^{-1} A_4 b_2 $, $ A_4 b_1 A_4 b_3 $ & $\{0\}$ & $\{1\}$ & 4 & $\C_2 \times \C_2 \times \C_2$\\
$\Gamma_{6,6,109}$ & 2 & 0 & $ a_1 b_3 a_1 b_3 $, $ a_1 b_3^{-1} A_3 b_3^{-1} $, $ a_2 b_3 a_2^{-1} b_3 $, $ A_3 b_1 A_3 b_1^{-1} $, $ A_3 b_2 A_4 b_2^{-1} $, $ A_3 b_2^{-1} A_4 b_2 $, $ A_4 b_1 A_4 b_3^{-1} $ & $\{0\}$ & $\{1\}$ & 4 & $\C_2 \times \C_2 \times \C_2$\\
$\Gamma_{6,6,110}$ & 2 & 0 & $ a_1 b_3 a_1 b_3 $, $ a_1 b_3^{-1} A_3 b_3^{-1} $, $ a_2 b_3 a_2^{-1} b_3 $, $ A_3 b_1 A_3 b_1^{-1} $, $ A_3 b_2 A_4 b_2^{-1} $, $ A_3 b_2^{-1} A_4 b_2 $, $ A_4 b_1 A_4 b_3 $ & $\{0\}$ & $\{1\}$ & 4 & $\C_2 \times \C_2 \times \C_2$\\
$\Gamma_{6,6,111}$ & 2 & 0 & $ a_1 b_3 a_1 b_3 $, $ a_1 b_3^{-1} a_2^{-1} b_3^{-1} $, $ a_2 b_3^{-1} A_3 b_3^{-1} $, $ A_3 b_1 A_3 b_1^{-1} $, $ A_3 b_2 A_4 b_2^{-1} $, $ A_3 b_2^{-1} A_4 b_2 $, $ A_4 b_1 A_4 b_3^{-1} $ & $\{0\}$ & $\{1\}$ & 4 & $\C_2 \times \C_2 \times \C_2$\\
$\Gamma_{6,6,112}$ & 2 & 0 & $ a_1 b_3 a_1 b_3 $, $ a_1 b_3^{-1} a_2^{-1} b_3^{-1} $, $ a_2 b_3^{-1} A_3 b_3^{-1} $, $ A_3 b_1 A_3 b_1^{-1} $, $ A_3 b_2 A_4 b_2^{-1} $, $ A_3 b_2^{-1} A_4 b_2 $, $ A_4 b_1 A_4 b_3 $ & $\{0\}$ & $\{1\}$ & 4 & $\C_2 \times \C_2 \times \C_2$\\
$\Gamma_{6,6,113}$ & 2 & 0 & $ a_1 b_3 a_1^{-1} b_3^{-1} $, $ a_2 b_3 a_2 b_3 $, $ a_2 b_3^{-1} A_3 b_3^{-1} $, $ A_3 b_1 A_3 b_1 $, $ A_3 b_2 A_4 b_2^{-1} $, $ A_3 b_2^{-1} A_4 b_2 $, $ A_4 b_1 A_4 b_3^{-1} $ & $\{0\}$ & $\{0,1\}$ & 4 & $\C_2 \times \C_2 \times \C_2$\\
$\Gamma_{6,6,114}$ & 2 & 0 & $ a_1 b_3 a_1^{-1} b_3^{-1} $, $ a_2 b_3 a_2 b_3 $, $ a_2 b_3^{-1} A_3 b_3^{-1} $, $ A_3 b_1 A_3 b_1 $, $ A_3 b_2 A_4 b_2^{-1} $, $ A_3 b_2^{-1} A_4 b_2 $, $ A_4 b_1 A_4 b_3 $ & $\{0\}$ & $\{0,1\}$ & 4 & $\C_2 \times \C_2 \times \C_2$\\
$\Gamma_{6,6,115}$ & 2 & 0 & $ a_1 b_3 a_1^{-1} b_3^{-1} $, $ a_2 b_3 a_2 b_3 $, $ a_2 b_3^{-1} A_3 b_3^{-1} $, $ A_3 b_1 A_3 b_1^{-1} $, $ A_3 b_2 A_4 b_2 $, $ A_4 b_1 A_4 b_3^{-1} $ & $\{0\}$ & $\{1,2\}$ & 4 & $\C_2 \times \C_2 \times \C_2$\\
$\Gamma_{6,6,116}$ & 2 & 0 & $ a_1 b_3 a_1^{-1} b_3^{-1} $, $ a_2 b_3 a_2 b_3 $, $ a_2 b_3^{-1} A_3 b_3^{-1} $, $ A_3 b_1 A_3 b_1^{-1} $, $ A_3 b_2 A_4 b_2 $, $ A_4 b_1 A_4 b_3 $ & $\{0\}$ & $\{1,2\}$ & 12 & $\C_2 \times \C_2 \times \C_2$\\
$\Gamma_{6,6,117}$ & 2 & 0 & $ a_1 b_3 a_1 b_3 $, $ a_1 b_3^{-1} A_3 b_3^{-1} $, $ a_2 b_3 a_2^{-1} b_3 $, $ A_3 b_1 A_3 b_1^{-1} $, $ A_3 b_2 A_4 b_2 $, $ A_4 b_1 A_4 b_3^{-1} $ & $\{0\}$ & $\{1,2\}$ & 4 & $\C_2 \times \C_2 \times \C_2$\\
$\Gamma_{6,6,118}$ & 2 & 0 & $ a_1 b_3 a_1 b_3 $, $ a_1 b_3^{-1} A_3 b_3^{-1} $, $ a_2 b_3 a_2^{-1} b_3 $, $ A_3 b_1 A_3 b_1^{-1} $, $ A_3 b_2 A_4 b_2 $, $ A_4 b_1 A_4 b_3 $ & $\{0\}$ & $\{1,2\}$ & 4 & $\C_2 \times \C_2 \times \C_2$\\
$\Gamma_{6,6,119}$ & 2 & 0 & $ a_1 b_3 a_1 b_3 $, $ a_1 b_3^{-1} A_3 b_3^{-1} $, $ a_2 b_3 a_2^{-1} b_3 $, $ A_3 b_1 A_3 b_1 $, $ A_3 b_2 A_4 b_2^{-1} $, $ A_3 b_2^{-1} A_4 b_2 $, $ A_4 b_1 A_4 b_3^{-1} $ & $\{0\}$ & $\{1,2\}$ & 4 & $\C_2 \times \C_2 \times \C_2$\\
$\Gamma_{6,6,120}$ & 2 & 0 & $ a_1 b_3 a_1 b_3 $, $ a_1 b_3^{-1} A_3 b_3^{-1} $, $ a_2 b_3 a_2^{-1} b_3 $, $ A_3 b_1 A_3 b_1 $, $ A_3 b_2 A_4 b_2^{-1} $, $ A_3 b_2^{-1} A_4 b_2 $, $ A_4 b_1 A_4 b_3 $ & $\{0\}$ & $\{1,2\}$ & 4 & $\C_2 \times \C_2 \times \C_2$\\
$\Gamma_{6,6,121}$ & 2 & 0 & $ a_1 b_3 a_1 b_3 $, $ a_1 b_3^{-1} a_2 b_3^{-1} $, $ a_2 b_3 A_3 b_3 $, $ A_3 b_1 A_3 b_1 $, $ A_3 b_2 A_4 b_2^{-1} $, $ A_3 b_2^{-1} A_4 b_2 $, $ A_4 b_1 A_4 b_3^{-1} $ & $\{0\}$ & $\{0,1,2\}$ & 4 & $\C_2 \times \C_2 \times \C_2$\\
$\Gamma_{6,6,122}$ & 2 & 0 & $ a_1 b_3 a_1 b_3 $, $ a_1 b_3^{-1} a_2 b_3^{-1} $, $ a_2 b_3 A_3 b_3 $, $ A_3 b_1 A_3 b_1 $, $ A_3 b_2 A_4 b_2^{-1} $, $ A_3 b_2^{-1} A_4 b_2 $, $ A_4 b_1 A_4 b_3 $ & $\{0\}$ & $\{0,1,2\}$ & 4 & $\C_2 \times \C_2 \times \C_2$\\
$\Gamma_{6,6,123}$ & 2 & 0 & $ a_1 b_3 a_1 b_3 $, $ a_1 b_3^{-1} a_2^{-1} b_3^{-1} $, $ a_2 b_3^{-1} A_3 b_3^{-1} $, $ A_3 b_1 A_3 b_1 $, $ A_3 b_2 A_4 b_2^{-1} $, $ A_3 b_2^{-1} A_4 b_2 $, $ A_4 b_1 A_4 b_3^{-1} $ & $\{0\}$ & $\{0,1,2\}$ & 4 & $\C_2 \times \C_2 \times \C_2$\\
$\Gamma_{6,6,124}$ & 2 & 0 & $ a_1 b_3 a_1 b_3 $, $ a_1 b_3^{-1} a_2^{-1} b_3^{-1} $, $ a_2 b_3^{-1} A_3 b_3^{-1} $, $ A_3 b_1 A_3 b_1 $, $ A_3 b_2 A_4 b_2^{-1} $, $ A_3 b_2^{-1} A_4 b_2 $, $ A_4 b_1 A_4 b_3 $ & $\{0\}$ & $\{0,1,2\}$ & 4 & $\C_2 \times \C_2 \times \C_2$\\
$\Gamma_{6,6,125}$ & 2 & 0 & $ a_1 b_3 a_1 b_3 $, $ a_1 b_3^{-1} a_2 b_3^{-1} $, $ a_2 b_3 A_3 b_3 $, $ A_3 b_1 A_3 b_1^{-1} $, $ A_3 b_2 A_4 b_2 $, $ A_4 b_1 A_4 b_3^{-1} $ & $\{0\}$ & $\{2,3\}$ & 4 & $\C_2 \times \C_2 \times \C_2$\\
$\Gamma_{6,6,126}$ & 2 & 0 & $ a_1 b_3 a_1 b_3 $, $ a_1 b_3^{-1} a_2 b_3^{-1} $, $ a_2 b_3 A_3 b_3 $, $ A_3 b_1 A_3 b_1^{-1} $, $ A_3 b_2 A_4 b_2 $, $ A_4 b_1 A_4 b_3 $ & $\{0\}$ & $\{2,3\}$ & 4 & $\C_2 \times \C_2 \times \C_2$\\
$\Gamma_{6,6,127}$ & 2 & 0 & $ a_1 b_3 a_1 b_3 $, $ a_1 b_3^{-1} a_2^{-1} b_3^{-1} $, $ a_2 b_3^{-1} A_3 b_3^{-1} $, $ A_3 b_1 A_3 b_1^{-1} $, $ A_3 b_2 A_4 b_2 $, $ A_4 b_1 A_4 b_3^{-1} $ & $\{0\}$ &  $\{2,3\}$ & 4 & $\C_2 \times \C_2 \times \C_2$\\
$\Gamma_{6,6,128}$ & 2 & 0 & $ a_1 b_3 a_1 b_3 $, $ a_1 b_3^{-1} a_2^{-1} b_3^{-1} $, $ a_2 b_3^{-1} A_3 b_3^{-1} $, $ A_3 b_1 A_3 b_1^{-1} $, $ A_3 b_2 A_4 b_2 $, $ A_4 b_1 A_4 b_3 $ & $\{0\}$ & $\{2,3\}$ & 4 & $\C_2 \times \C_2 \times \C_2$\\
\hline
\end{tabular}
\caption{Some virtually simple $(6,6)$-groups. (Part 4/5)}\label{table:simple4}
\end{table}

\begin{table}
\scriptsize
\centering
\begin{tabular}{|c|c|c|l|c|c|c|c|}
\hline
Name & $\tau_1$ & $\tau_2$ & Squares: $a_1 b_1 a_2^{-1} b_1$, $a_1 b_2 a_2 b_2^{-1}$, $a_1 b_2^{-1} a_2^{-1} b_1^{-1}$, $a_1 b_1^{-1} a_2^{-1} b_2$ + & $H_1$ & $H_2$
 & $[\Gamma : \Gamma^{(\infty)}]$ & $\Aut(X_{\Gamma^+})$\\
\hline
$\Gamma_{6,6,129}$ & 2 & 0 & $ a_1 b_3 a_1 b_3 $, $ a_1 b_3^{-1} a_1 b_3^{-1} $, $ a_2 b_3 a_2 b_3 $, $ a_2 b_3^{-1} A_3 b_3^{-1} $, $ A_3 b_1 A_4 b_1 $, $ A_3 b_2 A_3 b_2 $, $ A_4 b_2 A_4 b_3^{-1} $ & $\{0\}^*$ & $\{0\}^*$ & 4 & $\C_2 \times \C_2$\\
$\Gamma_{6,6,130}$ & 2 & 0 & $ a_1 b_3 a_1 b_3^{-1} $, $ a_2 b_3 a_2 b_3 $, $ a_2 b_3^{-1} A_3 b_3^{-1} $, $ A_3 b_1 A_4 b_1^{-1} $, $ A_3 b_2 A_3 b_2^{-1} $, $ A_3 b_1^{-1} A_4 b_1 $, $ A_4 b_2 A_4 b_3^{-1} $ & $\{0\}^*$ & $\{1\}$ & 4 & $\C_2 \times \C_2$\\
$\Gamma_{6,6,131}$ & 2 & 0 & $ a_1 b_3 a_1 b_3^{-1} $, $ a_2 b_3 a_2 b_3 $, $ a_2 b_3^{-1} A_3 b_3^{-1} $, $ A_3 b_1 A_4 b_1 $, $ A_3 b_2 A_3 b_2 $, $ A_4 b_2 A_4 b_3^{-1} $ & $\{0\}^*$ & $\{1\}$ & 4 & $\C_2 \times \C_2$\\
$\Gamma_{6,6,132}$ & 2 & 0 & $ a_1 b_3 a_1 b_3 $, $ a_1 b_3^{-1} a_1 b_3^{-1} $, $ a_2 b_3 a_2 b_3 $, $ a_2 b_3^{-1} A_3 b_3^{-1} $, $ A_3 b_1 A_4 b_1^{-1} $, $ A_3 b_2 A_3 b_2^{-1} $, $ A_3 b_1^{-1} A_4 b_1 $, $ A_4 b_2 A_4 b_3^{-1} $ & $\{0\}^*$ & $\{1\}$ & 4 & $\C_2 \times \C_2$\\
$\Gamma_{6,6,133}$ & 2 & 0 & $ a_1 b_3 a_1 b_3^{-1} $, $ a_2 b_3 a_2 b_3 $, $ a_2 b_3^{-1} A_3 b_3^{-1} $, $ A_3 b_1 A_4 b_1^{-1} $, $ A_3 b_2 A_3 b_2 $, $ A_3 b_1^{-1} A_4 b_1 $, $ A_4 b_2 A_4 b_3^{-1} $ & $\{0\}^*$ & $\{0,1\}$ & 4 & $\C_2 \times \C_2$\\
$\Gamma_{6,6,134}$ & 2 & 0 & $ a_1 b_3 a_1 b_3^{-1} $, $ a_2 b_3 a_2 b_3 $, $ a_2 b_3^{-1} A_3 b_3^{-1} $, $ A_3 b_1 A_4 b_1 $, $ A_3 b_2 A_3 b_2^{-1} $, $ A_4 b_2 A_4 b_3^{-1} $ & $\{0\}^*$ & $\{1,2\}$ & 4 & $\C_2 \times \C_2$\\
$\Gamma_{6,6,135}$ & 2 & 0 & $ a_1 b_3 a_1 b_3 $, $ a_1 b_3^{-1} a_1 b_3^{-1} $, $ a_2 b_3 a_2 b_3 $, $ a_2 b_3^{-1} A_3 b_3^{-1} $, $ A_3 b_1 A_4 b_1^{-1} $, $ A_3 b_2 A_3 b_2 $, $ A_3 b_1^{-1} A_4 b_1 $, $ A_4 b_2 A_4 b_3^{-1} $ & $\{0\}^*$ & $\{1,2\}$ & 4 & $\C_2 \times \C_2$\\
$\Gamma_{6,6,136}$ & 2 & 0 & $ a_1 b_3 a_1 b_3 $, $ a_1 b_3^{-1} a_1 b_3^{-1} $, $ a_2 b_3 a_2 b_3 $, $ a_2 b_3^{-1} A_3 b_3^{-1} $, $ A_3 b_1 A_4 b_1 $, $ A_3 b_2 A_3 b_2^{-1} $, $ A_4 b_2 A_4 b_3^{-1} $ & $\{0\}^*$ & $\{1,2\}$ & 4 & $\C_2 \times \C_2$\\
$\Gamma_{6,6,137}$ & 2 & 0 & $ a_1 b_3 a_1 b_3 $, $ a_1 b_3^{-1} a_1 b_3^{-1} $, $ a_2 b_3 a_2 b_3 $, $ a_2 b_3^{-1} A_3 b_3^{-1} $, $ A_3 b_1 A_3 b_1 $, $ A_3 b_2 A_4 b_2 $, $ A_4 b_1 A_4 b_3^{-1} $ & $\{2\}$ & $\{0\}^*$ & 4 & $\C_2 \times \C_2 \times \C_2$\\
$\Gamma_{6,6,138}$ & 2 & 0 & $ a_1 b_3 a_1 b_3 $, $ a_1 b_3^{-1} a_1 b_3^{-1} $, $ a_2 b_3 a_2 b_3 $, $ a_2 b_3^{-1} A_3 b_3^{-1} $, $ A_3 b_1 A_3 b_1 $, $ A_3 b_2 A_4 b_2 $, $ A_4 b_1 A_4 b_3 $ & $\{2\}$ & $\{0\}^*$ & 4 & $\C_2 \times \C_2 \times \C_2$\\
$\Gamma_{6,6,139}$ & 2 & 0 & $ a_1 b_3 a_1 b_3 $, $ a_1 b_3^{-1} A_3 b_3^{-1} $, $ a_2 b_3 a_2^{-1} b_3 $, $ A_3 b_1 A_4 b_1 $, $ A_3 b_2 A_3 b_2 $, $ A_4 b_2 A_4 b_3^{-1} $ & $\{2\}$ & $\{0\}^*$ & 4 & $\C_2 \times \C_2$\\
$\Gamma_{6,6,140}$ & 2 & 0 & $ a_1 b_3 a_1 b_3^{-1} $, $ a_2 b_3 a_2 b_3 $, $ a_2 b_3^{-1} A_3 b_3^{-1} $, $ A_3 b_1 A_3 b_1^{-1} $, $ A_3 b_2 A_4 b_2^{-1} $, $ A_3 b_2^{-1} A_4 b_2 $, $ A_4 b_1 A_4 b_3^{-1} $ & $\{2\}$ & $\{1\}$ & 4 & $\C_2 \times \C_2 \times \C_2$\\
$\Gamma_{6,6,141}$ & 2 & 0 & $ a_1 b_3 a_1 b_3^{-1} $, $ a_2 b_3 a_2 b_3 $, $ a_2 b_3^{-1} A_3 b_3^{-1} $, $ A_3 b_1 A_3 b_1^{-1} $, $ A_3 b_2 A_4 b_2^{-1} $, $ A_3 b_2^{-1} A_4 b_2 $, $ A_4 b_1 A_4 b_3 $ & $\{2\}$ & $\{1\}$ & 4 & $\C_2 \times \C_2 \times \C_2$\\
$\Gamma_{6,6,142}$ & 2 & 0 & $ a_1 b_3 a_1 b_3^{-1} $, $ a_2 b_3 a_2 b_3 $, $ a_2 b_3^{-1} A_3 b_3^{-1} $, $ A_3 b_1 A_3 b_1 $, $ A_3 b_2 A_4 b_2 $, $ A_4 b_1 A_4 b_3^{-1} $ & $\{2\}$ & $\{1\}$ & 4 & $\C_2 \times \C_2 \times \C_2$\\
$\Gamma_{6,6,143}$ & 2 & 0 & $ a_1 b_3 a_1 b_3^{-1} $, $ a_2 b_3 a_2 b_3 $, $ a_2 b_3^{-1} A_3 b_3^{-1} $, $ A_3 b_1 A_3 b_1 $, $ A_3 b_2 A_4 b_2 $, $ A_4 b_1 A_4 b_3 $ & $\{2\}$ & $\{1\}$ & 4 & $\C_2 \times \C_2 \times \C_2$\\
$\Gamma_{6,6,144}$ & 2 & 0 & $ a_1 b_3 a_1^{-1} b_3^{-1} $, $ a_2 b_3 a_2 b_3 $, $ a_2 b_3^{-1} A_3 b_3^{-1} $, $ A_3 b_1 A_4 b_1^{-1} $, $ A_3 b_2 A_3 b_2^{-1} $, $ A_3 b_1^{-1} A_4 b_1 $, $ A_4 b_2 A_4 b_3^{-1} $ & $\{2\}$ & $\{1\}$ & 4 & $\C_2 \times \C_2$\\
$\Gamma_{6,6,145}$ & 2 & 0 & $ a_1 b_3 a_1^{-1} b_3^{-1} $, $ a_2 b_3 a_2 b_3 $, $ a_2 b_3^{-1} A_3 b_3^{-1} $, $ A_3 b_1 A_4 b_1 $, $ A_3 b_2 A_3 b_2 $, $ A_4 b_2 A_4 b_3^{-1} $ & $\{2\}$ & $\{1\}$ & 4 & $\C_2 \times \C_2$\\
$\Gamma_{6,6,146}$ & 2 & 0 & $ a_1 b_3 a_1 b_3 $, $ a_1 b_3^{-1} a_1 b_3^{-1} $, $ a_2 b_3 a_2 b_3 $, $ a_2 b_3^{-1} A_3 b_3^{-1} $, $ A_3 b_1 A_3 b_1^{-1} $, $ A_3 b_2 A_4 b_2^{-1} $, $ A_3 b_2^{-1} A_4 b_2 $, $ A_4 b_1 A_4 b_3^{-1} $ & $\{2\}$ & $\{1\}$ & 4 & $\C_2 \times \C_2 \times \C_2$\\
$\Gamma_{6,6,147}$ & 2 & 0 & $ a_1 b_3 a_1 b_3 $, $ a_1 b_3^{-1} a_1 b_3^{-1} $, $ a_2 b_3 a_2 b_3 $, $ a_2 b_3^{-1} A_3 b_3^{-1} $, $ A_3 b_1 A_3 b_1^{-1} $, $ A_3 b_2 A_4 b_2^{-1} $, $ A_3 b_2^{-1} A_4 b_2 $, $ A_4 b_1 A_4 b_3 $ & $\{2\}$ & $\{1\}$ & 4 & $\C_2 \times \C_2 \times \C_2$\\
$\Gamma_{6,6,148}$ & 2 & 0 & $ a_1 b_3 a_1 b_3 $, $ a_1 b_3^{-1} A_3 b_3^{-1} $, $ a_2 b_3 a_2^{-1} b_3 $, $ A_3 b_1 A_4 b_1^{-1} $, $ A_3 b_2 A_3 b_2^{-1} $, $ A_3 b_1^{-1} A_4 b_1 $, $ A_4 b_2 A_4 b_3^{-1} $ & $\{2\}$ & $\{1\}$ & 4 & $\C_2 \times \C_2$\\
$\Gamma_{6,6,149}$ & 2 & 0 & $ a_1 b_3 a_1 b_3^{-1} $, $ a_2 b_3 a_2 b_3 $, $ a_2 b_3^{-1} A_3 b_3^{-1} $, $ A_3 b_1 A_3 b_1 $, $ A_3 b_2 A_4 b_2^{-1} $, $ A_3 b_2^{-1} A_4 b_2 $, $ A_4 b_1 A_4 b_3^{-1} $ & $\{2\}$ & $\{0,1\}$ & 4 & $\C_2 \times \C_2 \times \C_2$\\
$\Gamma_{6,6,150}$ & 2 & 0 & $ a_1 b_3 a_1 b_3^{-1} $, $ a_2 b_3 a_2 b_3 $, $ a_2 b_3^{-1} A_3 b_3^{-1} $, $ A_3 b_1 A_3 b_1 $, $ A_3 b_2 A_4 b_2^{-1} $, $ A_3 b_2^{-1} A_4 b_2 $, $ A_4 b_1 A_4 b_3 $ & $\{2\}$ & $\{0,1\}$ & 4 & $\C_2 \times \C_2 \times \C_2$\\
$\Gamma_{6,6,151}$ & 2 & 0 & $ a_1 b_3 a_1^{-1} b_3^{-1} $, $ a_2 b_3 a_2 b_3 $, $ a_2 b_3^{-1} A_3 b_3^{-1} $, $ A_3 b_1 A_4 b_1^{-1} $, $ A_3 b_2 A_3 b_2 $, $ A_3 b_1^{-1} A_4 b_1 $, $ A_4 b_2 A_4 b_3^{-1} $ & $\{2\}$ & $\{0,1\}$ & 4 & $\C_2 \times \C_2$\\
$\Gamma_{6,6,152}$ & 2 & 0 & $ a_1 b_3 a_1 b_3^{-1} $, $ a_2 b_3 a_2 b_3 $, $ a_2 b_3^{-1} A_3 b_3^{-1} $, $ A_3 b_1 A_3 b_1^{-1} $, $ A_3 b_2 A_4 b_2 $, $ A_4 b_1 A_4 b_3^{-1} $ & $\{2\}$ & $\{1,2\}$ & 4 & $\C_2 \times \C_2 \times \C_2$\\
$\Gamma_{6,6,153}$ & 2 & 0 & $ a_1 b_3 a_1 b_3^{-1} $, $ a_2 b_3 a_2 b_3 $, $ a_2 b_3^{-1} A_3 b_3^{-1} $, $ A_3 b_1 A_3 b_1^{-1} $, $ A_3 b_2 A_4 b_2 $, $ A_4 b_1 A_4 b_3 $ & $\{2\}$ & $\{1,2\}$ & 4 & $\C_2 \times \C_2 \times \C_2$\\
$\Gamma_{6,6,154}$ & 2 & 0 & $ a_1 b_3 a_1^{-1} b_3^{-1} $, $ a_2 b_3 a_2 b_3 $, $ a_2 b_3^{-1} A_3 b_3^{-1} $, $ A_3 b_1 A_4 b_1 $, $ A_3 b_2 A_3 b_2^{-1} $, $ A_4 b_2 A_4 b_3^{-1} $ & $\{2\}$ & $\{1,2\}$ & 4 & $\C_2 \times \C_2$\\
$\Gamma_{6,6,155}$ & 2 & 0 & $ a_1 b_3 a_1 b_3 $, $ a_1 b_3^{-1} a_1 b_3^{-1} $, $ a_2 b_3 a_2 b_3 $, $ a_2 b_3^{-1} A_3 b_3^{-1} $, $ A_3 b_1 A_3 b_1^{-1} $, $ A_3 b_2 A_4 b_2 $, $ A_4 b_1 A_4 b_3^{-1} $ & $\{2\}$ & $\{1,2\}$ & 4 & $\C_2 \times \C_2 \times \C_2$\\
$\Gamma_{6,6,156}$ & 2 & 0 & $ a_1 b_3 a_1 b_3 $, $ a_1 b_3^{-1} a_1 b_3^{-1} $, $ a_2 b_3 a_2 b_3 $, $ a_2 b_3^{-1} A_3 b_3^{-1} $, $ A_3 b_1 A_3 b_1^{-1} $, $ A_3 b_2 A_4 b_2 $, $ A_4 b_1 A_4 b_3 $ & $\{2\}$ & $\{1,2\}$ & 4 & $\C_2 \times \C_2 \times \C_2$\\
$\Gamma_{6,6,157}$ & 2 & 0 & $ a_1 b_3 a_1 b_3 $, $ a_1 b_3^{-1} a_1 b_3^{-1} $, $ a_2 b_3 a_2 b_3 $, $ a_2 b_3^{-1} A_3 b_3^{-1} $, $ A_3 b_1 A_3 b_1 $, $ A_3 b_2 A_4 b_2^{-1} $, $ A_3 b_2^{-1} A_4 b_2 $, $ A_4 b_1 A_4 b_3^{-1} $ & $\{2\}$ & $\{1,2\}$ & 4 & $\C_2 \times \C_2 \times \C_2$\\
$\Gamma_{6,6,158}$ & 2 & 0 & $ a_1 b_3 a_1 b_3 $, $ a_1 b_3^{-1} a_1 b_3^{-1} $, $ a_2 b_3 a_2 b_3 $, $ a_2 b_3^{-1} A_3 b_3^{-1} $, $ A_3 b_1 A_3 b_1 $, $ A_3 b_2 A_4 b_2^{-1} $, $ A_3 b_2^{-1} A_4 b_2 $, $ A_4 b_1 A_4 b_3 $ & $\{2\}$ & $\{1,2\}$ & 4 & $\C_2 \times \C_2 \times \C_2$\\
$\Gamma_{6,6,159}$ & 2 & 0 & $ a_1 b_3 a_1 b_3 $, $ a_1 b_3^{-1} A_3 b_3^{-1} $, $ a_2 b_3 a_2^{-1} b_3 $, $ A_3 b_1 A_4 b_1^{-1} $, $ A_3 b_2 A_3 b_2 $, $ A_3 b_1^{-1} A_4 b_1 $, $ A_4 b_2 A_4 b_3^{-1} $ & $\{2\}$ & $\{1,2\}$ & 4 & $\C_2 \times \C_2$\\
$\Gamma_{6,6,160}$ & 2 & 0 & $ a_1 b_3 a_1 b_3 $, $ a_1 b_3^{-1} A_3 b_3^{-1} $, $ a_2 b_3 a_2^{-1} b_3 $, $ A_3 b_1 A_4 b_1 $, $ A_3 b_2 A_3 b_2^{-1} $, $ A_4 b_2 A_4 b_3^{-1} $ & $\{2\}$ & $\{1,2\}$ & 4 & $\C_2 \times \C_2$\\
\hline
\end{tabular}
\caption{Some virtually simple $(6,6)$-groups. (Part 5/5)}\label{table:simple5}
\end{table}
\end{landscape}

\subsection{Virtually simple \texorpdfstring{$(4,5)$-groups}{(4,5)-groups}}
\label{subsection:45}

In this subsection we use the same strategy as above so as to discover virtually simple $(4,5)$-groups. The NST however requires the closures of the projections to be boundary-$2$-transitive. In the previous section we were dealing with $6$-regular tree, so \cite[Propositions~3.3.1 and~3.3.2]{Burger} could be used to ensure the $2$-transitivity on the boundary. For $4$-regular and $5$-regular trees, those results do not apply. We will therefore need the following theorem, due to Trofimov.

\begin{theorem}[Trofimov]\label{theorem:Trofimov}
Let $X$ be a connected $(q+1)$-regular graph with $q \geq 2$, and let $G \leq \Aut(X)$ be vertex-transitive. Let $v \in V(X)$ and suppose that $\underline{G}(v)$ contains $\mathrm{PSL}(2,q)$ (acting on the projective line). If $G$ is non-discrete, then $X$ is the $(q+1)$-regular tree and the closure $\overline{G} \leq \Aut(X)$ of $G$ is $2$-transitive on $\partial X$.
\end{theorem}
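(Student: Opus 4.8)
The statement has two parts: first, that $X$ is the $(q+1)$-regular tree, and second, that $\overline{G}$ is $2$-transitive on $\partial X$; I will treat them in that order, the first being the substantial one. As a preliminary remark, since $\PSL(2,q)$ is $2$-transitive on the projective line, the local action $\underline{G}(v)$ is $2$-transitive on the $q+1$ neighbours of $v$; together with the vertex-transitivity of $G$ this shows that $G$ is $2$-arc-transitive on $X$ (transitive on non-backtracking paths of length~$2$), and in particular arc-transitive.

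For the first part, the plan is to argue by contradiction: assume $X$ is not a tree, so that it contains a circuit and has finite girth. I would then run the amalgam method for the amalgam $(G_v, G_e, G_w)$ of vertex and edge stabilisers along an edge $e = \{v,w\}$. The Thompson--Wielandt theorem (applicable since the local actions are $2$-transitive, hence primitive) shows that the kernel of the action of the edge stabiliser on $X(v) \cup X(w)$ is a $p$-group for a single prime $p$; one then uses the precise structure of a point stabiliser of $\PSL(2,q)$ on the projective line — a Borel subgroup, metabelian and transitive on the remaining $q$ points — to bound the orders of the successive kernels $G^{[i]}_{\dots}$ of the action on larger and larger balls along an arc. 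The presence of a circuit of length equal to the girth forces this sequence of orders to stabilise, and hence $|G_v| < \infty$. Since $G$ is vertex-transitive, the descending chain $\Fix_G(B(v,n))$, $n \geq 0$, then lies inside the finite group $G_v$, so it is eventually constant; its stable term fixes every ball around $v$, hence all of $X$ by connectedness, hence is trivial. This contradicts the non-discreteness of $G$. (This is exactly the content of Trofimov's work on graphs with projective suborbits: one may instead simply quote his classification, to the effect that a connected $(q+1)$-regular $2$-arc-transitive graph whose local action contains $\PSL(2,q)$ is either the $(q+1)$-regular tree or one of finitely many finite graphs — the incidence graphs of the natural rank-$2$ geometries attached to $\PSL(2,q)$-type groups — the latter again forcing $G$ to be discrete by finiteness of $\Aut(X)$.) Either way the assumption fails, so $X$ is the $(q+1)$-regular tree, which I now denote $T$.

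For the second part, $\overline{G}$ is a closed, vertex-transitive, non-discrete subgroup of $\Aut(T)$ whose local action at every vertex is still $\supseteq \PSL(2,q)$ (closure does not change the action on any fixed finite ball), and hence $\overline{G}$ is again $2$-arc-transitive. I would invoke a Burger--Mozes-type characterisation of boundary-$2$-transitivity for closed subgroups of the automorphism group of a locally finite thick tree, of the shape: $\overline G$ is $2$-transitive on $\partial T$ if and only if it is transitive on the edges of $T$ and, for one (equivalently every) edge $e=\{v,w\}$, the pointwise stabiliser in $\overline G$ of the half-tree of $T-e$ containing $v$ acts transitively on the set of ends of the complementary half-tree $T_w$ (compare \cite[Lemma~3.1.1]{Burger}). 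Edge-transitivity is immediate from arc-transitivity. For the half-tree condition, write $F$ for that pointwise stabiliser; $F$ fixes $v$, $w$ and all of the half-tree on $v$'s side, and acts on $T_w$. Using that the local action of $\overline G$ at every vertex $u \in V(T_w)$ contains $\PSL(2,q)$ — so that the stabiliser of any neighbour is transitive on the remaining $q$ neighbours — together with non-discreteness (which guarantees that the pointwise stabilisers $\Fix_{\overline G}(B(v,n))$ never become trivial and therefore, at each radius, induce a transitive action on the relevant spheres inside $T_w$), one obtains by a telescoping argument along $T_w$, formalised via König's lemma applied to the finitely-branching tree $T_w$, that $F$ is transitive on $\partial T_w$. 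This yields that $\overline G$ is $2$-transitive on $\partial X$, completing the proof.

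The hard part is the first step. It is there that the hypothesis of non-discreteness is used in an essential way, and making the argument rigorous requires the full strength of the amalgam machinery — the Thompson--Wielandt theorem together with the detailed case analysis of the $\PSL(2,q)$ amalgam, i.e. precisely Trofimov's theorem on graphs with projective suborbits. The second step, by contrast, is a routine consequence of the local-to-global dictionary for groups acting on trees once the graph is known to be a tree.
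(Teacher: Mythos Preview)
The paper's treatment of this statement is simply a reference to Trofimov, together with the remark that Trofimov's actual proof establishes transitivity of $G$ on $\ell$-arcs for every $\ell \geq 1$; both conclusions then follow at once, since $\ell$-arc-transitivity for $\ell$ equal to the girth is impossible in the presence of a cycle, and $\ell$-arc-transitivity for all $\ell$ is a standard sufficient condition for $2$-transitivity on $\partial T$.

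Your route is different, and the second half contains a real gap. You propose to verify boundary-$2$-transitivity via the half-tree criterion of Burger--Mozes, reducing to the claim that the pointwise fixator $F$ of one half-tree acts transitively on the ends of the complementary half-tree $T_w$. Your justification reads: ``non-discreteness guarantees that the pointwise stabilisers $\Fix_{\overline G}(B(v,n))$ never become trivial and therefore, at each radius, induce a transitive action on the relevant spheres inside $T_w$''. The word ``therefore'' hides the entire difficulty. Non-triviality of $\Fix_{\overline G}(B(v,n))$ in no way implies that this fixator, restricted to a vertex $u$ at distance $n$, induces a \emph{transitive} group on the outward neighbours of $u$. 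The $\PSL(2,q)$ hypothesis constrains the local action of the full stabiliser $\overline G(u)$, not that of $F$ or of a deep ball fixator. Establishing that arc stabilisers keep acting transitively on extensions --- equivalently, that $\overline G$ is $\ell$-arc-transitive for every $\ell$ --- is precisely the substance of Trofimov's inductive argument, and it is not a routine consequence of non-discreteness plus $2$-transitive local action (for general $2$-transitive local actions it can fail; the specific Borel-subgroup structure of the point stabiliser in $\PSL(2,q)$ is what drives the induction). Once one has $\ell$-arc-transitivity for all $\ell$, your half-tree criterion is of course satisfied, but by then one already has boundary-$2$-transitivity directly and your second step is redundant.

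For the first part your sketch is in the right spirit, and you are candid that it ultimately rests on Trofimov's analysis of the $\PSL(2,q)$ amalgam. One small correction: the amalgam method operates on the amalgam $(G_v, G_e, G_w)$, which depends only on the universal cover, so the presence of a circuit is not what forces the kernel chain to stabilise; rather, Trofimov proves (using non-discreteness and the $\PSL(2,q)$ structure) that $G$ is $\ell$-arc-transitive for all $\ell$, and it is \emph{this} that forces $X$ to be simply connected. The single assertion ``$G$ is transitive on $\ell$-arcs for every $\ell$'' thus carries both halves of the theorem, which is exactly the point the paper is making.
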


\begin{proof}
See~\cite[Proposition~3.1 and Example~3.2]{Trofimov}. Note that the original statement only mentions that $X$ is the $(q+1)$-regular tree. However, the proof consists in showing that $G$ is transitive on paths of length $\ell$ of $X$ for each $\ell \geq 1$. This assertion implies that $X$ is a tree, but also that $\overline{G}$ is $2$-transitive on $\partial X$.
\end{proof}

In \S\ref{subsection:66} we started with a non-residually finite torsion-free $(4,4)$-group. This time we start with a non-residually finite $(3,3)$-group. Let $\Gamma_{3,3}$ be the $(3,3)$-group associated to the six squares in Figure~\ref{picture:33}. The local action of $\Gamma_{3,3}$ on $T_1$ (resp.\ $T_2$) is $\Sym(3)$ (resp.\ $\C_2$). In the next result, with the same ideas as for Proposition~\ref{proposition:nonrf}, we show that $\Gamma_{3,3}$ is irreducible and not residually finite. Note that we could have used \cite[Corollary~6.4]{CapraceWesolek} for the non-residual finiteness, but once again we wanted an explicit non-trivial element of $\Gamma_{3,3}^{(\infty)}$.

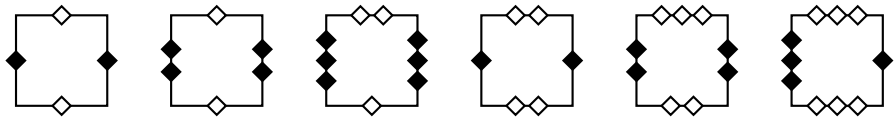
\begin{figure}[b!]
\centering
\begin{pspicture*}(-0.2,-0.2)(11.6,1.4)
\fontsize{10pt}{10pt}\selectfont
\psset{unit=1.2cm}

\pspolygon(0,0)(1,0)(1,1)(0,1)

\pspolygon[fillstyle=solid,fillcolor=white](0.4,0)(0.5,0.1)(0.6,0)(0.5,-0.1)
\pspolygon[fillstyle=solid,fillcolor=white](0.4,1)(0.5,1.1)(0.6,1)(0.5,0.9)
\pspolygon[fillstyle=solid,fillcolor=black](0,0.4)(0.1,0.5)(0,0.6)(-0.1,0.5)
\pspolygon[fillstyle=solid,fillcolor=black](1,0.4)(1.1,0.5)(1,0.6)(0.9,0.5)

\pspolygon(1.7,0)(2.7,0)(2.7,1)(1.7,1)

\pspolygon[fillstyle=solid,fillcolor=white](2.1,0)(2.2,0.1)(2.3,0)(2.2,-0.1)
\pspolygon[fillstyle=solid,fillcolor=white](2.1,1)(2.2,1.1)(2.3,1)(2.2,0.9)
\pspolygon[fillstyle=solid,fillcolor=black](1.7,0.525)(1.8,0.625)(1.7,0.725)(1.6,0.625)
\pspolygon[fillstyle=solid,fillcolor=black](1.7,0.475)(1.8,0.375)(1.7,0.275)(1.6,0.375)
\pspolygon[fillstyle=solid,fillcolor=black](2.7,0.525)(2.8,0.625)(2.7,0.725)(2.6,0.625)
\pspolygon[fillstyle=solid,fillcolor=black](2.7,0.475)(2.8,0.375)(2.7,0.275)(2.6,0.375)

\pspolygon(3.4,0)(4.4,0)(4.4,1)(3.4,1)

\pspolygon[fillstyle=solid,fillcolor=white](3.8,0)(3.9,0.1)(4,0)(3.9,-0.1)
\pspolygon[fillstyle=solid,fillcolor=white](3.925,1)(4.025,1.1)(4.125,1)(4.025,0.9)
\pspolygon[fillstyle=solid,fillcolor=white](3.875,1)(3.775,1.1)(3.675,1)(3.775,0.9)
\pspolygon[fillstyle=solid,fillcolor=black](4.4,0.4)(4.5,0.5)(4.4,0.6)(4.3,0.5)
\pspolygon[fillstyle=solid,fillcolor=black](4.4,0.625)(4.5,0.725)(4.4,0.825)(4.3,0.725)
\pspolygon[fillstyle=solid,fillcolor=black](4.4,0.375)(4.5,0.275)(4.4,0.175)(4.3,0.275)
\pspolygon[fillstyle=solid,fillcolor=black](3.4,0.4)(3.5,0.5)(3.4,0.6)(3.3,0.5)
\pspolygon[fillstyle=solid,fillcolor=black](3.4,0.625)(3.5,0.725)(3.4,0.825)(3.3,0.725)
\pspolygon[fillstyle=solid,fillcolor=black](3.4,0.375)(3.5,0.275)(3.4,0.175)(3.3,0.275)

\pspolygon(5.1,0)(6.1,0)(6.1,1)(5.1,1)

\pspolygon[fillstyle=solid,fillcolor=white](5.625,0)(5.725,0.1)(5.825,0)(5.725,-0.1)
\pspolygon[fillstyle=solid,fillcolor=white](5.575,0)(5.475,0.1)(5.375,0)(5.475,-0.1)
\pspolygon[fillstyle=solid,fillcolor=white](5.625,1)(5.725,1.1)(5.825,1)(5.725,0.9)
\pspolygon[fillstyle=solid,fillcolor=white](5.575,1)(5.475,1.1)(5.375,1)(5.475,0.9)
\pspolygon[fillstyle=solid,fillcolor=black](5.1,0.4)(5.2,0.5)(5.1,0.6)(5,0.5)
\pspolygon[fillstyle=solid,fillcolor=black](6.1,0.4)(6.2,0.5)(6.1,0.6)(6,0.5)

\pspolygon(6.8,0)(7.8,0)(7.8,1)(6.8,1)

\pspolygon[fillstyle=solid,fillcolor=white](7.325,0)(7.425,0.1)(7.525,0)(7.425,-0.1)
\pspolygon[fillstyle=solid,fillcolor=white](7.275,0)(7.175,0.1)(7.075,0)(7.175,-0.1)
\pspolygon[fillstyle=solid,fillcolor=white](7.2,1)(7.3,1.1)(7.4,1)(7.3,0.9)
\pspolygon[fillstyle=solid,fillcolor=white](7.175,1)(7.075,1.1)(6.975,1)(7.075,0.9)
\pspolygon[fillstyle=solid,fillcolor=white](7.425,1)(7.525,1.1)(7.625,1)(7.525,0.9)
\pspolygon[fillstyle=solid,fillcolor=black](6.8,0.525)(6.9,0.625)(6.8,0.725)(6.7,0.625)
\pspolygon[fillstyle=solid,fillcolor=black](6.8,0.475)(6.9,0.375)(6.8,0.275)(6.7,0.375)
\pspolygon[fillstyle=solid,fillcolor=black](7.8,0.525)(7.9,0.625)(7.8,0.725)(7.7,0.625)
\pspolygon[fillstyle=solid,fillcolor=black](7.8,0.475)(7.9,0.375)(7.8,0.275)(7.7,0.375)

\pspolygon(8.5,0)(9.5,0)(9.5,1)(8.5,1)

\pspolygon[fillstyle=solid,fillcolor=white](8.9,0)(9,0.1)(9.1,0)(9,-0.1)
\pspolygon[fillstyle=solid,fillcolor=white](8.875,0)(8.775,0.1)(8.675,0)(8.775,-0.1)
\pspolygon[fillstyle=solid,fillcolor=white](9.125,0)(9.225,0.1)(9.325,0)(9.225,-0.1)
\pspolygon[fillstyle=solid,fillcolor=white](8.9,1)(9,1.1)(9.1,1)(9,0.9)
\pspolygon[fillstyle=solid,fillcolor=white](8.875,1)(8.775,1.1)(8.675,1)(8.775,0.9)
\pspolygon[fillstyle=solid,fillcolor=white](9.125,1)(9.225,1.1)(9.325,1)(9.225,0.9)
\pspolygon[fillstyle=solid,fillcolor=black](9.5,0.4)(9.6,0.5)(9.5,0.6)(9.4,0.5)
\pspolygon[fillstyle=solid,fillcolor=black](8.5,0.4)(8.6,0.5)(8.5,0.6)(8.4,0.5)
\pspolygon[fillstyle=solid,fillcolor=black](8.5,0.625)(8.6,0.725)(8.5,0.825)(8.4,0.725)
\pspolygon[fillstyle=solid,fillcolor=black](8.5,0.375)(8.6,0.275)(8.5,0.175)(8.4,0.275)
\end{pspicture*}
\caption{The $(3,3)$-group $\Gamma_{3,3}$}\label{picture:33}
\end{figure}

\begin{proposition}\label{proposition:33nonrf}
The group $\Gamma_{3,3}$ is irreducible and not residually finite. Moreover, $[B_2(B_1B_3)^2B_2, B_1B_3]$ or $[B_2(B_1B_3)^2B_2, B_1B_3A_2]$ is a non-trivial element of $\Gamma_{3,3}^{(\infty)}$.
\end{proposition}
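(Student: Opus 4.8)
The plan is to follow the blueprint of Proposition~\ref{proposition:nonrf} as closely as possible, replacing the pair $(T_1,T_2)$ of $4$-regular trees by the $3$-regular trees here and replacing the generator $a_1$ (whose local action was $\D_8$) by a suitable product of horizontal generators that plays the analogous role. Concretely, I would first establish irreducibility: by \cite[Proposition~1.2]{Burger2} it suffices to show that one of the projections is non-discrete, and I would do this by exhibiting an element of $\Gamma_{3,3}$ fixing $B(v_2,N)$ but not $B(v_2,N+1)$ for some explicit $N$, using that the local action $\underline{\proj_2(\Gamma_{3,3})}(v_2)$ is $\C_2$ so that powers of a short word that fixes $B(v_2,1)$ can be pushed outward ball by ball (a $3$-regular analogue of the $(a_1a_2)^{81}$ computation). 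This is then checked concretely by drawing a long $\ell\times (N+1)$ rectangle (automatized, as in the original proof), which produces the needed non-fixed vertex; one can invoke \cite[Theorem~1.1]{Weiss} to get an a priori bound on $N$.

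Next, for the non-residual finiteness I would again use the inclusion $[\cent_G(H),\ol{H}]\subseteq G^{(\infty)}$ valid for any $H\leq G$ (\cite[Lemma~4.13]{CapraceNote}), taking $G=\Gamma_{3,3}$. The role of $a_1^3$ in the original proof — a central element of the vertex stabilizer on one side — will here be played by the element $B_2(B_1B_3)^2B_2$; the role of $a_2^4\in\ol{B_{a_1}}$ will be played by $B_1B_3$ (or $B_1B_3A_2$, depending on the local geometry on the other side). The first claim, that $B_2(B_1B_3)^2B_2$ centralizes the appropriate index-$3$ subgroup $B_{x}=\Gamma_{3,3}(v_1,x(v_1))$ of $B=\langle B_1,B_2,B_3\rangle=\Gamma_{3,3}(v_1)$, I would verify by a Reidemeister--Schreier computation of generators of $B_x$ followed by reading off commutation relations directly from the six geometric squares of Figure~\ref{picture:33}, exactly as the four displayed conjugation identities did the job in Proposition~\ref{proposition:nonrf}. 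The second claim, $B_1B_3\in\ol{B^{(2)}}$ (where $B^{(2)}$ is an appropriate even-length subgroup containing $B_x$), I would prove by the same profinite-quotient argument: given any finite quotient $\varphi\colon\Gamma_{3,3}\to Q$, use irreducibility to produce $\gamma\in\Fix_{\Gamma_{3,3}}(B(v_1,1))\cap B^{(2)}\cap\ker\varphi$ with $\proj_1(\gamma)$ non-trivial, track the first vertex $w$ of $T_1$ beyond which $\gamma$ fails to fix a neighbor, and use the local action on the four (here: three) neighbors of $w$ to deduce a relation of the form $\varphi(\gamma')=\varphi(x^2)$ for some $\gamma'\in B^{(2)}$ and generator $x$; then combine two such quotients with an automorphism of $\Gamma_{3,3}$ (visible from the symmetry of the squares, swapping horizontal types appropriately) to conclude that $\varphi$ sends the relevant square into $\varphi(B^{(2)})$ for \emph{all} finite quotients, hence membership in the profinite closure.

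The statement is phrased disjunctively (``$[B_2(B_1B_3)^2B_2, B_1B_3]$ \emph{or} $[B_2(B_1B_3)^2B_2, B_1B_3A_2]$''), and I would handle this exactly as the local-action case distinction around $w$ in the original proof: the two candidates correspond to the two possible outcomes of which neighbor of $w$ is displaced, since here the side-$1$ local action is $\Sym(3)$ rather than $\D_8$, so ``$x'=x^{-1}$'' is replaced by a slightly weaker constraint that leaves two admissible shapes for the commutator; at least one of them lands in $\Gamma_{3,3}^{(\infty)}$. I would also note non-triviality of the chosen commutator directly (it acts non-trivially on $T_1\times T_2$, again checked by a rectangle computation), so that $\Gamma_{3,3}^{(\infty)}\neq 1$ and the NST then forces $\Gamma_{3,3}$ to be non-residually finite, matching \cite[Corollary~6.4]{CapraceWesolek}.

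The main obstacle I anticipate is purely bookkeeping: unlike the $\D_8$ case, the $\Sym(3)$ local action on the horizontal side makes the ``central element of a point stabilizer'' less canonical, so the first real task is to verify that $B_2(B_1B_3)^2B_2$ genuinely centralizes $B_x$ — this requires computing $B_x$ via Reidemeister--Schreier and then checking several conjugation identities from the geometric squares, and getting the word right (as opposed to merely plausible) is where the argument could go wrong. Everything downstream (the profinite-closure argument, the symmetry automorphism, the final commutator membership) is then a faithful transcription of Proposition~\ref{proposition:nonrf}.
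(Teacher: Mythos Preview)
Your plan has the two trees swapped, and this is not a cosmetic issue: with the sides as you have them the argument would not go through.

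The centralizing element $B_2(B_1B_3)^2B_2$ lies in $\Gamma_{3,3}(v_1)=\langle B_1,B_2,B_3\rangle$. In the blueprint from Proposition~\ref{proposition:nonrf}, the centralizer element and the ``profinite closure'' element both live in one vertex stabilizer, while the subgroup $H$ lives in the \emph{other}. Here the paper takes $H=\langle A_1,A_3,A_2A_1A_2,A_2A_3A_2\rangle\leq\Gamma_{3,3}(v_2)$, an index~$2$ subgroup of $\langle A_1,A_2,A_3\rangle$, and checks that $B_2(B_1B_3)^2B_2$ centralizes $H$. Your choice $H=B_x\leq\Gamma_{3,3}(v_1)$ puts $H$ on the same side as the centralizer element, which breaks the structure of the argument.

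This swap also causes the profinite-closure step to fail. The key feature exploited in the paper is that the local action on $T_2$ is $\C_2$, whose only non-trivial permutation is the transposition $(B_1(v_2)\ B_3(v_2))$. Hence, given $\gamma\in\Fix_{\Gamma_{3,3}}(B(v_2,1))\cap\ker\varphi$ with $\proj_2(\gamma)\neq 1$, the first displaced neighbour $z=hx(v_2)$ is \emph{forced} to satisfy $\{x,x'\}=\{B_1,B_3\}$, yielding $\varphi(B_1B_3)\in\varphi(\Gamma_{3,3}(v_2))$ for every finite quotient, i.e.\ $B_1B_3\in\overline{\Gamma_{3,3}(v_2)}$. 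With your choice of the $\Sym(3)$ side, a fixed neighbour plus a moved neighbour does not determine the local permutation, so you do not get a single clean relation of this form; no symmetry automorphism rescues this.

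Finally, the disjunction in the statement is not a ``two displaced neighbours'' case split. It comes from the coset decomposition $\Gamma_{3,3}(v_2)=H\sqcup HA_2$: once $B_1B_3\in\overline{\Gamma_{3,3}(v_2)}$, either $B_1B_3\in\overline{H}$ or $B_1B_3A_2\in\overline{H}$, whence one of the two commutators lies in $\Gamma_{3,3}^{(\infty)}$. No automorphism of $\Gamma_{3,3}$ is used.
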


\begin{proof}
By \cite[Theorems~1.1 and~1.4]{Weiss}, $\Gamma_{3,3}$ is irreducible if and only if the fixator of $B(v_1,3)$ in $\proj_1(\Gamma_{3,3})$ is non-trivial. We first remark that $(B_1 B_2)^2$ fixes $B(v_1,1)$. Hence, $(B_1 B_2)^4$ fixes $B(v_1,2)$ and $(B_1 B_2)^8$ fixes $B(v_1,3)$. Moreover, $(B_1 B_2)^8$ does not fix $B(v_1,4)$ as $(B_1 B_2)^8(A_2 A_3 A_1 A_3(v_1)) = A_2 A_3 A_1 A_2 (v_1)$ (this can be seen by drawing a $4 \times 16$ rectangle). So $\Gamma_{3,3}$ is irreducible.

We now want a non-trivial element in $\Gamma_{3,3}^{(\infty)}$. Recall that $[C_G(H),\overline{H}] \subseteq G^{(\infty)}]$ for any subgroup $H$ of a group $G$ (see \cite[Lemma~4.13]{CapraceNote}). We take $G = \Gamma_{3,3}$ and $H = \langle A_1, A_3, A_2 A_1 A_2, A_2 A_3 A_2 \rangle$. Note that $H$ is a subgroup of $\langle A_1, A_2, A_3 \rangle = \Gamma_{3,3}(v_2)$. Actually, $\Gamma_{3,3}(v_2)$ acts simply transitively on the vertices of $T_1$ and $H$ has two orbits of vertices in $T_1$, so its index in $\Gamma_{3,3}(v_2)$ is~$2$. It is also quick to check that $B_2(B_1B_3)^2B_2 \in C_{\Gamma_{3,3}}(H)$. We now claim that $B_1B_3 \in \overline{\Gamma_{3,3}(v_2)}$. As $A_2 \not \in H$ and $H$ is an index~$2$ subgroup of $\Gamma_{3,3}(v_2)$, it will follow that $B_1B_3 \in \overline{H}$ or $B_1B_3A_2 \in \overline{H}$.

We show that $B_1B_3 \in \overline{\Gamma_{3,3}(v_2)}$ by mimicking the proof of Proposition~\ref{proposition:nonrf}, which was illustrated on Figure~\ref{picture:nonrf}. Consider a finite quotient $\varphi \colon \Gamma_{3,3} \to Q$. Since $\Gamma_{3,3}$ is irreducible, the projection $\proj_2(\Gamma_{3,3}(v_2))$ is infinite. Hence, its finite index subgroup $\proj_2(\Fix_{\Gamma_{3,3}}(B(v_2,1)) \cap \ker \varphi)$ is also infinite. Let $\gamma$ be an element of $\Fix_{\Gamma_{3,3}}(B(v_2,1)) \cap \ker \varphi$ such that $\proj_2(\gamma)$ is non-trivial. In $T_2$, there is a vertex $w \neq v_2$ such that $\gamma$ fixes the path from $v_2$ to $w$ but does not fix some neighbor $z$ of $w$: $\gamma(z) = z' \neq z$. Write $w = h(v_2)$ with $h \in \langle B_1, B_2, B_3 \rangle$, $z = hx(v_2)$ with $x \in \{B_1, B_2, B_3\}$ and $z' = hx'(v_2)$ with $x' \in \{B_1, B_2, B_3\}$. Recall that $\proj_2(\Gamma_{3,3}(v_2))$ acts on the three neighbors of $v_2$ as $\C_2$: the only non-trivial permutation induced on these three vertices is the transposition $(B_1(v_2)\ B_3(v_2))$. We thus have the same local action around $w$, and the fact that $\gamma$ fixes $w$ and some neighbor of $w$ while not fixing $z$ implies that $\{x, x'\} = \{B_1, B_3\}$. Replacing $\gamma$ by $\gamma^{-1}$ if necessary, we can assume that $x = B_3$ and $x' = B_1$. Then we get that $\gamma h B_3 = h B_1 \gamma'$ for some $\gamma' \in \Gamma_{3,3}(v_2)$. As $\varphi(\gamma) = 1$, this implies that $\varphi(B_1 B_3) \in \varphi(\Gamma_{3,3}(v_2))$. This is true for all finite quotients $\varphi \colon \Gamma_{3,3} \to Q$, so $B_1B_3 \in \overline{\Gamma_{3,3}(v_2)}$.
\end{proof}

The same proof can actually show that $[B_2(B_1B_3)^2B_2, (B_1B_3)^2]$ is always a non-trivial element of $\Gamma_{3,3}^{(\infty)}$. But this element has length~$20$ and our computer could hardly deal with it. Instead, the elements $[B_2(B_1B_3)^2B_2, B_1B_3]$ and
$$[B_2(B_1B_3)^2B_2, B_1B_3A_2] = B_2(B_1B_3)^2B_2B_1B_3B_2(B_1B_3)^2B_2B_3B_1$$
given by Proposition~\ref{proposition:33nonrf} have length~$16$ which is slightly better.

Using GAP we could search for $(4,5)$-groups $\Gamma$ containing $\Gamma_{3,3}$ or the mirror of $\Gamma_{3,3}$ (i.e.\ $\{(g_1,g_2) \in \Aut(T_1) \times \Aut(T_2) \mid (g_2,g_1) \in \Gamma_{3,3}\}$), and such that $\underline{H_1}(v_1) \geq \mathrm{PSL}(2,3)$ and $\underline{H_2}(v_2) \geq \mathrm{PSL}(2,4)$. We say that $\Gamma$ satisfies $(**)$ if the above conditions are true. Since $\Gamma_{3,3}$ is irreducible, a group $\Gamma$ satisfying $(**)$ is also irreducible and $H_t$ is $2$-transitive on $\partial T_t$ for each $t \in \{1,2\}$ by Theorem~\ref{theorem:Trofimov}. Thus the NST applies and $\Gamma$ is virtually simple.

There are $60$ equivalence classes of $(4,5)$-groups satisfying $(**)$, all with $\tau_1 = 4$ and $\tau_2 = 5$ (i.e.\ with nine generators, each of order~$2$). We give in Tables~\ref{table:simple45-1}--\ref{table:simple45-2} a group in each class: call them $\Gamma_{4,5,1}, \ldots, \Gamma_{4,5,60}$. Note that $\Gamma_{4,5,1}, \ldots, \Gamma_{4,5,28}$ contain $\Gamma_{3,3}$ while $\Gamma_{4,5,29}, \ldots, \Gamma_{4,5,60}$ contain its mirror. We can make some remarks similar to those in~\S\ref{subsection:66}:

\begin{itemize}
\item The index of the simple subgroup $\Gamma_{4,5,k}^{(\infty)}$ of $\Gamma_{4,5,k}$ can be computed by using the fact that $r_1 = [B_2(B_1B_3)^2B_2, B_1B_3]$ or $r_2 = [B_2(B_1B_3)^2B_2, B_1B_3A_2]$ belongs to $\Gamma_{3,3}^{(\infty)}$. Indeed, if $Q_1$ (resp.\ $Q_2$) is the group obtained by adding the relator $r_1$ (resp.\ $r_2$) to the relators of $\Gamma_{4,5,k}$, then $[\Gamma_{4,5,k} : \Gamma_{4,5,k}^{(\infty)}] = \max(|Q_1|,|Q_2|)$. (For $k \geq 29$ we must actually consider the mirrors of $r_1$ and $r_2$). The indices that we obtain are written in the tables. When the index is~$4$ we have that $\Gamma_{4,5,k}^{(\infty)} = \Gamma_{4,5,k}^+$.
\item For each $k \in \{1,\ldots,60\}$ we get $\Aut(X_{\Gamma_{4,5,k}^+}) \cong \C_2 \times \C_2$, so $\Gamma_{4,5,k}$ is the only $(4,5)$-group whose type-preserving subgroup is $\Gamma_{4,5,k}^+$. Therefore all $\Gamma_{4,5,k}^+$ are pairwise non-conjugate in $\Aut(T_1 \times T_2)$ (and thus pairwise non-isomorphic by \cite[Corollary~1.1.22]{BMZ}).
\end{itemize}

\begin{theorem}[Theorem~\ref{maintheorem:simple66-45}(ii)]
Let $\Gamma_{4,5,k}$ ($k \in \{1,\ldots,60\}$) be one of the $(4,5)$-groups given by Tables~\ref{table:simple45-1}--\ref{table:simple45-2}.
\begin{itemize}
\item If $k \in \{1,\ldots,32\} \cup \{39,\ldots,54\}$, then $\Gamma_{4,5,k}^+$ is simple.
\item If $k \in \{33,\ldots,38\} \cup \{55,\ldots,60\}$, then $\Gamma_{4,5,k}^+$ has a simple subgroup of index~$2$.
\end{itemize}
Moreover, all groups $\Gamma_{4,5,k}^+$ are pairwise non-isomorphic.
\end{theorem}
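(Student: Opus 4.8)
The plan is to mirror the structure of the proof already given for Theorem~\ref{maintheorem:simple66-45}(i), adapting it to the $(4,5)$-setting. First I would recall the global setup: each $\Gamma_{4,5,k}$ is an irreducible $(4,5)$-group satisfying condition $(**)$, so by Theorem~\ref{theorem:Trofimov} both closures $H_t = \overline{\proj_t(\Gamma_{4,5,k})}$ are $2$-transitive on $\partial T_t$, the NST applies, and $\Gamma_{4,5,k}$ is residually finite or virtually simple. Since $\Gamma_{4,5,k}$ contains $\Gamma_{3,3}$ (or its mirror), which by Proposition~\ref{proposition:33nonrf} is not residually finite, neither is $\Gamma_{4,5,k}$; hence $\Gamma_{4,5,k}$ is virtually simple, i.e.\ $\Gamma_{4,5,k}^{(\infty)}$ has finite index and is simple.

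Next I would pin down the index $[\Gamma_{4,5,k}:\Gamma_{4,5,k}^{(\infty)}]$. The key observation, exactly as in \S\ref{subsection:66}, is that $\Gamma_{4,5,k}^{(\infty)}$ is the smallest normal subgroup of $\Gamma_{4,5,k}$ containing a fixed nontrivial element of $\Gamma_{3,3}^{(\infty)}$, namely $r_1 = [B_2(B_1B_3)^2B_2, B_1B_3]$ or $r_2 = [B_2(B_1B_3)^2B_2, B_1B_3A_2]$ (or their mirrors when $k \geq 29$); indeed $\Gamma_{3,3}^{(\infty)} \leq \Gamma_{4,5,k}^{(\infty)}$, so the normal closure of $r_i$ in $\Gamma_{4,5,k}$ lies in $\Gamma_{4,5,k}^{(\infty)}$, and since the quotient by that normal closure is the finite group $Q_i$ (presented by adding the relator $r_i$), this normal closure has finite index and hence \emph{is} $\Gamma_{4,5,k}^{(\infty)}$ by just-infiniteness. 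Thus $[\Gamma_{4,5,k}:\Gamma_{4,5,k}^{(\infty)}] = \max(|Q_1|,|Q_2|)$, a GAP computation. The tables record the outcome: the index is $4$ for $k \in \{1,\ldots,32\}\cup\{39,\ldots,54\}$, whence $\Gamma_{4,5,k}^{(\infty)}=\Gamma_{4,5,k}^+$ (since $[\Gamma_{4,5,k}:\Gamma_{4,5,k}^+]=4$ always), and it is $8$ for $k \in \{33,\ldots,38\}\cup\{55,\ldots,60\}$, so $\Gamma_{4,5,k}^{(\infty)}$ is an index-$2$ subgroup of $\Gamma_{4,5,k}^+$; in all cases $\Gamma_{4,5,k}^+$ is simple or has a simple subgroup of index~$2$.

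For the pairwise non-isomorphism I would argue as in the $(6,6)$ case: a computer check shows $\Aut(X_{\Gamma_{4,5,k}^+}) \cong \C_2 \times \C_2$ for every $k$, so there is a unique good $\C_2\times\C_2$-action on the $(4,5)$-complex $X_{\Gamma_{4,5,k}^+}$, meaning $\Gamma_{4,5,k}$ is the only $(4,5)$-group with type-preserving subgroup $\Gamma_{4,5,k}^+$. Hence the groups $\Gamma_{4,5,k}^+$ are pairwise non-conjugate in $\Aut(T_1\times T_2)$ (distinct $(4,5)$-groups up to equivalence give distinct type-preserving subgroups up to conjugacy), and then pairwise non-isomorphic by~\cite[Corollary~1.1.22]{BMZ}. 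For the simple groups $\Gamma_{4,5,k}^{(\infty)}$ one notes that when $\Gamma_{4,5,k}^{(\infty)}=\Gamma_{4,5,k}^+$ this is already covered, and for the remaining $k$ (index~$8$) one can separate them from the others and from each other by comparing the closures of the projections (which are recorded in the tables and are invariants of the conjugacy class in $\Aut(T_1\times T_2)$ by Theorem~\ref{theorem:Radusimple}), again followed by~\cite[Theorem~1.4.1]{BMZ}.

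The main obstacle is not conceptual but computational and bookkeeping: one must actually verify, via GAP, that each of the $60$ listed data satisfies $(**)$ (local actions containing $\mathrm{PSL}(2,3)$ and $\mathrm{PSL}(2,4)$, and containment of $\Gamma_{3,3}$ or its mirror), compute the two finite quotients $Q_1,Q_2$ to read off the index, and confirm $\Aut(X_{\Gamma_{4,5,k}^+})\cong\C_2\times\C_2$ together with the projection data $H_1,H_2$; and one must be careful that the non-residual-finiteness element imported from $\Gamma_{3,3}$ (or its mirror) genuinely maps to a nontrivial element of $\Gamma_{4,5,k}^{(\infty)}$, which is automatic here since the embedding $\Gamma_{3,3}\hookrightarrow\Gamma_{4,5,k}$ is injective (the defining squares of $\Gamma_{3,3}$ are a subset of those of $\Gamma_{4,5,k}$). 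I would therefore present the proof as a reference to the discussion and tables, exactly parallel to the proof of part~(i).
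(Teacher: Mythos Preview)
Your approach matches the paper's essentially verbatim: the paper's proof is literally ``See the discussion above'', and that discussion is precisely what you have reconstructed---condition $(**)$ plus Trofimov gives boundary-$2$-transitivity, the NST plus the embedded non-residually-finite $\Gamma_{3,3}$ forces virtual simplicity, the index $[\Gamma_{4,5,k}:\Gamma_{4,5,k}^{(\infty)}]$ is read off from the finite quotients $Q_1,Q_2$ obtained by killing $r_1,r_2$ (or their mirrors), and the pairwise non-isomorphism of the $\Gamma_{4,5,k}^+$ follows from $\Aut(X_{\Gamma_{4,5,k}^+})\cong\C_2\times\C_2$ together with \cite[Corollary~1.1.22]{BMZ}.

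One remark: your final sentence, separating the simple subgroups $\Gamma_{4,5,k}^{(\infty)}$ for the index-$8$ cases via the closures of the projections, goes beyond what the theorem asserts (it only claims non-isomorphism of the $\Gamma_{4,5,k}^+$, not of the $\Gamma_{4,5,k}^{(\infty)}$) and does not actually work as stated. The tables here record only the \emph{local} actions $\underline{H_1}(v_1)$ and $\underline{H_2}(v_2)$, not the full closures $H_t$; the classification in $\mathcal{G}'_{(i)}$ and the machinery of \S\ref{section:projections} require $d_t\geq 6$ even, which fails for both factors here. Several of the index-$8$ groups share identical local data (e.g.\ $\Gamma_{4,5,33}$ and $\Gamma_{4,5,34}$ both have $(\Sym(4),\Sym(5))$), so this invariant cannot distinguish them. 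Simply drop that sentence; the theorem does not ask for it, and the paper does not claim it.
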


\begin{proof}
See the discussion above.
\end{proof}

\begin{corollary}
For each $k \in \{1,\ldots,32\} \cup \{39,\ldots,54\}$, there exist two injections $F_{11}\hookrightarrow F_3$ of free groups such that the simple group $\Gamma_{4,5,k}^+$ is isomorphic to the amalgamated free product $F_3 \ast_{F_{11}} F_3$.
\end{corollary}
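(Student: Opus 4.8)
The plan is to exploit the decomposition of the simple group $\Gamma_{4,5,k}^+$ that comes from one of the two tree actions, exactly as in the Burger--Mozes and Rattaggi arguments recalled in the introduction. First I would observe that for $k \in \{1,\ldots,32\} \cup \{39,\ldots,54\}$ we have $\Gamma_{4,5,k}^{(\infty)} = \Gamma_{4,5,k}^+$, which is the index~$4$ type-preserving subgroup. The key is that $\Gamma_{4,5,k}^+$ acts on $T_1 \times T_2$ preserving both types, hence it acts on $T_2$ (the $5$-regular tree) with quotient a single edge: indeed $\Gamma_{4,5,k}$ acts simply transitively on vertices of $T_1 \times T_2$, so $\Gamma_{4,5,k}^+$ is transitive on each of the four vertex-types, and when we project to $T_2$ and pass to the type-preserving subgroup we obtain a group acting on $T_2$ transitively on each of the two vertex classes and freely on edges, with quotient graph a single edge. (Here I use that the stabilizer in $\Gamma_{4,5,k}^+$ of a vertex of $T_1$ is a subgroup acting on $T_2$, and that the whole $\Gamma_{4,5,k}^+$ is generated by two adjacent such vertex stabilizers together with an edge stabilizer.)

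The second step is to identify the vertex and edge stabilizers for this edge of quotient. By Bass--Serre theory (see~\cite{Serre}), an action of a group on a tree with quotient a single edge and without edge inversions exhibits the group as an amalgamated free product $G_1 \ast_{G_e} G_2$, where $G_1, G_2$ are the stabilizers of the two endpoints of an edge and $G_e$ the stabilizer of that edge. I would take $v_1, v_1'$ adjacent vertices of $T_1$ and set $G_1 = \Gamma_{4,5,k}^+(v_1)$, $G_2 = \Gamma_{4,5,k}^+(v_1')$, $G_e = \Gamma_{4,5,k}^+(v_1) \cap \Gamma_{4,5,k}^+(v_1')$ (these are the stabilizers in the $T_1$-action). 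Each $G_i$ is the type-preserving part of a vertex stabilizer $\Gamma_{4,5,k}(v_1)$, which is a cocompact lattice in $\Aut(T_2)$ acting freely on $T_2$ (torsion-freeness here follows since a type-preserving group acting freely on the vertices of a product of trees is torsion-free by an argument like Lemma~\ref{lemma:torsionfree}, applied to the second factor), hence $G_i$ is a finitely generated free group; its rank is computed from an Euler-characteristic count, $\mathrm{rk}(G_i) = 1 + (\text{number of edges in the quotient graph of }G_i \acts T_2 - \text{number of vertices}) = 1 + \frac{d_2}{2}\cdot[\text{index}] - [\ldots]$. Concretely, $\Gamma_{4,5,k}(v_1)$ is a free group of rank $d_2/2 \cdot \ldots$; the relevant bookkeeping (using $[\Gamma_{4,5,k} : \Gamma_{4,5,k}^+] = 4$ and the simple-transitivity on vertices) yields that $G_i$ has rank~$3$ and $G_e$ has rank~$11$. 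I would verify the ranks by the formula $\mathrm{rk} = 1 - \chi$, where $\chi$ of the quotient graph equals $V - E$, with $V$ and $E$ read off from how $G_i$ and $G_e$ act on $T_2$; equivalently, one reads this directly off the presentation in Corollary~\ref{maincorollary:presentations}(ii), which already displays such a decomposition with generators $x_1,x_2,x_3$ and $y_1,y_2,y_3$ and eleven amalgamation relations.

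The main obstacle is getting the ranks of the vertex and edge groups exactly right and uniformly over all the relevant $k$: one must be careful that the action of $\Gamma_{4,5,k}^+$ on $T_1$ really has a single edge as quotient and no inversions (this uses that $\Gamma_{4,5,k}^+$ is type-preserving on $T_1$, so it cannot invert an edge), and that the edge group $G_e$ has index $d_1 = 4$ in each $G_i$ (because $\Gamma_{4,5,k}(v_1)$, projected to $\Aut(T_1)$, is transitive on the $d_1 = 4$ neighbours of $v_1$, hence its type-preserving part is too once we remember the bipartite structure) — wait, more precisely $[G_i : G_e]$ should be $d_1 = 4$ since the $T_1$-vertex stabilizer acts on the $4$ edges at $v_1$; then $\mathrm{rk}(G_e) - 1 = 4(\mathrm{rk}(G_i) - 1)$ gives $\mathrm{rk}(G_e) = 1 + 4 \cdot 2 = 9$... so I would instead recompute: the Nielsen--Schreier index formula $\mathrm{rk}(G_e) - 1 = [G_i : G_e](\mathrm{rk}(G_i) - 1)$ with $\mathrm{rk}(G_i) = 3$ forces $[G_i:G_e] = \frac{11-1}{3-1} = 5 = d_2$, so in fact the amalgamation is along the $T_2$-edge stabilizer, i.e.\ the decomposition to use is the one coming from the $T_1$-action but with $G_i = \Gamma_{4,5,k}^+(v_1)$ of rank $3$ and $G_e$ of rank $11$, the index being $[G_i : G_e] = 5$. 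Sorting out which tree gives which index ($d_1$ versus $d_2$) and confirming $\mathrm{rk}(G_i) = 3$, $\mathrm{rk}(G_e) = 11$ against the explicit presentation of Corollary~\ref{maincorollary:presentations}(ii) — where $F_3 = \langle x_1,x_2,x_3\rangle = \langle y_1,y_2,y_3\rangle$ and the $11$ relators cut out an $F_{11}$ — is the delicate computational point; once the counts check out, the conclusion $\Gamma_{4,5,k}^+ \cong F_3 \ast_{F_{11}} F_3$ is immediate from Bass--Serre theory, and simplicity is already established in Theorem~\ref{maintheorem:simple66-45}(ii).
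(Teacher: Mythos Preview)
Your overall strategy is exactly the paper's: apply Bass--Serre theory to the action of $\Gamma_{4,5,k}^+$ on one of the two trees, identify the two vertex stabilizers as free groups, and compute the rank of the edge stabilizer by Nielsen--Schreier. The confusion you flag at the end, however, is a real error and not just bookkeeping: you have the two trees swapped throughout. For the Bass--Serre decomposition coming from the action on $T_2$ (the $5$-regular tree), the vertex groups are stabilizers of $T_2$-vertices, namely $G=\Gamma_{4,5,k}^+(v_2)$ and $G'=\Gamma_{4,5,k}^+(v_2')$ for adjacent $v_2,v_2'\in V(T_2)$; these are the groups that act freely on $T_1$ and are free of rank $d_1-1=3$. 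Concretely, $\Gamma_{4,5,k}(v_2)=\langle A_1,A_2,A_3,A_4\rangle$ with each $A_i$ of order~$2$ (since $\tau_1=4$), so this is the free Coxeter group $W_4$, and its index-$2$ type-preserving subgroup is free on $A_1A_2,\,A_1A_3,\,A_1A_4$. The edge group $G\cap G'$ then has index $d_2=5$ in each $G,G'$, and Nielsen--Schreier gives rank $1+5(3-1)=11$.

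Your choice $G_i=\Gamma_{4,5,k}^+(v_1)$ with $v_1\in V(T_1)$ gives instead a lattice in $\Aut(T_2)$: that group is the type-preserving part of $W_5=\langle B_1,\dots,B_5\rangle$, hence free of rank $d_2-1=4$, and the edge group (for an edge of $T_1$) has index $d_1=4$, yielding an amalgam $F_4 \ast_{F_{13}} F_4$. So your claimed rank $\mathrm{rk}(G_i)=3$ is not compatible with the stabilizers you wrote down, and the sentence ``the decomposition to use is the one coming from the $T_1$-action but with \dots\ index $5$'' cannot be made coherent. Once you switch to $T_2$-vertex stabilizers, the proof is a three-line computation, exactly as in the paper.
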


\begin{proof}
Recall that $\Gamma_{4,5,k}(v_2) = \langle A_1, A_2, A_3, A_4 \rangle$, where $A_1$, $A_2$, $A_3$ and $A_4$ are order~$2$ elements sending $v_1$ to its four neighbours in $T_1$. Hence, the index~$2$ subgroup $G = \Gamma_{4,5,k}^+(v_2)$ is a free group of rank~$3$, on the $3$ generators $A_1A_2$, $A_1A_3$ and $A_1A_4$. If $v'_2$ is a vertex adjacent to $v_2$ in $T_2$, then $G' = \Gamma_{4,5,k}^+(v'_2)$ is also isomorphic to $F_3$. Moreover, these two point stabilizers $G$ and $G'$ generate $\Gamma_{4,5,k}^+$ so that $\Gamma_{4,5,k}^+ = G \ast_{G \cap G'} G'$. The subgroup $G \cap G'$ has index~$5$ in both $G$ and $G'$, so $G \cap G'$ is free of rank $1 + 5(3-1) = 11$ by the Nielsen-Schreier formula.
\end{proof}

\begin{proof}[Proof of Corollary~\ref{maincorollary:presentations}(ii)]
\begin{figure}[b!]
\centering
\begin{pspicture*}(-0.2,-2.1)(11.6,1.4)
\fontsize{10pt}{10pt}\selectfont
\psset{unit=1.2cm}

\pspolygon(0,0)(1,0)(1,1)(0,1)

\pspolygon[fillstyle=solid,fillcolor=white](0.4,0)(0.5,0.1)(0.6,0)(0.5,-0.1)
\pspolygon[fillstyle=solid,fillcolor=white](0.4,1)(0.5,1.1)(0.6,1)(0.5,0.9)
\pspolygon[fillstyle=solid,fillcolor=black](0,0.4)(0.1,0.5)(0,0.6)(-0.1,0.5)
\pspolygon[fillstyle=solid,fillcolor=black](1,0.4)(1.1,0.5)(1,0.6)(0.9,0.5)

\pspolygon(1.7,0)(2.7,0)(2.7,1)(1.7,1)

\pspolygon[fillstyle=solid,fillcolor=white](2.1,0)(2.2,0.1)(2.3,0)(2.2,-0.1)
\pspolygon[fillstyle=solid,fillcolor=white](2.1,1)(2.2,1.1)(2.3,1)(2.2,0.9)
\pspolygon[fillstyle=solid,fillcolor=black](1.7,0.525)(1.8,0.625)(1.7,0.725)(1.6,0.625)
\pspolygon[fillstyle=solid,fillcolor=black](1.7,0.475)(1.8,0.375)(1.7,0.275)(1.6,0.375)
\pspolygon[fillstyle=solid,fillcolor=black](2.7,0.525)(2.8,0.625)(2.7,0.725)(2.6,0.625)
\pspolygon[fillstyle=solid,fillcolor=black](2.7,0.475)(2.8,0.375)(2.7,0.275)(2.6,0.375)

\pspolygon(3.4,0)(4.4,0)(4.4,1)(3.4,1)

\pspolygon[fillstyle=solid,fillcolor=white](3.8,0)(3.9,0.1)(4,0)(3.9,-0.1)
\pspolygon[fillstyle=solid,fillcolor=white](3.925,1)(4.025,1.1)(4.125,1)(4.025,0.9)
\pspolygon[fillstyle=solid,fillcolor=white](3.875,1)(3.775,1.1)(3.675,1)(3.775,0.9)
\pspolygon[fillstyle=solid,fillcolor=black](4.4,0.4)(4.5,0.5)(4.4,0.6)(4.3,0.5)
\pspolygon[fillstyle=solid,fillcolor=black](4.4,0.625)(4.5,0.725)(4.4,0.825)(4.3,0.725)
\pspolygon[fillstyle=solid,fillcolor=black](4.4,0.375)(4.5,0.275)(4.4,0.175)(4.3,0.275)
\pspolygon[fillstyle=solid,fillcolor=black](3.4,0.4)(3.5,0.5)(3.4,0.6)(3.3,0.5)
\pspolygon[fillstyle=solid,fillcolor=black](3.4,0.625)(3.5,0.725)(3.4,0.825)(3.3,0.725)
\pspolygon[fillstyle=solid,fillcolor=black](3.4,0.375)(3.5,0.275)(3.4,0.175)(3.3,0.275)

\pspolygon(5.1,0)(6.1,0)(6.1,1)(5.1,1)

\pspolygon[fillstyle=solid,fillcolor=white](5.625,0)(5.725,0.1)(5.825,0)(5.725,-0.1)
\pspolygon[fillstyle=solid,fillcolor=white](5.575,0)(5.475,0.1)(5.375,0)(5.475,-0.1)
\pspolygon[fillstyle=solid,fillcolor=white](5.625,1)(5.725,1.1)(5.825,1)(5.725,0.9)
\pspolygon[fillstyle=solid,fillcolor=white](5.575,1)(5.475,1.1)(5.375,1)(5.475,0.9)
\pspolygon[fillstyle=solid,fillcolor=black](5.1,0.4)(5.2,0.5)(5.1,0.6)(5,0.5)
\pspolygon[fillstyle=solid,fillcolor=black](6.1,0.4)(6.2,0.5)(6.1,0.6)(6,0.5)

\pspolygon(6.8,0)(7.8,0)(7.8,1)(6.8,1)

\pspolygon[fillstyle=solid,fillcolor=white](7.325,0)(7.425,0.1)(7.525,0)(7.425,-0.1)
\pspolygon[fillstyle=solid,fillcolor=white](7.275,0)(7.175,0.1)(7.075,0)(7.175,-0.1)
\pspolygon[fillstyle=solid,fillcolor=white](7.2,1)(7.3,1.1)(7.4,1)(7.3,0.9)
\pspolygon[fillstyle=solid,fillcolor=white](7.175,1)(7.075,1.1)(6.975,1)(7.075,0.9)
\pspolygon[fillstyle=solid,fillcolor=white](7.425,1)(7.525,1.1)(7.625,1)(7.525,0.9)
\pspolygon[fillstyle=solid,fillcolor=black](6.8,0.525)(6.9,0.625)(6.8,0.725)(6.7,0.625)
\pspolygon[fillstyle=solid,fillcolor=black](6.8,0.475)(6.9,0.375)(6.8,0.275)(6.7,0.375)
\pspolygon[fillstyle=solid,fillcolor=black](7.8,0.525)(7.9,0.625)(7.8,0.725)(7.7,0.625)
\pspolygon[fillstyle=solid,fillcolor=black](7.8,0.475)(7.9,0.375)(7.8,0.275)(7.7,0.375)

\pspolygon(8.5,0)(9.5,0)(9.5,1)(8.5,1)

\pspolygon[fillstyle=solid,fillcolor=white](8.9,0)(9,0.1)(9.1,0)(9,-0.1)
\pspolygon[fillstyle=solid,fillcolor=white](8.875,0)(8.775,0.1)(8.675,0)(8.775,-0.1)
\pspolygon[fillstyle=solid,fillcolor=white](9.125,0)(9.225,0.1)(9.325,0)(9.225,-0.1)
\pspolygon[fillstyle=solid,fillcolor=white](8.9,1)(9,1.1)(9.1,1)(9,0.9)
\pspolygon[fillstyle=solid,fillcolor=white](8.875,1)(8.775,1.1)(8.675,1)(8.775,0.9)
\pspolygon[fillstyle=solid,fillcolor=white](9.125,1)(9.225,1.1)(9.325,1)(9.225,0.9)
\pspolygon[fillstyle=solid,fillcolor=black](9.5,0.4)(9.6,0.5)(9.5,0.6)(9.4,0.5)
\pspolygon[fillstyle=solid,fillcolor=black](8.5,0.4)(8.6,0.5)(8.5,0.6)(8.4,0.5)
\pspolygon[fillstyle=solid,fillcolor=black](8.5,0.625)(8.6,0.725)(8.5,0.825)(8.4,0.725)
\pspolygon[fillstyle=solid,fillcolor=black](8.5,0.375)(8.6,0.275)(8.5,0.175)(8.4,0.275)


\pspolygon(0.85,-1.5)(1.85,-1.5)(1.85,-0.5)(0.85,-0.5)

\pspolygon[fillstyle=solid,fillcolor=white](1.25,-1.5)(1.35,-1.4)(1.45,-1.5)(1.35,-1.6)
\pspolygon[fillstyle=solid,fillcolor=white](1.25,-0.5)(1.35,-0.4)(1.45,-0.5)(1.35,-0.6)
\pspolygon[fillstyle=solid,fillcolor=black](1.85,-1.325)(1.95,-1.225)(1.85,-1.125)(1.75,-1.225)
\pspolygon[fillstyle=solid,fillcolor=black](1.85,-0.675)(1.95,-0.775)(1.85,-0.875)(1.75,-0.775)
\rput(1.99,-1){$4$}
\pspolygon[fillstyle=solid,fillcolor=black](0.85,-1.325)(0.95,-1.225)(0.85,-1.125)(0.75,-1.225)
\pspolygon[fillstyle=solid,fillcolor=black](0.85,-0.675)(0.95,-0.775)(0.85,-0.875)(0.75,-0.775)
\rput(0.7,-1){$4$}

\pspolygon(2.55,-1.5)(3.55,-1.5)(3.55,-0.5)(2.55,-0.5)

\pspolygon[fillstyle=solid,fillcolor=white](2.95,-1.5)(3.05,-1.4)(3.15,-1.5)(3.05,-1.6)
\pspolygon[fillstyle=solid,fillcolor=white](3.175,-0.5)(3.275,-0.4)(3.375,-0.5)(3.275,-0.6)
\pspolygon[fillstyle=solid,fillcolor=white](2.925,-0.5)(2.825,-0.4)(2.725,-0.5)(2.825,-0.6)
\rput(3.05,-0.35){$4$}
\pspolygon[fillstyle=solid,fillcolor=black](3.55,-1.325)(3.65,-1.225)(3.55,-1.125)(3.45,-1.225)
\pspolygon[fillstyle=solid,fillcolor=black](3.55,-0.675)(3.65,-0.775)(3.55,-0.875)(3.45,-0.775)
\rput(3.69,-1){$5$}
\pspolygon[fillstyle=solid,fillcolor=black](2.55,-1.325)(2.65,-1.225)(2.55,-1.125)(2.45,-1.225)
\pspolygon[fillstyle=solid,fillcolor=black](2.55,-0.675)(2.65,-0.775)(2.55,-0.875)(2.45,-0.775)
\rput(2.4,-1){$5$}

\pspolygon(4.25,-1.5)(5.25,-1.5)(5.25,-0.5)(4.25,-0.5)

\pspolygon[fillstyle=solid,fillcolor=white](4.525,-1.5)(4.625,-1.4)(4.725,-1.5)(4.625,-1.6)
\pspolygon[fillstyle=solid,fillcolor=white](4.775,-1.5)(4.875,-1.4)(4.975,-1.5)(4.875,-1.6)
\pspolygon[fillstyle=solid,fillcolor=white](4.65,-0.5)(4.75,-0.4)(4.85,-0.5)(4.75,-0.6)
\pspolygon[fillstyle=solid,fillcolor=white](4.875,-0.5)(4.975,-0.4)(5.075,-0.5)(4.975,-0.6)
\pspolygon[fillstyle=solid,fillcolor=white](4.625,-0.5)(4.525,-0.4)(4.425,-0.5)(4.525,-0.6)
\pspolygon[fillstyle=solid,fillcolor=black](5.25,-1.325)(5.35,-1.225)(5.25,-1.125)(5.15,-1.225)
\pspolygon[fillstyle=solid,fillcolor=black](5.25,-0.675)(5.35,-0.775)(5.25,-0.875)(5.15,-0.775)
\rput(5.39,-1){$4$}
\pspolygon[fillstyle=solid,fillcolor=black](4.25,-1.325)(4.35,-1.225)(4.25,-1.125)(4.15,-1.225)
\pspolygon[fillstyle=solid,fillcolor=black](4.25,-0.675)(4.35,-0.775)(4.25,-0.875)(4.15,-0.775)
\rput(4.1,-1){$5$}

\pspolygon(5.95,-1.5)(6.95,-1.5)(6.95,-0.5)(5.95,-0.5)

\pspolygon[fillstyle=solid,fillcolor=white](6.575,-1.5)(6.675,-1.4)(6.775,-1.5)(6.675,-1.6)
\pspolygon[fillstyle=solid,fillcolor=white](6.325,-1.5)(6.225,-1.4)(6.125,-1.5)(6.225,-1.6)
\rput(6.45,-1.65){$4$}
\pspolygon[fillstyle=solid,fillcolor=white](6.575,-0.5)(6.675,-0.4)(6.775,-0.5)(6.675,-0.6)
\pspolygon[fillstyle=solid,fillcolor=white](6.325,-0.5)(6.225,-0.4)(6.125,-0.5)(6.225,-0.6)
\rput(6.45,-0.35){$4$}
\pspolygon[fillstyle=solid,fillcolor=black](6.95,-0.9)(7.05,-1)(6.95,-1.1)(6.85,-1)
\pspolygon[fillstyle=solid,fillcolor=black](5.95,-1.225)(6.05,-1.125)(5.95,-1.025)(5.85,-1.125)
\pspolygon[fillstyle=solid,fillcolor=black](5.95,-0.775)(6.05,-0.875)(5.95,-0.975)(5.85,-0.875)

\pspolygon(7.65,-1.5)(8.65,-1.5)(8.65,-0.5)(7.65,-0.5)

\pspolygon[fillstyle=solid,fillcolor=white](8.275,-1.5)(8.375,-1.4)(8.475,-1.5)(8.375,-1.6)
\pspolygon[fillstyle=solid,fillcolor=white](8.025,-1.5)(7.925,-1.4)(7.825,-1.5)(7.925,-1.6)
\rput(8.15,-1.65){$4$}
\pspolygon[fillstyle=solid,fillcolor=white](8.275,-0.5)(8.375,-0.4)(8.475,-0.5)(8.375,-0.6)
\pspolygon[fillstyle=solid,fillcolor=white](8.025,-0.5)(7.925,-0.4)(7.825,-0.5)(7.925,-0.6)
\rput(8.15,-0.35){$4$}
\pspolygon[fillstyle=solid,fillcolor=black](8.65,-0.675)(8.75,-0.775)(8.65,-0.875)(8.55,-0.775)
\pspolygon[fillstyle=solid,fillcolor=black](8.65,-0.9)(8.75,-1)(8.65,-1.1)(8.55,-1)
\pspolygon[fillstyle=solid,fillcolor=black](8.65,-1.125)(8.75,-1.225)(8.65,-1.325)(8.55,-1.225)
\pspolygon[fillstyle=solid,fillcolor=black](7.65,-0.675)(7.75,-0.775)(7.65,-0.875)(7.55,-0.775)
\pspolygon[fillstyle=solid,fillcolor=black](7.65,-1.125)(7.75,-1.225)(7.65,-1.325)(7.55,-1.225)
\rput(7.5,-1){$4$}
\end{pspicture*}
\caption{The $(4,5)$-group $\Gamma_{4,5,9}$}\label{picture:459}
\end{figure}
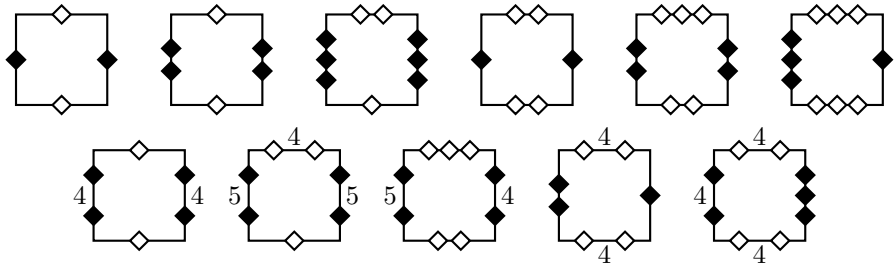
This is the presentation of $\Gamma_{4,5,9}^+$ (see Figure~\ref{picture:459}). In order to find this presentation, we write $x_1 = A_1A_2$, $x_2 = A_1A_3$ and $x_3 = A_1A_4$ so that $G = \Gamma_{4,5,9}^+(v_2)$ is freely generated by $x_1$, $x_2$ and $x_3$. In the same manner, the stabilizer $G' = \Gamma_{4,5,9}^+(B_1(v_2))$ of the vertex $B_1(v_2)$ (which is adjacent to $v_2$ in $T_2$) is freely generated by $y_1 = B_1A_1A_2B_1$, $y_2 = B_1A_1A_3B_1$ and $y_3 = B_1A_1A_4B_1$. The subgroup $G \cap G'$ of $G$ has index~$5$ in $G$, and the Reidemeister--Schreier method can be used to find $11$ generators of this subgroup (which is isomorphic to $F_{11}$). After some computations we came up with the $11$ elements of $G = \langle x_1, x_2, x_3\rangle$ written on the left-hand sides of the $11$ relations in the presentation. Those $11$ elements also belong to $G' = \langle y_1, y_2, y_3 \rangle$, and it then suffices to write them in terms of the $y_i$'s. For instance, for $x_2^2 \in G$ we have 
\begin{align*}
x_2^2 &= (A_1A_3)^2\\
&= B_1(B_1(A_1A_3)^2B_1)B_1\\
&= B_1(A_1A_3A_2A_3)B_1\\
&= B_1(A_1A_3A_2A_1A_1A_3)B_1\\
&= B_1(X_2X_1^{-1}X_2)B_1\\
&= y_2y_1^{-1}y_2
\end{align*}
We used the geometric squares defining $\Gamma_{4,5,9}$ to find the equality $B_1(A_1A_3)^2B_1 = A_1A_3A_2A_3$.
\end{proof}

\begin{landscape}
\begin{table}
\scriptsize
\centering
\begin{tabular}{|c|l|c|c|c|c|}
\hline
Name &
 Squares: $A_1B_1A_1B_1$, $A_1B_2A_1B_2$, $A_1B_3A_2B_3$, $A_2B_1A_2B_1$, $A_2B_2A_3B_2$, $A_3B_1A_3B_3$ +
 & $\underline{H_1}(v_1)$ & $\underline{H_2}(v_2)$ & $[\Gamma : \Gamma^{(\infty)}]$ & $\Aut(X_{\Gamma^+})$\\
\hline
$\Gamma_{4,5,1}$ & $ A_1 B_4 A_1 B_4 $, $ A_1 B_5 A_1 B_5 $, $ A_2 B_4 A_2 B_5 $, $ A_3 B_4 A_3 B_4 $, $ A_3 B_5 A_4 B_5 $, $ A_4 B_1 A_4 B_2 $, $ A_4 B_3 A_4 B_4 $ & $\Sym(4)$ & $\Sym(5)$ & 4 & $\C_2 \times \C_2$ \\
$\Gamma_{4,5,2}$ & $ A_1 B_4 A_1 B_4 $, $ A_1 B_5 A_1 B_5 $, $ A_2 B_4 A_2 B_5 $, $ A_3 B_4 A_3 B_4 $, $ A_3 B_5 A_4 B_5 $, $ A_4 B_1 A_4 B_4 $, $ A_4 B_2 A_4 B_3 $ & $\Sym(4)$ & $\Sym(5)$ & 4 & $\C_2 \times \C_2$ \\
$\Gamma_{4,5,3}$ & $ A_1 B_4 A_1 B_4 $, $ A_1 B_5 A_3 B_5 $, $ A_2 B_4 A_2 B_5 $, $ A_3 B_4 A_4 B_4 $, $ A_4 B_1 A_4 B_2 $, $ A_4 B_3 A_4 B_5 $ & $\Sym(4)$ & $\Sym(5)$ & 4 & $\C_2 \times \C_2$ \\
$\Gamma_{4,5,4}$ & $ A_1 B_4 A_1 B_4 $, $ A_1 B_5 A_3 B_5 $, $ A_2 B_4 A_2 B_5 $, $ A_3 B_4 A_4 B_4 $, $ A_4 B_1 A_4 B_5 $, $ A_4 B_2 A_4 B_3 $ & $\Sym(4)$ & $\Sym(5)$ & 4 & $\C_2 \times \C_2$ \\
$\Gamma_{4,5,5}$ & $ A_1 B_4 A_1 B_4 $, $ A_1 B_5 A_4 B_5 $, $ A_2 B_4 A_2 B_5 $, $ A_3 B_4 A_3 B_4 $, $ A_3 B_5 A_3 B_5 $, $ A_4 B_1 A_4 B_2 $, $ A_4 B_3 A_4 B_4 $ & $\Sym(4)$ & $\Sym(5)$ & 4 & $\C_2 \times \C_2$ \\
$\Gamma_{4,5,6}$ & $ A_1 B_4 A_1 B_4 $, $ A_1 B_5 A_4 B_5 $, $ A_2 B_4 A_2 B_5 $, $ A_3 B_4 A_3 B_4 $, $ A_3 B_5 A_3 B_5 $, $ A_4 B_1 A_4 B_4 $, $ A_4 B_2 A_4 B_3 $ & $\Sym(4)$ & $\Sym(5)$ & 4 & $\C_2 \times \C_2$ \\
$\Gamma_{4,5,7}$ & $ A_1 B_4 A_1 B_4 $, $ A_1 B_5 A_4 B_5 $, $ A_2 B_4 A_2 B_5 $, $ A_3 B_4 A_3 B_5 $, $ A_4 B_1 A_4 B_2 $, $ A_4 B_3 A_4 B_4 $ & $\Sym(4)$ & $\Sym(5)$ & 4 & $\C_2 \times \C_2$ \\
$\Gamma_{4,5,8}$ & $ A_1 B_4 A_1 B_4 $, $ A_1 B_5 A_4 B_5 $, $ A_2 B_4 A_2 B_5 $, $ A_3 B_4 A_3 B_5 $, $ A_4 B_1 A_4 B_4 $, $ A_4 B_2 A_4 B_3 $ & $\Sym(4)$ & $\Sym(5)$ & 4 & $\C_2 \times \C_2$ \\
$\Gamma_{4,5,9}$ & $ A_1 B_4 A_1 B_4 $, $ A_1 B_5 A_4 B_5 $, $ A_2 B_4 A_3 B_5 $, $ A_4 B_1 A_4 B_2 $, $ A_4 B_3 A_4 B_4 $ & $\Sym(4)$ & $\Sym(5)$ & 4 & $\C_2 \times \C_2$ \\
$\Gamma_{4,5,10}$ & $ A_1 B_4 A_1 B_4 $, $ A_1 B_5 A_4 B_5 $, $ A_2 B_4 A_3 B_5 $, $ A_4 B_1 A_4 B_4 $, $ A_4 B_2 A_4 B_3 $ & $\Sym(4)$ & $\Sym(5)$ & 4 & $\C_2 \times \C_2$ \\
$\Gamma_{4,5,11}$ & $ A_1 B_4 A_3 B_4 $, $ A_1 B_5 A_4 B_5 $, $ A_2 B_4 A_2 B_5 $, $ A_3 B_5 A_3 B_5 $, $ A_4 B_1 A_4 B_2 $, $ A_4 B_3 A_4 B_4 $ & $\Sym(4)$ & $\Sym(5)$ & 4 & $\C_2 \times \C_2$ \\
$\Gamma_{4,5,12}$ & $ A_1 B_4 A_3 B_4 $, $ A_1 B_5 A_4 B_5 $, $ A_2 B_4 A_2 B_5 $, $ A_3 B_5 A_3 B_5 $, $ A_4 B_1 A_4 B_4 $, $ A_4 B_2 A_4 B_3 $ & $\Sym(4)$ & $\Sym(5)$ & 4 & $\C_2 \times \C_2$ \\
$\Gamma_{4,5,13}$ & $ A_1 B_4 A_1 B_5 $, $ A_2 B_4 A_2 B_4 $, $ A_2 B_5 A_2 B_5 $, $ A_3 B_4 A_3 B_4 $, $ A_3 B_5 A_4 B_5 $, $ A_4 B_1 A_4 B_2 $, $ A_4 B_3 A_4 B_4 $ & $\Sym(4)$ & $\Sym(5)$ & 4 & $\C_2 \times \C_2$ \\
$\Gamma_{4,5,14}$ & $ A_1 B_4 A_1 B_5 $, $ A_2 B_4 A_2 B_4 $, $ A_2 B_5 A_2 B_5 $, $ A_3 B_4 A_3 B_4 $, $ A_3 B_5 A_4 B_5 $, $ A_4 B_1 A_4 B_4 $, $ A_4 B_2 A_4 B_3 $ & $\Sym(4)$ & $\Sym(5)$ & 4 & $\C_2 \times \C_2$ \\
$\Gamma_{4,5,15}$ & $ A_1 B_4 A_1 B_5 $, $ A_2 B_4 A_2 B_4 $, $ A_2 B_5 A_3 B_5 $, $ A_3 B_4 A_4 B_4 $, $ A_4 B_1 A_4 B_2 $, $ A_4 B_3 A_4 B_5 $ & $\Sym(4)$ & $\Sym(5)$ & 4 & $\C_2 \times \C_2$ \\
$\Gamma_{4,5,16}$ & $ A_1 B_4 A_1 B_5 $, $ A_2 B_4 A_2 B_4 $, $ A_2 B_5 A_3 B_5 $, $ A_3 B_4 A_4 B_4 $, $ A_4 B_1 A_4 B_5 $, $ A_4 B_2 A_4 B_3 $ & $\Sym(4)$ & $\Sym(5)$ & 4 & $\C_2 \times \C_2$ \\
$\Gamma_{4,5,17}$ & $ A_1 B_4 A_1 B_5 $, $ A_2 B_4 A_2 B_4 $, $ A_2 B_5 A_4 B_5 $, $ A_3 B_4 A_3 B_4 $, $ A_3 B_5 A_3 B_5 $, $ A_4 B_1 A_4 B_2 $, $ A_4 B_3 A_4 B_4 $ & $\Sym(4)$ & $\Sym(5)$ & 4 & $\C_2 \times \C_2$ \\
$\Gamma_{4,5,18}$ & $ A_1 B_4 A_1 B_5 $, $ A_2 B_4 A_2 B_4 $, $ A_2 B_5 A_4 B_5 $, $ A_3 B_4 A_3 B_4 $, $ A_3 B_5 A_3 B_5 $, $ A_4 B_1 A_4 B_4 $, $ A_4 B_2 A_4 B_3 $ & $\Sym(4)$ & $\Sym(5)$ & 4 & $\C_2 \times \C_2$ \\
$\Gamma_{4,5,19}$ & $ A_1 B_4 A_1 B_5 $, $ A_2 B_4 A_2 B_4 $, $ A_2 B_5 A_4 B_5 $, $ A_3 B_4 A_3 B_5 $, $ A_4 B_1 A_4 B_2 $, $ A_4 B_3 A_4 B_4 $ & $\Sym(4)$ & $\Sym(5)$ & 4 & $\C_2 \times \C_2$ \\
$\Gamma_{4,5,20}$ & $ A_1 B_4 A_1 B_5 $, $ A_2 B_4 A_2 B_4 $, $ A_2 B_5 A_4 B_5 $, $ A_3 B_4 A_3 B_5 $, $ A_4 B_1 A_4 B_4 $, $ A_4 B_2 A_4 B_3 $ & $\Sym(4)$ & $\Sym(5)$ & 4 & $\C_2 \times \C_2$ \\
$\Gamma_{4,5,21}$ & $ A_1 B_4 A_1 B_5 $, $ A_2 B_4 A_3 B_4 $, $ A_2 B_5 A_4 B_5 $, $ A_3 B_5 A_3 B_5 $, $ A_4 B_1 A_4 B_2 $, $ A_4 B_3 A_4 B_4 $ & $\Sym(4)$ & $\Sym(5)$ & 4 & $\C_2 \times \C_2$ \\
$\Gamma_{4,5,22}$ & $ A_1 B_4 A_1 B_5 $, $ A_2 B_4 A_3 B_4 $, $ A_2 B_5 A_4 B_5 $, $ A_3 B_5 A_3 B_5 $, $ A_4 B_1 A_4 B_4 $, $ A_4 B_2 A_4 B_3 $ & $\Sym(4)$ & $\Sym(5)$ & 4 & $\C_2 \times \C_2$ \\
$\Gamma_{4,5,23}$ & $ A_1 B_4 A_1 B_5 $, $ A_2 B_4 A_2 B_5 $, $ A_3 B_4 A_3 B_4 $, $ A_3 B_5 A_4 B_5 $, $ A_4 B_1 A_4 B_2 $, $ A_4 B_3 A_4 B_4 $ & $\Sym(4)$ & $\Sym(5)$ & 4 & $\C_2 \times \C_2$ \\
$\Gamma_{4,5,24}$ & $ A_1 B_4 A_1 B_5 $, $ A_2 B_4 A_2 B_5 $, $ A_3 B_4 A_3 B_4 $, $ A_3 B_5 A_4 B_5 $, $ A_4 B_1 A_4 B_4 $, $ A_4 B_2 A_4 B_3 $ & $\Sym(4)$ & $\Sym(5)$ & 4 & $\C_2 \times \C_2$ \\
$\Gamma_{4,5,25}$ & $ A_1 B_4 A_2 B_5 $, $ A_3 B_4 A_3 B_4 $, $ A_3 B_5 A_4 B_5 $, $ A_4 B_1 A_4 B_2 $, $ A_4 B_3 A_4 B_4 $ & $\Sym(4)$ & $\Sym(5)$ & 4 & $\C_2 \times \C_2$ \\
$\Gamma_{4,5,26}$ & $ A_1 B_4 A_2 B_5 $, $ A_3 B_4 A_3 B_4 $, $ A_3 B_5 A_4 B_5 $, $ A_4 B_1 A_4 B_4 $, $ A_4 B_2 A_4 B_3 $ & $\Sym(4)$ & $\Sym(5)$ & 4 & $\C_2 \times \C_2$ \\
$\Gamma_{4,5,27}$ & $ A_1 B_4 A_3 B_5 $, $ A_2 B_4 A_2 B_4 $, $ A_2 B_5 A_4 B_5 $, $ A_4 B_1 A_4 B_2 $, $ A_4 B_3 A_4 B_4 $ & $\Sym(4)$ & $\Sym(5)$ & 4 & $\C_2 \times \C_2$ \\
$\Gamma_{4,5,28}$ & $ A_1 B_4 A_3 B_5 $, $ A_2 B_4 A_2 B_4 $, $ A_2 B_5 A_4 B_5 $, $ A_4 B_1 A_4 B_4 $, $ A_4 B_2 A_4 B_3 $ & $\Sym(4)$ & $\Sym(5)$ & 4 & $\C_2 \times \C_2$ \\
\hline
\end{tabular}
\caption{Some virtually simple $(4,5)$-groups containing $\Gamma_{3,3}$.}\label{table:simple45-1}
\end{table}

\begin{table}
\scriptsize
\centering
\begin{tabular}{|c|l|c|c|c|c|}
\hline
Name &
 Squares: $A_1B_1A_1B_1$, $A_1B_2A_1B_2$, $A_1B_3A_3B_3$, $A_2B_1A_2B_1$, $A_2B_2A_2B_3$, $A_3B_1A_3B_2$ +
 & $\underline{H_1}(v_1)$ & $\underline{H_2}(v_2)$ & $[\Gamma : \Gamma^{(\infty)}]$ & $\Aut(X_{\Gamma^+})$\\
\hline
$\Gamma_{4,5,29}$ & $ A_1 B_4 A_1 B_4 $, $ A_1 B_5 A_2 B_5 $, $ A_2 B_4 A_4 B_4 $, $ A_3 B_4 A_3 B_5 $, $ A_4 B_1 A_4 B_1 $, $ A_4 B_2 A_4 B_2 $, $ A_4 B_3 A_4 B_5 $ & $\Sym(4)$ & $\Sym(5)$ & 4 & $\C_2 \times \C_2$ \\
$\Gamma_{4,5,30}$ & $ A_1 B_4 A_1 B_4 $, $ A_1 B_5 A_2 B_5 $, $ A_2 B_4 A_4 B_4 $, $ A_3 B_4 A_3 B_5 $, $ A_4 B_1 A_4 B_1 $, $ A_4 B_2 A_4 B_5 $, $ A_4 B_3 A_4 B_3 $ & $\Sym(4)$ & $\Sym(5)$ & 4 & $\C_2 \times \C_2$ \\
$\Gamma_{4,5,31}$ & $ A_1 B_4 A_1 B_4 $, $ A_1 B_5 A_2 B_5 $, $ A_2 B_4 A_4 B_4 $, $ A_3 B_4 A_3 B_5 $, $ A_4 B_1 A_4 B_2 $, $ A_4 B_3 A_4 B_5 $ & $\Sym(4)$ & $\Sym(5)$ & 4 & $\C_2 \times \C_2$ \\
$\Gamma_{4,5,32}$ & $ A_1 B_4 A_1 B_4 $, $ A_1 B_5 A_2 B_5 $, $ A_2 B_4 A_4 B_4 $, $ A_3 B_4 A_3 B_5 $, $ A_4 B_1 A_4 B_3 $, $ A_4 B_2 A_4 B_5 $ & $\Sym(4)$ & $\Sym(5)$ & 4 & $\C_2 \times \C_2$ \\
$\Gamma_{4,5,33}$ & $ A_1 B_4 A_1 B_4 $, $ A_1 B_5 A_4 B_5 $, $ A_2 B_4 A_3 B_5 $, $ A_4 B_1 A_4 B_1 $, $ A_4 B_2 A_4 B_2 $, $ A_4 B_3 A_4 B_4 $ & $\Sym(4)$ & $\Sym(5)$ & 8 & $\C_2 \times \C_2$ \\
$\Gamma_{4,5,34}$ & $ A_1 B_4 A_1 B_4 $, $ A_1 B_5 A_4 B_5 $, $ A_2 B_4 A_3 B_5 $, $ A_4 B_1 A_4 B_1 $, $ A_4 B_2 A_4 B_4 $, $ A_4 B_3 A_4 B_3 $ & $\Sym(4)$ & $\Sym(5)$ & 8 & $\C_2 \times \C_2$ \\
$\Gamma_{4,5,35}$ & $ A_1 B_4 A_1 B_4 $, $ A_1 B_5 A_4 B_5 $, $ A_2 B_4 A_3 B_5 $, $ A_4 B_1 A_4 B_2 $, $ A_4 B_3 A_4 B_4 $ & $\Sym(4)$ & $\Alt(5)$ & 8 & $\C_2 \times \C_2$ \\
$\Gamma_{4,5,36}$ & $ A_1 B_4 A_1 B_4 $, $ A_1 B_5 A_4 B_5 $, $ A_2 B_4 A_3 B_5 $, $ A_4 B_1 A_4 B_3 $, $ A_4 B_2 A_4 B_4 $ & $\Sym(4)$ & $\Alt(5)$ & 8 & $\C_2 \times \C_2$ \\
$\Gamma_{4,5,37}$ & $ A_1 B_4 A_1 B_4 $, $ A_1 B_5 A_4 B_5 $, $ A_2 B_4 A_3 B_5 $, $ A_4 B_1 A_4 B_4 $, $ A_4 B_2 A_4 B_2 $, $ A_4 B_3 A_4 B_3 $ & $\Sym(4)$ & $\Sym(5)$ & 8 & $\C_2 \times \C_2$ \\
$\Gamma_{4,5,38}$ & $ A_1 B_4 A_1 B_4 $, $ A_1 B_5 A_4 B_5 $, $ A_2 B_4 A_3 B_5 $, $ A_4 B_1 A_4 B_4 $, $ A_4 B_2 A_4 B_3 $ & $\Sym(4)$ & $\Alt(5)$ & 8 & $\C_2 \times \C_2$ \\
$\Gamma_{4,5,39}$ & $ A_1 B_4 A_2 B_4 $, $ A_1 B_5 A_4 B_5 $, $ A_2 B_5 A_2 B_5 $, $ A_3 B_4 A_3 B_5 $, $ A_4 B_1 A_4 B_1 $, $ A_4 B_2 A_4 B_2 $, $ A_4 B_3 A_4 B_4 $ & $\Sym(4)$ & $\Sym(5)$ & 4 & $\C_2 \times \C_2$ \\
$\Gamma_{4,5,40}$ & $ A_1 B_4 A_2 B_4 $, $ A_1 B_5 A_4 B_5 $, $ A_2 B_5 A_2 B_5 $, $ A_3 B_4 A_3 B_5 $, $ A_4 B_1 A_4 B_1 $, $ A_4 B_2 A_4 B_4 $, $ A_4 B_3 A_4 B_3 $ & $\Sym(4)$ & $\Sym(5)$ & 4 & $\C_2 \times \C_2$ \\
$\Gamma_{4,5,41}$ & $ A_1 B_4 A_2 B_4 $, $ A_1 B_5 A_4 B_5 $, $ A_2 B_5 A_2 B_5 $, $ A_3 B_4 A_3 B_5 $, $ A_4 B_1 A_4 B_2 $, $ A_4 B_3 A_4 B_4 $ & $\Sym(4)$ & $\Sym(5)$ & 4 & $\C_2 \times \C_2$ \\
$\Gamma_{4,5,42}$ & $ A_1 B_4 A_2 B_4 $, $ A_1 B_5 A_4 B_5 $, $ A_2 B_5 A_2 B_5 $, $ A_3 B_4 A_3 B_5 $, $ A_4 B_1 A_4 B_3 $, $ A_4 B_2 A_4 B_4 $ & $\Sym(4)$ & $\Sym(5)$ & 4 & $\C_2 \times \C_2$ \\
$\Gamma_{4,5,43}$ & $ A_1 B_4 A_1 B_5 $, $ A_2 B_4 A_2 B_4 $, $ A_2 B_5 A_3 B_5 $, $ A_3 B_4 A_4 B_4 $, $ A_4 B_1 A_4 B_1 $, $ A_4 B_2 A_4 B_2 $, $ A_4 B_3 A_4 B_5 $ & $\Sym(4)$ & $\Sym(5)$ & 4 & $\C_2 \times \C_2$ \\
$\Gamma_{4,5,44}$ & $ A_1 B_4 A_1 B_5 $, $ A_2 B_4 A_2 B_4 $, $ A_2 B_5 A_3 B_5 $, $ A_3 B_4 A_4 B_4 $, $ A_4 B_1 A_4 B_1 $, $ A_4 B_2 A_4 B_5 $, $ A_4 B_3 A_4 B_3 $ & $\Sym(4)$ & $\Sym(5)$ & 4 & $\C_2 \times \C_2$ \\
$\Gamma_{4,5,45}$ & $ A_1 B_4 A_1 B_5 $, $ A_2 B_4 A_2 B_4 $, $ A_2 B_5 A_3 B_5 $, $ A_3 B_4 A_4 B_4 $, $ A_4 B_1 A_4 B_2 $, $ A_4 B_3 A_4 B_5 $ & $\Sym(4)$ & $\Sym(5)$ & 4 & $\C_2 \times \C_2$ \\
$\Gamma_{4,5,46}$ & $ A_1 B_4 A_1 B_5 $, $ A_2 B_4 A_2 B_4 $, $ A_2 B_5 A_3 B_5 $, $ A_3 B_4 A_4 B_4 $, $ A_4 B_1 A_4 B_3 $, $ A_4 B_2 A_4 B_5 $ & $\Sym(4)$ & $\Sym(5)$ & 4 & $\C_2 \times \C_2$ \\
$\Gamma_{4,5,47}$ & $ A_1 B_4 A_1 B_5 $, $ A_2 B_4 A_2 B_4 $, $ A_2 B_5 A_3 B_5 $, $ A_3 B_4 A_4 B_4 $, $ A_4 B_1 A_4 B_5 $, $ A_4 B_2 A_4 B_2 $, $ A_4 B_3 A_4 B_3 $ & $\Sym(4)$ & $\Sym(5)$ & 4 & $\C_2 \times \C_2$ \\
$\Gamma_{4,5,48}$ & $ A_1 B_4 A_1 B_5 $, $ A_2 B_4 A_2 B_4 $, $ A_2 B_5 A_3 B_5 $, $ A_3 B_4 A_4 B_4 $, $ A_4 B_1 A_4 B_5 $, $ A_4 B_2 A_4 B_3 $ & $\Sym(4)$ & $\Sym(5)$ & 4 & $\C_2 \times \C_2$ \\
$\Gamma_{4,5,49}$ & $ A_1 B_4 A_1 B_5 $, $ A_2 B_4 A_3 B_4 $, $ A_2 B_5 A_4 B_5 $, $ A_3 B_5 A_3 B_5 $, $ A_4 B_1 A_4 B_1 $, $ A_4 B_2 A_4 B_2 $, $ A_4 B_3 A_4 B_4 $ & $\Sym(4)$ & $\Sym(5)$ & 4 & $\C_2 \times \C_2$ \\
$\Gamma_{4,5,50}$ & $ A_1 B_4 A_1 B_5 $, $ A_2 B_4 A_3 B_4 $, $ A_2 B_5 A_4 B_5 $, $ A_3 B_5 A_3 B_5 $, $ A_4 B_1 A_4 B_1 $, $ A_4 B_2 A_4 B_4 $, $ A_4 B_3 A_4 B_3 $ & $\Sym(4)$ & $\Sym(5)$ & 4 & $\C_2 \times \C_2$ \\
$\Gamma_{4,5,51}$ & $ A_1 B_4 A_1 B_5 $, $ A_2 B_4 A_3 B_4 $, $ A_2 B_5 A_4 B_5 $, $ A_3 B_5 A_3 B_5 $, $ A_4 B_1 A_4 B_2 $, $ A_4 B_3 A_4 B_4 $ & $\Sym(4)$ & $\Sym(5)$ & 4 & $\C_2 \times \C_2$ \\
$\Gamma_{4,5,52}$ & $ A_1 B_4 A_1 B_5 $, $ A_2 B_4 A_3 B_4 $, $ A_2 B_5 A_4 B_5 $, $ A_3 B_5 A_3 B_5 $, $ A_4 B_1 A_4 B_3 $, $ A_4 B_2 A_4 B_4 $ & $\Sym(4)$ & $\Sym(5)$ & 4 & $\C_2 \times \C_2$ \\
$\Gamma_{4,5,53}$ & $ A_1 B_4 A_1 B_5 $, $ A_2 B_4 A_3 B_4 $, $ A_2 B_5 A_4 B_5 $, $ A_3 B_5 A_3 B_5 $, $ A_4 B_1 A_4 B_4 $, $ A_4 B_2 A_4 B_2 $, $ A_4 B_3 A_4 B_3 $ & $\Sym(4)$ & $\Sym(5)$ & 4 & $\C_2 \times \C_2$ \\
$\Gamma_{4,5,54}$ & $ A_1 B_4 A_1 B_5 $, $ A_2 B_4 A_3 B_4 $, $ A_2 B_5 A_4 B_5 $, $ A_3 B_5 A_3 B_5 $, $ A_4 B_1 A_4 B_4 $, $ A_4 B_2 A_4 B_3 $ & $\Sym(4)$ & $\Sym(5)$ & 4 & $\C_2 \times \C_2$ \\
$\Gamma_{4,5,55}$ & $ A_1 B_4 A_2 B_5 $, $ A_3 B_4 A_3 B_4 $, $ A_3 B_5 A_4 B_5 $, $ A_4 B_1 A_4 B_1 $, $ A_4 B_2 A_4 B_2 $, $ A_4 B_3 A_4 B_4 $ & $\Sym(4)$ & $\Sym(5)$ & 8 & $\C_2 \times \C_2$ \\
$\Gamma_{4,5,56}$ & $ A_1 B_4 A_2 B_5 $, $ A_3 B_4 A_3 B_4 $, $ A_3 B_5 A_4 B_5 $, $ A_4 B_1 A_4 B_1 $, $ A_4 B_2 A_4 B_4 $, $ A_4 B_3 A_4 B_3 $ & $\Sym(4)$ & $\Sym(5)$ & 8 & $\C_2 \times \C_2$ \\
$\Gamma_{4,5,57}$ & $ A_1 B_4 A_2 B_5 $, $ A_3 B_4 A_3 B_4 $, $ A_3 B_5 A_4 B_5 $, $ A_4 B_1 A_4 B_2 $, $ A_4 B_3 A_4 B_4 $ & $\Sym(4)$ & $\Sym(5)$ & 8 & $\C_2 \times \C_2$ \\
$\Gamma_{4,5,58}$ & $ A_1 B_4 A_2 B_5 $, $ A_3 B_4 A_3 B_4 $, $ A_3 B_5 A_4 B_5 $, $ A_4 B_1 A_4 B_3 $, $ A_4 B_2 A_4 B_4 $ & $\Sym(4)$ & $\Sym(5)$ & 8 & $\C_2 \times \C_2$ \\
$\Gamma_{4,5,59}$ & $ A_1 B_4 A_2 B_5 $, $ A_3 B_4 A_3 B_4 $, $ A_3 B_5 A_4 B_5 $, $ A_4 B_1 A_4 B_4 $, $ A_4 B_2 A_4 B_2 $, $ A_4 B_3 A_4 B_3 $ & $\Sym(4)$ & $\Sym(5)$ & 8 & $\C_2 \times \C_2$ \\
$\Gamma_{4,5,60}$ & $ A_1 B_4 A_2 B_5 $, $ A_3 B_4 A_3 B_4 $, $ A_3 B_5 A_4 B_5 $, $ A_4 B_1 A_4 B_4 $, $ A_4 B_2 A_4 B_3 $ & $\Sym(4)$ & $\Sym(5)$ & 8 & $\C_2 \times \C_2$ \\
\hline
\end{tabular}
\caption{Some virtually simple $(4,5)$-groups containing the mirror of $\Gamma_{3,3}$.}\label{table:simple45-2}
\end{table}
\end{landscape}

\subsection{Virtually simple \texorpdfstring{$(2n,2n+1)$-groups ($n \geq 2$)}{(2n,2n+1)-groups (n >= 2)}}

In \S\ref{subsection:45} we gave a list of virtually simple $(4,5)$-groups $\Gamma_{4,5,k}$. We now construct for each $n \geq 2$ a virtually simple $(2n,2n+1)$-group. For $n = 2$ we take $\Gamma_{4,5} = \Gamma_{4,5,9}$, see Table~\ref{table:simple45-1} and Figure~\ref{picture:459}. For $n \geq 3$ we define $\Gamma_{2n,2n+1}$ as the $(2n,2n+1)$-group whose geometric squares are:
\begin{enumerate}[(1)]
\item the $11$ geometric squares of $\Gamma_{4,5}$;
\item for each $3 \leq k \leq n$, the $3$ geometric squares $A_{2k} B_{2k+1} A_1 B_{2k}$, $A_{2k-1} B_{2k} A_{2k-1} B_1$ and $A_{2k-1} B_{2k+1} A_2 B_{2k+1}$, see Figure~\ref{picture:3more};
\item all geometric squares $A_j B_k A_j B_k$ with $j \in \{1,\ldots,2n\}$ and $k \in \{1,\ldots,2n+1\}$ such that the corner $(A_j,B_k)$ does not already appear in a geometric square of (1) or (2).
\end{enumerate}

\begin{figure}[h!]
\centering
\begin{pspicture*}(-0.5,-0.5)(9.5,1.7)
\fontsize{10pt}{10pt}\selectfont
\psset{unit=1.2cm}

\pspolygon(0,0)(1,0)(1,1)(0,1)

\pspolygon[fillstyle=solid,fillcolor=white](0.175,0)(0.275,0.1)(0.375,0)(0.275,-0.1)
\pspolygon[fillstyle=solid,fillcolor=white](0.625,0)(0.725,0.1)(0.825,0)(0.725,-0.1)
\rput(0.5,-0.24){$2k$}
\pspolygon[fillstyle=solid,fillcolor=white](0.4,1)(0.5,1.1)(0.6,1)(0.5,0.9)
\pspolygon[fillstyle=solid,fillcolor=black](1,0.625)(1.1,0.725)(1,0.825)(0.9,0.725)
\pspolygon[fillstyle=solid,fillcolor=black](1,0.175)(1.1,0.275)(1,0.375)(0.9,0.275)
\rput(1.48,0.5){$2k+1$}
\pspolygon[fillstyle=solid,fillcolor=black](0,0.625)(0.1,0.725)(0,0.825)(-0.1,0.725)
\pspolygon[fillstyle=solid,fillcolor=black](0,0.175)(0.1,0.275)(0,0.375)(-0.1,0.275)
\rput(-0.21,0.5){$2k$}

\pspolygon(3,0)(4,0)(4,1)(3,1)

\pspolygon[fillstyle=solid,fillcolor=white](3.175,0)(3.275,0.1)(3.375,0)(3.275,-0.1)
\pspolygon[fillstyle=solid,fillcolor=white](3.625,0)(3.725,0.1)(3.825,0)(3.725,-0.1)
\rput(3.5,-0.26){$2k-1$}
\pspolygon[fillstyle=solid,fillcolor=white](3.175,1)(3.275,1.1)(3.375,1)(3.275,0.9)
\pspolygon[fillstyle=solid,fillcolor=white](3.625,1)(3.725,1.1)(3.825,1)(3.725,0.9)
\rput(3.5,1.25){$2k-1$}
\pspolygon[fillstyle=solid,fillcolor=black](4,0.625)(4.1,0.725)(4,0.825)(3.9,0.725)
\pspolygon[fillstyle=solid,fillcolor=black](4,0.175)(4.1,0.275)(4,0.375)(3.9,0.275)
\rput(4.23,0.5){$2k$}
\pspolygon[fillstyle=solid,fillcolor=black](3,0.4)(3.1,0.5)(3,0.6)(2.9,0.5)

\pspolygon(6,0)(7,0)(7,1)(6,1)

\pspolygon[fillstyle=solid,fillcolor=white](6.175,0)(6.275,0.1)(6.375,0)(6.275,-0.1)
\pspolygon[fillstyle=solid,fillcolor=white](6.625,0)(6.725,0.1)(6.825,0)(6.725,-0.1)
\rput(6.5,-0.24){$2k-1$}
\pspolygon[fillstyle=solid,fillcolor=white](6.275,1)(6.375,1.1)(6.475,1)(6.375,0.9)
\pspolygon[fillstyle=solid,fillcolor=white](6.725,1)(6.625,1.1)(6.525,1)(6.625,0.9)
\pspolygon[fillstyle=solid,fillcolor=black](7,0.625)(7.1,0.725)(7,0.825)(6.9,0.725)
\pspolygon[fillstyle=solid,fillcolor=black](7,0.175)(7.1,0.275)(7,0.375)(6.9,0.275)
\rput(7.48,0.5){$2k+1$}
\pspolygon[fillstyle=solid,fillcolor=black](6,0.625)(6.1,0.725)(6,0.825)(5.9,0.725)
\pspolygon[fillstyle=solid,fillcolor=black](6,0.175)(6.1,0.275)(6,0.375)(5.9,0.275)
\rput(5.54,0.5){$2k+1$}

\end{pspicture*}
\caption{Additional squares in $\Gamma_{2n,2n+1}$.}\label{picture:3more}
\end{figure}
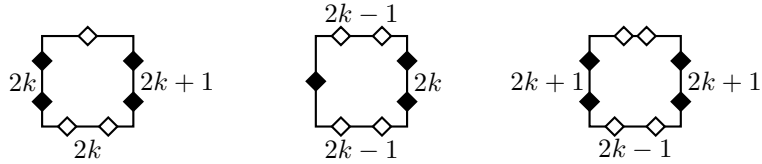

\begin{theorem}[Theorem~\ref{maintheorem:2n2n+1}]\label{theorem:2n2n+1}
For each $n \geq 2$, $\Gamma_{2n,2n+1}$ is a virtually simple $(2n,2n+1)$-group with $\underline{H_1}(v_1) \cong \Sym(2n)$ and $\underline{H_2}(v_2) \cong \Sym(2n+1)$. Moreover, if $n \geq 3$ then there is a legal coloring $i$ of $T_1$ such that
\begin{itemize}
\item $H_1 = G_{(i)}(\{4\}, \{4\})$ if $n$ is even;
\item $H_1 = G_{(i)}(\{0,2,3\}, \{0,2,3\})$ if $n$ is odd.
\end{itemize}
\end{theorem}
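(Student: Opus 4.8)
The overall strategy is to reduce the statement to three separate computations and then apply the machinery of \S\ref{section:projections}. First I would verify that $\Gamma_{2n,2n+1}$ is genuinely a $(2n,2n+1)$-group: one must check that the three families of geometric squares in (1)--(3) together form a $(2n,2n+1)$-datum, i.e.\ that every corner $(A_j,B_k)$ (with the appropriate orientations) appears exactly once up to square automorphisms. Family (1) comes from $\Gamma_{4,5}$, which is already known to be a valid datum, family (2) introduces the new corners involving $A_{2k-1},A_{2k},B_{2k},B_{2k+1}$ for $3\le k\le n$, and family (3) is defined precisely to fill in all remaining corners with squares of the form $A_jB_kA_jB_k$ (which have a nontrivial automorphism); a short bookkeeping argument shows these three families partition the corner set. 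Since all generators in families (2) and (3), as well as in $\Gamma_{4,5}$, are involutions, we have $\tau_1 = 2n$ and $\tau_2 = 2n+1$.

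Next I would identify the local actions. Reading the squares of (1)--(3) off around $v_1$ and $v_2$ as in \S\ref{subsection:action}, one computes the permutation groups $\underline{H_1}(v_1)\le\Sym(2n)$ and $\underline{H_2}(v_2)\le\Sym(2n+1)$ generated by the $B_k$ acting on $E(v_1)$ and the $A_j$ acting on $E(v_2)$ respectively. For $n=2$ this is already recorded for $\Gamma_{4,5,9}$ in Table~\ref{table:simple45-1} ($\Sym(4)$ and $\Sym(5)$). For $n\ge3$ the additional squares of (2) contribute transpositions and $3$-cycles that, together with the $\Gamma_{4,5}$ part and the ``diagonal'' squares of (3), generate all of $\Sym(2n)$ and $\Sym(2n+1)$; this is a finite check that the generated groups are $2$-transitive and contain a transposition, hence are full symmetric groups. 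In particular $\underline{H_t}(v_t)\ge\mathrm{PSL}(2,q)$ for the relevant $q$, so Theorem~\ref{theorem:Trofimov} applies once we know the $H_t$ are non-discrete.

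For virtual simplicity I would argue exactly as in \S\ref{subsection:45}: the datum of $\Gamma_{2n,2n+1}$ contains, by construction, the $11$ squares of $\Gamma_{4,5,9}$, hence $\Gamma_{4,5,9}$ embeds in $\Gamma_{2n,2n+1}$ (in the sense that its geometric squares are a subset); since $\Gamma_{4,5,9}$ is irreducible and not residually finite (Proposition~\ref{proposition:33nonrf} and the discussion of $\Gamma_{4,5,9}$ in \S\ref{subsection:45}, via the explicit element of $\Gamma_{4,5,9}^{(\infty)}$), the larger group $\Gamma_{2n,2n+1}$ is irreducible as well, its projections are non-discrete, and Theorem~\ref{theorem:Trofimov} gives that $\overline{\proj_t(\Gamma_{2n,2n+1})}$ is $2$-transitive on $\partial T_t$. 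The Normal Subgroup Theorem then forces $\Gamma_{2n,2n+1}$ to be residually finite or virtually simple, and since it contains the non-residually-finite $\Gamma_{4,5,9}$ it is virtually simple. Here I should be slightly careful: embedding of data does not automatically give a subgroup inclusion $\Gamma_{4,5,9}\hookrightarrow\Gamma_{2n,2n+1}$, but it does give that $\Gamma_{4,5,9}^{(\infty)}$'s explicit non-trivial element of length $16$ in the generators $A_1,\dots,A_4,B_1,\dots,B_5$ maps to a non-trivial element dying in every finite quotient of $\Gamma_{2n,2n+1}$; the verification that it is still non-trivial (using that the defining squares of $\Gamma_{4,5,9}$ are literally among those of $\Gamma_{2n,2n+1}$, so the local picture along that word is unchanged) is routine.

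Finally, the identification of $H_1$ as a specific member of $\mathcal{G}'_{(i)}$ is the main obstacle and the part requiring the most work. By \S\ref{subsection:graph} one computes the labelled graph $G^{(1)}_{\Gamma_{2n,2n+1}}$ with vertex set $B=\{B_1,\dots,B_{2n+1}\}$, using Proposition~\ref{proposition:algo} (note $\tau_1=2n\ne0$, so the simplified graph is unavailable and we must work with $G^{(1)}$ directly); from the edge structure and the vertex labels $\sigma(B_k)=\sgn$ of the permutation $B_k$ induces on $E(v_1)$ one reads off $s^{(1)}_k(B_j)$ for all $j$ and small $k$ via non-repeating paths. The key combinatorial claim will be that, because the only squares touching a given ``new'' generator are the few in families (2) and (3), the graph $G^{(1)}$ has a very rigid nearly-path-like shape, so the invariant $K^{(1)}$ equals $3$ and the image $s^{(1)}(H_1(v_1))$ is cut out by $\prod_{r\in X}s_r=1$ with $X=\{4\}$ when $n$ is even and $X=\{0,2,3\}$ when $n$ is odd; the dependence on the parity of $n$ comes from the parity of $\sigma(B_k)$ for the diagonal squares of (3), of which there are an even or odd number depending on $n$. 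Since in both cases $0\notin X$ would fail for $n$ odd (indeed $0\in\{0,2,3\}$), for the odd case one must also check via Proposition~\ref{proposition:choose4} or Lemma~\ref{lemma:alpha} that $X=\{0,2,3\}\notin\alpha(\{Y\subset_f\N\})$, which is immediate since $0\in X$; hence the unique possibility is $H_1=G_{(i)}(\{0,2,3\},\{0,2,3\})$. For $n$ even, $X=\{4\}$ and $\alpha^{-1}(\{4\})=\{3,5\}$ is non-empty, so we must additionally run Proposition~\ref{proposition:choose4} (solving the $\pm1$-systems $(*)$ and $(**)$) and, if needed, Proposition~\ref{proposition:primeornot} to exclude the starred variants; the expectation, which the explicit squares should confirm, is that neither system is solvable, pinning $H_1=G_{(i)}(\{4\},\{4\})$. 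The hard part throughout is keeping the path-counting in the $(2n+1)$-vertex graph $G^{(1)}$ organised enough to get a clean closed form valid for all $n\ge3$ rather than checking it case by case; I would handle this by an explicit description of the connected components of $G^{(1)}$ induced by the blocks $\{B_{2k-1},B_{2k},B_{2k+1}\}$ coming from (2), plus the fixed-point structure of $\varphi_B$.
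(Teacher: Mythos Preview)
Your overall strategy matches the paper's, but there are two points where your argument needs correction.

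First, your appeal to Theorem~\ref{theorem:Trofimov} for general $n\ge 3$ does not go through: Trofimov's hypothesis requires the local action to contain $\mathrm{PSL}(2,q)$ in its action on the projective line, and this only makes sense when $q$ is a prime power. For $T_{2n}$ one would need $q=2n-1$ to be a prime power, and for $T_{2n+1}$ one would need $2n$ to be a prime power; neither holds for arbitrary $n$ (e.g.\ $n=8$ gives $q=15$). The paper instead invokes \cite[Propositions~3.3.1 and~3.3.2]{Burger}, which apply whenever $d\ge 6$ and the local action contains $\Alt(d)$; since $2n,2n+1\ge 6$ and $\underline{H_t}(v_t)=\Sym(d_t)\ge\Alt(d_t)$, this gives boundary-$2$-transitivity directly once irreducibility is inherited from the embedded $\Gamma_{4,5}$, and then the NST yields virtual simplicity as you outline. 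For $n=2$ the paper simply quotes that $\Gamma_{4,5}=\Gamma_{4,5,9}$ is already virtually simple from \S\ref{subsection:45}.

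Second, your plan for identifying $H_1$ is considerably heavier than the paper's and contains computational slips: $K^{(1)}=\max X$ is $4$ when $n$ is even and $3$ when $n$ is odd, not $3$ uniformly; and $\alpha^{-1}(\{4\})=\{1,3\}$, not $\{3,5\}$ (check Lemma~\ref{lemma:explicitalpha}). The paper avoids any general path-counting in $G^{(1)}_{\Gamma_{2n,2n+1}}$ by observing that the structure of this labelled graph---and hence the output of the algorithm of \S\ref{section:projections}---depends only on the parity of $n$ once $n\ge 3$: the additional generators $B_{2k},B_{2k+1}$ for $k\ge 3$ all contribute in the same way to edges and labels, so passing from $n$ to $n+2$ leaves the invariants unchanged. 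It therefore suffices to carry out the computation for $n=3$ and $n=4$ only. Your block-by-block analysis could in principle be pushed through, but the parity reduction is both shorter and avoids the bookkeeping where your errors crept in.
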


\begin{proof}
The group $\underline{H_1}(v_1)$ is generated by the following $2n+1$ permutations, which clearly generate $\Sym(2n)$:
\begin{align*}
B_1\ &:\ ()\\
B_2\ &:\ (A_2\ A_3)\\
B_3\ &:\ (A_1\ A_2)\\
B_4\ &:\ (A_2\ A_3)\\
B_5\ &:\ (A_1\ A_4)(A_2\ A_3)\\
B_{2k} \ \text{($3 \leq k \leq n$)}\ &:\ (A_1\ A_{2k})\\
B_{2k+1} \ \text{($3 \leq k \leq n$)}\ &:\ (A_1\ A_{2k})(A_2\ A_{2k-1})
\end{align*}
For $\underline{H_2}(v_2)$ we have the $2n$ permutations
\begin{align*}
A_1\ &:\ (B_6\ B_7)(B_8\ B_9)\ldots (B_{2n}\ B_{2n+1})\\
A_2\ &:\ (B_4\ B_5)\\
A_3\ &:\ (B_1\ B_3)(B_4\ B_5)\\
A_4\ &:\ (B_1\ B_2)(B_3\ B_4)\\
A_{2k-1} \ \text{($3 \leq k \leq n$)}\ &:\ (B_1\ B_{2k})\\
A_{2k} \ \text{($3 \leq k \leq n$)}\ &:\ (B_{2k}\ B_{2k+1})
\end{align*}
The permutations $A_2, A_3, A_4$ generate $\Sym(5)$, and we then get by induction that the permutations $A_2, \ldots, A_{2k}$ generate $\Sym(2k+1)$ for each $3 \leq k \leq n$. In particular, we have $\underline{H_2}(v_2) \cong \Sym(2k+1)$.

We already know that $\Gamma_{4,5}$ is virtually simple. For $n \geq 3$, by \cite[Proposition~3.3.2]{Burger} the groups $H_1$ and $H_2$ are boundary-$2$-transitive. The NST then implies that $\Gamma_{2n,2n+1}$ is virtually simple.

When $n \geq 3$, the group $H_1$ can be computed thanks to the algorithms developed in \S\ref{section:projections}. We do not give the details here, but it can be seen when computing the graph $G^{(1)}_{\Gamma_{2n,2n+1}}$ that the result will only depend on the parity of $n$. So it suffices to proceed for $n = 3$ and $n = 4$.
\end{proof}

\subsection{Virtually simple \texorpdfstring{$(6,4n)$-groups ($n \geq 2$)}{(6,4n)-groups (n >= 2)}}

Let $T_1$ be the $6$-regular tree and $T_2^{(n)}$ be the $4n$-regular tree for $n \geq 2$. The set of closed subgroups of $\Aut(T_1)$ carries the Chabauty topology. In this section we describe a sequence $(\Gamma_{6,4n})_{n \geq 2}$ of groups, with $\Gamma_{6,4n}$ being a $(6,4n)$-group, such that $\overline{\proj_1(\Gamma_{6,4n})} \to \Aut(T_1)$ in the Chabauty topology when $n \to \infty$.

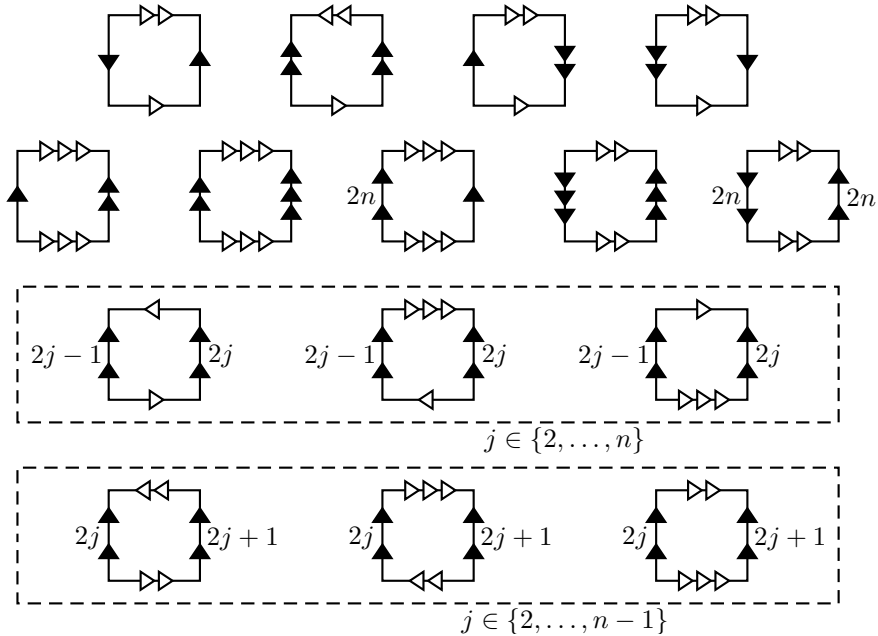
\begin{figure}[b!]
\centering
\begin{pspicture*}(-1.4,-2.2)(10.1,6.2)
\fontsize{10pt}{10pt}\selectfont
\psset{unit=1.2cm}


\pspolygon(0,4)(1,4)(1,5)(0,5)

\pspolygon[fillstyle=solid,fillcolor=white](0.45,4.1)(0.45,3.9)(0.6,4)
\pspolygon[fillstyle=solid,fillcolor=white](0.35,5.1)(0.35,4.9)(0.5,5)
\pspolygon[fillstyle=solid,fillcolor=white](0.55,5.1)(0.55,4.9)(0.7,5)
\pspolygon[fillstyle=solid,fillcolor=black](0.9,4.45)(1.1,4.45)(1,4.6)
\pspolygon[fillstyle=solid,fillcolor=black](-0.1,4.55)(0.1,4.55)(0,4.4)

\pspolygon(2,4)(3,4)(3,5)(2,5)

\pspolygon[fillstyle=solid,fillcolor=white](2.45,4.1)(2.45,3.9)(2.6,4)
\pspolygon[fillstyle=solid,fillcolor=white](2.65,5.1)(2.65,4.9)(2.5,5)
\pspolygon[fillstyle=solid,fillcolor=white](2.45,5.1)(2.45,4.9)(2.3,5)
\pspolygon[fillstyle=solid,fillcolor=black](2.9,4.55)(3.1,4.55)(3,4.7)
\pspolygon[fillstyle=solid,fillcolor=black](2.9,4.35)(3.1,4.35)(3,4.5)
\pspolygon[fillstyle=solid,fillcolor=black](1.9,4.55)(2.1,4.55)(2,4.7)
\pspolygon[fillstyle=solid,fillcolor=black](1.9,4.35)(2.1,4.35)(2,4.5)

\pspolygon(4,4)(5,4)(5,5)(4,5)

\pspolygon[fillstyle=solid,fillcolor=white](4.45,4.1)(4.45,3.9)(4.6,4)
\pspolygon[fillstyle=solid,fillcolor=white](4.35,5.1)(4.35,4.9)(4.5,5)
\pspolygon[fillstyle=solid,fillcolor=white](4.55,5.1)(4.55,4.9)(4.7,5)
\pspolygon[fillstyle=solid,fillcolor=black](4.9,4.45)(5.1,4.45)(5,4.3)
\pspolygon[fillstyle=solid,fillcolor=black](4.9,4.65)(5.1,4.65)(5,4.5)
\pspolygon[fillstyle=solid,fillcolor=black](3.9,4.45)(4.1,4.45)(4,4.6)

\pspolygon(6,4)(7,4)(7,5)(6,5)

\pspolygon[fillstyle=solid,fillcolor=white](6.45,4.1)(6.45,3.9)(6.6,4)
\pspolygon[fillstyle=solid,fillcolor=white](6.35,5.1)(6.35,4.9)(6.5,5)
\pspolygon[fillstyle=solid,fillcolor=white](6.55,5.1)(6.55,4.9)(6.7,5)
\pspolygon[fillstyle=solid,fillcolor=black](6.9,4.55)(7.1,4.55)(7,4.4)
\pspolygon[fillstyle=solid,fillcolor=black](5.9,4.45)(6.1,4.45)(6,4.3)
\pspolygon[fillstyle=solid,fillcolor=black](5.9,4.65)(6.1,4.65)(6,4.5)


\pspolygon(-1,2.5)(0,2.5)(0,3.5)(-1,3.5)

\pspolygon[fillstyle=solid,fillcolor=white](-0.75,2.6)(-0.75,2.4)(-0.6,2.5)
\pspolygon[fillstyle=solid,fillcolor=white](-0.55,2.6)(-0.55,2.4)(-0.4,2.5)
\pspolygon[fillstyle=solid,fillcolor=white](-0.35,2.6)(-0.35,2.4)(-0.2,2.5)
\pspolygon[fillstyle=solid,fillcolor=white](-0.75,3.6)(-0.75,3.4)(-0.6,3.5)
\pspolygon[fillstyle=solid,fillcolor=white](-0.55,3.6)(-0.55,3.4)(-0.4,3.5)
\pspolygon[fillstyle=solid,fillcolor=white](-0.35,3.6)(-0.35,3.4)(-0.2,3.5)
\pspolygon[fillstyle=solid,fillcolor=black](-0.1,2.85)(0.1,2.85)(0,3)
\pspolygon[fillstyle=solid,fillcolor=black](-0.1,3.05)(0.1,3.05)(0,3.2)
\pspolygon[fillstyle=solid,fillcolor=black](-1.1,2.95)(-0.9,2.95)(-1,3.1)

\pspolygon(1,2.5)(2,2.5)(2,3.5)(1,3.5)

\pspolygon[fillstyle=solid,fillcolor=white](1.25,2.6)(1.25,2.4)(1.4,2.5)
\pspolygon[fillstyle=solid,fillcolor=white](1.45,2.6)(1.45,2.4)(1.6,2.5)
\pspolygon[fillstyle=solid,fillcolor=white](1.65,2.6)(1.65,2.4)(1.8,2.5)
\pspolygon[fillstyle=solid,fillcolor=white](1.25,3.6)(1.25,3.4)(1.4,3.5)
\pspolygon[fillstyle=solid,fillcolor=white](1.45,3.6)(1.45,3.4)(1.6,3.5)
\pspolygon[fillstyle=solid,fillcolor=white](1.65,3.6)(1.65,3.4)(1.8,3.5)
\pspolygon[fillstyle=solid,fillcolor=black](1.9,2.75)(2.1,2.75)(2,2.9)
\pspolygon[fillstyle=solid,fillcolor=black](1.9,2.95)(2.1,2.95)(2,3.1)
\pspolygon[fillstyle=solid,fillcolor=black](1.9,3.15)(2.1,3.15)(2,3.3)
\pspolygon[fillstyle=solid,fillcolor=black](0.9,2.85)(1.1,2.85)(1,3)
\pspolygon[fillstyle=solid,fillcolor=black](0.9,3.05)(1.1,3.05)(1,3.2)

\pspolygon(3,2.5)(4,2.5)(4,3.5)(3,3.5)

\pspolygon[fillstyle=solid,fillcolor=white](3.25,2.6)(3.25,2.4)(3.4,2.5)
\pspolygon[fillstyle=solid,fillcolor=white](3.45,2.6)(3.45,2.4)(3.6,2.5)
\pspolygon[fillstyle=solid,fillcolor=white](3.65,2.6)(3.65,2.4)(3.8,2.5)
\pspolygon[fillstyle=solid,fillcolor=white](3.25,3.6)(3.25,3.4)(3.4,3.5)
\pspolygon[fillstyle=solid,fillcolor=white](3.45,3.6)(3.45,3.4)(3.6,3.5)
\pspolygon[fillstyle=solid,fillcolor=white](3.65,3.6)(3.65,3.4)(3.8,3.5)
\pspolygon[fillstyle=solid,fillcolor=black](3.9,2.95)(4.1,2.95)(4,3.1)
\pspolygon[fillstyle=solid,fillcolor=black](2.9,2.75)(3.1,2.75)(3,2.9)
\rput(2.77,3.01){$2n$}
\pspolygon[fillstyle=solid,fillcolor=black](2.9,3.15)(3.1,3.15)(3,3.3)

\pspolygon(5,2.5)(6,2.5)(6,3.5)(5,3.5)

\pspolygon[fillstyle=solid,fillcolor=white](5.35,2.6)(5.35,2.4)(5.5,2.5)
\pspolygon[fillstyle=solid,fillcolor=white](5.55,2.6)(5.55,2.4)(5.7,2.5)
\pspolygon[fillstyle=solid,fillcolor=white](5.35,3.6)(5.35,3.4)(5.5,3.5)
\pspolygon[fillstyle=solid,fillcolor=white](5.55,3.6)(5.55,3.4)(5.7,3.5)
\pspolygon[fillstyle=solid,fillcolor=black](5.9,2.75)(6.1,2.75)(6,2.9)
\pspolygon[fillstyle=solid,fillcolor=black](5.9,2.95)(6.1,2.95)(6,3.1)
\pspolygon[fillstyle=solid,fillcolor=black](5.9,3.15)(6.1,3.15)(6,3.3)
\pspolygon[fillstyle=solid,fillcolor=black](4.9,2.85)(5.1,2.85)(5,2.7)
\pspolygon[fillstyle=solid,fillcolor=black](4.9,3.05)(5.1,3.05)(5,2.9)
\pspolygon[fillstyle=solid,fillcolor=black](4.9,3.25)(5.1,3.25)(5,3.1)

\pspolygon(7,2.5)(8,2.5)(8,3.5)(7,3.5)

\pspolygon[fillstyle=solid,fillcolor=white](7.35,2.6)(7.35,2.4)(7.5,2.5)
\pspolygon[fillstyle=solid,fillcolor=white](7.55,2.6)(7.55,2.4)(7.7,2.5)
\pspolygon[fillstyle=solid,fillcolor=white](7.35,3.6)(7.35,3.4)(7.5,3.5)
\pspolygon[fillstyle=solid,fillcolor=white](7.55,3.6)(7.55,3.4)(7.7,3.5)
\pspolygon[fillstyle=solid,fillcolor=black](7.9,2.75)(8.1,2.75)(8,2.9)
\rput(8.25,2.98){$2n$}
\pspolygon[fillstyle=solid,fillcolor=black](7.9,3.15)(8.1,3.15)(8,3.3)
\pspolygon[fillstyle=solid,fillcolor=black](6.9,2.85)(7.1,2.85)(7,2.7)
\rput(6.77,3.01){$2n$}
\pspolygon[fillstyle=solid,fillcolor=black](6.9,3.25)(7.1,3.25)(7,3.1)


\pspolygon(0,0.75)(1,0.75)(1,1.75)(0,1.75)

\pspolygon[fillstyle=solid,fillcolor=white](0.45,0.85)(0.45,0.65)(0.6,0.75)
\pspolygon[fillstyle=solid,fillcolor=white](0.55,1.85)(0.55,1.65)(0.4,1.75)
\pspolygon[fillstyle=solid,fillcolor=black](0.9,1)(1.1,1)(1,1.15)
\rput(1.23,1.23){$2j$}
\pspolygon[fillstyle=solid,fillcolor=black](0.9,1.4)(1.1,1.4)(1,1.55)
\pspolygon[fillstyle=solid,fillcolor=black](-0.1,1)(0.1,1)(0,1.15)
\rput(-0.46,1.22){$2j-1$}
\pspolygon[fillstyle=solid,fillcolor=black](-0.1,1.4)(0.1,1.4)(0,1.55)

\pspolygon(3,0.75)(4,0.75)(4,1.75)(3,1.75)

\pspolygon[fillstyle=solid,fillcolor=white](3.55,0.85)(3.55,0.65)(3.4,0.75)
\pspolygon[fillstyle=solid,fillcolor=white](3.25,1.85)(3.25,1.65)(3.4,1.75)
\pspolygon[fillstyle=solid,fillcolor=white](3.45,1.85)(3.45,1.65)(3.6,1.75)
\pspolygon[fillstyle=solid,fillcolor=white](3.65,1.85)(3.65,1.65)(3.8,1.75)
\pspolygon[fillstyle=solid,fillcolor=black](3.9,1)(4.1,1)(4,1.15)
\rput(4.23,1.23){$2j$}
\pspolygon[fillstyle=solid,fillcolor=black](3.9,1.4)(4.1,1.4)(4,1.55)
\pspolygon[fillstyle=solid,fillcolor=black](2.9,1)(3.1,1)(3,1.15)
\rput(2.54,1.22){$2j-1$}
\pspolygon[fillstyle=solid,fillcolor=black](2.9,1.4)(3.1,1.4)(3,1.55)

\pspolygon(6,0.75)(7,0.75)(7,1.75)(6,1.75)

\pspolygon[fillstyle=solid,fillcolor=white](6.25,0.85)(6.25,0.65)(6.4,0.75)
\pspolygon[fillstyle=solid,fillcolor=white](6.45,0.85)(6.45,0.65)(6.6,0.75)
\pspolygon[fillstyle=solid,fillcolor=white](6.65,0.85)(6.65,0.65)(6.8,0.75)
\pspolygon[fillstyle=solid,fillcolor=white](6.45,1.85)(6.45,1.65)(6.6,1.75)
\pspolygon[fillstyle=solid,fillcolor=black](6.9,1)(7.1,1)(7,1.15)
\rput(7.23,1.23){$2j$}
\pspolygon[fillstyle=solid,fillcolor=black](6.9,1.4)(7.1,1.4)(7,1.55)
\pspolygon[fillstyle=solid,fillcolor=black](5.9,1)(6.1,1)(6,1.15)
\rput(5.54,1.22){$2j-1$}
\pspolygon[fillstyle=solid,fillcolor=black](5.9,1.4)(6.1,1.4)(6,1.55)

\pspolygon[linestyle=dashed](-1,0.5)(8,0.5)(8,2)(-1,2)
\rput(5,0.3){$j \in \{2, \ldots, n\}$}


\pspolygon(0,-1.25)(1,-1.25)(1,-0.25)(0,-0.25)

\pspolygon[fillstyle=solid,fillcolor=white](0.35,-1.15)(0.35,-1.35)(0.5,-1.25)
\pspolygon[fillstyle=solid,fillcolor=white](0.55,-1.15)(0.55,-1.35)(0.7,-1.25)
\pspolygon[fillstyle=solid,fillcolor=white](0.45,-0.15)(0.45,-0.35)(0.3,-0.25)
\pspolygon[fillstyle=solid,fillcolor=white](0.65,-0.15)(0.65,-0.35)(0.5,-0.25)
\pspolygon[fillstyle=solid,fillcolor=black](0.9,-1)(1.1,-1)(1,-0.85)
\rput(1.47,-0.79){$2j+1$}
\pspolygon[fillstyle=solid,fillcolor=black](0.9,-0.6)(1.1,-0.6)(1,-0.45)
\pspolygon[fillstyle=solid,fillcolor=black](-0.1,-1)(0.1,-1)(0,-0.85)
\rput(-0.23,-0.78){$2j$}
\pspolygon[fillstyle=solid,fillcolor=black](-0.1,-0.6)(0.1,-0.6)(0,-0.45)

\pspolygon(3,-1.25)(4,-1.25)(4,-0.25)(3,-0.25)

\pspolygon[fillstyle=solid,fillcolor=white](3.65,-1.15)(3.65,-1.35)(3.5,-1.25)
\pspolygon[fillstyle=solid,fillcolor=white](3.45,-1.15)(3.45,-1.35)(3.3,-1.25)
\pspolygon[fillstyle=solid,fillcolor=white](3.25,-0.15)(3.25,-0.35)(3.4,-0.25)
\pspolygon[fillstyle=solid,fillcolor=white](3.45,-0.15)(3.45,-0.35)(3.6,-0.25)
\pspolygon[fillstyle=solid,fillcolor=white](3.65,-0.15)(3.65,-0.35)(3.8,-0.25)
\pspolygon[fillstyle=solid,fillcolor=black](3.9,-1)(4.1,-1)(4,-0.85)
\rput(4.47,-0.79){$2j+1$}
\pspolygon[fillstyle=solid,fillcolor=black](3.9,-0.6)(4.1,-0.6)(4,-0.45)
\pspolygon[fillstyle=solid,fillcolor=black](2.9,-1)(3.1,-1)(3,-0.85)
\rput(2.77,-0.78){$2j$}
\pspolygon[fillstyle=solid,fillcolor=black](2.9,-0.6)(3.1,-0.6)(3,-0.45)

\pspolygon(6,-1.25)(7,-1.25)(7,-0.25)(6,-0.25)

\pspolygon[fillstyle=solid,fillcolor=white](6.25,-1.15)(6.25,-1.35)(6.4,-1.25)
\pspolygon[fillstyle=solid,fillcolor=white](6.45,-1.15)(6.45,-1.35)(6.6,-1.25)
\pspolygon[fillstyle=solid,fillcolor=white](6.65,-1.15)(6.65,-1.35)(6.8,-1.25)
\pspolygon[fillstyle=solid,fillcolor=white](6.35,-0.15)(6.35,-0.35)(6.5,-0.25)
\pspolygon[fillstyle=solid,fillcolor=white](6.55,-0.15)(6.55,-0.35)(6.7,-0.25)
\pspolygon[fillstyle=solid,fillcolor=black](6.9,-1)(7.1,-1)(7,-0.85)
\rput(7.47,-0.79){$2j+1$}
\pspolygon[fillstyle=solid,fillcolor=black](6.9,-0.6)(7.1,-0.6)(7,-0.45)
\pspolygon[fillstyle=solid,fillcolor=black](5.9,-1)(6.1,-1)(6,-0.85)
\rput(5.77,-0.78){$2j$}
\pspolygon[fillstyle=solid,fillcolor=black](5.9,-0.6)(6.1,-0.6)(6,-0.45)

\pspolygon[linestyle=dashed](-1,-1.5)(8,-1.5)(8,0)(-1,0)
\rput(5,-1.7){$j \in \{2, \ldots, n-1\}$}

\end{pspicture*}
\caption{The torsion-free $(6,4n)$-group $\Gamma_{6,4n}$}\label{picture:64n}
\end{figure}

\begin{theorem}[Theorem~\ref{maintheorem:64n}]\label{theorem:64n}
Let $n \geq 2$ be an integer and let $\Gamma_{6,4n}$ be the torsion-free $(6,4n)$-group associated to the geometric squares in Figure~\ref{picture:64n}. Then $\Gamma_{6,4n}$ is virtually simple,  $\overline{\proj_1(\Gamma_{6,4n})} = G_{(i_1)}(\{n\},\{n\})$ for some legal coloring $i_1$ of $T_1$ and $\overline{\proj_2(\Gamma_{6,4n})} = G_{(i_2)}(\{0\},\{0\})$ for some legal coloring $i_2$ of $T_2^{(n)}$.
\end{theorem}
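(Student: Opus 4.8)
The statement bundles two essentially independent assertions: the identification of the two projection closures as explicit members of $\mathcal{G}'_{(i)}$, and the virtual simplicity of $\Gamma_{6,4n}$. The plan is to obtain the projections by running the algorithm of \S\ref{section:projections}, and to deduce virtual simplicity from the Normal Subgroup Theorem once both projections are known to be boundary-$2$-transitive and $\Gamma_{6,4n}$ is known not to be residually finite. I would begin by reading off from the geometric squares of Figure~\ref{picture:64n}, exactly as in the worked example of \S\ref{subsection:example}, the permutations induced by $b_1,\dots,b_{4n}$ on $E(v_1)$ and by $a_1,\dots,a_6$ on $E(v_2)$. The squares are arranged so that the former generate $\Sym(6)$ while the latter generate $\Alt(4n)$ and nothing larger (the configuration of black labels being chosen so that the induced permutations on $T_2^{(n)}$ are all even); this is a finite verification whose outcome is uniform in $n$ apart from the finitely many ``new'' generators appearing in the chain of Figure~\ref{picture:64n}, so it is enough to check it for $n=2$ and for the generic inductive step. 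With $\underline{H_1}(v_1)\cong\Sym(6)$, $\underline{H_2}(v_2)\cong\Alt(4n)$ and $d_t\ge 6$ in both cases, I would then verify the non-discreteness of the (vertex-transitive) groups $H_1$ and $H_2$ by computing the images of $H_1(v_1)$ and $H_2(v_2)$ in $\Aut(B(v_t,2))$ and invoking \cite[Propositions~3.3.1 and~3.3.2]{Burger} as in \S\ref{subsection:irred}; this simultaneously shows that $\Gamma_{6,4n}$ is irreducible, that $H_t$ is conjugate to a member of $\mathcal{G}'_{(i)}$ (Theorem~\ref{theorem:Raduclassification}), and that $H_t$ is $2$-transitive on $\partial T_t$.

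The identification of $H_2$ is then immediate: since $\underline{H_2}(v_2)=\Alt(4n)\neq\Sym(4n)$, the map $H_2(v_2)\to\Aut(B(v_2,1))$ is not surjective, so $K^{(2)}=0$, and the sign homomorphism $s^{(2)}$ on $H_2(v_2)$ is trivial; hence $s^{(2)}(H_2(v_2))=\{1\}$, which by Lemmas~\ref{lemma:valueS} and~\ref{lemma:valueS*} forces the parameter $X$ to be $\{0\}$ and therefore $H_2=G_{(i_2)}(\{0\},\{0\})$. For $H_1$ I would form the simplified labelled graph $\tilde{G}^{(1)}_{\Gamma_{6,4n}}$ (the datum is torsion-free, so Proposition~\ref{proposition:algo2} applies, and the graph has $2n$ vertices) and compute $s^{(1)}_k(b_j)=\prod_{p\in\tilde{\mathcal P}_k(\{b_j,b_j^{-1}\})}\sigma(d(p))$ for $k=0,1,2,\dots$. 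The chain of square-families indexed by $j\in\{2,\dots,n\}$ and $j\in\{2,\dots,n-1\}$ in Figure~\ref{picture:64n} is engineered precisely so that the non-repeating paths of length $<n$ in this graph accumulate total sign $+1$ while the paths of length $n$ yield a genuine constraint; the outcome should be $K^{(1)}=n$ and $s^{(1)}(H_1(v_1))=\{(s_0,\dots,s_n)\in(\C_2)^{n+1}:s_n=1\}$, i.e. $X=\{n\}$. Since $0\notin\{n\}$ but $\{n\}$ lies in the image of $\alpha$ (Lemma~\ref{lemma:alpha}), four groups remain possible, and I would finish by checking that the systems $(*)$ and $(**)$ of Proposition~\ref{proposition:choose4} admit no solution over $\{-1,1\}$ — the one point at which the precise shape of the squares intervenes — so that $\rho^{(1)}=2$ and $H_1=G_{(i_1)}(\{n\},\{n\})$.

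It remains to show that $\Gamma_{6,4n}$ is not residually finite; the Normal Subgroup Theorem then provides the dichotomy ``residually finite or virtually simple'', and excluding the first possibility leaves virtual simplicity. Since $\Gamma_{6,4n}$ is an irreducible $(d_1,d_2)$-group this follows from \cite[Corollary~6.4]{CapraceWesolek}; alternatively one can mimic \S\ref{subsection:66}, locating among the geometric squares of $\Gamma_{6,4n}$ a copy of a non-residually-finite sublattice (such as the group $\Gamma_{4,4}$ of \S\ref{subsection:66}) and extracting, by an argument in the spirit of Proposition~\ref{proposition:nonrf}, an explicit non-trivial element of $\Gamma_{6,4n}^{(\infty)}$. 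Either way $\Gamma_{6,4n}^{(\infty)}\neq\{1\}$, and being just-infinite by the Normal Subgroup Theorem it has finite index in $\Gamma_{6,4n}$ and is simple.

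The main obstacle is not any single step but the bookkeeping that makes everything uniform in $n$: one must describe the non-repeating paths in $\tilde{G}^{(1)}_{\Gamma_{6,4n}}$ well enough to see the level-$k$ signature products collapse to $+1$ for $k<n$ and not for $k=n$, and one must check that the non-residually-finite sub-datum really sits inside $\Gamma_{6,4n}$ for every $n$. Everything else consists of routine finite verifications, just as for Theorem~\ref{theorem:2n2n+1}, whose proof the authors also leave as a computation.
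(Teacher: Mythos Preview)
Your proposal is correct and follows the same strategy as the paper. The one streamlining you miss is that the paper observes at the outset that the first four geometric squares of $\Gamma_{6,4n}$ are exactly those of $\Gamma_{4,4}$, so $\Gamma_{4,4}\leq\Gamma_{6,4n}$ for every $n$; this single embedding delivers both irreducibility and non-residual-finiteness at once via Proposition~\ref{proposition:nonrf}, eliminating any $n$-dependent $B(v_t,2)$-computation and making what you describe as the ``alternative'' route the paper's primary one. (Your first option for non-residual-finiteness, invoking \cite[Corollary~6.4]{CapraceWesolek} from irreducibility alone, is too quick as stated: irreducibility by itself does not preclude residual finiteness---arithmetic lattices are residually finite---so one would need to check the hypotheses of that corollary, which the $\Gamma_{4,4}$-embedding sidesteps.) The remaining steps---verifying $\underline{H_1}(v_1)=\Sym(6)$ and $\underline{H_2}(v_2)=\Alt(4n)$ (whence $H_2=G_{(i_2)}(\{0\},\{0\})$ immediately), computing $\tilde{G}^{(1)}_{\Gamma_{6,4n}}$ (which the paper finds to be a $2n$-cycle with exactly one label $-1$), reading off $K^{(1)}=n$ and $X=\{n\}$, and excluding the three starred candidates by observing that already the first equation of the system $(*)$ in Proposition~\ref{proposition:choose4} is inconsistent---are exactly as you anticipate.
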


\begin{proof}
One easily checks that the geometric squares given in~Figure~\ref{picture:64n} indeed define a torsion-free $(6,4n)$-group. The first four squares correspond to $\Gamma_{4,4}$ (see \S\ref{subsection:66}), so that $\Gamma_{4,4} \leq \Gamma_{6,4n}$. In particular, $\Gamma_{6,4n}$ is irreducible. If we show that $\underline{H_1}(v_1) \geq \Alt(6)$ and $\underline{H_2}(v_2) \geq \Alt(4n)$ (where $H_t = \overline{\proj_t(\Gamma_{6,4n})}$), then it will follow from the NST and \cite[Propositions~3.3.1 and~3.3.2]{Burger} that $\Gamma_{6,4n}$ is virtually simple.

The group $\underline{H_1}(v_1)$ is generated by the following permutations.
\begin{align*}
b_1\ &:\ (a_1\ a_2)(a_1^{-1}\ a_2^{-1})(a_3)(a_3^{-1})\\
b_2\ &:\ (a_1\ a_2\ a_1^{-1}\ a_2^{-1})(a_3)(a_3^{-1})\\
b_3\ &:\ (a_1\ a_3\ a_1^{-1})(a_2\ a_2^{-1}\ a_3^{-1})\\
b_{2j}\ &:\ (a_1\ a_1^{-1}\ a_3^{-1})(a_2\ a_3\ a_2^{-1}) \qquad (j\in \{2, \ldots, n-1\})\\
b_{2j+1}\ &:\ (a_1\ a_3\ a_1^{-1})(a_2\ a_2^{-1}\ a_3^{-1}) \qquad (j \in \{2,\ldots, n-1\})\\
b_{2n}\ &: (a_1\ a_1^{-1}\ a_3^{-1})(a_2)(a_2^{-1})(a_3)
\end{align*}
The permutations induced by $b_1$, $b_2$, $b_3$ and $b_{2n}$ generate $\Sym(6)$, so $\underline{H_1}(v_1) = \Sym(6)$. For $\underline{H_2}(v_2)$ we get:
\begin{align*}
a_1\ &:\ (b_1\ b_1^{-1}\ b_2^{-1}) (b_2) (b_3\ b_4) (b_3^{-1}\ b_4^{-1}) \ldots (b_{2n-1}\ b_{2n}) (b_{2n-1}^{-1}\ b_{2n}^{-1})\\
a_2\ &:\ (b_1\ b_2\ b_1^{-1})(b_2^{-1}) (b_3\ b_3^{-1}) (b_{2n}\ b_{2n}^{-1})\\
& \qquad\qquad (b_4\ b_5) (b_4^{-1}\ b_5^{-1}) \ldots (b_{2n-2}\ b_{2n-1}) (b_{2n-2}^{-1}\ b_{2n-1}^{-1})\\
a_3\ &:\ (b_{2n}\ b_{2n-1}\ \ldots \ b_2\ b_1) (b_{2n}^{-1}\ b_{2n-1}^{-1}\ \ldots \ b_2^{-1}\ b_1^{-1}) 
\end{align*}
We observe that $a_1^2$ and $a_2^2$ induce the permutations $(b_1\ b_2^{-1}\ b_1^{-1})$ and $(b_1\ b_1^{-1}\ b_2)$ respectively, which generate $\Alt(\{b_1, b_1^{-1}, b_2, b_2^{-1}\})$. Conjugating this alternating group by several powers of $a_3$, we obtain all $\Alt(\{b_i, b_i^{-1}, b_{i+1}, b_{i+1}^{-1}\})$ with $i \in \{1,\ldots, 2n-1\}$. These alternating groups together generate $\Alt(2n)$. As the permutations induced by $a_1$, $a_2$ and $a_3$ are all even, we get $\underline{H_2}(v_2) = \Alt(2n)$. This already implies that $H_2 = G_{(i_2)}(\{0\},\{0\})$ for some legal coloring $i_2$ of $T_2^{(n)}$.

There remains to compute $H_1$, using the algorithms developed in \S\ref{section:projections}. The simplified labelled graph $\tilde{G}_{\Gamma_{6,4n}}^{(1)}$ is a cycle with only one label $-1$, see Figure~\ref{picture:64n-graph}. From this graph and via Proposition~\ref{proposition:algo2}, we can compute the values of $s_k^{(1)}(b_j)$ for $j \in \{1,\ldots,2n\}$ and $k \in \N$:
$$\begin{array}{c|cccccc}
 & s^{(1)}_0 & s^{(1)}_1 & s^{(1)}_2 & \ldots & s^{(1)}_{n-1} & s^{(1)}_{n}\\
 \hline
 b_2 & -1 & +1 & +1 & \ldots & +1 & +1\\
 b_3 & +1 & -1 & +1 & \ldots & +1 & +1\\
 b_4 & +1 & +1 & -1 & \ldots & +1 & +1\\
 \vdots & & & & & & \\
 b_{n+1} & +1 & +1 & +1 & \ldots & -1 & +1\\
 b_{n+2} & +1 & +1 & +1 & \ldots & +1 & +1\\
 b_{n+3} & +1 & +1 & +1 & \ldots & -1 & +1\\
 \vdots & & & & & & \\
 b_{2n} & +1 & +1 & -1 & \ldots & +1 & +1\\
 b_1 & +1 & -1 & +1 & \ldots & +1 & +1
\end{array}$$
From these values we deduce that $K^{(1)} = n$ and
$$s^{(1)}(H_1(v_1)) = \{(s_0,\ldots,s_n) \mid s_n = 1\}.$$
The groups that can be isomorphic to $H_1$ are thus $G_{(i_1)}(\{n\},\{n\})$, $G_{(i_1)}(Y,Y)^*$, $G'_{(i_1)}(Y,Y)^*$ and $G_{(i_1)}(Y^*,Y^*)$ where $\alpha(Y) = \{n\}$. We have $Y = \{1,3,\ldots,n-1\}$ if $n$ is even and $Y = \{0,2,\ldots,n-1\}$ if $n$ is odd.

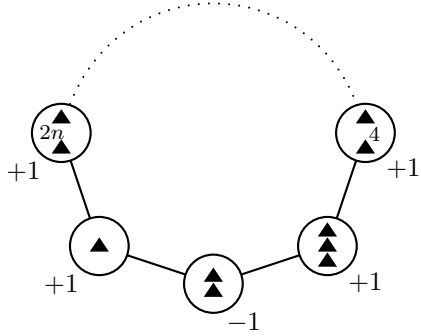
\begin{figure}[t!]
\centering
\begin{pspicture*}(-1.2,-0.6)(4.2,3.8)
\fontsize{10pt}{10pt}\selectfont
\psset{unit=1cm}

\psline(-0.5,2.01)(0,0.51)

\psline(0,0.51)(1.5,0.01)

\psline(1.5,0.01)(3,0.51)

\psline(3,0.51)(3.5,2.01)

\psbezier[linestyle=dotted](-0.5,2.01)(0,4.3)(3,4.3)(3.5,2.01)

\pscircle[fillstyle=solid,fillcolor=white](-0.5,2.01){0.4}
\pspolygon[fillstyle=solid,fillcolor=black](-0.4,1.75)(-0.6,1.75)(-0.5,1.9)
\rput(-0.62,2.02){{\fontsize{8pt}{8pt}$2n$}}
\pspolygon[fillstyle=solid,fillcolor=black](-0.4,2.15)(-0.6,2.15)(-0.5,2.3)
\rput(-1,1.5){$+1$}

\pscircle[fillstyle=solid,fillcolor=white](0,0.51){0.4}
\pspolygon[fillstyle=solid,fillcolor=black](0.1,0.45)(-0.1,0.45)(0,0.6)
\rput(-0.5,0){$+1$}

\pscircle[fillstyle=solid,fillcolor=white](1.5,0.01){0.4}
\pspolygon[fillstyle=solid,fillcolor=black](1.6,-0.15)(1.4,-0.15)(1.5,0)
\pspolygon[fillstyle=solid,fillcolor=black](1.6,0.05)(1.4,0.05)(1.5,0.2)
\rput(1.9,-0.5){$-1$}

\pscircle[fillstyle=solid,fillcolor=white](3,0.51){0.4}
\pspolygon[fillstyle=solid,fillcolor=black](3.1,0.25)(2.9,0.25)(3,0.4)
\pspolygon[fillstyle=solid,fillcolor=black](3.1,0.45)(2.9,0.45)(3,0.6)
\pspolygon[fillstyle=solid,fillcolor=black](3.1,0.65)(2.9,0.65)(3,0.8)
\rput(3.5,0.03){$+1$}

\pscircle[fillstyle=solid,fillcolor=white](3.5,2.01){0.4}
\pspolygon[fillstyle=solid,fillcolor=black](3.6,1.75)(3.4,1.75)(3.5,1.9)
\rput(3.62,1.99){{\fontsize{8pt}{8pt}$4$}}
\pspolygon[fillstyle=solid,fillcolor=black](3.6,2.15)(3.4,2.15)(3.5,2.3)
\rput(4,1.53){$+1$}

\end{pspicture*}
\caption{The simplified labelled graph $\tilde{G}^{(1)}_{\Gamma_{6,4n}}$.}\label{picture:64n-graph}
\end{figure}

Now let us see which of the four groups is the good one, thanks to Proposition~\ref{proposition:choose4}. The very first equality in both systems $(*)$ and $(**)$ comes from the first and third geometric squares defining $\Gamma_{6,4n}$ and is $x_1 x_2 \Sigma_{4n} = x_2 x_1 \Sigma_{4n-1}$, where $\Sigma_{4n} = \prod_{r \in Y} s_r^{(1)}(b_1^{-1})$ and $\Sigma_{4n-1} = \prod_{r \in Y} s_r^{(1)}(b_2^{-1})$. But from the table above we can compute that $\Sigma_{4n} \neq \Sigma_{4n-1}$ in any case, so $(*)$ and $(**)$ have no solution. Hence $H_1 = G_{(i_1)}(\{n\},\{n\})$ for some legal coloring $i_1$ of $T_1$.
\end{proof}

\begin{proof}[Proof of Corollary~\ref{maincorollary:commensurator}]
Let $n \geq 2$ and define $\Gamma_{6,4n} \leq \Aut(T) \times \Aut(T_2^{(n)})$ as in Proposition~\ref{theorem:64n}. For $v_2 \in V(T_2^{(n)})$, the group $F = \proj_1(\Gamma_{6,4n}(v_2)) \leq \Aut(T)$ is torsion-free and acts simply transitively on the vertices of $\Aut(T)$: it is thus conjugate to $F_3$ in $\Aut(T)$. Moreover, the full projection $\proj_1(\Gamma_{6,4n}) \leq \Aut(T)$ commensurates $F$. Indeed, if $\gamma \in \Gamma_{6,4n}$ then $\gamma \Gamma_{6,4n}(v_2) \gamma^{-1} = \Gamma_{6,4n}(\gamma(v_2))$ so $\Gamma_{6,4n}(v_2) \cap \gamma \Gamma_{6,4n}(v_2) \gamma^{-1}$ is nothing else than the fixator of $v_2$ and $\gamma(v_2)$ in $\Gamma_{6,4n}$. This is a finite index subgroup of $\Gamma_{6,4n}(v_2)$ as wanted. Hence, the closure of the commensurator of $F_3$ in $\Aut(T)$ contains $G_{(i^{(n)})}(\{n\},\{n\})$ for some legal coloring $i^{(n)}$ of $T$ (see Proposition~\ref{theorem:64n}). The conclusion follows from the fact that $\bigcup_{n \geq 2}G_{(i^{(n)})}(\{n\},\{n\})$ is dense in $\Aut(T)$.
\end{proof}

\section{About products of three trees}

We finally prove Theorem~\ref{maintheorem:3trees}, which deals with products of three trees.


\begin{proof}[Proof of Theorem~\ref{maintheorem:3trees}]
Suppose that such a group $\Gamma$ exists. Let us consider $\Gamma(v_3)$, the fixator of $v_3$ in $T_3$. The group $\Gamma' = \proj_{1,2}(\Gamma(v_3)) \leq \Aut(T_1) \times \Aut(T_2)$ acts simply transitively on the vertices of $T_1 \times T_2$, i.e.\ it is a $(6,6)$-group. By hypothesis, $\proj_{1,3}(\Gamma)$ is dense in $H_1 \times H_3$, so $\proj_{1,3}(\Gamma(v_3))$ is dense in $H_1 \times H_3(v_3)$ (because $H_3(v_3)$ is open in $H_3$). Taking images under the continuous map $\proj_1$, we get that $\proj_1(\Gamma(v_3))$ is dense in $H_1$, i.e.\ $\overline{\proj_1(\Gamma')} = H_1$. Similarly, we have $\overline{\proj_2(\Gamma')} = H_2$. We deduce in particular that $\Gamma'$ is an irreducible $(6,6)$-group whose local actions on $T_1$ and $T_2$ contain $\Alt(6)$. The last hypothesis also implies that the values $\tau_1$ and $\tau_2$ associated to $\Gamma'$ are both equal to zero.

As can be read from Tables~\ref{table:66} and~\ref{table:6600T}, there are $23225$ equivalence classes of irreducible $(6,6)$-groups with $\tau_1 = \tau_2 = 0$ and $\underline{H_1}(v_1), \underline{H_2}(v_2) \geq \Alt(6)$. We indeed have $2240$ such groups that are torsion-free and $20985$ such groups with torsion.

There remains to prove that none of those $23225$ groups can be equal to $\Gamma'$. Let $\gamma$ be an element of $\Gamma'$. It induces a permutation of the six neighbors of $v_2$. Since all elements of $\Sym(6)$ have order $\leq 6$, there exists $o \in \{4,5,6\}$ such that $\gamma^o$ fixes $B(v_2,1)$ in $T_2$. If $Q_{\gamma^o}$ is the group obtained by adding the relation $\gamma^o = 1$ to the presentation of $\Gamma'$, then we have a natural surjection $Q_{\gamma^o} \to \underline{H_3}(v_3)$. Now recall that, if $\gamma^o$ is non-trivial, then $Q_{\gamma^o}$ is a finite group by the Normal Subgroup Theorem. Also, $\underline{H_3}(v_3)$ is isomorphic to $\Alt(6)$ or $\Sym(6)$ by hypothesis. Hence, for each of the $23225$ groups mentioned above and for each generator $g \in \{a_1,a_2,a_3,b_1,b_2,b_3\}$ we can compute (with GAP) the groups $Q_{g^o}$ for each $o \in \{4,5,6\}$ and check if one of these three finite groups surjects onto $\Alt(6)$ or $\Sym(6)$. If the answer is no for one of the six generators, then that group can be excluded.

We could check this condition on all $23225$ groups, and the answer is clear: none of them satisfies the condition.
\end{proof}

\newpage


\appendix

\section[Almost simplicity of commensurators of \texorpdfstring{$F_n$}{F_n} - by P.-E. Caprace]{Almost simplicity of commensurators of free groups -- by Pierre-Emmanuel Caprace}\label{appendix:A}

\begin{flushright}
\begin{minipage}[t]{0.45\linewidth}\itshape\small
Parfois le vacarme des espaces infinis\\
Effrayait un peu les humains\\
Qui se trainaient péniblement\\
A toute allure\\
Sur la voie lactée du progrès\\

Quand dans leur champ visuel\\
Un arbre surgissait encore

\hfill\upshape (Jacques Pr\'evert, \emph{Arbres}, 1976)
\end{minipage}
\end{flushright}


\subsection{Introduction}

Let $G$ be a group and $\Gamma \leq G$ be a subgroup. The \textbf{group of relative commensurators} (or simply the \textbf{relative commensurator}) of $\Gamma$ in $G$ is defined as the set 
$$\Comm_G(\Gamma) = \{g \in G \mid [\Gamma : \Gamma \cap g \Gamma g^{-1}] < \infty\}.$$
It is a subgroup of $G$ containing the normalizer $\mathrm N_G(\Gamma)$. 

Let $m \geq 3$, let $W_m$ be the \textbf{free Coxeter group of rank~$m$}, i.e.\ the free product of $m$ copies of the cyclic group of order~$2$, and $T$ be the $m$-regular tree, viewed as a Cayley tree of $W_m$. Let also $\Aut(T)^+$ be the index~$2$ subgroup of the full automorphism group $\Aut(T)$ consisting of the automorphisms preserving the canonical bipartition of $T$. In \cite[Remark~2.12(i)]{LMZ}, Lubotzky--Mozes--Zimmer emphasize the open problem asking whether the relative commensurator $\Comm_{\Aut(T)^+}(W_m)$ is a simple group. That problem is notably motivated by the analogy between the properties of tree lattices and lattices in rank one simple Lie groups, illustrated by numerous results (see \cite{Bass-Lubotzky}). The relative commensurator group $\Comm_{\Aut(T)^+}(W_m)$ could be compared with $\Comm_{\PSL_2(\RR)}(\PSL_2(\ZZ))$, which coincides with the simple group $\PSL_2(\QQ)$. 
Our first goal is to show that the relative commensurator group $\Comm_{\Aut(T)^+}(W_m)$ enjoys the weaker property of being \emph{almost simple}. In order to define that notion, we recall that a group $G$ is called \textbf{monolithic} if the intersection of all non-identity normal subgroups of $G$ is non-trivial. That intersection is then called the \textbf{monolith} of $G$; it is denoted by $\Mon(G)$. The group $G$ is called \textbf{almost simple} if $G$ is monolithic and if its monolith is a non-abelian simple group. Thus, if $G$ is almost simple, then the natural conjugation action of $G$ on its monolith $S = \Mon(G)$ is injective, and after identifying $G$ with its image in $\Aut(S)$, we obtain $S \cong \Inn(S) \leq G \leq \Aut(S)$. Conversely, given a non-abelian simple group $S$, any group $G$ with $S \cong \Inn(S) \leq G \leq \Aut(S)$ is almost simple. The term \emph{almost simple} is standard in finite group theory; its use in the study of infinite groups is less frequent.


\begin{thm}\label{thm:RelCom}
	Let $m \geq 3$ and $C_m= \Comm_{\Aut(T)}(W_m)$ be the relative commensurator of the free Coxeter group $W_m$ of rank~$m$ in the automorphism group of its Cayley tree $T$. Then:
\begin{enumerate}[(i)]
	\item $C_m$ is almost simple. 
	\item The simple monolith $\Mon(C_m)$ contains a finite index subgroup of $W_m$. 
	\item The closure of $\Mon(C_m)$ in $\Aut(T)$ coincides with $\Aut(T)^+$. 
	\item  $\Mon(C_m)$ is not finitely generated. 	
\end{enumerate}
\end{thm}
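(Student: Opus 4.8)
The plan is to establish the four assertions of Theorem~\ref{thm:RelCom} more or less in the order listed, bootstrapping each from the previous one, with the main new input being Theorem~\ref{maintheorem:2n2n+1} (the existence of a virtually simple $(2n,2n+1)$-group). First I would set up the geometric picture: a finite-index subgroup $\Gamma \leq W_m$ is a free group $F_k$ acting freely cocompactly on $T$, and by the discussion around Corollary~\ref{maincorollary:commensurator} and Theorem~\ref{maintheorem:64n}, commensurating lattices in $\Aut(T)$ arise from vertex stabilizers inside $(d_1,d_2)$-groups. The key observation is that $C_m=\Comm_{\Aut(T)}(W_m)$ is a countable group containing $W_m$ as a commensurated subgroup, and that any $(d_1,d_2)$-group $\Lambda$ whose first factor projects densely with local action $\geq\Alt(d_1)$ (for $d_1$ the degree of $T$, here $m$ even, or passing to a suitable finite-index/bipartite reduction when $m$ is odd via Theorem~\ref{maintheorem:2n2n+1}) gives vertex stabilizers that are free groups commensurable to $W_m$, hence lie in $C_m$ after conjugation; the whole projection $\proj_1(\Lambda)$ then sits inside $C_m$.

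For almost simplicity (i), I would argue that $C_m$ is monolithic with non-abelian simple monolith by exhibiting the monolith concretely: let $S$ be the subgroup of $C_m$ generated by all conjugates (in $C_m$) of $W_m^{(\infty)}$-type simple pieces coming from the virtually simple lattices $\Gamma_{2n,2n+1}$ of Theorem~\ref{maintheorem:2n2n+1}. Concretely, for each relevant $(d_1,d_2)$-group $\Lambda$ with simple subgroup $\Lambda^{(\infty)}$, the image $\proj_1(\Lambda^{(\infty)})$ is a simple subgroup of $C_m$ (it is simple because $\Lambda^{(\infty)}$ is simple and injects via $\proj_1$, by \cite[Corollary~1.1.22]{BMZ} or edge-transitivity as in the introduction). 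I would then show these simple subgroups, for varying $n$, generate a single simple normal subgroup of $C_m$: any two of them have commensurable (indeed, after conjugation, nested) intersections with $W_m$, so their normal closures in $C_m$ coincide; this common normal closure is the monolith $S=\Mon(C_m)$, it is simple because it is a directed union / amalgam of simple groups over which normality forces collapse, and it is non-abelian. Then every non-trivial normal subgroup of $C_m$ meets $S$ non-trivially (again because any non-trivial normal subgroup of $C_m$ has non-trivial intersection with some such lattice's simple part, using the NST: a non-trivial normal subgroup of $C_m$ restricts to a non-trivial, hence finite-index, normal subgroup of the just-infinite lattices), so $S\subseteq N$ for all such $N$, proving $C_m$ monolithic with simple monolith.

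Statement (ii) is then immediate from the construction: $S=\Mon(C_m)$ contains $\proj_1(\Lambda^{(\infty)})$ for $\Lambda=\Gamma_{2n,2n+1}$, and $\Lambda^{(\infty)}$ has finite index in $\Lambda$, so $\proj_1(\Lambda^{(\infty)})\cap W_m$ has finite index in $\proj_1(\Lambda)\cap W_m$, which in turn is commensurable with $W_m$; chasing finite indices (and passing to the even-degree or bipartite reduction) yields that $S$ contains a finite-index subgroup of $W_m$. For (iii), since $S$ contains a finite-index subgroup of $W_m$, its closure in $\Aut(T)$ is non-discrete and vertex-transitive (on one bipartition class), and contains the dense projection $\overline{\proj_1(\Gamma_{6,4n})}=G_{(i^{(n)})}(\{n\},\{n\})$ for every $n$ by Theorem~\ref{maintheorem:64n}; as in the proof of Corollary~\ref{maincorollary:commensurator}, $\bigcup_{n\geq 2}G_{(i^{(n)})}(\{n\},\{n\})$ is dense in $\Aut(T)$, so $\overline{S}\supseteq\Aut(T)^+$, and since $S\leq C_m\leq\Aut(T)$ with $S$ type-preserving (each simple piece is type-preserving, being an index-$4$ subgroup $\Lambda^{(\infty)}=\Lambda^+$ in the main cases), $\overline{S}=\Aut(T)^+$ exactly.

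Finally, for (iv), the non-finite-generation of $\Mon(C_m)$, I expect this to be the main obstacle and would prove it by a density/ascending-union argument: if $S$ were finitely generated, it would be contained in some $\Comm_{\Aut(T)}(W_m)$-conjugate of a single finitely generated lattice, hence in a single $\proj_1(\Lambda)$ for one $(d_1,d_2)$-group $\Lambda$, which is a discrete-after-quotient obstruction — more precisely, $\overline{S}=\overline{\proj_1(\Lambda)}=G_{(i)}(X,X)$ (or a relative) would be a \emph{fixed} member of $\mathcal{G}'_{(i)}$, contradicting (iii) which forces $\overline{S}=\Aut(T)^+\notin\mathcal{G}'_{(i)}$ (the full group $\Aut(T)$, equivalently $\Aut(T)^+$, is explicitly excluded from $\mathcal G'_{(i)}$ as it is not locally topologically finitely generated, cf.\ the remark after the Burger--Mozes--Zimmer question in the introduction). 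The delicate point is justifying that a finitely generated subgroup of $C_m$ commensurating $W_m$ is actually contained, up to conjugacy, in a single such projection $\proj_1(\Lambda)$; this requires showing that the commensurator $C_m$ is the directed union of the groups $\proj_1(\Lambda)$ as $\Lambda$ ranges over $(d_1,d_2)$-groups "containing" a fixed finite-index subgroup of $W_m$, together with an argument that this union is strictly increasing and that a finitely generated subgroup lands in one term — the latter following from the $\overline{\phantom{X}}$-closure computation and the fact that the closures $G_{(i^{(n)})}(\{n\},\{n\})$ form a strictly ascending chain whose union is dense but which is not eventually constant.
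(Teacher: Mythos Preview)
Your approach to (i)--(ii) has the right starting point --- producing a simple subgroup $B \leq C_m$ with $[W_m : B \cap W_m] < \infty$ via a virtually simple $(2n,2n+1)$-group --- but the argument that every non-trivial normal subgroup $N \trianglelefteq C_m$ contains $B$ has a gap. You invoke the NST, but the NST controls normal subgroups of a lattice in a product, not normal subgroups of $C_m$ intersected with such a lattice; there is no reason $N \cap \proj_1(\Lambda)$ should be normal in $\proj_1(\Lambda)$. The paper instead proves an elementary FC-centralizer lemma: if $N \trianglelefteq \Comm_G(\Gamma)$ and $N \cap \Gamma = \{1\}$ then $N$ centralizes a finite-index subgroup of $\Gamma$; since every finite-index subgroup of $W_m$ acts minimally on $T$ and hence has trivial centralizer in $\Aut(T)$, one gets $N \cap W_m \neq \{1\}$, hence $N \cap B \neq \{1\}$, hence $N \supseteq B$. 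Applying this once more inside $\Mon(C_m)$ gives simplicity of the monolith. Your ``directed union of simple groups'' picture is not needed and not justified. Also note that for $m=3$ no $(2n,2n+1)$-group is available; the paper supplies the simple subgroup via an arithmetic construction over $\mathbf{Q}(\sqrt 2)$.

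Your arguments for (iii) and (iv) have genuine errors. For (iii), the Chabauty-convergence input from Theorem~\ref{maintheorem:64n} lives only in the $6$-regular tree; you have no analogous family for general $m$, so the argument does not establish $\overline{\Mon(C_m)} = \Aut(T)^+$ outside $m=6$. The paper instead uses that $C_m$ is dense in $\Aut(T)$ (Bass--Kulkarni), so $\overline{\Mon(C_m)}$ is a non-trivial closed normal subgroup of $\Aut(T)$, and then Tits' simplicity theorem forces it to contain $\Aut(T)^+$. For (iv), your key claim --- that a finitely generated $S \leq C_m$ must sit inside a single $\proj_1(\Lambda)$ --- is unsupported: you have not shown $C_m$ is a directed union of such projections, and there is no a priori reason it should be. The paper's argument is entirely different: if $S$ were finitely generated, form the Schlichting completion $G = S /\!\!/ (S \cap W_m)$ and embed $S$ diagonally as a cocompact lattice with dense projections in $\Aut(T)^+ \times G$; compact generation of $G$ then forces vertex stabilizers in $\Aut(T)^+$ to be topologically finitely generated, contradicting the surjection of such a stabilizer onto $\prod_{\mathbf N} \mathbf C_2$.
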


This readily implies that $\Comm_{\Aut(T)^+}(W_m)$, which is an index~$2$ subgroup of $C_m$, is almost simple. Theorem~\ref{thm:RelCom} also implies that for any $d \geq 2$, the relative commensurator group $\Comm_{\Aut(T_{2d})}(F_d)$ of the free group $F_d$ in the automorphism group of its Cayley tree $T_{2d}$ is almost simple, since $F_d$ and $W_{2d}$ both contain $F_{2d-1}$ as a subgroup of index~$2$, and are thus commensurate in $\Aut(T_{2d})$.  

Denoting the monolith of $C_m =  \Comm_{\Aut(T)}(W_m)$ by $S_m$, we emphasize that  $\{S_m \}_{m \geq 3}$ constitutes an infinite family of pairwise non-isomorphic simple groups. Indeed, this follows from the Commensurator Rigidity Theorem in \cite[Theorem~A.1]{Monod_JAMS} or \cite[Theorem~8.1]{GKM}. The fact that the full commensurator groups $C_m$ are pairwise non-isomorphic for distinct values of $m$ was established in \cite[Corollary~2]{LMZ}. Moreover, Commensurator Rigidity implies that $\Aut(S_m)$ coincides with $\mathrm N_{\Aut(T)}(S_m)$. I do not know whether the latter group coincides with $C_m$; similarly,  \cite[Corollary~2]{LMZ} ensures that $\Aut(C_m)  = \mathrm N_{\Aut(T)}(C_m)$ and, as pointed out in \cite{LMZ}, it is an open problem to determine whether $\mathrm N_{\Aut(T)}(C_m) = C_m$. Note that $S_m$ is a characteristic subgroup of $C_m$, so that $\mathrm N_{\Aut(T)}(C_m) \leq \mathrm N_{\Aut(T)}(S_m)$.

Another open problem, popularized by A.\ Luboztky, asks whether the group of \emph{abstract commensurators} of the free group $F_d$ of rank~$d \geq 2$ is simple. We recall that for any group $\Gamma$, the group $\Comm(\Gamma)$ of \textbf{abstract commensurators} of $\Gamma$ is defined as the quotient of the set of all pairs $(\Gamma_0, \alpha_0)$ consisting of a finite index subgroup $\Gamma_0 \leq \Gamma$ and an injective homorphism $\alpha_0 \colon \Gamma_0 \to \Gamma$ whose image is of finite index, by the equivalence relation identifying pairs $(\Gamma_0, \alpha_0)$ and $(\Gamma_1, \alpha_1)$ such that $\alpha_0$ and $\alpha _1$ coincide on a finite index subgroup of $\Gamma_0\cap \Gamma_1$. The group $\Comm(\Gamma)$  was introduced in \cite[Section~6, Appendix~B]{Bass-Kulkarni}, where the idea of the concept is attributed to J.-P.\ Serre and W.\ Neumann. We contribute to Luboztky's question by establishing the following abstract companion to Theorem~\ref{thm:RelCom}. 

\begin{thm}\label{thm:AbsCom}
Let  $d \geq 2$ and $A=\Comm(F_d)$ be the group of abstract commensurators of the free group $F_d$. We identify $F_d$ with its natural image in $A$. Then:
\begin{enumerate}[(i)]
	\item $A$ is almost simple. 
	\item  $\Mon(A)$ contains a finite index subgroup of $F_d$. 
	\item  $\Mon(A)$ is not finitely generated. 
\end{enumerate}
\end{thm}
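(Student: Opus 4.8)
The strategy is to deduce Theorem~\ref{thm:AbsCom} from Theorem~\ref{thm:RelCom} together with the construction of virtually simple $(2n,2n+1)$-groups in Theorem~\ref{maintheorem:2n2n+1} (equivalently Theorem~\ref{theorem:2n2n+1}). First I would recall that $\Comm(F_d)$ does not depend on $d$ for $d \geq 2$ (all $F_d$ with $d \geq 2$ are commensurable, being finite-index subgroups of one another), so it suffices to treat a single convenient value of $d$, or rather to exhibit $F_d$ as a lattice acting on a suitable tree. The key geometric input is that $F_d$ is, up to conjugacy, the fundamental group of a bouquet of $d$ circles, hence acts freely cocompactly on the $2d$-regular tree $T_{2d}$; and $W_{2d}$, the free Coxeter group of rank $2d$, contains $F_{2d-1}$ as an index-$2$ subgroup and acts on the $2d$-regular tree as its Cayley tree. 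Thus $F_d$ and $W_{2d}$ are commensurate in $\Aut(T_{2d})$, and consequently $\Comm_{\Aut(T_{2d})}(F_d) = \Comm_{\Aut(T_{2d})}(W_{2d}) = C_{2d}$ in the notation of Theorem~\ref{thm:RelCom}.

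The second step is to relate the \emph{abstract} commensurator $A = \Comm(F_d)$ to the \emph{relative} commensurator $C_{2d}$. There is a canonical homomorphism $C_{2d} \to A$: an element $g \in \Aut(T_{2d})$ commensurating $F_d$ restricts to an isomorphism between two finite-index subgroups of $F_d$, defining a class in $A$. The content needed is that this map is injective with image a finite-index... more precisely, one wants to identify $\Mon(A)$ inside this picture. Here I would invoke that $F_d$ has trivial centre and, more importantly, that $F_d$ is \emph{co-Hopfian up to finite index} in the appropriate sense so that distinct elements of $C_{2d}/(\text{pointwise stabilizer})$ give distinct abstract commensurators; since the pointwise stabilizer of $T_{2d}$ in $\Aut(T_{2d})$ is trivial, the map $C_{2d}\to A$ is injective. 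For surjectivity one uses that any injection between finite-index subgroups of $F_d$ (a free group) takes a vertex-transitive free action to another such, hence is induced by a tree automorphism — this is the classical fact that abstract commensurators of free groups are realized geometrically, as in \cite[Corollary~4.25]{Bass-Kulkarni}. So $A \cong C_{2d}$ as abstract groups, and Theorem~\ref{thm:RelCom}(i) immediately gives (i): $A$ is almost simple, with $\Mon(A) \cong \Mon(C_{2d}) = S_{2d}$, which by Theorem~\ref{thm:RelCom}(ii) contains a finite-index subgroup of $W_{2d}$, hence a finite-index subgroup of $F_d$; this is (ii). Part (iii), that $\Mon(A)$ is not finitely generated, is then exactly Theorem~\ref{thm:RelCom}(iv) transported across the isomorphism.

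For the non-finite-generation (the underlying content of Theorem~\ref{thm:RelCom}(iv), which I would want to re-derive in this language if a self-contained argument is preferred), the plan is to use Theorem~\ref{maintheorem:64n}: the virtually simple $(6,4n)$-groups $\Gamma_{6,4n}$ produce, via vertex stabilizers in the second tree, finite-index free subgroups of $F_3$ whose commensurator in $\Aut(T_6)$ is dense in $\Aut(T_6)$ (Corollary~\ref{maincorollary:commensurator}), and in particular the closure of $\Mon$ is all of $\Aut(T_{2d})^+$ (Theorem~\ref{thm:RelCom}(iii)); but $\Aut(T_{2d})^+$ is a non-discrete, non-compactly-generated... rather: a dense finitely generated subgroup of $\Aut(T_{2d})^+$ would, combined with cocompactness of $S_m$'s action, force $S_m$ to be compactly presented / of bounded "depth", contradicting the spreading-out across the groups $G_{(i^{(n)})}(\{n\},\{n\})$ for all $n$. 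More cleanly: if $\Mon(A)$ were finitely generated, its action on $T_{2d}$ would be "of finite type", meaning the local action at a vertex would stabilize at some finite radius, but $\Mon(A)$ contains elements realizing arbitrarily deep local behaviour coming from the groups $G_{(i^{(n)})}(\{n\},\{n\})$ with $n \to \infty$, a contradiction. I expect \textbf{this last step — the precise argument for non-finite-generation — to be the main obstacle}, since it requires carefully quantifying how "complexity" of elements in the monolith grows and showing a finite generating set cannot capture it; the cleanest route is probably to cite Theorem~\ref{thm:RelCom}(iv) directly rather than reprove it, reducing the proof of Theorem~\ref{thm:AbsCom} to the identification $A \cong C_{2d}$, which is the routine (if slightly technical) part.
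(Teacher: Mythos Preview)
Your central claim, that the natural map $C_{2d} \to A$ is an isomorphism, is false, and this undermines the whole reduction. The map is injective (this follows from the triviality of the FC-centralizer, exactly as you say), but it is never surjective. Indeed, the paper observes that the natural maps $C_m \to A$ are injective for \emph{every} $m \geq 3$, while the groups $C_m$ are pairwise non-isomorphic by Commensurator Rigidity; so $A$ contains copies of all the $C_m$ simultaneously and cannot coincide with any single one. Concretely, your claim that ``any injection between finite-index subgroups of $F_d$ is induced by a tree automorphism'' fails because such an injection need not preserve the index: the isomorphism between $F_2$ and an index-$2$ copy of $F_3$ inside it is an abstract commensurator that cannot be realized in $\Aut(T_4)$. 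The citation of \cite[Corollary~4.25]{Bass-Kulkarni} is a misreading: that result says the relative commensurator is \emph{dense} in $\Aut(T)$, not that it exhausts the abstract one.

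For (i) and (ii) your strategy is nonetheless salvageable: using only the \emph{injectivity} $C_{2d} \hookrightarrow A$, the simple group $S_{2d} = \Mon(C_{2d})$ lands in $A$ and still contains a finite-index subgroup of $F_d$, so Lemma~\ref{lem:AlmostSimple} applies directly. The paper notes this alternative explicitly (Remark~\ref{rem:embedding}), though its primary route uses $\PSL_2(\mathbf{Q}) \supset \PSL_2(\mathbf{Z})$ instead. For (iii), however, your reduction collapses: knowing that $S_{2d}$ is not finitely generated says nothing about $\Mon(A)$, since subgroups of finitely generated groups need not be finitely generated. The paper's argument for (iii) is genuinely different and uses the full strength of the embeddings $B_m \hookrightarrow \Mon(A)$ for \emph{all} $m \geq 6$: assuming $\Mon(A)$ finitely generated, one passes to a Schlichting completion and extracts a compactly generated topologically simple quotient $H$; the images of the $B_m$ then force $H$ to contain infinite pro-$p$ subgroups for every prime $p$, contradicting a structural theorem of Caprace--Reid--Willis on such groups. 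Your sketch for (iii) via ``arbitrarily deep local behaviour'' gestures in a related direction but does not supply an actual obstruction.
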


Since $F_2$ and $F_d$ are abstractly commensurable,  we have in fact $\Comm(F_2) = \Comm(F_d)$ for all $d \geq 2$. Hence Theorem~\ref{thm:AbsCom} affords a single simple group up to isomorphism, which contrasts with Theorem~\ref{thm:RelCom}. In fact the simple group $\Mon(A)$ is the largest one, in the sense that the natural homomorphism $C_m \to A$ induces an embedding $S_m \to \Mon(A)$  for every $m \geq 3$ (see Remark~\ref{rem:embedding} below). 

The main ingredient in the proofs of Theorems~\ref{thm:RelCom} and~\ref{thm:AbsCom} is the existence of simple subgroups of $A$ and $C_m$ containing a finite index subgroup of $F_d$ and $W_m$ respectively. In the case of $A$, such a simple group is provided by $\PSL_2(\mathbf Q)$. In the case of $C_m$, we rely on Radu's Theorem~\ref{maintheorem:2n2n+1}.

An important result on the normal subgroup structure of relative commensurators of lattices in general locally compact groups is established by D.~Creutz and Y.~Shalom~\cite[Theorem~1.1]{CreutzShalom}; it implies notably that any non-trivial normal subgroup of $C_m$ contains a finite index subgroup of $W_m$.  However, the proof of Theorem~\ref{thm:RelCom} does not require to invoke that result.

\subsection{A criterion of almost simplicity}

Let $G$ be a group and $\Gamma \leq G$ be a subgroup. The relative commensurator group of $\Gamma$ in $G$ was defined in the introduction. We also define the \textbf{FC-centralizer} of $\Gamma$ in $G$ as the set $\FC_G(\Gamma)$ of those $g \in G$ which centralize a finite index subgroup of $\Gamma$. It is easy to check  that $\FC_G(\Gamma)$ is a normal subgroup of $\Comm_G(\Gamma)$. It coincides with the kernel of the natural homomorphism of $\Comm_G(\Gamma)$ to the group $\Comm(\Gamma)$ of abstract commensurators of $\Gamma$. 

\begin{lem}\label{lem:NormalFC}
Let $G$ be a group, $\Gamma \leq G$ be a subgroup and let $N \leq \Comm_G(\Gamma)$ be such that $\Gamma \leq \mathrm N_G(N)$. 
If $N \cap \Gamma = \{1\}$, then $N \leq \FC_G(\Gamma)$. 
\end{lem}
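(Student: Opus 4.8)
\textbf{Proof plan for Lemma~\ref{lem:NormalFC}.}
The plan is to exploit the hypothesis $N \cap \Gamma = \{1\}$ together with the fact that $\Gamma$ normalizes $N$ in order to show that each element $n \in N$ commutes with a finite index subgroup of $\Gamma$. First I would fix an arbitrary $n \in N$ and consider the commutator map $\Gamma \to N$ given by $\gamma \mapsto [n,\gamma] = n^{-1}\gamma^{-1}n\gamma$. Because $\Gamma \leq \mathrm N_G(N)$, the element $\gamma^{-1}n\gamma$ lies in $N$, so $[n,\gamma] = n^{-1}(\gamma^{-1}n\gamma) \in N$; thus the map indeed takes values in $N$. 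On the other hand, since $n \in \Comm_G(\Gamma)$, the subgroup $\Gamma_0 = \Gamma \cap n\Gamma n^{-1}$ has finite index in $\Gamma$; I would restrict attention to $\Gamma_0$ and observe that for $\gamma \in \Gamma_0$ one can write $\gamma = n \delta n^{-1}$ for some $\delta \in \Gamma$, whence $[n,\gamma] = n^{-1}\gamma^{-1}n\gamma = n^{-1}(n\delta^{-1}n^{-1})n\gamma = \delta^{-1}\gamma \in \Gamma$.

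Combining the two observations, for every $\gamma \in \Gamma_0$ the commutator $[n,\gamma]$ lies in $N \cap \Gamma = \{1\}$, so $n$ centralizes $\Gamma_0$. Since $\Gamma_0$ has finite index in $\Gamma$, this exactly says $n \in \FC_G(\Gamma)$. As $n \in N$ was arbitrary, $N \leq \FC_G(\Gamma)$, which is the claim.

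I expect the only genuine subtlety to be the verification that $n\Gamma n^{-1} \cap \Gamma$ is the right subgroup to work with and that an element of it has the required form $n\delta n^{-1}$ with $\delta \in \Gamma$; this is immediate from the definition of the intersection but it is the step where the finite-index hypothesis (i.e. $n \in \Comm_G(\Gamma)$) is actually used. Everything else is a formal manipulation of commutators inside the group generated by $N$ and $\Gamma$, using only that $\Gamma$ normalizes $N$. No deeper input — and in particular no appeal to the tree structure or to the results of Creutz--Shalom — is needed here.
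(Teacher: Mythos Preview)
Your proof is correct and is essentially the same as the paper's: fix $n\in N$, pass to the finite-index subgroup $\Gamma_0=\Gamma\cap n\Gamma n^{-1}$, and observe that for $\gamma\in\Gamma_0$ the commutator $[n,\gamma]$ lies in $N$ (because $\Gamma$ normalizes $N$) and in $\Gamma$ (because $n^{-1}\gamma n\in\Gamma$), hence is trivial. The paper phrases the choice of $\Gamma_0$ slightly differently (a finite-index $\Gamma_0$ with $x\Gamma_0 x^{-1}\leq\Gamma$), but the argument is identical.
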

\begin{proof}
Let $x \in N$. Since $x$ commensutares $\Gamma$, there is a finite index subgroup $\Gamma_0 \leq \Gamma$ such that $x \Gamma_0 x^{-1} \leq \Gamma$. Since $\Gamma_0$ normalizes $N$, it follows that for all $\gamma \in \Gamma_0$, the commutator $[x, \gamma]$ is contained in the intersection $N \cap \Gamma$. The latter being trivial by hypothesis, we infer that $x$ commutes with $\Gamma_0$. Thus $x \in \FC_G(\Gamma)$. 
\end{proof}

We say that a subgroup $\Gamma$ of a group $G$  is a \textbf{commensurated} subgroup of $G$ if $\Comm_G(\Gamma) = G$. 

\begin{lem}\label{lem:AlmostSimple}
Let $G$ be a non-trivial group and $\Gamma$ be a commensurated subgroup such that $\FC_G(\Gamma)=\{1\}$. Suppose that $G$ has a   simple subgroup $B$ containing a finite index subgroup of $\Gamma$. Then $G$ is almost simple,  and the monolith $\Mon(G)$ coincides with the normal closure of $B$ in $G$. In particular $\Mon(G)$ contains a finite index subgroup of $\Gamma$. 
\end{lem}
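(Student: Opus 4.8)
The plan is to deduce almost simplicity of $G$ from the existence of the simple subgroup $B$ together with the hypotheses $\Comm_G(\Gamma) = G$ and $\FC_G(\Gamma) = \{1\}$. First I would let $M$ denote the normal closure of $B$ in $G$ and show that $M$ is contained in every non-trivial normal subgroup of $G$; this immediately gives that $G$ is monolithic with $\Mon(G) = M$, and since $B \leq M$ is non-abelian (a finite index subgroup of $\Gamma$ embeds in it, so $B$ is infinite, hence non-abelian because an infinite simple group is non-abelian), simplicity of $M$ will then make $G$ almost simple. The containment $M$ inside every non-trivial normal $N$ is the crux: given $1 \neq N \trianglelefteq G$, I want to intersect $N$ with $B$. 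Since $B$ contains a finite index subgroup $\Gamma_0 \leq \Gamma$, and $\Gamma$ is commensurated, the subgroup $\Gamma_0$ is also commensurated in $G$; moreover $\Gamma_0$ normalizes $N$ (as $N$ is normal in $G$). So Lemma~\ref{lem:NormalFC} applies with the roles adjusted: if $N \cap \Gamma_0 = \{1\}$ then $N \leq \FC_G(\Gamma_0)$. But $\FC_G(\Gamma_0) = \FC_G(\Gamma)$ since $\Gamma_0$ has finite index in $\Gamma$, and this is trivial by hypothesis, forcing $N = \{1\}$, a contradiction. Hence $N \cap \Gamma_0 \neq \{1\}$, so $N \cap B \neq \{1\}$.

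Next, $N \cap B$ is a non-trivial normal subgroup of $B$ (it is normalized by $N$ and by $B$, being an intersection of two subgroups each normalized by $B$ — here one uses that $B$ normalizes itself and normalizes $N$). Since $B$ is simple, $N \cap B = B$, i.e.\ $B \leq N$. As $N$ is normal in $G$, the normal closure $M$ of $B$ is contained in $N$. This proves $M \leq N$ for every non-trivial normal $N \trianglelefteq G$, so $G$ is monolithic and $\Mon(G) = M$ (note $M \neq \{1\}$ because $B$ is infinite). Then I would verify that $M$ is itself simple: any non-trivial normal subgroup $N$ of $M$ that is moreover normal in $G$ must contain $M$ by what we just showed, so it equals $M$; and in fact $\Mon(G) = M$ has the property that its only normal subgroups normalized by $G$ are $\{1\}$ and $M$. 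To upgrade this to full simplicity of $M$, I would invoke the standard fact that the monolith of a monolithic group is characteristically simple, and here more is true: a minimal normal subgroup of $G$ that is generated by conjugates of a \emph{simple} subgroup $B$ is itself simple — indeed $M$ is a (possibly infinite) product of the $G$-conjugates $gBg^{-1}$, each simple, and since $M$ is the monolith any normal subgroup of $M$ that is $G$-invariant is all of $M$; combined with the fact that $B$ is self-normalizing-in-$M$ only up to the action, one shows directly that $M$ has no proper non-trivial normal subgroup at all. Finally, since $B$ contains a finite index subgroup of $\Gamma$ and $B \leq M = \Mon(G)$, the last assertion follows.

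I expect the main obstacle to be the final step: establishing that $M = \Mon(G)$ is not merely a minimal $G$-invariant subgroup but an abstractly simple group. The clean way to do this is: let $1 \neq P \trianglelefteq M$; then the normal closure $P^G$ of $P$ in $G$ is a non-trivial normal subgroup of $G$, hence contains $M$, hence $P^G = M$; now $M = P^G$ is generated by $G$-conjugates of $P$, and since $M$ is generated by the conjugates $gBg^{-1}$ with $B$ simple, a diagonal/commutator argument (each $gBg^{-1}$ either lies in the centralizer of $P$ or meets $P$ non-trivially, and in the latter case $gBg^{-1} \leq P$ by simplicity of $gBg^{-1}$) shows that $P$ contains every conjugate of $B$ that does not centralize it; if some conjugate $gBg^{-1}$ centralized $P$ entirely then $P$ would lie in the centralizer of $gBg^{-1}$ in $M$, but one checks this centralizer is trivial because $M$ is the monolith and the center of $M$ is trivial (again $M$ non-abelian). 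Carrying out this centralizer bookkeeping carefully is the delicate part; everything else is a direct application of Lemma~\ref{lem:NormalFC} and the simplicity of $B$. I would present these steps in the order: (1) $M \leq N$ for all $1 \neq N \trianglelefteq G$, via Lemma~\ref{lem:NormalFC}; (2) conclude $G$ monolithic with $\Mon(G) = M$; (3) $M$ simple; (4) $M \supseteq$ a finite index subgroup of $\Gamma$.
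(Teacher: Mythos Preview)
Your steps (1), (2), and (4) are correct and match the paper's argument exactly: Lemma~\ref{lem:NormalFC} applied to the finite index subgroup $\Gamma_0 = B \cap \Gamma$ forces $N \cap B \neq \{1\}$ for every non-trivial $N \trianglelefteq G$, whence $B \leq N$ by simplicity of $B$, and $G$ is monolithic with $\Mon(G) = M$ equal to the normal closure of $B$.

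The gap is in step (3). Your proposed dichotomy ``either $gBg^{-1}$ centralizes $P$ or $gBg^{-1} \cap P \neq \{1\}$'' is not justified: from $gBg^{-1} \cap P = \{1\}$ and $P \trianglelefteq M$ you only get that $gBg^{-1}$ normalizes $P$ with trivial intersection, which does not force centralization. The subsequent centralizer bookkeeping you sketch does not close this. The paper sidesteps all of this with a much cleaner idea: \emph{re-apply step (1) to $M$ itself}. The subgroup $B \cap \Gamma$ is commensurated in $M$ (being commensurated in the overgroup $G$) and $\FC_M(B \cap \Gamma) \leq \FC_G(\Gamma) = \{1\}$, so by the already-proved first part, $M$ is monolithic with $\Mon(M) \supseteq B$. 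Now $\Mon(M)$ is characteristic in $M$, hence normal in $G$, hence contains $\Mon(G) = M$; thus $\Mon(M) = M$, which is exactly the statement that $M$ has no proper non-trivial normal subgroup. This two-line recursion replaces your entire centralizer argument.
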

\begin{proof}
Let $N$ be a non-identity normal subgroup of $G$. By hypothesis, we have $[\Gamma : B \cap \Gamma] < \infty$. Thus $\FC_G(B \cap \Gamma) = \FC_G(\Gamma)= \{1\}$.  By Lemma~\ref{lem:NormalFC}, this implies that $N \cap B \cap \Gamma$ is non-trivial. In particular $N \cap B$ is non-trivial. Since $B$ is simple, we obtain $N \geq B$. Therefore $G$ is monolithic and $\Mon(G) \geq B$. 

The group $B \cap \Gamma$ is a commensurated subgroup of $\Mon(G)$, and its FC-centralizer is trivial. By the first part of the proof, the monolith of $\Mon(G)$ is thus monolithic, and its monolith contains $B$. Since $\Mon(\Mon(G))$ is a  characteristic subgroup of $\Mon(G)$, it is normal in $G$. Thus $\Mon(\Mon(G))= \Mon(G)$. This proves that $\Mon(G)$ is simple. 

It remains to show that $\Mon(G)$ is not abelian. If $\Mon(G)$ were abelian, then $B$, hence $\Gamma$, would be finite. This would imply that  $G = \FC_G(\Gamma)=\{1\}$, contradicting the hypothesis that $G$ is non-trivial. 
\end{proof}

\subsection{Proofs of Theorems~\ref{thm:RelCom} and~\ref{thm:AbsCom}}


%

We will rely on Theorem~\ref{maintheorem:2n2n+1} to verify the hypotheses of Lemma~\ref{lem:AlmostSimple}. In order to cover the trivalent tree, we need the following result relying on arithmetic groups.

\begin{prop}\label{prop:trivalent}
Let $W_3 = \mathbf C_2 * \mathbf C_2 * \mathbf C_2$ be the free Coxeter group of rank~$3$ and $T_3$ be its Cayley tree. Then $\Comm_{\Aut(T_3)}(W_3)$ contains a simple subgroup $B$ such that $[W_3 : B \cap W_3] < \infty$. 
\end{prop}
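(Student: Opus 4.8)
\textbf{Proof strategy for Proposition~\ref{prop:trivalent}.}
The plan is to exhibit inside $\Comm_{\Aut(T_3)}(W_3)$ a simple group that is commensurate to $W_3$, using an arithmetic lattice in a product of a tree with another locally compact group. The key observation is that $W_3$ contains the free group $F_2$ as an index~$2$ subgroup (the even subgroup), so $\Comm_{\Aut(T_3)}(W_3) = \Comm_{\Aut(T_3)}(F_2)$, and it suffices to find a simple subgroup $B$ with $[F_2 : B\cap F_2]<\infty$. Now $F_2$ embeds as a lattice in $\PGL_2(\mathbf{Q}_2) \cong \Aut(T_3)$ (more precisely, a finite-index subgroup of $W_3$ is a torsion-free lattice acting on the Bruhat--Tits tree of $\PGL_2(\mathbf{Q}_2)$, which is the $3$-regular tree). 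The commensurator of such a lattice is large: by the work on $S$-arithmetic groups, a finite-index subgroup of $F_2$ sits as an $S$-arithmetic lattice, and its commensurator in $\Aut(T_3)$ contains a copy of $\PGL_2(\mathbf{Q})$ acting on $T_3$ via the embedding $\PGL_2(\mathbf{Q}) \hookrightarrow \PGL_2(\mathbf{Q}_2)$. The subgroup $B = \PSL_2(\mathbf{Q})$ (or $\PSL_2(\mathbf{Z}[1/2])$) is simple, lies in $\Comm_{\Aut(T_3)}(W_3)$, and contains a finite-index subgroup of $F_2$, hence of $W_3$.

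First I would make the identification of $\Aut(T_3)$-relevant arithmetic data precise: fix a torsion-free finite-index subgroup $\Lambda \leq W_3$, realize $\Lambda$ as a cocompact lattice in $G = \PGL_2(\mathbf{Q}_2)$ acting on its Bruhat--Tits tree $T_3$ (this is classical; a free group of the appropriate rank arises as the fundamental group of a finite graph that is the quotient $\Lambda\backslash T_3$, and one can take for instance a congruence subgroup inside $\PGL_2(\mathbf{Z}[1/2])$). Then I would invoke the fact that $\Comm_{G}(\Lambda) \supseteq \PGL_2(\mathbf{Q})$ via the diagonal embedding, which follows from strong approximation / the Borel density-type argument for $S$-arithmetic groups: two congruence subgroups of $\PGL_2(\mathbf{Z}[1/2])$ are commensurable in $\PGL_2(\mathbf{Q})$, and conjugating by a rational element preserves commensurability class. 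The group $B := \PSL_2(\mathbf{Q})$ is then inside $\Comm_{\Aut(T_3)}(W_3)$, is simple (a standard fact), and since $B \cap \Lambda$ has finite index in $\Lambda$ (both being $S$-arithmetic lattices in the same group up to commensurability), we get $[W_3 : B\cap W_3]<\infty$.

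The main obstacle is making the containment $\PGL_2(\mathbf{Q}) \leq \Comm_{\Aut(T_3)}(W_3)$ rigorous at the level of the tree action rather than just abstractly: one must check that the abstract commensuration of congruence subgroups is actually implemented by tree automorphisms normalizing nothing spurious, i.e.\ that the embedding $\PGL_2(\mathbf{Q}) \hookrightarrow \Aut(T_3)$ coming from $\PGL_2(\mathbf{Q}_2)$ really does commensurate the image of $W_3$. This is where I expect to lean on the literature, in particular the Bass--Kulkarni framework \cite{Bass-Kulkarni} and the standard description of commensurators of tree lattices in \cite{Bass-Lubotzky}, where it is shown that for an $S$-arithmetic lattice $\Lambda$ in a rank-one group over a local field, $\Comm_{G}(\Lambda)$ contains the corresponding group over the global field. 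A secondary technical point to verify is torsion: $\PSL_2(\mathbf{Q})$ does contain torsion, but that is harmless since we only need $B$ simple and commensurate to $W_3$, not torsion-free; alternatively one could pass to a suitable torsion-free characteristic-free simple subgroup if needed, but simplicity of $\PSL_2(\mathbf{Q})$ together with $[W_3 : B \cap W_3] < \infty$ is exactly what the hypotheses of Lemma~\ref{lem:AlmostSimple} require, so no further refinement is necessary.
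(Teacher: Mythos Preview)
Your approach has a genuine gap that makes it fail as written. You propose to use $B = \PSL_2(\mathbf{Q})$, embedded in $\Aut(T_3)$ via $\PGL_2(\mathbf{Q}_2)$, and you claim that a finite-index subgroup of $W_3$ can be realized as a congruence subgroup inside $\PGL_2(\mathbf{Z}[1/2])$. But $\PGL_2(\mathbf{Z}[1/2])$ is a \emph{non-uniform} lattice in $\PGL_2(\mathbf{Q}_2)$: the quotient of the Bruhat--Tits tree by its action is an infinite ray, not a finite graph. Meanwhile $W_3$ acts cocompactly on $T_3$ (the quotient is a single vertex). A uniform lattice and a non-uniform lattice in $\Aut(T_3)$ are never commensurable, so $[W_3 : \PSL_2(\mathbf{Q}) \cap W_3] = \infty$ and the conclusion you want simply does not hold for this choice of $B$. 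The Bass--Kulkarni Commensurability Theorem you implicitly rely on only identifies the commensurability classes of \emph{uniform} tree lattices.

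The paper's proof circumvents exactly this obstruction by passing to a quaternion algebra over a real quadratic field: take $K = \mathbf{Q}(\sqrt 2)$ and $H = (-1,-1)_K$, which is ramified at both archimedean places. Then $\SL_1(H)/Z$ is simple by a theorem of Margulis, and at the $2$-adic place it embeds in $\PGL_2(\mathbf{Q}_2(\sqrt 2))$, whose Bruhat--Tits tree is still trivalent because $\mathbf{Q}_2(\sqrt 2)/\mathbf{Q}_2$ is totally ramified. The crucial point is that ramification at infinity forces the arithmetic lattice to be \emph{cocompact}, so it is genuinely commensurable with $W_3$ by Bass--Kulkarni. Your $\PSL_2(\mathbf{Q})$ idea is morally the right shape but lives in the wrong commensurability class; the quaternionic twist is not optional here.
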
	
\begin{proof}
Let $K = \mathbf Q(\sqrt 2)$ and let $H = (-1, -1)_K$ be the Hamilton quaternion algebra over $K$. The set $\mathscr R_\infty$ of the infinite places of the field $K$ contains exactly two elements   (coming from the two embeddings $K \to \mathbf R$), and for each $v \in \mathscr R_\infty$, the algebra $H \otimes_K K_v$ is a division algebra (it is a subalgebra of the Hamilton quaterions over $\mathbf R$). Moreover, for any finite place $v$ of $K$, the algebra $H \otimes_K K_v$ splits, i.e. it is isomorphic to the matrix algebra $M_2(K_v)$. This follows from \cite[Propositions~9.13 and~9.14]{Jacobson}. We may thus invoke  the main result of \cite{Margulis80}, ensuring that   the quotient $B = \SL_1(H)/Z$ of the group of reduced norm one elements $\SL_1(H)$ by its center  is simple. 

On the other hand, considering the $2$-adic place, we obtain an embedding 
$$\varphi \colon \SL_1(H)/Z \to \PGL_2(\mathbf Q_2(\sqrt 2)).$$ 
Since $\mathbf Q_2(\sqrt 2)$ is a totally ramified extension of $\mathbf Q_2$, its residue field has order~$2$, so that the Bruhat--Tits tree of $\PGL_2(\mathbf Q_2(\sqrt 2))$ is the trivalent tree.  Moreover, by \cite[Lemma I.3.1.1(v) and Theorem 3.2.7(b)]{Margulis}, the group $\varphi(\SL_1(H)/Z)$ contains a cocompact lattice of $\PGL_2(\mathbf Q_2(\sqrt 2))$ as a commensurated subgroup. The latter lattice must   be commensurable with $W_3$ by \cite[Commensurability Theorem]{Bass-Kulkarni}. The desired result follows. 
\end{proof}

We now collect the relevant information from  Theorem~\ref{maintheorem:2n2n+1} and Proposition~\ref{prop:trivalent} that we shall need for the proof of Theorem~\ref{thm:RelCom}. 

\begin{cor}\label{cor:Radu}
Let $m \geq 3$ and $C_m= \Comm_{\Aut(T)}(W_m)$. Then $C_m$ contains a simple subgroup $B$ such that $[W_m : B \cap W_m] < \infty$.

Furthermore, for $m \geq 6$, the simple group $B$  can be chosen finitely generated, and such that the Schlichting completion $G = B/\!\! /B \cap W_m$ is a  simple group containing an infinite pro-$p$ subgroup for every prime $p < m-1$. 
\end{cor}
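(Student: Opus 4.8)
The plan is to realize a finite-index subgroup of $W_m$ as a commensurated subgroup of a simple group sitting inside $\Comm_{\Aut(T)}(W_m)$, using the virtually simple $(2n,2n+1)$-groups of Theorem~\ref{maintheorem:2n2n+1} when $m \geq 4$, and Proposition~\ref{prop:trivalent} to dispose of $m = 3$. For $m \geq 4$ I would fix $n \geq 2$ with $m \in \{2n, 2n+1\}$ and set $\Gamma = \Gamma_{2n,2n+1}$ as in Theorem~\ref{theorem:2n2n+1}. Since $\Gamma$ contains $\Gamma_{3,3}$, which is irreducible by Proposition~\ref{proposition:33nonrf}, it is itself irreducible, hence not residually finite, so the Normal Subgroup Theorem makes $B := \Gamma^{(\infty)}$ a simple subgroup of finite index in $\Gamma$; just-infiniteness also forces both projections $\proj_1, \proj_2$ to be injective. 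Let $j \in \{1,2\}$ index the $m$-regular factor, write $T = T_j$, and let $u$ be a vertex of the other tree $T_{3-j}$. By construction the fixator $\Gamma(u)$ is generated by the $m$ involutions carrying a base vertex of $T$ to its neighbours and acts simply transitively on $V(T)$; as $T$ is the corresponding Cayley graph, there are no further relations, so $\proj_j(\Gamma(u)) \cong \C_2^{\ast m} = W_m$. For $\gamma \in \Gamma$ one has $\gamma \Gamma(u) \gamma^{-1} = \Gamma(\gamma u)$ and $\Gamma(u) \cap \Gamma(\gamma u)$ is the fixator of $\{u, \gamma u\}$, of index at most $|S(u, \mathrm{dist}(u, \gamma u))|$ in $\Gamma(u)$; hence $\Gamma$ commensurates $\Gamma(u)$, and through $\proj_j$ we obtain $B \leq \Gamma \leq \Comm_{\Aut(T)}(W_m) = C_m$. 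As $[\Gamma(u) : \Gamma^{(\infty)} \cap \Gamma(u)] \leq [\Gamma : \Gamma^{(\infty)}] < \infty$, the subgroup $B \cap W_m$ has finite index in $W_m$. This proves the first assertion for $m \geq 4$; for $m = 3$ it is exactly Proposition~\ref{prop:trivalent}.

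For the \emph{furthermore} part I would keep the group $B = \Gamma^{(\infty)}$ just produced, now with $m \geq 6$. As a finite-index subgroup of the finitely presented group $\Gamma$, it is finitely presented, in particular finitely generated. To identify the Schlichting completion $G = B /\!\!/ (B \cap W_m)$, the key point is that although $W_m$ embeds \emph{discretely} in $\Aut(T_j)$, its finite-index subgroup $B \cap W_m = \Gamma^{(\infty)} \cap \Gamma(u)$ embeds — via the \emph{other} projection $\proj_{3-j}$ — \emph{densely} into the compact open vertex stabilizer $\overline{\proj_{3-j}(B)}(u)$ of the group $\overline{\proj_{3-j}(B)} \leq \Aut(T_{3-j})$: indeed $\proj_{3-j}$ is injective with dense image, and it carries $\Gamma^{(\infty)} \cap \Gamma(u)$ onto the stabilizer of $u$ in $\proj_{3-j}(\Gamma^{(\infty)})$, which is dense in the open subgroup $\overline{\proj_{3-j}(B)}(u)$. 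By the universal property of the Schlichting completion this gives $G \cong \overline{\proj_{3-j}(B)}$, a totally disconnected locally compact group acting on the $(m \pm 1)$-regular tree, the degree being $m+1$ when $m$ is even and $m-1$ when $m$ is odd.

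It remains to see that $G$ is simple and has the stated pro-$p$ subgroups. Since $B = \Gamma^{(\infty)} \subseteq \Gamma^+$, the group $\overline{\proj_{3-j}(B)}$ is type-preserving; and because the relevant degree is $m+1 \geq 7$ (resp. $m-1 \geq 6$) — this is exactly where $m \geq 6$ is used — Theorem~\ref{theorem:Raduclassification} applies to the non-discrete group $\overline{\proj_{3-j}(\Gamma)}$, whose local action is $\Sym(m \pm 1) \geq \Alt(m \pm 1)$, and identifies it up to conjugacy with some $H \in \mathcal{G}'_{(i)}$. Then $\overline{\proj_{3-j}(B)}$ is a closed, hence open, finite-index normal subgroup of $H$ contained in $\Aut(T_{3-j})^+$, so it lies in $H \cap \Aut(T_{3-j})^+ = G_{(i)}^+(X,X)$; being open of finite index in the infinite topologically simple group $G_{(i)}^+(X,X)$ (Theorem~\ref{theorem:Radusimple}), it must equal it, since an infinite topologically simple totally disconnected locally compact group admits no proper open finite-index subgroup (pass to the normal core). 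Thus $G = G_{(i)}^+(X,X)$ is simple. Finally, for the vertex stabilizer $G(v) \leq \Aut(T_d)(v)$, where $d = m \pm 1$: the defining sign conditions are $2$-torsion constraints, so for odd $p$ they do not alter the $p$-part of the supernatural order of $G(v)$, which equals that of $\Aut(T_d)(v)$, namely $p^\infty$ for every prime $p \leq d-1$; for $p = 2$ a count of the congruence quotients $G(v) \to \Aut(B(v,n))$ shows that the constraints reduce the (exponentially growing) $2$-part of $|\Aut(B(v,n))|$ only by a sub-dominant amount once $d \geq 5$, so the $2$-part of $|G(v)|$ is again $2^\infty$. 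Hence $G(v)$, and a fortiori $G$, contains an infinite Sylow pro-$p$ subgroup for every prime $p \leq d-1$; since $d-1 \geq m-2$ in all cases, this covers every prime $p < m-1$.

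The main obstacle I anticipate is making the identification $B /\!\!/ (B \cap W_m) \cong \overline{\proj_{3-j}(B)} = G_{(i)}^+(X,X)$ fully rigorous — in particular checking the universal-property step and the reduction of finite-index closed normal subgroups of $H \in \mathcal{G}'_{(i)}$ — together with the elementary but slightly delicate bookkeeping needed to control the $2$-part of the vertex stabilizer of $G_{(i)}^+(X,X)$.
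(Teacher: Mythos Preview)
Your proposal follows the paper's line exactly: Proposition~\ref{prop:trivalent} for $m=3$, the virtually simple $(2n,2n+1)$-groups of Theorem~\ref{maintheorem:2n2n+1} for $m\geq 4$, and identification of the Schlichting completion $G$ with the closure of the projection on the other tree factor (the paper cites \cite[Lemma~5.16]{CM_discrete} or \cite[Lemma~3.6]{ShalomWillis} for this step, and simply asserts the pro-$p$ property where you sketch an argument). One small correction is needed in your simplicity argument: the equality $H \cap \Aut(T_{3-j})^+ = G_{(i)}^+(X,X)$ that you invoke fails in general---by Theorem~\ref{theorem:Radusimple} the index of $G_{(i)}^+(X,X)$ in $H\cap\Aut(T)^+$ can be $1$, $2$ or $4$ depending on the type of $H$---so the paper argues instead that $G$, being closed of finite index in some member of $\mathcal{G}'_{(i)}$, contains the simple group $G_{(i)}^+(X,X)$, and then that density of the simple group $B$ in $G$ forbids any nontrivial finite quotient, forcing $G=G_{(i)}^+(X,X)$.
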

\begin{proof}
For $m = 3$, we invoke   Proposition~\ref{prop:trivalent} and the conclusion follows. For $m \geq 4$ we set $n = \lfloor \frac m 2 \rfloor$ and consider the virtually simple lattice $\Gamma_{2n, 2n+1} \leq \Aut(T_{2n}) \times \Aut(T_{2n+1})$ afforded by  Theorem~\ref{maintheorem:2n2n+1}, where $T_d$ is the $d$-regular tree. Let also $\Gamma_{2n, 2n+1}^{(\infty)} $ be its simple subgroup of finite index. Let $m' = m+1$ if $m$ is even and $m' = m-1$ if $m$ is odd. For every vertex $v$ of $T_{m'}$, the group $\Gamma_{2n, 2n+1}^{(\infty)} $ commensurates the stabilizer $\Gamma_{2n, 2n+1}^{(\infty)}(v)$.
Moreover, denoting by $B$ and $\Gamma$ the respective projections of $\Gamma_{2n, 2n+1}^{(\infty)} $ and $\Gamma_{2n, 2n+1}^{(\infty)}(v)$ to $\Aut(T_m)$,
we deduce that $B$ is a simple subgroup of $\Aut(T_m)$ which commensurates the cocompact lattice $\Gamma < \Aut(T_m)$. Since $\Gamma$ and $W_m$ are commensurable by \cite[Commensurability Theorem]{Bass-Kulkarni}, we may assume, upon replacing $B$ by a conjugate, that $B \leq C_m$. 

Let us now assume that $m \geq 6$ and  consider the Schlichting completion $G = B/\!\! / B \cap W_m$. By   \cite[Lemma~5.16]{CM_discrete} (or \cite[Lemma~3.6]{ShalomWillis}), the locally compact group  $G$ is isomorphic to the closure of the projection of $B \leq \Aut(T_{m}) \times \Aut(T_{m'})$ to $\Aut(T_{m'})$. Hence, we deduce from Theorems~\ref{theorem:2n2n+1} and~\ref{theorem:Raduclassification} that $G$ belongs to the collection $\mathcal{G}'_{(i)}$. Therefore it contains an infinite pro-$p$ subgroup for every prime $p < m'$. Moreover every member of $\mathcal{G}'_{(i)}$ is virtually simple (see Theorem~\ref{theorem:Radusimple}), and since $B$ is simple and dense in $G$, it follows that every finite quotient of $G$ must be trivial. Thus $G$ is  simple.
\end{proof}

\begin{proof}[Proof of Theorem~\ref{thm:RelCom}]
Every finite index subgroup $\Gamma$ of $W_m$ is a cocompact lattice in $\Aut(T)$, acting minimally. Therefore the centralizer $\mathrm C_{\Aut(T)}(\Gamma)$ is trivial by \cite[Proposition~6.5]{Bass-Lubotzky}. This shows that $\FC_{\Aut(T)}(W_m)=\{1\}$. By Corollary~\ref{cor:Radu}, the hypotheses of Lemma~\ref{lem:AlmostSimple} are satisfied. This proves (i) and (ii). 

By \cite[Corollary 4.25]{Bass-Kulkarni}, the group $C_m$ is dense in $\Aut(T)$. Thus the closure $\overline{\Mon(C_m)}$ is a non-trivial closed normal subgroup of $\Aut(T)$. Tits' simplicity theorem  \cite[Theorem~4.5]{Titsarbres} then implies that  $\overline{\Mon(C_m)} \geq \Aut(T)^+$, hence   $\overline{\Mon(C_m)} = \Aut(T)^+$ since otherwise $\Mon(C_m)$ would have a non-trivial quotient of order~$2$. This proves (iii).

It remains to prove that the simple group $S = \Mon(C_m)$ is not finitely generated. Suppose for a contradiction that it is. Let then $G = S/\!\!/\Gamma \cap S$ be the Schlichting completion of the pair $(S, \Gamma \cap S)$ (see \cite[\S3]{ShalomWillis}). Then $G$ is a compactly generated totally disconnected locally compact group. Moreover, the diagonal embedding 
$$S \to \Aut(T)^+ \times G$$
maps $S$ to a cocompact lattice with dense projections in the product group \sloppy $\Aut(T)^+ \times G$ (see \cite[Lemma~5.15]{CM_discrete}). Now the fact that $G$ is compactly generated   and that $S$ is cocompact in the product $\Aut(T)^+ \times G$ implies that the compact open subgroups of $\Aut(T)^+$ are topologically finitely generated profinite groups (see the proof of \cite[Proposition~1.1.2]{BMZ}). This is absurd, since a vertex stabilizer in  $\Aut(T)^+$ maps continuously onto the infinite Cartesian product $\prod_{\mathbf N} \mathbf C_2$, which is not finitely generated. This proves (iv). 
\end{proof}

\begin{rmk}\label{rem:embedding}
As noticed in the proof above, we have $\FC_{\Aut(T)}(W_m)=\{1\}$, so that $\FC_{C_m}(W_m)=\{1\}$. Thus the natural map $C_m \to \Comm(W_m)$ is injective. By Lemma~\ref{lem:AlmostSimple}, it follows that $\Mon(\Comm(W_m))$ is the normal closure of the image of the simple group $S_m$ in $\Comm(W_m)$. As noticed in the introduction, for all $d \geq 2$ we have $\Comm(F_2)= \Comm(F_d) = \Comm(W_{2d})$. Thus the simple group $S_m$ embeds in  $\Mon(\Comm(F_2))$ for every $m \geq 3$ by Lemma~\ref{lem:AlmostSimple}. 
\end{rmk}

To complete the proof of Theorem~\ref{thm:AbsCom}, we  need the following basic property of Schlichting completions. 

\begin{lem}\label{lem:Schlichting}
Let $\Lambda$ be a group and $\Gamma \leq \Lambda$ be a commensurated subgroup.  Let also $H$ be a totally disconnected locally compact group and $\varphi \colon  \Lambda \to H$ be a    homomorphism such that    $\overline{\varphi(\Gamma)}$ is compact. Then  $\overline{\varphi(\Lambda)}$  has a compact normal subgroup $K$ such that the natural map $\Lambda \to \overline{\varphi(\Lambda)}/K$ extends to a continuous homomorphism with dense image  $  \Lambda /\!\! / \Gamma \to \overline{\varphi(\Lambda)}/K$. 
\end{lem}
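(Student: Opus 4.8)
\textbf{Proof plan for Lemma~\ref{lem:Schlichting}.}
The statement to prove is a routine but useful functoriality property of Schlichting completions, so the plan is to reduce it to the universal property that characterizes $\Lambda /\!\!/ \Gamma$. Recall that the Schlichting completion $\Lambda /\!\!/ \Gamma$ is the closure of the image of $\Lambda$ in $\Sym(\Lambda/\Gamma)$ (equipped with the permutation topology), or equivalently the completion of $\Lambda$ for the topology whose basis of identity neighborhoods consists of the finite-index subgroups of $\Gamma$ (and their conjugates); it is the unique, up to isomorphism, totally disconnected locally compact group $\wh{\Lambda}$ equipped with a homomorphism $\Lambda \to \wh{\Lambda}$ with dense image such that the closure of the image of $\Gamma$ is compact open and $\Gamma$ is precisely the preimage of that compact open subgroup. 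I would begin by setting $\ol{L} = \ol{\varphi(\Lambda)}$, which is a totally disconnected locally compact group, and $U = \ol{\varphi(\Gamma)}$, which is compact by hypothesis.

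First I would produce the compact normal subgroup $K$. The compact subgroup $U$ of $\ol L$ is contained in some compact open subgroup $V$ of $\ol L$ (here one uses van Dantzig's theorem: a totally disconnected locally compact group has a basis of identity neighborhoods consisting of compact open subgroups, and any compact subgroup is contained in one such). Since $\Gamma$ is commensurated in $\Lambda$, for each $g \in \Lambda$ the subgroup $\varphi(g) U \varphi(g)^{-1} = \ol{\varphi(g\Gamma g^{-1})}$ is commensurable with $U$ inside $\ol L$; in particular $\varphi(\Lambda)$ normalizes the commensurability class of $U$, hence so does $\ol L$ by continuity. I would then take $K$ to be the kernel of the action of $\ol L$ on the (discrete) coset space $\ol L / V'$, where $V'$ is a suitable compact open subgroup commensurated by $\ol L$ — concretely, $K = \bigcap_{x \in \ol L} x V x^{-1}$ is a compact (being a closed subgroup of the compact $V$) normal subgroup of $\ol L$, and it is exactly the kernel of the $\ol L$-action on the orbit of $VU$... the cleanest route is: let $K$ be the kernel of the continuous action of $\ol L$ on the commensurability-coset space associated to $U$; it is closed, normal, and contained in a conjugate of $V$, hence compact.

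Next I would check that $\ol L / K$, together with the composite homomorphism $\Lambda \xrightarrow{\varphi} \ol L \to \ol L/K$, satisfies the defining requirements of $\Lambda /\!\!/\Gamma$ relative to the quotient topology pulled back to $\Lambda$. The image of $\Lambda$ is dense in $\ol L/K$ since it was dense in $\ol L$; the closure of the image of $\Gamma$ equals $UK/K$, which is compact, and it is open in $\ol L/K$ provided $K$ was chosen so that $UK$ is open — which holds since $UK$ contains an open subgroup (a conjugate-intersection inside $V$, say) once $K$ absorbs the ``non-open part''. This is the one point requiring a little care: one must arrange $K$ so that $UK$ is genuinely open, not merely compact, in $\ol L/K$; I expect this to be the main (though still modest) obstacle, and it is resolved by defining $K$ to be the kernel of the $\ol L$-action on $\ol L/W$ for $W$ a compact open subgroup lying between $U$ and $V$ and commensurated by $\ol L$, so that $WK/K$ is compact open and $U \le W$ gives that $\ol{\varphi(\Gamma)}K/K \le WK/K$ with finite index, hence is itself compact open. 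Finally, by the universal property of the Schlichting completion — any continuous homomorphism from $\Lambda$ to a totally disconnected locally compact group under which the closure of the image of $\Gamma$ is compact open factors uniquely and continuously through $\Lambda /\!\!/\Gamma$ — the map $\Lambda \to \ol L/K$ extends to a continuous homomorphism $\Lambda /\!\!/\Gamma \to \ol L/K$, which has dense image because $\Lambda$ already maps densely. This is precisely the assertion of the lemma, so the proof is complete.
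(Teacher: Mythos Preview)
Your overall strategy coincides with the paper's: find a compact open subgroup $V$ of $\overline L = \overline{\varphi(\Lambda)}$ containing $U = \overline{\varphi(\Gamma)}$ (van Dantzig), let $K$ be its normal core in $\overline L$, and then invoke functoriality of Schlichting completions. The paper packages the last two steps as citations: it sets $\Gamma_1 = \varphi^{-1}(V)$, observes that $\Gamma_1$ is commensurated with $\Gamma \leq \Gamma_1$, uses \cite[Lemmas~3.5 and~3.6]{ShalomWillis} to identify $\overline L / K \cong \Lambda/\!\!/\Gamma_1$, and then \cite[Lemma~3.8(1)]{ShalomWillis} to obtain the continuous dense-image map $\Lambda/\!\!/\Gamma \to \Lambda/\!\!/\Gamma_1$.

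There is, however, a genuine error in your execution. You assert that $\overline{\varphi(\Gamma)}K/K$ has finite index in $WK/K$, in order to make it compact \emph{open}. That finite-index claim is false in general: $U$ is merely a compact subgroup of the compact open group $W$, and there is no reason for $[W:U]$ to be finite (think of $\Gamma$ mapping into a proper closed subgroup of a profinite group). Fortunately this openness is not what is required. To extend $\Lambda \to \overline L/K$ continuously to $\Lambda/\!\!/\Gamma$, you only need the map to be continuous for the topology on $\Lambda$ in which finite-index subgroups of $\Gamma$ form an identity basis. Since $\Gamma \subseteq \varphi^{-1}(V)$ and every compact open subgroup of $\overline L/K$ is commensurable with $V/K$, the preimage of any compact open subgroup contains a finite-index subgroup of $\Gamma$; that is all you need. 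This is exactly what the paper's introduction of the intermediate commensurated subgroup $\Gamma_1 = \varphi^{-1}(V)$ accomplishes cleanly.
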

\begin{proof}
By \cite[Lemma~3.1]{Cap_CMH}, the compact group $\overline{\varphi(\Gamma)}$  is contained in some compact open subgroup $L$ of $H$. Let $\Gamma_1= \varphi^{-1}(L)$. Then $\Gamma_1$ is a commensurated subgroup of $\Lambda$, and we have  $\Gamma \leq \Gamma_1$. By \cite[Lemmas~3.5 and 3.6]{ShalomWillis}, the group  $\overline{\varphi(\Lambda)}$  has a compact normal subgroup $K$ such that the quotient $\overline{\varphi(\Lambda)}/K$ is isomorphic to the Schlichting completion $\Lambda  /\!\! / \Gamma_1$. Moreover, by   \cite[Lemma~3.8(1)]{ShalomWillis}, the natural map $\Lambda \to \Lambda  /\!\! / \Gamma_1$ extends to a continuous homomorphism with dense image  $  \Lambda /\!\! / \Gamma \to \Lambda  /\!\! / \Gamma_1 \cong \overline{\varphi(\Lambda)}/K$. 
\end{proof}

\begin{proof}[Proof of Theorem~\ref{thm:AbsCom}]
	We first recall that, as pointed out in the introduction, we have $\Comm(F_d)=\Comm(F_2)$ for all $d \geq 2$. 
	
	The simple group $B = \PSL_2(\QQ)$ commensurates its finitely generated virtually free subgroup $\Gamma = \PSL_2(\ZZ)$. Thus we have a natural homomorphism $\varphi \colon B \to \Comm(\Gamma)$. 	The kernel of $\varphi$ is the FC-centralizer $\FC_B(\Gamma)$. Notice that $\FC_{\PSL_2(\RR)}(\Gamma)$ is trivial, since every finite index subgroup of $\Gamma$ is a lattice in $\PSL_2(\RR)$, and thus has a trivial centralizer by the Borel density theorem. In particular $\varphi$ is injective. This implies that  $\Comm(\Gamma) = \Comm(F_2)$ has a simple subgroup isomorphic to $\PSL_2(\QQ)$ containing a finite index subgroup of $F_2$. By the very definition, the FC-centralizer of $\Gamma$ in $\Comm(\Gamma)$ is trivial. The assertions (i) and (ii) thus follow from Lemma~\ref{lem:AlmostSimple}. Alternatively, we could have verified the hypotheses of Lemma~\ref{lem:AlmostSimple} using Theorem~\ref{thm:RelCom}, see Remark~\ref{rem:embedding}.

	In order to prove (iii), we set $A = \Comm(\Gamma)$ and $S = \Mon(A)$. 
Suppose for a contradiction that the simple group $S$ is finitely generated. Let $G = S/\!\!/\Gamma \cap S$ be the Schlichting completion and $\varphi \colon S \to G$ be the natural homomorphism. By \cite[Proposition~3.6(iii)]{CRW}, the fact that $S$ is a finitely generated simple group implies that there is  compactly generated topologically simple totally disconnected locally compact group $H$ and a continuous surjective homomorphism $\pi \colon G \to H$. 

Since $F_d$ is abstractly commensurable with the free Coxeter group $W_m$ of rank $m$ for every $m$, we have $A = \Comm(W_m)$. We consider the finitely generated simple group $B_{m}$ afforded by Corollary~\ref{cor:Radu} and  let $G_{m} = B_m /\! \! / B_m \cap W_m$ be the corresponding Schlichting completion. By Corollary~\ref{cor:Radu}, the group $G_{m}$ is simple and contains an infinite pro-$p$ subgroup for every prime $p < m-1$, provided $m \geq 6$.

We next deduce from Lemma~\ref{lem:AlmostSimple} and Remark~\ref{rem:embedding} that $S$ contains a copy of the simple group $B_m$ for all $m \geq 6$. We shall thus view $B_m$ as a subgroup of $S$. 

In view of Lemma~\ref{lem:Schlichting}, the closure $\overline{\pi ( \varphi(B_m))}$ of the image of $B_m$ in $H$ has a compact normal subgroup $K$, and there is a  continuous homomorphism with dense image 
$$G_{m} \to \overline{\pi ( \varphi(B_m))}/K.$$ 
Notice that the latter quotient cannot be trivial, since otherwise $\overline{\pi ( \varphi(B_m))}$ would be compact, which is absurd since $B_m$ is an infinite simple group, hence not residually finite. Since $G_{m}$  is  simple, the continuous homomorphism $G_{m} \to \overline{\pi ( \varphi(B_m))}/K$ is injective. Therefore $H$ contains an infinite pro-$p$ subgroup for every prime $p < m-1$. Since this argument is valid for every $m \geq 6$, we infer that $H$ contains an infinite pro-$p$ subgroup for every prime $p$. However, by \cite[Theorems~4.14 and~5.3]{CRW}, a compactly generated topologically simple totally disconnected locally compact group can contain an infinite pro-$p$ subgroup for only finitely many primes. This is a contradiction. 
\end{proof}

\clearpage

\phantomsection

\addcontentsline{toc}{section}{References}

\bibliographystyle{amsalpha}
\bibliography{biblio}

\end{document}